\numberwithin{equation}{section}
\newcommand{\indep}{\rotatebox[origin=c]{90}{$\models$}}
\newtheorem{theorem}{Theorem}
\numberwithin{theorem}{section}
\newtheorem{lemma}[theorem]{Lemma}
\newtheorem{proposition}[theorem]{Proposition}
\newtheorem{corollary}[theorem]{Corollary}
\theoremstyle{definition}
\newtheorem{defi}[theorem]{Definition}
\newtheorem{definition}[theorem]{Definition}
\newtheorem{example}[theorem]{Example}
\newtheorem{remark}[theorem]{Remark}
\newtheorem{asn}[theorem]{Assumption}
\newcommand{\bbX}{\mathbb{X}}
\newcommand{\bbY}{\mathbb{Y}}
\newcommand{\bbZ}{\mathbb{Z}}
\newcommand{\N}{\mathbb{N}}
\newcommand{\Q}{\mathbb{Q}}
\newcommand{\R}{\mathbb{R}}
\newcommand{\X}{\mathcal{X}}
\newcommand{\E}{\mathbb{E}}
\newcommand{\F}{\mathcal{F}}
\newcommand{\G}{\mathcal{G}}
\newcommand{\B}{\mathcal{B}}
\newcommand{\mesh}{\mathrm{mesh}}
\newcommand{\AF}{\textup{AF}}
\newcommand{\supp}{\textup{supp}}
\newcommand{\prob}{\mathcal{P}}
\newcommand{\pr}{\textup{pr}}
\newcommand{\id}{\textup{id}}
\newcommand{\law}{\mathcal{L}}
\newcommand{\cpl}{\textup{Cpl}}
\newcommand{\cplc}{\textup{Cpl}_{\mathrm{c}}}
\newcommand{\cplbc}{\textup{Cpl}_{\mathrm{bc}}}
\newcommand{\W}{\mathcal{W}}
\newcommand{\AW}{\mathcal{AW}}
\newcommand{\CW}{\mathcal{CW}}
\newcommand{\SCW}{\mathcal{SCW}}
\renewcommand{\epsilon}{\varepsilon}
\renewcommand{\subset}{\subseteq}
\renewcommand{\P}{\mathbb{P}}
\newcommand{\pp}{\textup{pp}}
\newcommand{\FP}{\textup{FP}}
\newcommand{\HK}{\textup{HK}}
\newcommand{\cont}{\textup{cont}}
\newcommand{\fp}[1]{{\mathbb #1}}
\newcommand{\cadlag}{c\`adl\`ag}
\newcommand{\cSFP}{\mathcal{NFP}}
\newcommand{\NFP}{\textup{NFP}}
\let\oldmarginpar\marginpar
\renewcommand\marginpar[1]{\-\oldmarginpar[\raggedleft\footnotesize #1]{\raggedright\footnotesize\color{red} #1}}
\title{The Wasserstein space of stochastic processes in continuous time}
\author{D.\ Bartl, M.\ Beiglböck, G.\ Pammer, S.\ Schrott, X.\ Zhang}
\begin{document}
	
	\maketitle

	\begin{abstract}
		Researchers from different areas have independently defined extensions of the usual weak convergence of laws of stochastic processes with the goal of adequately accounting for the flow of information. Natural approaches are convergence of the Aldous--Knight prediction process, Hellwig's information topology, convergence in adapted distribution in the sense of Hoover--Keisler and the weak topology induced by optimal stopping problems. The first main contribution of this article is that on   continuous processes with natural filtrations there exists a canonical \emph{adapted weak topology} which can be defined by all of these approaches; moreover, the adapted weak topology is metrized by a suitable  \emph{adapted Wasserstein distance} $\mathcal{AW}$.

		While the set of  processes with natural filtrations is not complete, we establish that its completion consists precisely of the space ${\rm FP}$ of stochastic processes with general filtrations. 
		We also show that $({\rm FP}, \mathcal{AW})$ exhibits several desirable properties. 
		Specifically, it is Polish, martingales form a closed subset and approximation results such as Donsker's theorem extend to $\mathcal{AW}$. 

\medskip

\noindent \emph{keywords:} causal transport, adapted topologies, continuity of optimal stopping, Donsker's theorem
        
	\end{abstract}

\section{Introduction}
When attempting to represent a natural phenomenon using a stochastic process model, it is likely that the model will not provide a perfect representation of reality.
In order to analyze how such a model error  affects the specific issue being addressed, it is convenient to endow the collection of (laws of) stochastic processes with an appropriate concept of distance or topology.

To that end, denote by $D([0,1];\mathbb{R})$ the space of \emph{\cadlag} paths  equipped with the usual $J_1$-topology and write $\mathcal{P}(D([0,1];\mathbb{R}))$ for the set of all Borel probability measures on $D([0,1];\mathbb{R})$ /  the set of laws of \cadlag{} processes.
Naturally, $\mathcal{P}(D([0,1];\mathbb{R}))$ can be equipped with the standard weak topology, or variants of it like the Wasserstein distance.
However,  the weak topology does not adequately address many of the continuity-related questions that often arise in the study of stochastic processes. 
For instance, classical optimization problems such as optimal stopping or utility maximization do not exhibit continuous dependence on the underlying process, and every process can be approximated by processes that become deterministic immediately after time 0.

As a result, numerous researchers have introduced various modifications of the weak topology tailored to address the specific goals of their research problems. 
That includes Aldous' extended weak convergence \cite{Al81} based on Knight's prediction process \cite{Kn77}, Hoover--Keisler's convergence in adapted distribution \cite{HoKe84}, Hellwig's topology \cite{Ho91}, and the topology induced by  causal / adapted Wasserstein distances \cite{PfPi12, PfPi14, BiTa19, La18, BaBaBeEd19a}.
The first main result of our article presented in Theorem~\ref{thm:intro.all.topo.equal.bounded} 
 shows that the seemingly different approaches can be used to  define the same \emph{adapted weak topology} on the laws of continuous processes.
Moreover, this topology is the coarsest topology for which optimal stopping problems become continuous.

\subsection{Different approaches to adapted topologies}
\label{sec:intro.canonical}

The weak topology on $\mathcal{P}(D([0,1];\mathbb{R}))$ falls short in the context of stochastic processes because it establishes similarity based solely on path resemblance, overlooking the crucial aspect of available information at any given time.
To illustrate this issue briefly, let us examine the following processes.
\begin{figure}[H]
\label{fig:intro}
\centering
\begin{tikzpicture}[scale=1]
			\begin{scope}[xshift=-2.5cm]
				\draw[->, line width=1pt] (-0.3,0) -- (2.3,0) node[below] {$t$};
				\draw[->, line width=1pt] (0,-0.3) -- (0,2.3) node[right] {$\R$};
				\draw[black, line width=1pt] (0,1) -- (1,1) -- (2,1.8) ;
				\draw[black, line width=1pt] (0,1) -- (1,1) -- (2,0.2) ;
			\end{scope}
			
			\begin{scope}[xshift=2.5cm]
				\draw[->, line width=1pt] (-0.3,0) -- (2.3,0) node[below] {$t$};
				\draw[->, line width=1pt] (0,-0.3) -- (0,2.3) node[right] {$\R$};
				\draw[black, line width=1pt] (0,1) -- (1,1.05) -- (2,1.8) ;
				\draw[black, line width=1pt] (0,1) -- (1,0.95) -- (2,0.2) ;
			\end{scope}
		\end{tikzpicture}

\vspace{-1.5em}
\caption{$\mathbb{P}$ on the left and $\mathbb{P}^\varepsilon$ on the right.}
\end{figure}
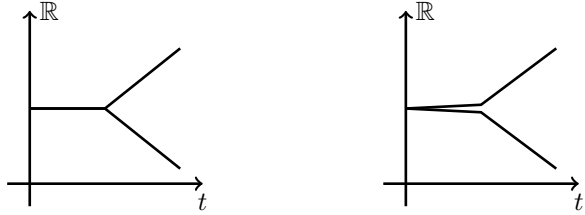
It is evident that $\mathbb{P}$ and $\mathbb{P}^\varepsilon$ in Figure \ref{fig:intro} are similar when viewed through the lens of the weak topology. 
However, when considering them from the perspective of stochastic processes they arguably differ substantially, because their `information' available on the future behaviour  varies considerably across these processes.
As an illustration, the values of certain optimal stopping problems evaluated under $\mathbb{P}^\varepsilon$ will not converge to the value under $\mathbb{P}$.

\medskip

\noindent
{\bf Aldous' topology \cite{Al81}.} 
One approach to formalizing the notion of `information' lies in examining the conditional distribution / conditional law of a given process.
Denote by $X$ the canonical process on $\mathcal{X}:=D([0,1];\mathbb{R})$ defined by $X_t(x):=x(t)$ and let  
$\mathcal{F}_t^{\mathbb{P}}:=\bigcap_{\varepsilon>0}\sigma(X_s: s \leq t+ \varepsilon)\vee\mathcal{N}(\mathbb{P})$ be the {natural} filtration augmented by the ${\mathbb{P}}$-null sets {$\mathcal{N}(\mathbb{P})$}.
Then, if $\mathbb{P}$ and $\mathbb{P}^\varepsilon$ are the laws in Figure \ref{fig:intro} and $t\in(0,\frac{1}{2})$, {we have}
\[ \mathcal{L}_{\mathbb{P}}(X_1 | \mathcal{F}_t^{\mathbb{P}})
=\frac{1}{2}\left(\delta_{\{0\}} + \delta_{\{2\}} \right)
\quad\text{but}\quad 
\mathcal{L}_{\mathbb{P}^\varepsilon}(X_1 | \mathcal{F}_t^{\mathbb{P}})
=\begin{cases}
\delta_{\{2\}} &\text{if } X_t\geq 1,\\
\delta_{\{0\}} &\text{if } X_t<1.
\end{cases}
 \]
This suggests that a natural approach to strengthen the weak topology within the framework of laws of stochastic process could involve exploring appropriate topological structures for the conditional laws of these processes.
Aldous' extended weak topology is obtained using that idea.

To be more precise, if $\mathcal{P}(\mathcal{X})$ is endowed with the usual weak topology, then it is standard to verify that the $\mathcal{P}(\mathcal{X})$-valued stochastic process $t\mapsto \mathcal{L}_{\mathbb{P}}(X | \mathcal{F}_t^{\mathbb{P}})$ has almost surely \cadlag{} paths.
In particular, the so-called (first order) \emph{prediction process}
\[ {\rm pp}(\mathbb{P})
=\big({\rm pp}_t(\mathbb{P}) \big)_{t\in[0,1]}
:= \big(\mathcal{L}_{\mathbb{P}}(X | \mathcal{F}_t^{\mathbb{P}})\big)_{t\in[0,1]} \]
is a \cadlag{} stochastic process taking its values in  $\mathcal{P}(\mathcal{X})$.
Aldous endows $D([0,1];\R \times \mathcal{P}(\mathcal{X}))$ with the $J_1$ distance $d_{J_1}$ and considers the coarsest topology for which the mapping
\begin{align}
\label{eq:intro.aldous}
\mathcal{P}(\mathcal{X}) \to \mathcal{P}\big( D([0,1];\R \times\mathcal{P}(\mathcal{X}) )\big),
\qquad \mathbb{P}\mapsto \mathcal{L}_{\mathbb{P}}\big( (X_t,{\rm pp}_t(\mathbb{P}))_{t \in [0,1]}\big)
\end{align}
is continuous.

 While Aldous used Skorohod's $J_1$-topology on $D([0,1]; \R \times \mathcal{P}(\X))$, one can also choose weaker mode of convergence on this space. A natural choice for this is the Meyer--Zheng topology. These two approaches yield the same topology on processes with continuous filtration, while the version with the $J_1$-topology is stronger in general, see Example~\ref{ex:counter}. To distinguish these topologies we will refer to them as Aldous\textsubscript{MZ} and Aldous\textsubscript{J1}.

\medskip 
\noindent
{\bf Hoover--Keisler's topology \cite{HoKe84, Ho91}} shares  certain resemblance with the topology introduced by Aldous, but there are notable distinctions between the two.
Hoover--Keisler endow the \cadlag{} paths with the 
Meyer--Zheng topology. 
Moreover, they consider \emph{iterated} prediction processes, namely ${\rm pp}^2(\mathbb{P})$ is the prediction process of the \cadlag{} process $({\rm pp}_t(\mathbb{P}))_{t\in[0,1]}$ and so forth.
Their topology is the initial topology  of $\mathbb{P}\mapsto \law_\mathbb{P}( ({\rm pp}^r(\mathbb{P}))_{r\in\mathbb{N}})$ (see Section \ref{sec:Recap} for the details). 

\medskip

\noindent
{\bf Hellwig's topology \cite{He96}} originates in a finite discrete-time context.
For any fixed time point $t$, Hellwig partitions the time index into the past and the future and considers disintegrations of the future w.r.t.\ the past. 
His topology is given as the initial topology of the corresponding mappings.
In Section \ref{sec:adapted topologies on plain} we detail the continuous-time extension of Hellwig's topology.

\vspace{0.5em}
\noindent
{\bf The optimal stopping topology.}
We recall that one of the motivations behind considering refined variations of the weak topology on $\mathcal{P}(\mathcal{X})$ stems from the fact that fundamental stochastic optimization problems do not exhibit continuity w.r.t.\ the weak topology.
Arguably, one fundamental  optimization problem in that context is that of optimal stopping. 
Let $\varphi\colon\mathcal{X}\times[0,1]\to\mathbb{R}$ be a  bounded, continuous and non-anticipative function (that is,  $\varphi(x,t)$ depends solely on $(x_s)_{s\in[0,t]}$), and set 
%
\[ {\rm OS}(\mathbb{P},\varphi)
:= \inf\big\{ \mathbb{E}_\mathbb{P}[ \varphi(X,\tau) ] : \tau \text{ is an $(\mathcal{F}_t^\mathbb{P})_{t\in[0,1]}$-stopping time} \big\}.\]
The optimal stopping topology is  defined as the coarsest topology for which  $\mathbb{P} {\mapsto} {\rm OS}(\mathbb{P},\varphi)$ is continuous for every function $\varphi$ as above.

\vspace{0.5em}
\noindent
{\bf The adapted Wasserstein distance.}
In the context of the weak topology on $\mathcal{P}(\mathcal{X})$, the influential role of the optimal transport perspective using Wasserstein distances has left a profound mark on various fields from PDE, calculus of variation and probability to machine learning. 
Interestingly, although the classical Wasserstein distance is not well-suited for analyzing differences between stochastic processes (because it metrizes the weak topology), there exists a suitable variant thereof designed  precisely for this purpose.
Fix a metric $d_\mathcal{X}$ on $\mathcal{X}$ that is complete and compatible with the $J_1$-topology (see Remark~\ref{rem:defAWflexibility1}), denote by $\mathcal{P}_p(\mathcal{X})$ the set of all $\mathbb{P}\in\mathcal{P}(\mathcal{X})$ for which $\mathbb{E}_\mathbb{P}[d^p_{\mathcal{X}}(0,X)]<\infty$, and define the \emph{strict adapted Wasserstein distance}
\[ 
\mathcal{AW}_p^{\rm (s)} (\mathbb{P},\mathbb{Q}) = \inf_{\pi\in \cpl_{\rm bc}(\mathbb{P},\mathbb{Q})}  \mathbb{E}_\pi[d^p_{\mathcal{X}}(X,Y)]^{1/p} .
\]
The difference to the usual Wasserstein distance is that one restricts to so-called \emph{bi-causal} couplings, i.e.\ couplings that are non-anticipative and can be viewed as a Kantorovich analogue of non-anticipative transport maps.
It was shown in a discrete-time framework that $\mathcal{AW}_p^{(s)}$ constitutes the analogue of the classical Wasserstein distance in the realm of stochastic processes: it induces the same topology as the (discrete-time variants of) Aldous' topology, Hoover--Keisler's topology, the optimal stopping topology and has many further desirable properties, see \cite{BaBaBeEd19b, BaBePa21}.

Perhaps surprisingly, in the present continuous-time setting, the previous observations  no longer apply.
Indeed, the distance $\mathcal{AW}_p^{(s)}$ seems too strong to analyse \emph{general} stochastic processes.
For instance, it is not separable and  Donsker's theorem does not hold true with $\mathcal{AW}_p^{(s)}$, as the \emph{only} bi-causal coupling between a random walk and the Brownian motion is the product measure, see Appendix \ref{sec:aw.strong}.

With this in mind, when our objective is to  modify $\mathcal{AW}_p^{(s)}$ tailored for comparing general stochastic processes, it becomes necessary to \emph{relax the bi-causality constraint}.
In fact, a natural approach towards that goal draws a parallel to the reasoning behind equipping the \cadlag{} paths $D([0,1];\mathbb{R})$ with the $J_1$ distance rather than the sup-norm, and it consists in setting
\[ \mathcal{AW}_p (\mathbb{P},\mathbb{Q})
:= \inf_{\varepsilon>0 } \left( \varepsilon + \inf_{\pi\in \cpl^{\epsilon}_{\text{\rm bc}}(\mathbb{P},\mathbb{Q})} \mathbb{E}_\pi[d^p_{\mathcal{X}}(X,Y)]^{1/p} \right).\]

Roughly put, the idea is that whether a certain information is arriving $\varepsilon$ earlier or later in time should not have a substantial impact but be penalized only on the level of $\varepsilon$, see Definition~\ref{def:AWcont}.

\medskip
\noindent
{\bf Causal distance \cite{La18, AcBaZa20}.} 
Another distance based on optimal transport principles stems from Lassalle's causal  transport problem.
Set
\[ \mathcal{CW}_p (\mathbb{P},\mathbb{Q})
:= \inf_{\varepsilon>0 } \inf_{\pi\in \cpl^\varepsilon_{{\rm c}}(\mathbb{P},\mathbb{Q})}\Big( \varepsilon+ \mathbb{E}_\pi[d^p_{\mathcal{X}}(X,Y)]^{1/p} \Big)\]
where couplings are causal if they are non-anticipative from $\mathbb{P}$ to $\mathbb{Q}$ (but not necessarily vice versa) and $\varepsilon$-causality represents the same relaxation as previously employed.
While  $\mathcal{CW}_p$ cannot be a distance (as it lacks symmetry), a natural remedy is to consider the \emph{symmetrized  causal Wasserstein distance} $\mathcal{SCW}_p (\mathbb{P},\mathbb{Q})=\max\{\mathcal{CW}_p (\mathbb{P},\mathbb{Q}),\mathcal{CW}_p (\mathbb{Q},\mathbb{P})\}$.

\subsection{The adapted weak topology on $\mathcal{P}(\mathcal{X})$.}
Recall that $X$ is the canonical process on $\mathcal{X}=D([0,1];\mathbb{R})$.
We say that the $p$-th moments of $\mathbb{P}^n$ converge to $\mathbb{P}$ if $\E_{\mathbb{P}^n}[d_\mathcal{X}^p(0,X)]\to \E_{\mathbb{P}}[d_\mathcal{X}^p(0,X)]$.
With this notation set in place, we are ready to state our first main result.

\begin{theorem}
\label{thm:intro.all.topo.equal.bounded}
	Let $p\in[1,\infty)$ and $\mathbb{P},\mathbb{P}^n \in\mathcal{P}_p(\mathcal{X})$ for $n\in\mathbb{N}$.
	Then the following are equivalent.
	\begin{enumerate}[label = (\roman*)]
    \item $\mathbb{P}^n\to\mathbb{P}$ in the Aldous$_{\text{MZ}}$ topology  and the $p$-th moments converge.
	\item $\mathbb{P}^n\to\mathbb{P}$ in the Hoover--Keisler topology and the $p$-th moments converge.
	\item $\mathbb{P}^n\to\mathbb{P}$ in Hellwig's topology and the $p$-th moments converge. 
	\item $\mathcal{AW}_p(\mathbb{P}^n,\mathbb{P})\to 0$.
     \item $\mathcal{SCW}_p(\mathbb{P}^n,\mathbb{P})\to 0$.
     \item $\mathcal{CW}_p(\mathbb{P},\mathbb{P}^n)\to 0$.
	\end{enumerate}
	If, in addition, $X$ has $\mathbb{P}$-a.s.\ continuous paths,
 (i)-(v) are further equivalent to 
	\begin{enumerate}[(i)]
	\item[(vii)] $\mathbb{P}^n\to\mathbb{P}$ in the optimal stopping topology and the $p$-th moments converge.
	\end{enumerate}
\end{theorem}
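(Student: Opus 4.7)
My plan is to prove the equivalence of (i)--(vi) by establishing the cyclic chain (iv) $\Rightarrow$ (v) $\Rightarrow$ (vi) $\Rightarrow$ (i) $\Rightarrow$ (ii) $\Rightarrow$ (iii) $\Rightarrow$ (iv) and to treat (vii) separately under the continuity hypothesis. The first two links are immediate: every bi-causal coupling is causal in both directions, so $\AW_p \geq \SCW_p \geq \CW_p(\P,\cdot)$. Moreover, since $\AW_p$ dominates the classical $p$-Wasserstein distance, convergence in (iv) automatically yields convergence of $p$-th moments, so the moment clause present in (i)--(iii) need not be enforced separately once the cycle is closed.

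\textbf{From causal transport up through Hellwig.} For (vi) $\Rightarrow$ (i) I would pick near-optimal $\varepsilon_n$-causal couplings $\pi_n$ from $\P$ to $\P^n$ with $\varepsilon_n + \E_{\pi_n}[d_\X^p(X,Y)]^{1/p} \to 0$ and use the disintegration $\pi_n(dy\mid x_{[0,t]})$ as an asymptotic bridge between $\pp_t(\P)$ and $\pp_{t+\varepsilon_n}(\P^n)$; the $\varepsilon_n$-time shift is absorbed by the Meyer--Zheng pseudo-path topology and delivers Aldous\textsubscript{MZ} convergence. For (i) $\Rightarrow$ (ii) I would iterate: since the first-order prediction process is itself a \cadlag{} $\mathcal{P}(\X)$-valued process, Meyer--Zheng continuity of the prediction-process functional propagates convergence to all higher-order prediction processes by induction on the iteration depth. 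For (ii) $\Rightarrow$ (iii) I would observe that Hellwig's finite-dimensional disintegrations $\law_\P(X_{t_{k+1}},\dots,X_{t_n} \mid X_{t_1},\dots,X_{t_k})$ are bounded continuous functionals of the iterated prediction processes evaluated at $\P$-generic grids, so Hoover--Keisler convergence descends to Hellwig convergence.

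\textbf{Closing the loop --- the main obstacle.} The step (iii) $\Rightarrow$ (iv) is the crux. The plan is to discretize time on a grid of mesh $\delta>0$, invoke the known discrete-time equivalence between Hellwig's topology and the bi-causal adapted Wasserstein distance (Backhoff--Bartl--Beiglböck--Eder, Bartl--Beiglböck--Pammer) to obtain genuine bi-causal couplings $\pi_n^\delta$ between the sampled processes, and then lift them path-by-path to couplings on $\X \times \X$ that are $\delta$-bi-causal in the sense of Definition~\ref{def:AWcont}. Choosing $\delta = \delta_n \downarrow 0$ slowly enough then yields $\AW_p(\P^n,\P) \to 0$. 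The principal difficulty is to simultaneously control the full Wasserstein cost on $\X$ and preserve $\delta$-bi-causality during the lift: one needs $J_1$-tightness (i.e.\ uniform modulus-of-oscillation bounds for \cadlag{} paths) to be inherited from the Hellwig convergence together with the $p$-th moment convergence, and this requires a careful oscillation estimate that does not follow from finite-dimensional convergence alone.

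\textbf{The optimal stopping case.} Under the hypothesis that $\P$ has a.s.\ continuous paths, the $\varepsilon$-relaxation in $\AW_p$ becomes inessential since $J_1$ coincides with the uniform topology near the support of $\P$. For (iv) $\Rightarrow$ (vii), a bi-causal coupling transports stopping times of $\P$ to stopping times of $\P^n$ with controlled path error, so OS values converge uniformly over bounded continuous non-anticipative payoffs. For (vii) $\Rightarrow$ (i) I would rely on Snell envelopes: the Snell envelope at time $t$ of a bounded continuous non-anticipative payoff is a bounded continuous functional of $\pp_t(\P)$, and this family of payoffs is rich enough to separate points among first-order prediction processes in the Meyer--Zheng topology, closing the chain.
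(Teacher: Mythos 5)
Your chain has a genuine gap at the step (i) $\Rightarrow$ (ii). You argue that Meyer--Zheng convergence of the first-order prediction process ``propagates to all higher-order prediction processes by induction on the iteration depth'', but this is false in general: passing to conditional laws is not a continuous operation, and indeed the paper explicitly observes (Remark after Theorem~1.6) that for every rank $r$ there are filtered processes with identical $\law(\pp^r)$ that are distinguished by $\pp^{r+1}$ and by optimal stopping. What makes the implication true here is the \emph{naturally filtered} hypothesis: for such processes all iterated prediction processes are Borel functions of the path $X$ alone (Lemma~\ref{lem:plaineq}), and the paper exploits this via Lemma~\ref{lem:Hellwig_separates_plain} rather than by any induction on rank. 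Relatedly, the paper's actual route from Aldous\textsubscript{MZ}/Hellwig to Hoover--Keisler is not a direct propagation argument but a compactness argument: Hellwig convergence gives weak convergence of the marginal laws, hence tightness, hence $\HK$-relative compactness via Proposition~\ref{prop:Compact}, and then one checks that $\fp X$ is the only accumulation point using the naturally-filtered separation lemma. Your (vi) $\Rightarrow$ (i) sketch via ``disintegrations as bridges'' is vague and would also be replaced by this compactness mechanism in a rigorous proof; the paper's treatment of $\CW_p(\P,\P^n)\to 0 \Rightarrow \AW_p\to 0$ is pure soft analysis: extract an $\HK$-accumulation point $\fp Y$, use the triangle inequality to get $\CW_p(\fp X,\fp Y)=0$, then use Corollary~\ref{cor:plain.via.ordering} to close the loop.

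Your identification of (iii) $\Rightarrow$ (iv) (Hellwig $\Rightarrow \AW_p$) as the crux, and the plan to discretize time and import the discrete-time equivalence from Bartl--Beiglb\"ock--Pammer, is genuinely aligned with the paper's strategy, which implements this inside the proof of Theorem~\ref{thm:AWmetrizesHK} via grids contained in $\cont(\fp X)$ and Proposition~\ref{prop:HKtoAWdiscr}. The oscillation concern you raise is side-stepped in the paper by proving $\HK_p\Leftrightarrow\AW_p$ first as a stand-alone metrization theorem and only then deducing the Hellwig equivalence, again through compactness. Your optimal-stopping closure ``Snell envelopes separate prediction processes'' captures the right intuition, but the actual argument (Lemma~\ref{lem:OS.finer.Hellwig}) is more delicate: it uses compactness, then Lemma~\ref{lem:martingale_strict_minimum} to produce a specific payoff $\varphi$ on which two distinct Hellwig-classes yield different optimal stopping values, with a truncation/approximation step (Lemma~\ref{lem:os.continuous}) to make $\varphi$ bounded and continuous. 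In short, the proposal reproduces the paper's central discretization idea for the hard implication, but is missing the two structural pillars the paper leans on --- the naturally-filtered functional-dependence lemma and the Prohorov-type compactness theorem --- and these are not optional refinements: without them the implications (i)$\Rightarrow$(ii), (vi)$\Rightarrow$(i) and (vii)$\Rightarrow$(i) as you sketch them do not go through.
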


Theorem \ref{thm:intro.all.topo.equal.bounded} is reassuring as it demonstrates that the previously discussed approaches give rise to the same concept.
Moreover, we will see in Section~\ref{sec:adapted topologies on plain} that convergence in (i)-(vi) is equivalent to the convergence of all probabilistic operations built from iterating $d^p$-bounded continuous functions and conditional expectations. We refer to this topology as the  adapted weak topology. 

In addition, Theorem \ref{thm:intro.all.topo.equal.bounded} turns out to be very useful in applications of the adapted weak topology.
For example, it is often more straightforward to show that a sequence converges in the adapted weak topology when employing Hellwig's definition.
Conversely, when the goal is to establish a continuity property for a specific problem defined on $\mathbb{P}$, a stronger formulation of the topology proves advantageous.
For instance, the characterization via $\AW_p$ is useful since causal couplings enable the transfer of features between processes, specifically the proof that optimal stopping problems continuously depend on the adapted weak topology is based on transferring stopping times from one process to another via causal couplings.

Notably, the continuity assumptions for the equivalence of  (vii) and the other items stated in Theorem \ref{thm:intro.all.topo.equal.bounded} are found to be crucial. 
For instance, when $X$ does not have $\mathbb{P}$-a.s.\ continuous paths, then (i)-(vi) do not imply (vii). Also, Theorem \ref{thm:intro.all.topo.equal.bounded} fails if we replace adapted Wasserstein distance by its strict counterpart, not even (iv) and (v) are equivalent in this case, see Remark~\ref{rem:sSCW}. Note also that the order of $\P, \P^n$ in (vi) is important: $\mathcal{CW}_p(\mathbb{P}^n,\mathbb{P})\to 0$ is precisely equivalent to convergence in the ordinary $p$-Wasserstein distance. 

	If the metric  $d_{\mathcal{X}}$ is bounded 
    in the context of Theorem \ref{thm:intro.all.topo.equal.bounded} the requirement that the $p$-th moments converge is satisfied automatically.
	In particular, in this case the topology induced by $\mathcal{AW}_p$ does not depend on $p$ and is equal to the  Hoover--Keisler topology.

\begin{remark}
    While in the discrete-time setting there is  only \emph{one} adapted topology on the set of adapted processes \cite{BaBaBeEd19b, Pa22, BoLiOb23}, the situation is more nuanced in continuous time.  
    Theorem \ref{thm:intro.all.topo.equal.bounded} asserts that when interpreting existing  approaches to adapted topologies in the weak way, we arrive at the same concept. One the other hand, there are also stricter ways to interpret these approaches, which lead to different topologies.
 Examples are the Aldous$_{J_1}$ topology which is stronger than the Aldous$_{\text{MZ}}$ topology as well as the strict version of the adapted Wasserstein distance.
 Specifically the
  strict adapted Wasserstein distance is useful for instance in connection with Talagrand type inequalities \cite{La18, Fo22a} and to provide stability and sensitivity results in mathematical finance, see e.g.\ \cite{BaBaBeEd19a, JiOb24}. 
    Our view  is that the adapted weak topology is a natural `weakest' topology which accounts for the role of information and that depending on the intended purposes one may want to consider different strengthenings. 
\end{remark}

\vspace{0.5em}

\medskip

Our next result shows that \emph{Donsker's theorem} extends to the realm of the adapted weak topology. 
Moreover, this also holds true for Euler approximations of Stochastic Differential Equations (SDEs). 
This result seems interesting when combined with the previously mentioned continuous dependence observed for different  optimization problems.
Indeed, it implies that they can be approximated effectively through discrete-time stochastic processes. We give here an informal version and  refer to Section \ref{sec:donsker} for the full details.

\begin{theorem}
Let $p\in[1,\infty)$.
\begin{enumerate}[label = (\roman*)]
    \item Let $\mathbb{P}$ be the standard Wiener measure and let $\mathbb{P}^n$ be the law of the standard scaled random walk with step size $\frac{1}{n}$. 
    Then $\AW_p(\mathbb{P}^n,\mathbb{P})\to 0$.
    \item Let $\mathbb{P}$ be the law of the solution to the SDE $dY_t=\mu_t(Y_t) dt + \sigma_t(Y_t) dB_t$ where $B$ is the standard Brownian motion and $\mu$ and $\sigma$ satisfy mild regularity conditions, and let  $\mathbb{P}^n$ be the law of the Euler approximation of $Y$ with step size $\frac{1}{n}$.
    Then $\AW_p(\mathbb{P}^n,\mathbb{P})\to 0$.
\end{enumerate}
\end{theorem}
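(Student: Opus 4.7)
By Theorem~\ref{thm:intro.all.topo.equal.bounded} (item~(vi)), it suffices in both parts to show $\CW_p(\P,\P^n)\to 0$, which reduces the task to constructing, for each $n$, an (approximately) causal coupling $\pi^n$ of $(\P,\P^n)$ with good pathwise $L^p$-control. In both settings we realize the discrete marginal as a functional of the continuous one by driving both processes with a common Brownian motion (directly in the Euler case, via Skorokhod embedding in the Donsker case).

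\textbf{Donsker.} Let $B$ be a standard Brownian motion. Iterated Skorokhod embedding provides $\F^B$-stopping times $0=\tau^n_0\leq\cdots\leq\tau^n_n$ with iid increments of mean $1/n$ and finite higher moments, such that $\sqrt{n}(B_{\tau^n_k}-B_{\tau^n_{k-1}})$ are fair $\pm 1$ coin flips. Then $(B_{\tau^n_k})_{k\leq n}$ has the law of the scaled random walk at the grid times $(k/n)_{k\leq n}$. Define $S^n$ on $[0,1]$ by piecewise linear interpolation through the values $B_{\tau^n_{\lfloor nt\rfloor}}$ and set $\pi^n:=\law(B,S^n)$. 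With $\varepsilon_n:=\max_k|\tau^n_k-k/n|$, classical variance bounds for iid sums yield $\varepsilon_n\to 0$ in $L^p$. Since $S^n_{[0,t]}$ is $\F^B_{t+\varepsilon_n+1/n}$-measurable, $\pi^n$ is $(\varepsilon_n+1/n)$-causal from $\P$ to $\P^n$. The modulus of continuity of $B$, combined with the control of the Skorokhod times, then yields $\E[\sup_t d_{\X}(B_t,S^n_t)^p]^{1/p}\to 0$; this proves $\CW_p(\P,\P^n)\to 0$.

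\textbf{Euler and main obstacle.} Couple $Y$ and $Y^n$ by driving both with the same Brownian motion $B$. Under the stated Lipschitz-type regularity, classical strong convergence gives $\E[\sup_t|Y_t-Y^n_t|^p]^{1/p}=O(n^{-1/2})$. Since $Y^n_t$ is a measurable function of $\{B_{k/n}:k/n\leq t\}$ and, under mild non-degeneracy, $\F^Y=\F^B$, one has $Y^n_{[0,t]}\in\F^Y_t$, so the coupling is causal from $Y$ to $Y^n$ and $\CW_p(\law(Y),\law(Y^n))\to 0$. The conceptual point underlying both arguments is that the $\varepsilon$-relaxation built into $\CW_p$ permits reconciling causality with pathwise closeness---something the strict bi-causal distance $\AW_p^{\mathrm{(s)}}$ cannot achieve (cf.\ Appendix~\ref{sec:aw.strong}, where Donsker's theorem is seen to fail). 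The main technical work lies, for (i), in the quantitative control of $\varepsilon_n$ together with the uniform modulus of continuity of $B$, and for (ii), in the filtration identification $\F^Y=\F^B$, which is precisely what the ``mild regularity'' hypothesis ensures.
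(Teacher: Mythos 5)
The Euler part of your argument is essentially the paper's (Proposition~\ref{prop:Euler}): drive $Y$ and $Y^n$ by the same Brownian motion, use $\mathcal{F}^{Y^n}_t \subset \mathcal{F}^B_t = \mathcal{F}^Y_t$ (valid because $\sigma>0$), and invoke classical strong $L^p$ error bounds. The paper verifies $\tfrac1n$-bicausality of $(X,X^n)_\#\mathbb{P}$ directly, while you check causality in one direction and conclude via Theorem~\ref{thm:intro.all.topo.equal.bounded}(vi); both routes are sound.

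The Donsker part has a genuine gap. You set $\varepsilon_n := \max_k |\tau^n_k - k/n|$ and assert that $\pi^n$ is $(\varepsilon_n+1/n)$-causal, but $\varepsilon_n$ is a random variable and the $\varepsilon$ in Definition~\ref{def:cpl2} must be a constant. Moreover, since $S^n$ is a deterministic functional of $B$ under $\pi^n$, $\delta$-causality from $\fp B$ to the random-walk process is equivalent to $S^n_{[0,t]}$ being $\mathcal{F}^B_{t+\delta}$-measurable for every $t$, which forces $\tau^n_{\lfloor nt\rfloor}\le t+\delta$ almost surely. The Skorokhod increments $\tau^n_k-\tau^n_{k-1}$ are unbounded (exit times of Brownian motion from a strip), so for every fixed $\delta>0$ the event $\{\max_k(\tau^n_k-k/n)>\delta\}$ has positive probability; hence $\pi^n \notin \cpl^\delta_{\rm c}$ for any $\delta$, and $\CW_p(\mathbb{P},\mathbb{P}^n)\to 0$ does not follow from this coupling. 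The paper avoids exactly this obstruction: Proposition~\ref{prop:qualitative_donsker} uses a soft argument (tightness via Proposition~\ref{prop:Compact}, closedness of martingales from Proposition~\ref{lem:mart_closed}, and L\'evy's characterization via Lemma~\ref{lem:BM}), while the quantitative Proposition~\ref{prop:RW} applies the KMT coupling on \emph{independent} blocks of length $\varepsilon$ and concatenates them --- the block independence is precisely what yields a deterministic $\varepsilon$-bicausal coupling. Your Skorokhod construction could likely be repaired by an analogous block-independent (or ``cut off and refresh'') device, but as written it does not produce an $\varepsilon$-causal coupling for any deterministic $\varepsilon$.
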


\subsection{General stochastic processes}

The findings in the previous section provide a fairly comprehensive depiction of the adapted weak topology on $\mathcal{P}(\mathcal{X})$, i.e.\ for stochastic processes with natural filtrations.
Nevertheless, this framework presents two notable limitations.
Firstly, the metric space $(\mathcal{P}_p(\mathcal{X}),\AW_p)$ is not complete, and this persists also when using other natural distances instead of $\AW_p$.
Secondly, following the usual theory of stochastic processes, one would like to consider processes together with a \emph{general filtration} and not just the natural filtration. 

In fact, these limitations present their mutual resolution: an interpretation of the incompleteness of $(\mathcal{P}_p(\mathcal{X}),\AW_p)$ is that the space $\mathcal{P}_p(\mathcal{X})$ is too `small' to adequately represent all processes one would like to consider.
We show that the extra information that can be stored in an ambient filtration is precisely what is needed to arrive at the completion of $(\mathcal{P}_p(\mathcal{X}),\AW_p)$.  Put differently, there exist sequences of stochastic processes with natural filtration that converge to processes whose filtration stores strictly more information. 
To make this precise we need the following definition.

\begin{definition}
	A \emph{filtered process} $\fp X$ is a five tuple
	\[ \fp X = \big( \Omega,\mathcal{F},\mathbb{P}, (\mathcal{F}_t)_{t\in[0,1]},X \big) \]
	where $( \Omega,\mathcal{F},\mathbb{P})$ is a probability space, $(\mathcal{F}_t)_{t\in[0,1]}$ is a filtration satisfying the usual conditions, and $ X=(X_t)_{t\in[0,1]}$ is an adapted \cadlag{} process.
	We write $\mathcal{FP}$ for the collection of all filtered processes and $\mathcal{FP}_p$ for those satisfying $\mathbb{E}_{\mathbb{P}}[d_\mathcal{X}^p(0,X)]<\infty$.
\end{definition}

Clearly $\mathcal{P}(\mathcal{X})$ can be viewed as a subset of $\mathcal{FP}$: it corresponds to all  processes $\fp X$ with \emph{natural} filtration, i.e.\ for which $\mathcal{F}_t=\bigcap_{s>t} \sigma(X_u: u\leq s)\vee \mathcal{N}(\mathbb{P})$.
Moreover, it is straightforward to extend the concept of bi-causality and the definition of $\mathcal{AW}_p$ from the previous setting of $\mathcal{P}(\mathcal{X})$ to the present setting of $\mathcal{FP}$, see Section \ref{sec:AW}.

It is worthwhile to emphasize that the space $\mathcal{FP}$ contains a multitude of processes one would  perceive as being `equivalent'.
For example, if $\fp X$ is a filtered process and $\fp Y$ is derived from $\fp X$ by adding independent information into the filtration that does not influence the behavior of $X$, one would typically not treat $\fp X$ and $\fp Y$ as distinct processes.
In analogy to the definition of the $\mathcal{L}_p /  L_p$ spaces in analysis, we consider the quotient space
\[ \FP_p := \mathcal{FP}_p/_{\sim_{\AW}} \quad \text{where $\fp X\sim_{\AW}\fp Y$ iff $\AW_p(\fp X,\fp Y)=0$ }.
\]
The following is our second main result.

\begin{theorem}
	$\AW_p$ is a metric on $\FP_p$ and $(\FP_p,\AW_p)$ is the completion of $(\mathcal{P}_p(\mathcal{X}),\AW_p)$.
\end{theorem}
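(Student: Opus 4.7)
Symmetry of $\AW_p$ on $\FP_p$ is inherited from the symmetric definition of bi-causal $\epsilon$-couplings and of $d_{\mathcal{X}}$, while definiteness holds by construction of the quotient $\FP_p = \mathcal{FP}_p/{\sim_{\AW}}$. For the triangle inequality, given filtered processes $\fp X, \fp Y, \fp Z \in \mathcal{FP}_p$ and $\delta > 0$, I would select near-optimal bi-causal $\epsilon_1$- and $\epsilon_2$-couplings $\pi^{XY}$ and $\pi^{YZ}$, and glue them along their common $\fp Y$-marginal on an enlarged filtered probability space via conditional disintegration. The projection onto the $(X,Z)$-coordinates is then a bi-causal $(\epsilon_1+\epsilon_2)$-coupling, and Minkowski applied to $d_{\mathcal{X}}(X,Z) \leq d_{\mathcal{X}}(X,Y) + d_{\mathcal{X}}(Y,Z)$ yields the bound. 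The delicate point is that gluing \emph{filtered} processes (rather than merely laws) must respect the filtration of $\fp Y$; this is where the bi-causality of each factor is essential.

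\textbf{Density of $\mathcal{P}_p(\mathcal{X})$ in $\FP_p$.} This is the heart of the theorem and I expect it to be the main obstacle. Given $\fp X \in \mathcal{FP}_p$, one must produce $\mathbb{P}^n \in \mathcal{P}_p(\mathcal{X})$ (carrying only their natural filtration) whose induced filtered processes converge to $\fp X$ in $\AW_p$. Since $(\mathcal{F}_t)$ can contain strictly more information than $\mathcal{F}^X_t$, one has to \emph{encode} the surplus information into a path perturbation. My plan is to fix a countable family $\{f_k\}$ of bounded continuous functions that separates conditional laws, a dyadic time grid of mesh $\eta_n \downarrow 0$, and to construct $X^n$ by inserting short pre-announcement perturbations of size vanishing in $L^p$ on intervals $[t^n_j - \eta_n, t^n_j]$ whose values encode the conditional expectations $\mathbb{E}[f_k(X)\,|\,\mathcal{F}_{t^n_j}]$. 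The $\epsilon$-slack in the definition of $\AW_p$ absorbs the $\eta_n$-time shifts; the $L^p$-cost of the perturbation is made to vanish; and crucially the natural filtration generated by $\mathbb{P}^n$ recovers enough of $(\mathcal{F}_t)$ at the grid points to support bi-causal $\eta_n$-couplings between $\fp X$ and $\mathbb{P}^n$ whose $\AW_p$-cost goes to $0$. Verifying this last point --- that the encoded information produces a sufficient natural filtration --- is the technical crux and likely leverages the equivalence between Hellwig's formulation and $\AW_p$ established in Theorem~\ref{thm:intro.all.topo.equal.bounded}.

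\textbf{Completeness.} Given a Cauchy sequence $(\fp X^n)$ in $\FP_p$, extract a subsequence with $\AW_p(\fp X^n, \fp X^{n+1}) < 2^{-n}$, and choose bi-causal $\epsilon_n$-couplings $\pi^n$ between $\fp X^n$ and $\fp X^{n+1}$ with $\epsilon_n + \mathbb{E}_{\pi^n}[d_{\mathcal{X}}(X^n, X^{n+1})^p]^{1/p} \leq 2^{-n+1}$. Iteratively glue the $\pi^n$ along their common marginals to obtain a single probability space $(\bar\Omega, \bar{\mathcal{F}}, \bar{\mathbb{P}})$ carrying all $X^n$ together with their pulled-back filtrations $(\bar{\mathcal{F}}^n_t)_t$. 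Summability of the $L^p$-costs combined with Borel--Cantelli yields a limiting \cadlag{} process $X^\infty$ with $X^n \to X^\infty$ almost surely in the $J_1$-topology and in $L^p(d_{\mathcal{X}})$. Define
\[ \bar{\mathcal{F}}^\infty_t := \bigcap_{s > t} \Big( \bigvee_{n} \bar{\mathcal{F}}^n_s \Big) \vee \mathcal{N}(\bar{\mathbb{P}}), \]
which satisfies the usual conditions, and set $\fp X^\infty := (\bar\Omega, \bar{\mathcal{F}}, \bar{\mathbb{P}}, (\bar{\mathcal{F}}^\infty_t)_t, X^\infty)$. The marginal of the glued measure on $(X^n, X^\infty)$ is bi-causal with slack $\sum_{k \geq n} \epsilon_k$ and has $L^p$-cost bounded by $\sum_{k \geq n} 2^{-k+1}$, so $\AW_p(\fp X^n, \fp X^\infty) \to 0$, which together with the original Cauchy property delivers convergence of the whole sequence and establishes completeness.
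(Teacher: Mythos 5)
Your metric argument matches the paper's: the triangle inequality is obtained by gluing near-optimal $\epsilon_1$- and $\epsilon_2$-bi-causal couplings along the common $\fp Y$-marginal (Lemma~\ref{lem:glueing} in the paper, which also tracks the additive slack $\epsilon_1+\epsilon_2$). You correctly flag the well-definedness issue, which the paper handles by reducing to canonical representatives (Proposition~\ref{prop:AW_CFP}).

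For density, you take a genuinely different route from the paper, and I do not think it works as sketched. The paper reduces to discrete time: it time-discretizes $\fp X$ on a grid $T$ (Proposition~\ref{prop:FiniteOmegaDense}), invokes the \emph{known} discrete-time result of Bartl--Beiglb\"ock--Pammer that naturally filtered processes on finite spaces are dense in the discrete-time $\AW$, and then lifts back to continuous time, verifying that the piecewise-constant lift preserves being naturally filtered (Proposition~\ref{prop:Plain.Dense}). Your plan instead tries to encode the surplus filtration information directly into ``pre-announcement'' path perturbations. The gap is concrete: the object to be encoded at each grid time is the conditional law $\law(X\mid\F_{t^n_j})$, an infinite-dimensional quantity, and you propose to compress it into finitely many real values $\E[f_k(X)\mid\F_{t^n_j}]$ written onto a short interval of vanishing $L^p$-size. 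It is not explained why (a) finitely many such moments at scale $\eta_n$ reconstruct enough of the filtration to build an $\eta_n$-bi-causal coupling with small cost, nor (b) why shrinking the perturbation magnitude to control the $L^p$-cost does not destroy the information content in the generated $\sigma$-field --- a Borel-isomorphism trick can pack information into arbitrarily small values, but then the coupling cost is no longer controlled by the encoding magnitude. Appealing to Theorem~\ref{thm:intro.all.topo.equal.bounded} does not close this gap because that theorem is a statement \emph{about} naturally filtered processes and cannot certify that your perturbed process approximates $\fp X$ in $\AW_p$ before such an approximation is constructed. The discrete-time reduction is precisely what makes the paper's encoding argument rigorous.

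For completeness, there is a real gap in your explicit construction that the paper's argument sidesteps. The paper observes that an $\AW_p$-Cauchy sequence has $\W_p$-Cauchy (hence relatively compact) laws, invokes the Prohorov-type result (Corollary~\ref{cor:CompFPp}) to get a $\HK_p$-limit point, and then uses the already-established equivalence of $\AW_p$- and $\HK_p$-convergence to finish --- no explicit limiting probability space is ever built. Your construction needs to verify that the marginal of the infinite gluing on $(X^n,X^\infty)$ is $\bigl(\sum_{k\geq n}\epsilon_k\bigr)$-bi-causal with respect to the filtration $\bar{\F}^\infty_t := \bigcap_{s>t}\bigl(\bigvee_m \bar{\F}^m_s\bigr)\vee\mathcal{N}(\bar{\P})$. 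This is not automatic: taking the join $\bigvee_m \bar{\F}^m_s$ makes $\bar{\F}^\infty_t$ potentially much larger than any single $\bar{\F}^m_t$, so the reverse causality condition $\F^{\fp X^n}_t\,\indep_{\bar{\F}^\infty_{t+\epsilon}}\,\F^{\fp X^n}_1$ can fail --- the limit filtration may ``look ahead'' along the chain in a way that anticipates $X^n$. Finite compositions of $\epsilon_k$-bi-causal couplings are controlled by Lemma~\ref{lem:glueing}, but passing to the projective limit requires a separate argument you do not supply, and the choice of $\bar{\F}^\infty$ itself would need justification. The paper's compactness-based proof avoids constructing any limit filtration at all.
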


It turns out that the equivalence relation $\sim_{\AW}$ captures essential properties of a stochastic process.
For instance, if two processes are equivalent, then the values of the respective optimal stopping problems are equal.
Indeed, let  $\varphi\colon\mathcal{X}\times[0,1]\to\mathbb{R}$ be a bounded, non-anticipative function, and for  $\fp X=(\Omega,\mathcal{F},\P,(\mathcal{F}_t)_{t\in[0,1]},X)\in\mathcal{FP}$ set
\[ 
{\rm OS}(\fp X,\varphi) := \inf\big\{ \mathbb{E}_\mathbb{P}[ \varphi(X,\tau) ] : \tau\text{ is  an } (\mathcal{F}_t)_{t\in[0,1]}\text{-stopping time} \big\}.
\]

\begin{theorem}
\label{thm:intro.os}
    Let $\varphi$ be as above.
    \begin{enumerate}[(i)]
	\item Let $\fp X, \fp Y \in \mathcal{FP}_p$.
    If $\fp X\sim_{\AW}\fp Y$, then ${\rm OS}(\fp X, \varphi)={\rm OS}(\fp Y,\varphi)$.
	\item Let $\fp X,\fp X^n\in\mathcal{FP}_p$ such that $\AW_p(\fp X^n,\fp X)\to 0$.
    If $X$ has almost surely continuous paths and $\varphi$ is continuous, then ${\rm OS}(\fp X^n, \varphi) \to {\rm OS}(\fp X, \varphi)$.
	\end{enumerate}
\end{theorem}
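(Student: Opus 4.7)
\textbf{Plan for Theorem \ref{thm:intro.os}.} The central tool in both parts is the \emph{transfer of stopping times} via bi-causal couplings, combined with the characterization of $\sim_{\AW}$ on $\mathcal{FP}_p$ as the extension of Theorem~\ref{thm:intro.all.topo.equal.bounded} from $\mathcal{P}_p(\mathcal{X})$ to filtered processes. The key technical fact I would use throughout is that if $\pi \in \cpl^\varepsilon_{\rm bc}(\fp X, \fp Y)$, then any $\mathcal{F}^X$-stopping time $\tau$ gives rise to an $\mathcal{F}^Y$-stopping time $\tau' := (\tau + \varepsilon) \wedge 1$ under the augmented filtration on the coupling space. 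This follows from the $\varepsilon$-bi-causality relation $\mathcal{F}^X_t \subseteq \mathcal{F}^Y_{t+\varepsilon}$ (up to null sets) and is the Kantorovich analogue of the classical fact that causal maps preserve stopping times.

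\textbf{Part (i).} Assuming $\AW_p(\fp X, \fp Y) = 0$, pick a sequence $\pi^k \in \cpl^{\varepsilon_k}_{\rm bc}(\fp X, \fp Y)$ with $\varepsilon_k \downarrow 0$ and $\mathbb{E}_{\pi^k}[d^p_\mathcal{X}(X,Y)] \to 0$. Given an $\mathcal{F}^X$-stopping time $\tau$ with $\mathbb{E}_\mathbb{P}[\varphi(X,\tau)] \leq {\rm OS}(\fp X,\varphi) + \delta$, transfer to $\tau_k = (\tau + \varepsilon_k)\wedge 1$, an $\mathcal{F}^Y$-stopping time on the product. Because $\varphi$ is merely bounded and Borel (not continuous), one cannot directly compare $\varphi(X,\tau)$ with $\varphi(Y,\tau_k)$ pointwise; instead I would go through the Snell-envelope representation ${\rm OS}(\fp X,\varphi) = \mathbb{E}_\mathbb{P}[V^X_0]$ built iteratively from $\varphi$ via conditional expectations and suprema, and invoke the fact that $\AW_p = 0$ implies equality of adapted distributions in the Hoover--Keisler sense (this is the lifted form of Theorem~\ref{thm:intro.all.topo.equal.bounded} for $\FP_p$, established earlier in the paper). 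Since the Snell envelope is constructed from exactly the adapted operations that HK equivalence preserves, $\mathcal{L}(V^X_0) = \mathcal{L}(V^Y_0)$, hence ${\rm OS}(\fp X,\varphi)={\rm OS}(\fp Y,\varphi)$.

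\textbf{Part (ii).} I would prove the two inequalities separately. For $\liminf_n {\rm OS}(\fp X^n,\varphi) \geq {\rm OS}(\fp X,\varphi)$, fix $\delta > 0$ and a near-optimal $\mathcal{F}^{X^n}$-stopping time $\tau_n$. Choose $\pi^n \in \cpl^{\varepsilon_n}_{\rm bc}(\fp X^n, \fp X)$ with $\varepsilon_n \to 0$ and $\mathbb{E}_{\pi^n}[d^p_\mathcal{X}(X^n,X)] \to 0$, and transfer $\tau_n$ to the $\mathcal{F}^X$-stopping time $\tilde{\tau}_n := (\tau_n + \varepsilon_n)\wedge 1$ on the coupling space. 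Then
\begin{equation*}
{\rm OS}(\fp X,\varphi) \leq \mathbb{E}_{\pi^n}[\varphi(X,\tilde{\tau}_n)] \leq \mathbb{E}_{\pi^n}[\varphi(X^n,\tau_n)] + R_n,
\end{equation*}
where $R_n = \mathbb{E}_{\pi^n}\bigl[|\varphi(X,\tilde{\tau}_n) - \varphi(X^n,\tau_n)|\bigr]$. To show $R_n \to 0$, combine (a) continuity of $\varphi$, (b) $d_\mathcal{X}(X^n,X) \to 0$ in $L^p$ hence along a subsequence a.s., (c) the a.s.\ continuity of paths of $X$ so that the $\varepsilon_n$-shift $\tilde{\tau}_n - \tau_n$ produces a vanishing change in $X_{\tilde{\tau}_n}$ relative to $X_{\tau_n}$, and (d) uniform integrability from the boundedness of $\varphi$. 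The reverse inequality is symmetric: take a near-optimal $\tau$ on $\fp X$ and transfer it via the coupling in the other direction to obtain $\tau_n := (\tau + \varepsilon_n)\wedge 1$ as an $\mathcal{F}^{X^n}$-stopping time.

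\textbf{Main obstacles.} The delicate step is the rigorous transfer of stopping times: one must enlarge the probability space by the coupling, verify the stopping-time property under the appropriately augmented right-continuous filtration (where null sets of $\pi^n$ and the $\varepsilon_n$-bi-causality conditional-independence are handled correctly), and keep track of the $\varepsilon_n$ time shift. The continuity assumption on paths of $X$ is essential in part~(ii), since a bounded $\varepsilon_n$-shift of a stopping time along a \cadlag{} path with jumps could cause an $\Omega(1)$ change in $\varphi$; the same is true for continuity of $\varphi$. In part~(i), the main subtlety is that one cannot use the continuity-based argument directly, so one must route through the Hoover--Keisler / Snell-envelope characterization of adapted-distribution equivalence.
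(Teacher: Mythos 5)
Your overall strategy (transfer stopping times between processes via $\varepsilon$-bicausal couplings, then control the cost using path continuity) matches the paper, but the central technical claim on which the whole proof rests is false, and this is a genuine gap. You assert that $\varepsilon$-bicausality implies the filtration inclusion $\mathcal{F}^X_t \subseteq \mathcal{F}^Y_{t+\varepsilon}$ (up to null sets) so that $\tau' := (\tau+\varepsilon)\wedge 1$ is an $\mathcal{F}^Y$-stopping time. This is not what $\varepsilon$-bicausality says: by Definition~\ref{def:cpl2} it is a \emph{conditional-independence} condition, $\mathcal{F}_t^{\fp Y} \indep_{\mathcal{F}_{t+\varepsilon}^{\fp X}} \mathcal{F}_1^{\fp X}$, and it does not entail any inclusion between the two filtrations. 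The product coupling between independent processes is bicausal and satisfies no such inclusion. Moreover, a stopping time $\tau$ of $\fp X$ is a function on $\Omega^{\fp X}$; even on the coupling space, $\tau + \varepsilon$ is $\mathcal{F}_1^{\fp X}$-measurable, not $\mathcal{F}_1^{\fp Y}$-measurable, so it cannot be a member of $\mathrm{ST}(\fp Y)$. Your sketch therefore breaks at the very first step of both parts, and the estimate ${\rm OS}(\fp X,\varphi) \le \mathbb{E}_{\pi^n}[\varphi(X,\tilde\tau_n)]$ in part~(ii) is not justified.

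The paper circumvents this via a \emph{randomized} transfer (Proposition~\ref{prop:ST.transfer}): given $\pi \in \cplc^\varepsilon(\fp X,\fp Y)$ and $\tau \in \mathrm{ST}(\fp Y)$, one sets $\sigma_u := \inf\{t \ge 0 : \pi(\tau \le t - \varepsilon \mid \mathcal{F}_1^{\fp X}) \ge u\}\wedge 1$. The conditional probability $\pi(\tau \le t-\varepsilon \mid \mathcal{F}_1^{\fp X})$ is $\mathcal{F}_t^{\fp X}$-measurable \emph{precisely because} of $\varepsilon$-causality (Lemma~\ref{lem:causal_equiv}), and hence each $\sigma_u$ is a genuine $\mathcal{F}^{\fp X}$-stopping time; averaging over $u$ recovers $\mathbb{E}_\pi[\varphi(X,\tau+\varepsilon)]$. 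This quantile/randomization device is the missing ingredient your proposal needs. Once it is in place, your treatment of the error term $R_n$ in part~(ii) through continuity of $\varphi$, the modulus of continuity of $X$, and boundedness is essentially the content of Lemma~\ref{lem:OSerr0} and Proposition~\ref{prop:OST.qual}. For part~(i) the paper does not go through Snell envelopes: it applies the same transfer with $\varepsilon=0$, using that $(\id,\pp^\infty(\fp X))_\#\P^{\fp X}$ is a strictly bicausal coupling between $\fp X$ and its canonical representative $\overline{\fp X}$ with $X = \overline{X}$ a.s.\ (Lemma~\ref{lem:canonical.couplings}), which makes continuity of $\varphi$ unnecessary. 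Your Snell-envelope route is conceptually reasonable but would require its own justification (the continuous-time Snell envelope is an essential supremum, not a finite iteration of conditional expectations and maxima), so it is neither shorter nor more robust than the paper's argument.
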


To put it differently, Theorem \ref{thm:intro.os} (i) states that $\sim_{\AW}$ selects all information of processes relevant to optimal stopping problems; in particular, ${\rm OS}(\cdot,\varphi)$ may be defined on $\FP_p$.
The following section explores this phenomenon in a broader context, and we show that $\sim_{\AW}$ selects `all' relevant information of a given process.

\vspace{1em}
\noindent
{\bf The Hoover--Keisler equivalence and  topology.}
Based on formalizing what assertions belong to the `language of probability' Hoover--Keisler \cite{HoKe84, Ho91} have made precise what it should mean that two processes $\fp X,\fp Y\in\mathcal{FP}$  have the same probabilistic properties; we write $\fp X \sim_{\rm HK} \fp Y$.
They show that $\sim_{\rm HK}$ can be characterized via the \emph{iterated} variant of the prediction process considered in the previous section.
For $\fp X =(\Omega,\mathcal{F},\P,(\mathcal{F}_t)_{t\in[0,1]},X)\in\mathcal{FP}$, denote its prediction process by
\[ {\rm pp}(\fp X) := \left( \mathcal{L}_{\mathbb{P}}(X | \mathcal{F}_t ) \right)_{t\in[0,1]}. \]
Since ${\rm pp}(\fp X)$ is a \cadlag{} process taking its values in $\mathcal{P}(\mathcal{X})$, one may define \emph{its} prediction process
\[ {\rm pp}^2(\fp X) := \left( \mathcal{L}_{\mathbb{P}}( {\rm pp}(\fp X)  | \mathcal{F}_t ) \right)_{t\in[0,1]}, \]
which again is a \cadlag{} process,  this time taking its values in $\mathcal{P}(D([0,1];\mathcal{P}(\mathcal{X})))$.
This procedure is  iterated  infinitely often  resulting in the \cadlag{} process ${\rm pp}^\infty(\fp X) := ({\rm pp}^r(\fp X))_{r\in\mathbb{N}}$, see Section \ref{sec:Recap} for more details, and 
\[\fp X \sim_{\rm HK} \fp Y \quad\text{iff} \quad \mathcal{L}_{\mathbb{P}}\left( {\rm pp}^\infty(\fp X)\right) 
= \mathcal{L}_{\mathbb{P}}\left( {\rm pp}^\infty(\fp Y)\right).\]
It turns out that $\sim_{\rm HK}$ and $\sim_{\AW}$ are equal:

\begin{theorem}
	Let $\fp X,\fp Y\in\mathcal{FP}_p$.
	Then $\fp X\sim_{\rm HK}\fp Y$ if and only if  $\fp X\sim_{\AW}\fp Y$. 
\end{theorem}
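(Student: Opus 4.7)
The plan is to prove the two implications separately, reducing in each case to structural facts already established for natural-filtration processes in Theorem~\ref{thm:intro.all.topo.equal.bounded} and in the Hoover--Keisler framework recalled in Section~\ref{sec:Recap}.

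For the direction $\fp X \sim_{\AW} \fp Y \Rightarrow \fp X \sim_{\rm HK} \fp Y$, I would pick couplings $\pi_n \in \cpl^{\epsilon_n}_{\rm bc}(\fp X,\fp Y)$ with $\E_{\pi_n}[d_\X^p(X,Y)]^{1/p} + \epsilon_n \to 0$. The guiding principle is that a bi-causal $\epsilon$-coupling approximately transports conditional laws up to an $\epsilon$-shift in time: for any bounded continuous non-anticipative $\phi$ on $\X$, the conditional expectation of $\phi(X)$ with respect to the $\fp X$-filtration at time $(t-\epsilon)^+$ is close, under $\pi$, to the corresponding conditional expectation of $\phi(Y)$ at time $t$. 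Starting from this, I would prove by induction on $r \in \mathbb{N}$ that $\law(\pp^r(\fp X)) = \law(\pp^r(\fp Y))$: at each step a bounded continuous test functional of $\pp^{r+1}(\fp X)$ unfolds into a continuous functional of time-indexed conditional expectations of continuous functionals of $\pp^r(\fp X)$; bi-causality of $\pi_n$ transfers such conditional expectations between the two sides with error vanishing in $\epsilon_n$, and the inductive hypothesis identifies the resulting quantities. Passing $r \to \infty$ then yields $\fp X \sim_{\rm HK} \fp Y$.

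For the direction $\fp X \sim_{\rm HK} \fp Y \Rightarrow \fp X \sim_{\AW} \fp Y$, I would construct bi-causal $\epsilon$-couplings with arbitrarily small cost via recursive conditional-distribution matching on a time discretisation of mesh at most $\epsilon$. Given a partition $0 = t_0 < t_1 < \cdots < t_K = 1$, one couples $X$ and $Y$ step by step: at step $k$ the conditional law of the path segment $(X_s)_{s \in [t_k,t_{k+1}]}$ given the $\fp X$-filtration at $t_k$ is a measurable functional of $\pp^\infty(\fp X)$ at $t_k$, and the same formula applied to $\pp^\infty(\fp Y)$ produces the matching conditional law on the $\fp Y$-side, so one may couple them optimally for the cost $d_\X^p$. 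Gluing these conditional couplings yields a coupling that is bi-causal on the grid and hence $\epsilon$-bi-causal in the sense of Definition~\ref{def:AWcont}. Refining the partition, together with the \cadlag{} regularity of the prediction process, lets one drive the total cost to zero.

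The hardest part will be the quantitative propagation in Direction~1: at level $r$ the process $\pp^r(\fp X)$ lives in a nested Polish space $\mathcal{P}(D([0,1]; \mathcal{P}(D([0,1]; \cdots ))))$ whose metric must be chosen compatibly with the approximate conditional-expectation transfer offered by an $\epsilon$-bi-causal coupling. Controlling the continuity moduli of test functionals across this hierarchy while simultaneously tracking the $\epsilon_n \to 0$ error and the inductive equality at level $r$ will require careful bookkeeping. The analogous subtlety in Direction~2 is to guarantee that the between-grid oscillations of $X$ and $Y$ can be matched within $\epsilon$ using only the information stored in $\pp^\infty$; the \cadlag{} rather than continuous nature of paths is what makes this step delicate.
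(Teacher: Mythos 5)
Your proposal takes a route genuinely different from the paper's, and both directions as sketched have real gaps that the paper's machinery is specifically designed to avoid. For $\fp X\sim_{\AW}\fp Y\Rightarrow\fp X\sim_{\HK}\fp Y$, the paper never works with the approximating sequence $(\pi_n)_n$: it first passes to the canonical representatives $\overline{\fp X},\overline{\fp Y}\in\mathcal{CFP}_p$ on the Lusin base $\mathsf{M}_\infty$ via Proposition~\ref{prop:AW_CFP}, then uses compactness of $\varepsilon$-bicausal couplings on that base (Propositions~\ref{prop:epsbcclosed} and~\ref{prop:AW_inf_attained}) to extract a \emph{single strictly bicausal} coupling $\pi$ with $\overline X=\overline Y$ $\pi$-a.s.; the induction on the rank $r$ then propagates $\pp^r(\overline{\fp X})=\pp^r(\overline{\fp Y})$ \emph{exactly} via Lemma~\ref{lem:causal_equiv}, with no error to track. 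Your scheme of pushing $\varepsilon_n$-errors through the Meyer--Zheng hierarchy of iterated prediction processes is not just bookkeeping: conditional expectations do not contract $L^p$-errors uniformly in $t$, the $\varepsilon_n$-shift in the conditioning $\sigma$-field is not a small perturbation of the conditioning operator, and the test functionals you need at rank $r+1$ are only weakly continuous on $\mathsf{M}_r$, so there is no clean modulus to propagate. Making this rigorous would essentially amount to re-proving the compactness result that the paper uses to bypass the issue.

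For $\fp X\sim_{\HK}\fp Y\Rightarrow\fp X\sim_{\AW}\fp Y$, once Proposition~\ref{prop:AW_CFP} is available the paper's argument is one line, since $\fp X\sim_{\HK}\fp Y$ means $\overline{\fp X}=\overline{\fp Y}$. Your grid-matching construction has two problems. First, matching the conditional laws of the \emph{path segments} $(X_s)_{s\in[t_k,t_{k+1}]}$ and $(Y_s)_{s\in[t_k,t_{k+1}]}$ does not yield a coupling that is bicausal w.r.t.\ the original filtrations $(\F_t^{\fp X})$, $(\F_t^{\fp Y})$ in the sense of Definition~\ref{def:cpl2}: for general filtered processes these filtrations carry strictly more information than the paths, so one must match the full rank-$\infty$ prediction processes; but once you do that, you have effectively constructed the canonical filtered process. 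Second, $\Omega^{\fp X}$ and $\Omega^{\fp Y}$ need not be standard Borel, so the regular conditional disintegrations that your step-by-step construction relies on are not automatically available — this is precisely the technical obstruction handled in Appendix~\ref{sec:NonPolish} (Propositions~\ref{prop:nonPolish},~\ref{prop:bicausal_cplapprox}, going through the finite discretization of Proposition~\ref{prop:FiniteOmegaDense}), and it is not trivial. In short, the two ingredients you omit — attainment of the $\AW_p$-infimum for canonical representatives, and the reduction $\AW_p(\fp X,\fp Y)=\AW_p(\overline{\fp X},\overline{\fp Y})$ — are what make the paper's proof work, and without them the difficulties you yourself flag remain unresolved.
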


\begin{remark}
It is natural to ask whether it is necessary to iterate the prediction process (infinitely many times). Indeed, it follows from \cite[Section 7]{BaBePa21} that for each $r \in \N$ there  exist filtered processes $\fp X, \fp Y$ with $\law(\pp^r(\fp X)) =\law(\pp^r(\fp Y))$, which lead to different optimal stopping values. Thus,  $\fp X, \fp Y$ are distinct in terms of $\sim_{\HK}$ but equivalent from the perspective of $\pp^r$.  Hence, the weak topology induced by $\pp^r$ (as well as the Aldous\textsubscript{J1}  topology) are not even Hausdorff on $\FP$. In particular, they cannot coincide with the topology generated by $\AW$. 
\end{remark}

In analogy to the Hoover--Keisler topology on $\mathcal{P}(\mathcal{X})$ introduced in Section \ref{sec:intro.canonical}, the characterisation of $\sim_{\rm HK}$ via ${\rm pp}^\infty$ naturally induces a topology on $\FP$, the \emph{Hoover--Keisler topology} (on $\FP$), defined as the initial topology of  $\fp X \mapsto \mathcal{L}_{\mathbb{P}}(\rm pp^\infty (\fp X))$.

\begin{theorem}
	For $\fp X,\fp X^n\in\FP_p$, $n\in\mathbb{N}$, the following are equivalent.
 \begin{enumerate}[label = (\roman*)]
	\item $\fp X^n\to \fp X$ in the Hoover--Keisler topology and $\E_{\mathbb{P}^{\fp X^n}}[d_{\mathcal{X}}^p(X^n,0)] \to\E_{\mathbb{P}^{\fp X}}[d_{\mathcal{X}}^p(X,0)]$.
    \item
    $\AW_p(\fp X^n,\fp X)\to 0$.
    \item
    $\SCW_p(\fp X^n,\fp X)\to 0$.    
 \end{enumerate}
\end{theorem}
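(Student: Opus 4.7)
The plan is to deduce this equivalence from Theorem~\ref{thm:intro.all.topo.equal.bounded} (the analogous statement on $\mathcal{P}_p(\mathcal{X})$), combined with the completion result that $(\FP_p,\AW_p)$ is $\overline{(\mathcal{P}_p(\mathcal{X}),\AW_p)}$ and the identification $\sim_{\AW}=\sim_{\HK}$ established earlier in the paper. The implication (ii)$\Rightarrow$(iii) is immediate: every bi-causal $\varepsilon$-coupling is in particular $\varepsilon$-causal in both directions, so $\SCW_p\le\AW_p$ and $\AW_p$-convergence entails $\SCW_p$-convergence.

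For (iii)$\Rightarrow$(i), select for each $n$ an $\varepsilon_n$-causal witness $\pi^n$ realizing $\SCW_p(\fp X^n,\fp X)\to 0$, with $\varepsilon_n\to 0$ and $\mathbb{E}_{\pi^n}[d_\mathcal{X}^p(X^n,X)]^{1/p}\to 0$. Moment convergence follows by Minkowski applied to $d_\mathcal{X}(0,\cdot)$. For convergence in the Hoover--Keisler topology, i.e.\ weak convergence of $\mathcal{L}(\pp^\infty(\fp X^n))$ to $\mathcal{L}(\pp^\infty(\fp X))$, the key observation is that a causal coupling of $\fp X$ and $\fp Y$ approximately intertwines the prediction processes: by disintegrating $\pi$ along the filtration and exploiting the tower property, one constructs a causal coupling of the $\mathcal{P}(\mathcal{X})$-valued c\`adl\`ag prediction processes $\pp(\fp X)$ and $\pp(\fp Y)$ whose transport cost is controlled by that of $\pi$. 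Iterating yields convergence of $\mathcal{L}(\pp^r(\fp X^n))$ to $\mathcal{L}(\pp^r(\fp X))$ for every $r\in\mathbb{N}$, and hence convergence of $\mathcal{L}(\pp^\infty(\fp X^n))$.

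The main obstacle is (i)$\Rightarrow$(ii), which I would argue by contradiction and completeness of $(\FP_p,\AW_p)$. Suppose $\AW_p(\fp X^n,\fp X)\ge\varepsilon_0>0$ along a subsequence. By density of $\mathcal{P}_p(\mathcal{X})$ in $(\FP_p,\AW_p)$, choose $\mathbb{P}^n\in\mathcal{P}_p(\mathcal{X})$ with $\AW_p(\mathbb{P}^n,\fp X^n)<1/n$; then by the already-established implication (ii)$\Rightarrow$(i) applied to the null sequence $\AW_p(\mathbb{P}^n,\fp X^n)\to 0$, one gets $\mathbb{P}^n\to\fp X$ in the HK topology with converging $p$-th moments, while the triangle inequality gives $\AW_p(\mathbb{P}^n,\fp X)\ge\varepsilon_0/2$ eventually. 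It now suffices to show that $\{\mathbb{P}^n\}$ is $\AW_p$-Cauchy: by completeness it then converges in $\AW_p$ to some $\fp Y\in\FP_p$, which by the same implication also satisfies $\mathbb{P}^n\to\fp Y$ in HK+moments, forcing $\fp Y=\fp X$ by uniqueness of HK-limits on the quotient $\FP_p/\sim_{\AW}=\FP_p/\sim_{\HK}$ (Hausdorffness of the HK topology on $\FP_p$), contradicting $\AW_p(\mathbb{P}^n,\fp X)\ge\varepsilon_0/2$.

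To establish the Cauchy property, I would further approximate $\fp X$ by a sequence $\mathbb{Q}^k\in\mathcal{P}_p(\mathcal{X})$ with $\AW_p(\mathbb{Q}^k,\fp X)\to 0$ and perform a diagonal extraction in $(n,k)$: since both $(\mathbb{P}^n)$ and $(\mathbb{Q}^k)$ approach $\fp X$ in HK+moments, a diagonal choice $n=n(k)$ produces pairs $(\mathbb{P}^{n(k)},\mathbb{Q}^k)$ in $\mathcal{P}_p(\mathcal{X})$ that become arbitrarily close in HK+moments; Theorem~\ref{thm:intro.all.topo.equal.bounded} applied inside $\mathcal{P}_p(\mathcal{X})$ (where it asserts topological equivalence of HK+moments and $\AW_p$) then converts this HK-closeness into $\AW_p$-closeness, and combining with $\AW_p(\mathbb{Q}^k,\fp X)\to 0$ yields the desired contradiction $\AW_p(\mathbb{P}^{n(k)},\fp X)\to 0$. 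Rigorously converting topological equivalence on the dense subset $\mathcal{P}_p(\mathcal{X})$ into Cauchy-ness for a sequence whose limit lies in the strictly larger space $\FP_p$ is the subtle technical heart of the argument; an alternative route would be to construct the required coupling directly via Skorokhod representation applied to the weakly convergent laws $\mathcal{L}(\pp^\infty(\fp X^n))$, yielding an almost-sure realization that, combined with moment convergence and the $\varepsilon$-slack built into $\AW_p$, produces bi-causal $\varepsilon_n$-couplings of vanishing cost.
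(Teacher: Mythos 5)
Your proposal has a structural circularity that undermines the whole argument.  You want to deduce the theorem from two ingredients: (a) the completion theorem that $(\FP_p,\AW_p)$ is the completion of $(\mathcal{P}_p(\X),\AW_p)$, and (b) Theorem~\ref{thm:intro.all.topo.equal.bounded} asserting equivalence of adapted topologies on $\mathcal{P}_p(\X)$.  In the paper, both of these are \emph{consequences} of the theorem you are trying to prove.  Theorem~\ref{thm:intro.all.topo.equal.bounded} is established in Section~\ref{sec:adapted topologies on plain} as Theorem~\ref{thm:plan.topo.same}, whose proof explicitly invokes Theorem~\ref{thm:AWmetrizesHK} (which is the statement at hand); and the completeness of $(\FP_p,\AW_p)$ is proved inside Theorem~\ref{thm:AWmetrizesHK} itself, using the implication (i)$\Rightarrow$(ii).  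So the argument as outlined cannot stand without reproving from scratch the very tools it leans on.

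Even ignoring that, the two hard steps are left as acknowledged gaps.  For (i)$\Rightarrow$(ii) you say that converting HK-closeness on the dense subset $\mathcal{P}_p(\X)$ into $\AW_p$-Cauchyness ``is the subtle technical heart,'' but do not resolve it; your fallback via Skorokhod representation of $\law(\pp^\infty(\fp X^n))$ does not obviously produce $\varepsilon$-bicausal couplings between $\fp X^n$ and $\fp X$ with small cost (a.s.\ convergence of the prediction processes is far from a causality statement about couplings of the underlying processes).  For (iii)$\Rightarrow$(i) you sketch an iterated intertwining of prediction processes along a causal coupling; this is not a valid shortcut either — the prediction process of rank $r+1$ is taken with respect to the filtration of $\fp X$, not of $\pp^r(\fp X)$, and controlling the transport cost of the induced coupling of $\mathsf{M}_r$-valued processes by the cost of $\pi$ is not straightforward.

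The paper's route avoids all of this.  The direction (i)$\Rightarrow$(ii) is handled by time-discretization: one picks a grid $T$ inside $\cont(\fp X)\cup\{1\}$ with small mesh, shows $\AW_p(\fp X,\mathcal{D}^T(\fp X))\le\mesh(T)$ (Lemma~\ref{lem:AWMesh}), and reduces $\AW_p(\mathcal{D}^T(\fp X^n),\mathcal{D}^T(\fp X))\to 0$ to the already-known discrete-time result of \cite{BaBePa21} via Proposition~\ref{prop:HKtoAWdiscr}.  The converses (ii)$\Rightarrow$(i) and (iii)$\Rightarrow$(ii) are then soft compactness arguments: $\W_p$-convergence of the laws gives relative compactness in $(\FP_p,\HK_p)$ via Corollary~\ref{cor:CompFPp}, and the unique HK-limit point is identified via $\AW_p(\fp X,\fp Y)=0\Leftrightarrow\fp X\sim_{\HK}\fp Y$ (respectively $\SCW_p(\fp X,\fp Y)=0\Leftrightarrow\fp X\sim_{\HK}\fp Y$, Lemma~\ref{lem:scw.definite}).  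The discretization idea is the key non-trivial step, and it is missing from your proposal.
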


An especially valuable feature of the usual weak topology on $\mathcal{P}(\mathcal{X})$ is the ubiquity  of compact sets ensured by Prohorov's theorem.
Remarkably, the latter extends to the present context.

\begin{theorem} \label{prop:comp.intro}
    For a set $\Pi\subset \FP_p$, the following are equivalent.
\begin{enumerate}[label = (\roman*)]
\item  
 $\Pi$ is precompact w.r.t.\ $\AW_p$.
 \item 
 $\{ \law(\fp X) : \fp X\in \Pi\}$ is precompact w.r.t.\ $\W_p$.
 \end{enumerate}
\end{theorem}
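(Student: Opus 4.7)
The plan is to handle the two implications separately; (i) $\Rightarrow$ (ii) is essentially immediate, while (ii) $\Rightarrow$ (i) carries the substance.

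For (i) $\Rightarrow$ (ii), the key observation is that every bi-causal (or $\varepsilon$-bi-causal) coupling of two filtered processes is in particular a coupling of their path-laws, so comparing the two infima defining $\AW_p$ and $\W_p$ yields $\W_p(\law(\fp X),\law(\fp Y)) \leq \AW_p(\fp X,\fp Y)$. The marginal map $\fp X \mapsto \law(\fp X)$ from $(\FP_p,\AW_p)$ to $(\mathcal{P}_p(\mathcal{X}),\W_p)$ is therefore $1$-Lipschitz, and continuous images of precompact sets are precompact.

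For (ii) $\Rightarrow$ (i), I would combine three earlier results: completeness of $(\FP_p,\AW_p)$; the equivalence of $\AW_p$-convergence with convergence in the Hoover--Keisler topology together with convergence of $p$-th moments; and the characterisation of the Hoover--Keisler topology as the initial topology of $\fp X \mapsto \law(\pp^\infty(\fp X))$. Given a sequence $\fp X^n$ in $\Pi$, $\W_p$-precompactness of the laws yields a subsequence with $\law(\fp X^n) \to \mu$ in $\W_p$, which in particular supplies the required moment convergence $\E_{\mathbb{P}^{\fp X^n}}[d_\mathcal{X}^p(X^n,0)] \to \int d_\mathcal{X}^p(x,0)\,d\mu(x)$. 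It then remains to extract a further subsequence along which $\law(\pp^r(\fp X^n))$ converges weakly for each $r \in \mathbb{N}$; completeness of $(\FP_p,\AW_p)$ will then provide an $\AW_p$-limit point inside $\FP_p$.

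The technical heart of the argument, and the step I expect to be the main obstacle, is the inductive propagation of tightness: if $\{\law(\pp^r(\fp X)) : \fp X \in \Pi\}$ is precompact in the natural $p$-Wasserstein-type topology at level $r$, then so is $\{\law(\pp^{r+1}(\fp X)) : \fp X \in \Pi\}$. The tools I would deploy are (a) the tower property, which propagates uniform moment bounds across iteration; (b) the measure-valued martingale structure of $t \mapsto \pp_t(\,\cdot\,)$, which when tested against bounded continuous functionals on the relevant path space reduces path-space tightness to a statement about uniformly bounded scalar martingales; and (c) a Meyer--Zheng or Jakubowski-type tightness criterion to lift pointwise-in-$t$ tightness to the level of processes. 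A diagonal extraction across $r \in \mathbb{N}$ then yields the required Hoover--Keisler-convergent subsequence, and combined with the moment convergence already in hand, the aforementioned equivalence delivers the desired $\AW_p$-convergent subsequence.
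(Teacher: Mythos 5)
Your direction (i) $\Rightarrow$ (ii) is correct and matches the obvious observation: any $\varepsilon$-bicausal coupling is in particular a coupling of the path-laws, so $\W_p \le \AW_p$, and the marginal map is $1$-Lipschitz.

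For (ii) $\Rightarrow$ (i), the paper does not reconstruct the compactness argument: it invokes Corollary~\ref{cor:CompFPp} (precompactness in $\HK_p$ is equivalent to $\W_p$-precompactness of the laws), which is itself a consequence of Proposition~\ref{prop:Compact}, quoted from \cite{BePaScZh23} as a Prohorov-type theorem for the Hoover--Keisler topology, and then applies Theorem~\ref{thm:AWmetrizesHK} to translate $\HK_p$-precompactness into $\AW_p$-precompactness. Your proposal instead sets out to re-prove the Prohorov-type theorem from first principles. The overall flavor of your sketch — propagate tightness through the ranks of $\pp^r$, use the measure-valued martingale structure and a Meyer--Zheng criterion, diagonalize — is indeed how that result is established, so the route is not wrong. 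However, there is a real gap at the end: after extracting a subsequence along which $\law(\pp^\infty(\fp X^{n_k}))$ converges weakly to some $\nu \in \mathcal{P}(\mathsf{M}_\infty)$, you still need to produce an actual filtered process $\fp Y \in \FP_p$ with $\law(\pp^\infty(\fp Y)) = \nu$, i.e.\ you need that the image of $\FP$ under $\fp X \mapsto \law(\pp^\infty(\fp X))$ is closed (at least along this subsequence). Invoking completeness of $(\FP_p,\AW_p)$ does not supply this: completeness handles Cauchy sequences, but you have no a priori reason for your subsequence to be $\AW_p$-Cauchy before you have identified its limit inside $\FP_p$. In fact, in the paper's logical architecture, completeness of $\AW_p$ is established \emph{after} and partly as a consequence of the Prohorov-type result, not as a tool to prove it. The missing closedness/identification step is precisely the technical core that \cite{BePaScZh23} handles (via the canonical representation on $\mathsf{M}_\infty$ and a consistency argument for the limit of iterated prediction processes), and it needs to be addressed explicitly; without it the induction over ranks does not close.
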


In point (ii), $\W_p$ denotes the usual Wasserstein distance, in particular, (ii) is equivalent to the following: for every $\varepsilon>0$ there is a compact set $K\subset \mathcal{X}$ and $\sup_{\fp X\in \Pi} \mathbb{E}_{\mathbb{P}^{\fp X}}[ (1+d_\mathcal{X}^p(X,0))1_{K^c}(X)]\leq\varepsilon$.
Let us note that Proposition \ref{prop:comp.intro} was first established by Hoover \cite{Ho91} (in a somewhat different framework), see also \cite{BePaScZh23}.

\vspace{0.5em}
\noindent
{\bf Canonical representatives.} 
It is possible to construct for each filtered process $\fp X$ a canonical filtered process $\overline{\fp X}$ satisfying $\AW(\fp X, \overline{\fp X})=0$. These canonical filtered process have a specific standard Borel probability space as their basis, see Section~\ref{sec:Recap} for details. In particular, every $\sim_\AW$-equivalence class contains a representative that is based on a standard Borel probability space.

\subsection{Related literature}

As already noted, topologies (or distances) that account for the flow of information were introduced by different groups of authors in different discrete or continuous-time settings. This includes Aldous in stochastic analysis \cite{Al81}, 
Hoover--Keisler in mathematical logic \cite{HoKe84, Ho91}, Hellwig in economics \cite{He96}, Pflug--Pichler \cite{PfPi12, PfPi14} in optimization, Bion-Nadal--Talay and Lasalle \cite{BiTa19, La18} in stochastic control / stochastic analysis, Nielsen--Sun \cite{NiSu20} in machine learning, the topology induced by the Knothe--Rosenblatt coupling \cite{BePaPo21} and, using higher rank signatures, Bonnier--Liu--Oberhauser \cite{BoLiOb23}. 

In  \cite{BaBaBeEd19b, Pa22} it is established that the all of these topologies coincide in finite discrete time, while \cite{BaBePa21} shows that the completion of all  filtered discrete-time processes can be represented by allowing for general filtrations. Hence \cite{BaBaBeEd19b, Pa22, BaBePa21} are discrete-time precursors of the present work. We emphasize that the proofs in the discrete-time setting are based on induction on the number of time steps and differ fundamentally from the arguments given here. 

Adapted topologies and adapted transport theory have been applied to geometric inequalities \cite{BaBeLiZa17, La18, Fo22a}, stochastic optimization  \cite{Pf09, PfPi12, KiPfPi20, BaWi23}, starting with the article \cite{Do14} (pricing of game options), they have proved useful at various instances in  mathematical finance, see e.g.\ \cite{AcBaZa20} (questions of insider trading and enlargement of filtrations), \cite{GlPfPi17, BaDoDo20, BaBaBeEd19a, BeJoMaPa21a, BeJoMaPa21b, Wi20, JoPa24} (stability of pricing / hedging and utility maximization) and \cite{AcBePa20} (interest rate uncertainty) and appear in machine learning, see e.g.\ \cite{AkGaKi23} (causal graph learning), \cite{XuLiMuAc20, AcKrPa23} (video prediction and generation),  \cite{XuAc21} (universal approximation). We refer to \cite{PfPi16, BaBaBeWi20, AcHo22, BlWiZhZh24} for results on statistical estimation and to \cite{PiWe21, EcPa22, BaHa23} for efficient numerical methods for adapted transport problems and adapted Wasserstein distances.
Recent articles which consider sensitivity w.r.t.\ adapted Wasserstein distance and make the connection with distributionally robust optimization are \cite{BaWi23, Ji24, JiOb24, SaTo24, MiWi24}. The connection between adapted and classical weak topologies / Wasserstein distances is explored in \cite{Ed19, BlLaPaWi24}. Duality and barycenters are analyzed in \cite{KrPa24, AcKrPa24}.
Explicit calculations of the adapted Wasserstein distance between Gaussian processes are given in \cite{GuWo24, AcHoPa24}. 

As hinted above, in continuous time the behaviour of strict  (see Section \ref{sec:aw.strong})  causal / adapted Wasserstein distance is sensitive to the underlying distance; e.g.\ the Cameron--Martin norm is used in \cite{La18}; pathwise quadratic variation is used in \cite{Fo22a} to obtain a $T_2$ inequality, in \cite{BaBeHuKa20} for stability results, in \cite{BaBeHuKa20, BePaRi24} to characterize (geometric) Bass martingales (see also \cite{BaLoOb24}) and in \cite{JiOb24} for sensitivity of DRO; $L_p$-distances on paths are considered in \cite{BiTa19, BaKaRo22, RoSz24}. 
The recent article \cite{CoLi24} puts (strictly) causal / adapted couplings in continuous time center stage, establishing in particular denseness results for Monge-type couplings between SDEs.

\subsection{Organization of the article}
Section~\ref{sec:notbg} summarizes the necessary background on continuous-time filtered process from \cite{BePaScZh23}. In Section~\ref{sec:AW} we develop the central concepts of this article: We introduce the notion of $\epsilon$-bicausal couplings, use it to define the adapted Wasserstein distance $\AW$ and show that $\AW$ induces the Hoover--Keisler topology. We further show that the symmetrized causal Wasserstein distance $\SCW$ generates the same topology as $\AW$.  Section~\ref{sec:OS} contains quantitative and qualitative continuity results for optimal stopping problems. Moreover, we show that martingales form an $\AW$-closed subset of $\FP$. Section~\ref{sec:plain} concerns processes with their natural filtration, in particular we show that the space $\FP$ is the completion of the set of process with natural filtration w.r.t.\ $\AW$. Section~\ref{sec:adapted topologies on plain} is devoted to the proof of Theorem~\ref{thm:intro.all.topo.equal.bounded} on the equivalence of topologies on processes with natural filtration.  In Section~\ref{sec:donsker} we prove qualitative and quantitative approximation results for the Brownian motion and SDEs.

\section{Notation and background on filtered processes}\label{sec:notbg}
\subsection{Notations and conventions}\label{sec:not}
For a filtered process $\fp X$, we usually refer to the elements of the tuple $\fp X$ as $\Omega^{\fp X}, \F^{\fp X}, \P^{\fp X}, (\F_t^{\fp X})_{t \in [0,1]}, X$, that is,  $\Omega^{\fp X}$ denotes the base space of the filtered process $\fp X$ etc. 
For notational simplicity, for a stochastic process $(Y_t)_{t\in[0,1]}$ and  $t>1$, we set $Y_t:=Y_1$ and employ similar conventions for filtrations etc.

Given a set $\Omega$,  a $\sigma$-algebra $\F$ on $\Omega$, and a probability measure $\P$ on $(\Omega,\mathcal{F})$, we write $\F^\P$ for the completion of  $\F$ with $\P$-nullsets. 
Equalities between random variables on the same probability space $(\Omega,\F,\P)$ are always to be understood $\P$-a.s. 
On a product space $\Omega^{\fp X} \times \Omega^{\fp Y}$ we will always be explicit, as a.s.-equality depends on the choice of the coupling $\pi$. 
If $\mathcal{F}^{\fp X}$ is a $\sigma$-algebra on $\Omega^{\fp X}$, we can consider it as a $\sigma$-algebra on the product space $\Omega^{\fp X} \times \Omega^{\fp Y}$, i.e.\ we identify $\mathcal{F}^{\fp X}$ with $\mathcal{F}^{\fp X}\otimes \{\emptyset, \Omega^\fp Y\}$ to shorten the notation. 
For $\sigma$-algebras $\F, \G, \mathcal{H}$ we write $\mathcal{F} \indep_{\mathcal{H}} \mathcal{G}$ if $\F$ and $\G$ are independent given  $\mathcal{H}$.  
For a topological space $A$, we denote by $\mathcal{P}(A)$ the set of all Borel probability measures on $A$.
If $A$ is a metric space, $\mathcal{P}_p(A)\subset \mathcal{P}(A)$ is the subset of all those probability measures with finite $p$-th moment.
On $\mathcal{P}(A)$ we consider the topology of weak convergence of probability measures (induced by testing with integrals w.r.t.\ continuous bounded functions) and on $\mathcal{P}_p(A)$  the topology induced by the $p$-Wasserstein distance $\W_p$.
If $B$ is another metric space and $f\colon A\to B$ a Borel map, for $\mu\in\mathcal{P}(A)$ we denote by $f_\# \mu\in\mathcal{P}(B)$ the push-forward of $\mu$ under $f$.

We write $\mathcal X$ for the path space, i.e.\ $\mathcal X = D([0,1];\R^d)$ or $\mathcal X = C([0,1];\R^d)$ if we restrict to continuous paths. The set of continuous functions from $A$ to $B$ is denoted by $C(A;B)$, the subset of bounded functions by $C_b(A;B)$.
If $B=\R$ or $B=\R^d$, we often simply write $C(A)$ and  $C_b(A)$ if clear from context.
For a function $f \in \mathcal X$ (i.e.\ a path)  and $t \in [0,1]$, we define the stopped path $f^t$ by $f^t(s)=f(t \wedge s)$ for all $s \in [0,1]$.

Finally, finite and countable product spaces are always endowed with the product topology.

\subsection{An introduction to filtered processes}
\label{sec:Recap}
In this section we summarize the necessary background from continuous-time filtered processes.
Most of the material is taken from \cite{BePaScZh23}.

First recall from the introduction that a \emph{filtered process} $\fp X$ is a five tuple
\[ \fp X = \big( \Omega,\mathcal{F},\mathbb{P}, (\mathcal{F}_t)_{t\in[0,1]}, X \big), \]
where $( \Omega,\mathcal{F},\mathbb{P})$ is a probability space, $(\mathcal{F}_t)_{t\in[0,1]}$ is a filtration satisfying the usual conditions, and $X=(X_t)_{t\in[0,1]}$ is an adapted \cadlag{} process.  We write $\mathcal{FP}$  for the class of all filtered processes.

The prediction process of $\fp X \in \mathcal{FP}$, denoted by $\pp(\fp X)$, is the up to indistinguishability unique \cadlag{} version of the measure-valued-martingale
\[
 (\law_\P(X | \F_t^{\fp X}))_{t \in [0,1]}.
\] 
As $\pp(\fp X)$ can be seen as a random variable on the space $\mathsf{M}_1 := D([0,1]; \prob(\mathsf{M}_0))$, where $\mathsf{M}_0 := \mathcal{X}$. It is possible to consider \emph{its} prediction process and further iterate this procedure, cf.\ \cite{HoKe84,Ho91}:  

\begin{itemize}
    \item The prediction process of rank 1 is $\pp^1(\fp X) = \pp(\fp X) \in \mathsf{M}_1$.
    \item The prediction process of rank $r+1$ is the (unique up to indistinguishability) \cadlag{}-version of 
    \[
(\law_{\P^{\fp X}}(\pp^r(\fp X) |\F_t^{\fp X} ))_{t \in [0,1]} \in D([0,1]; \prob(\mathsf{M}_r) ) =: \mathsf{M}_{r+1}. 
    \]
    \item The prediction process of rank $\infty$ is defined as 
    \[
    \pp^\infty(\fp X) := (t \mapsto (\pp^1_t(\fp X), \pp^2_t(\fp X),\dots)) \in D\bigg([0,1];\prod_{r=0}^\infty \prob(\mathsf{M}_r) \bigg) =: \mathsf{M}_\infty.
    \] 
\end{itemize}

For $r \ge 1 $, we use the Meyer--Zheng topology \cite{MeZh84}, i.e.\ the spaces $\mathsf{M}_r$ are endowed with topologies according to the  convention\footnote{In the case $\mathsf{M}_0=\X= D([0,1];\R^d)$ this space is still equipped with Skorohod's $J_1$-topology as outlined in the introduction.} 
that the set of \cadlag{} paths $D(S) = D([0,1];S)$ with values in a topological space $S$ is equipped with the topology of convergence in measure w.r.t.\ the measure $\lambda + \delta_1$, where $\lambda$ is the Lebesgue measure on $[0,1]$.

It is important to consider the Meyer--Zheng topology on the space of \cadlag{} processes, as it ensures relative compactness in law for a sequence of martingales $(X^n)_n$ if and only  if $\sup_n \E[|X^n_1|] < \infty$, see \cite{MeZh84}. 
This leads to a Prokhorov-type compactness result for filtered processes (see Proposition~\ref{prop:Compact}), which is crucial for the analysis in this paper.

\begin{remark}
The chosen topology on  $D([0,1]; S)$  has the drawback of not being Polish.
However, it remains Lusin, i.e., homeomorphic to a Borel subset of a Polish space \cite[Appendix]{MeZh84}. 
Lusin spaces are stable under operations such as $S \mapsto \prob(S)$, $ S \mapsto D(S)$ and countable products; in particular $\mathsf{M}_r $ is Lusin for all  $r \in \mathbb{N} \cup \{\infty\}$.
Moreover, they provide sufficient regularity for our purposes, behaving similarly as Polish spaces.
For instance, conditional laws of  $S$-valued random variables are well-defined, measure-valued martingales in $\prob(S)$ have \cadlag{} versions, and all expressions in the definition of iterated prediction processes are well-posed. For further details, see \cite[Section~2]{BePaScZh23}.
\end{remark}

Note that the prediction processes of finite rank are measure-valued martingales, that is for every Borel and bounded $f : \mathsf{M}_r \to \R$, the real-valued process $ Y_t(\omega) := \int f d\pp^r_t(\fp X)(\omega)$ is a real-valued $(\F^{\fp X}_t)_{t \in [0,1]}$-martingale. As the random variable $\pp^r(\fp X)$ is $ \F_1^{\fp X}$-measurable, we have for every $r \in \N$,
$$
\pp^{r+1}_1(\fp X)= \law(\pp^r(\fp X) |\F_1^{\fp X}) = \delta_{\pp^r(\fp X)}. 
$$
In this sense, $\pp^{r+1}(\fp X)$ contains more information about $\fp X$ than $\pp^{r}(\fp X)$ does. In a specific sense, which was made precise by Hoover--Keisler \cite{HoKe84}, $\pp^\infty(\fp X)$ contains all relevant probabilistic information about $\fp X$. Following their approach, we identify two filtered processes with each other if the prediction processes of rank $\infty$ have the same law:
\begin{definition}\label{def:HKequiv}
We say that $\fp X , \fp Y \in \mathcal{FP}$ are Hoover--Keisler equivalent, denoted as $\fp X \sim_{\HK} \fp Y$ if $\law(\pp^\infty(\fp X)) = \law(\pp^\infty(\fp Y))$.  
The space of filtered process is defined as $\FP = \mathcal{FP}/_{\sim_{\HK}}$. 
\end{definition}

Note that in the introduction we defined the space of filtered processes as the factor space $\FP = \mathcal{FP}/_{\sim_{\AW}}$. We will show in Section~\ref{sec:AW} that these definitions agree. 
For now, we stick to $\mathcal{FP}/_{\sim_{\HK}}$ to be consistent with the papers \cite{BePaScZh23,Ho91,HoKe84}. 

\begin{definition}\label{def:HKtop}
The Hoover--Keisler topology on $\FP$, denote by $\HK$, is the initial topology w.r.t.\ 
\[
\FP \to \prob(\mathsf{M}_\infty) : \quad  \fp X \mapsto \law_{\P^{\fp X}}(\pp^\infty(\fp X)).
\]
\end{definition}
It is immediate from this definition that $(\FP,\HK)$ is homeomorphic to a subspace of the metrizable space $\mathsf{M}_\infty$ and hence metrizable. Therefore, it is sufficient to work with sequences when proving properties of the topology $\HK$. 
Moreover, it is shown in \cite[Theorem~1.5]{BePaScZh23} that $(\FP,\HK)$ is a Polish space, but in this paper no explicit metric for this topology is constructed. 
In Section~\ref{sec:AW} of the present paper, we will show that the adapted Wasserstein distance is a complete metric for $(\FP,\HK)$.

We proceed and state the announced Prohorov-type theorem for $(\FP,\HK)$, see \cite[Theorem~1.6]{BePaScZh23}. 
 
\begin{proposition}\label{prop:Compact}
A set of filtered processes $A \subset \FP$ is relatively compact in the Hoover--Keisler topology if and only if the respective set of laws $\{ \law(X) : \fp X \in A \}$ is tight. 
\end{proposition}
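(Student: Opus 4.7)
The plan is to handle the two directions separately; the non-trivial direction will proceed by induction on the rank of the prediction process.

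For \emph{necessity}, the argument is essentially formal. I would first observe that the map $\Phi\colon(\FP,\HK)\to(\prob(\X),\W)$, $\fp X \mapsto \law(X)$, is continuous: this is because $\law(X)$ can be read off from $\pp^\infty(\fp X)$ as the mean of the (random) measure $\pp^1_t(\fp X)$, using that $\pp^1(\fp X)$ is a measure-valued $(\F^{\fp X}_t)$-martingale with terminal value $\delta_X$. Continuous images of relatively compact sets are relatively compact, so Prokhorov's theorem then gives tightness of $\{\law(X):\fp X\in A\}$.

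For \emph{sufficiency}, assume tightness of $\{\law(X):\fp X\in A\}$. I would argue by induction on $r$ that $\{\law(\pp^r(\fp X)):\fp X\in A\}$ is tight in $\prob(\mathsf{M}_r)$. The base case $r=0$ is the hypothesis. For the inductive step, observe that $\pp^{r+1}(\fp X)$ is a \cadlag{} measure-valued martingale in $\prob(\mathsf{M}_r)$ with terminal value $\delta_{\pp^r(\fp X)}$; hence $\{\law(\pp^{r+1}_1(\fp X))\}=\{\law(\delta_{\pp^r(\fp X)})\}$ is tight in $\prob(\prob(\mathsf{M}_r))$ as the continuous image under $y\mapsto \delta_y$ of the tight family $\{\law(\pp^r(\fp X))\}$. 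To lift this terminal-time tightness to tightness of the whole path in the Meyer--Zheng topology, I would invoke the classical Meyer--Zheng compactness theorem \cite{MeZh84} componentwise: for each $f\in C_b(\mathsf{M}_r)$, the real martingale $t\mapsto\langle f,\pp^{r+1}_t(\fp X)\rangle$ is uniformly bounded by $\|f\|_\infty$, hence MZ-tight in $D([0,1];\R)$. Testing against a countable point-separating family $\{f_k\}\subset C_b(\mathsf{M}_r)$, together with the marginal tightness just established, yields tightness of $\{\law(\pp^{r+1}(\fp X))\}$ in $\prob(D([0,1];\prob(\mathsf{M}_r)))$ under MZ convergence. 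Since $\mathsf{M}_\infty$ is built as a countable product, tightness at each rank $r$ aggregates, via the Lusin variant of Prokhorov's theorem, to tightness of $\{\law(\pp^\infty(\fp X))\}$ in $\prob(\mathsf{M}_\infty)$, which is precisely HK-relative compactness of $A$.

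The hard part will be the measure-valued Meyer--Zheng compactness step. Although the scalar case is classical, the target $\prob(\mathsf{M}_r)$ is only Lusin, so tightness characterisations that rely on local compactness do not apply directly, and the componentwise MZ-tightness of $\langle f_k,\pp^{r+1}\rangle$ must be assembled into joint tightness of the measure-valued paths. A standard workaround is to identify MZ-\cadlag{} paths with subprobabilities on $[0,1]\times\mathsf{M}_r$ via their time-mean graph measure, thereby reducing measure-valued MZ-tightness to ordinary Prokhorov tightness in an ambient Polish enlargement and then lifting back to $D([0,1];\prob(\mathsf{M}_r))$; ensuring that this lift is compatible with the martingale property and respects the separating family $\{f_k\}$ uniformly in $r$ is the genuinely technical point. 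Once that lemma is in hand, the induction on $r$ and the assembly into $\mathsf{M}_\infty$ are routine.
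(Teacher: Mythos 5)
The paper does not prove Proposition~\ref{prop:Compact} itself; it states it with a citation to \cite[Theorem~1.6]{BePaScZh23}, so there is no in-paper proof to compare against. Your proposal is nonetheless a faithful reconstruction of the natural argument and is structurally what the companion paper must do: necessity follows because $\fp X\mapsto\law(X)$ is HK-continuous (read $\law(X)$ off the time-$1$ slice $\pp^1_1=\delta_X$, noting that evaluation at $t=1$ is Meyer--Zheng-continuous precisely because of the $\delta_1$ atom in the reference measure) and $\X$ is Polish so Prokhorov applies; sufficiency goes by induction on the rank of the prediction process, using Meyer--Zheng's compactness criterion on the uniformly bounded scalar martingales $t\mapsto\langle f_k,\pp^{r+1}_t(\fp X)\rangle$.

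Two places should be made precise. First, your ``marginal tightness just established'' refers only to $t=1$; the pseudopath/occupation-measure machinery you invoke needs tightness of $\{\law(\pp^{r+1}_t(\fp X))\}$ in $\prob(\prob(\mathsf M_r))$ uniformly over $(\lambda+\delta_1)$-a.e.\ $t$ and over $\fp X\in A$. This does follow cheaply from the measure-valued-martingale property, since $\E\big[\pp^{r+1}_t(\fp X)(K^c)\big]=\law(\pp^r(\fp X))(K^c)$ for every $t$, so the rank-$r$ inductive hypothesis controls all time-marginals at once --- but the step should be stated rather than left implicit, as the terminal marginal alone is not enough. Second, ``$\mathsf M_\infty$ is built as a countable product'' is not literally right: $\mathsf M_\infty=D([0,1];\prod_r\prob(\mathsf M_r))$ is a space of \cadlag{} paths into a countable product, and the final aggregation rests on the identification $D([0,1];\prod_r\prob(\mathsf M_r))\cong\prod_r D([0,1];\prob(\mathsf M_r))$ under the Meyer--Zheng topology (joint convergence in $(\lambda+\delta_1)$-measure into a countable metrizable product is equivalent to componentwise convergence in measure). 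With those two repairs your outline is sound; the genuinely technical lemma you flag --- that scalar MZ-tightness plus marginal tightness assembles into measure-valued MZ-tightness compatible with the martingale property --- is indeed where the real work lies.
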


\begin{defi}\label{def:FPpHKp}
We denote with $\mathcal{FP}_p$ the collection of filtered processes $\fp X \in \mathcal{FP}$ with finite $p$-th moment, that is $\E_{\P^{\fp X}}[d^p(X,0)] < \infty$ and set $\FP_p = \mathcal{FP}_p /_{\sim_{\HK}}$. 

Let $\fp X^n ,\fp X \in \FP_p$ for $n\in\N$. We say that $\fp X^n \to \fp X$ in $\HK_p$ if $\fp X^n \to \fp X $ in $\HK$ and $\E_{\P^{\fp X^n}}[d^p(X^n,0)] \to \E_{\P^{\fp X}}[d^p(X,0)]$.
\end{defi}

Since relative compactness in the space of probability measure with finite $p$-th moment, equipped with the $p$-Wasserstein distance $(\prob_p(\mathcal X), \W_p)$ is equivalent to tightness and uniform integrability of the $p$-th moments (see e.g.\ \cite[Proposition~7.1.5]{AGS}), the following consequence of Proposition \ref{prop:Compact} is immediate.

\begin{corollary}\label{cor:CompFPp}
A set of filtered processes $A \subset \FP_p$ is relatively compact w.r.t.\ $\HK_p$ if and only if the respective set of laws $\{ \law(X) : \fp X \in A \}$ is relatively compact in $(\prob_p(\mathcal X),\W_p)$.
\end{corollary}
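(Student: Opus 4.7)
The plan is to obtain the corollary as a direct combination of Proposition~\ref{prop:Compact} with the classical characterization that a subset of $(\prob_p(\mathcal X),\W_p)$ is relatively compact if and only if it is tight and its $p$-th moments are uniformly integrable (i.e.\ \cite[Proposition~7.1.5]{AGS}). Since the underlying topologies are metrizable, I would argue sequentially and repeatedly use that $\W_p$-convergence is equivalent to weak convergence together with convergence of $p$-th moments.

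For the forward direction, starting from an arbitrary sequence $(\fp X^n)_n\subset A$, I would extract, by hypothesis, a subsequence $\fp X^{n_k}\to\fp X$ in $\HK_p$. By Definition~\ref{def:FPpHKp} this means $\fp X^{n_k}\to\fp X$ in $\HK$ together with convergence of $p$-th moments. Since the Meyer--Zheng topology on $\mathsf M_1 = D([0,1];\prob(\mathcal X))$ makes evaluation at $t=1$ continuous (because $\lambda+\delta_1$ charges $\{1\}$) and $\pp^1_1(\fp X)=\delta_X$ almost surely, $\HK$-convergence forces weak convergence $\law(X^{n_k})\to\law(X)$ in $\prob(\mathcal X)$. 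Combined with the convergence of $p$-th moments this yields $\W_p(\law(X^{n_k}),\law(X))\to 0$, so $\{\law(X):\fp X\in A\}$ is sequentially and hence (by metrizability of $\W_p$) relatively compact in $\prob_p(\mathcal X)$.

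For the reverse direction, $\W_p$-relative compactness of $\{\law(X):\fp X\in A\}$ entails tightness, so Proposition~\ref{prop:Compact} gives relative compactness of $A$ in $(\FP,\HK)$. Given $(\fp X^n)_n\subset A$, I would first extract $\fp X^{n_k}\to\fp X$ in $\HK$, which as above yields $\law(X^{n_k})\to\law(X)$ weakly. By the $\W_p$-compactness hypothesis, a further subsequence converges in $\W_p$ to some $\mu\in\prob_p(\mathcal X)$, and uniqueness of weak limits forces $\mu=\law(X)$. In particular $\law(X)\in\prob_p(\mathcal X)$, so $\fp X\in\FP_p$, and $\E_{\P^{\fp X^{n_{k_j}}}}[d_{\mathcal X}^p(X^{n_{k_j}},0)]\to\E_{\P^{\fp X}}[d_{\mathcal X}^p(X,0)]$ along the sub-subsequence, delivering $\HK_p$-convergence of a subsequence, hence sequential and therefore topological relative compactness of $A$ in $\HK_p$.

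I do not foresee any real obstacle. The only mildly delicate point is the continuous recovery of $\law(X)$ from $\law(\pp^\infty(\fp X))$, which rests on the conventions that $D([0,1];S)$ carries the topology of convergence in measure w.r.t.\ $\lambda+\delta_1$ (so $t=1$ evaluation is continuous) and that $\pp^1_1(\fp X)=\delta_X$ a.s.; both are explicitly adopted by the authors.
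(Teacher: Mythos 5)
Your proof is correct and takes essentially the same approach as the paper, which simply declares the corollary "immediate" from Proposition~\ref{prop:Compact} together with the AGS characterization of $\W_p$-relative compactness; you fill in the sequential details, including the useful observation that $\HK$-convergence forces weak convergence of $\law(X)$ via continuity of evaluation at $t=1$ under $\lambda+\delta_1$ and the identity $\pp^1_1(\fp X)=\delta_X$.
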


Next we want to assign to each filtered process a canonical filtered process. To that end, we need some notation: $Z = (Z_t)_{t \in [0,1]}$ denotes the canonical process on $\mathsf{M}_\infty$, so for every $t \in [0,1]$, $Z_t$ is a (infinite) vector of measures $(Z^r_t)_{r \ge 1}$ where $Z^r_t \in \prob(\mathsf{M}_{r-1})$.

Moreover, we set $\overline{X} := Z^0 := \delta^{-1}(Z_1^1)$, where $\delta^{-1} : \prob(\mathcal X) \to \mathcal X$ is a Borel function that satisfies $\delta^{-1}(\delta_x)=x$ for all $x \in \mathcal X$. Note that $\delta^{-1}$ restricted to the set of Dirac measures is continuous.  

\begin{defi}\label{def:asociatedCFP}
Let $\fp X \in \mathcal{FP}$. The \emph{canonical filtered process} associated to $\fp X$ is given by
\[
\overline{\fp X} = (\mathsf{M}_\infty,\mathcal{B}_{\mathsf{M}_\infty}, \law(\pp^\infty(\fp X)), (\F_t)_{t \in [0,1]}, \overline X),
\]
where $\mathcal{B}_{\mathsf{M}_\infty}$ is the Borel $\sigma$-algebra on $\mathsf{M}_\infty$, $(\F_t)_{t \in [0,1]}$ is the right-continuous completion of the canonical filtration on $\mathsf{M}_\infty$, and $\overline{X}$ is the map defined above. 

We denote the collection of all canonical filtered process as  $\mathcal{CFP}$. 
\end{defi}
The canonical filtered process associated to $\fp X$ has the property that its prediction process of rank $\infty$ satisfies $\law(\pp^\infty(\overline{\fp X})) = \law(\pp^\infty(\fp X))$. Thus, every $\HK$-class contains exactly one element from $\mathcal{CFP}$. This fact was established in \cite[Theorem 1.3]{BePaScZh23} and implies the following
\begin{lemma}\label{lem:canonRep}
    Let $\fp X, \fp Y \in \mathcal{FP}$.
    Then $\fp X \sim_{\HK} \overline{\fp X}$.
    Moreover, $\fp X \sim_{\HK} \fp Y$ if and only if $\overline{\fp X} = \overline{\fp Y}$.  
\end{lemma}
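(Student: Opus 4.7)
The plan is to reduce both statements to the cited result \cite[Theorem 1.3]{BePaScZh23}, which asserts precisely that the rank-$\infty$ prediction process of the canonical representative reproduces the law of the original one, i.e.\ $\law(\pp^\infty(\overline{\fp X})) = \law(\pp^\infty(\fp X))$. Once this is available, the lemma becomes essentially a bookkeeping argument based on Definition~\ref{def:HKequiv} and Definition~\ref{def:asociatedCFP}. The only thing to check carefully is that the canonical filtered process $\overline{\fp X}$, as constructed in Definition~\ref{def:asociatedCFP}, depends on $\fp X$ only through the law $\law(\pp^\infty(\fp X))$; indeed, the base space $\mathsf{M}_\infty$, its Borel $\sigma$-algebra, the right-continuous completion of the canonical filtration, and the map $\overline X = \delta^{-1}(Z_1^1)$ are all fixed deterministic objects.

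For the first claim, I would unfold the definitions: by construction the law of $\overline{\fp X}$'s underlying measure on $\mathsf{M}_\infty$ is $\law(\pp^\infty(\fp X))$, and \cite[Theorem 1.3]{BePaScZh23} then identifies the prediction process of $\overline{\fp X}$ itself with the canonical process $Z$. Consequently
\[ \law\bigl(\pp^\infty(\overline{\fp X})\bigr) \;=\; \law(\pp^\infty(\fp X)), \]
which is exactly the condition $\fp X \sim_{\HK} \overline{\fp X}$ from Definition~\ref{def:HKequiv}.

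For the second claim, one direction is now trivial: if $\overline{\fp X} = \overline{\fp Y}$, then by the first part $\fp X \sim_{\HK} \overline{\fp X} = \overline{\fp Y} \sim_{\HK} \fp Y$, and transitivity of $\sim_{\HK}$ yields $\fp X \sim_{\HK} \fp Y$. For the converse, suppose $\fp X \sim_{\HK} \fp Y$, i.e.\ $\law(\pp^\infty(\fp X)) = \law(\pp^\infty(\fp Y))$. Since $\overline{\fp X}$ and $\overline{\fp Y}$ share the same base space, $\sigma$-algebra, filtration and process $\overline X$, and their base measures coincide by assumption, they are equal as five-tuples.

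The main obstacle is entirely absorbed into the external input \cite[Theorem 1.3]{BePaScZh23}; once that theorem is invoked, no further work is required beyond unpacking the definitions. Thus the proof reduces to one display equation plus a two-line argument for the biconditional.
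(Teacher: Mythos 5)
Your proof is correct and matches the paper's own reasoning: the paper likewise invokes \cite[Theorem 1.3]{BePaScZh23} for the identity $\law(\pp^\infty(\overline{\fp X})) = \law(\pp^\infty(\fp X))$ and then treats both parts of the lemma as immediate from Definitions~\ref{def:HKequiv} and~\ref{def:asociatedCFP}. One small caveat: the right-continuous completion of the canonical filtration on $\mathsf{M}_\infty$ is not a measure-independent object (the nullsets one adjoins depend on the base law), so it is not literally a ``fixed deterministic object''; this does not harm your argument, because in the $\Rightarrow$ direction the laws $\law(\pp^\infty(\fp X))$ and $\law(\pp^\infty(\fp Y))$ coincide by assumption, hence so do the completed filtrations.
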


\begin{remark}\label{rem:canonCont}
Note that for every $\fp X \in \mathcal{FP}$ the function $\overline{X} : \mathsf{M}_\infty \to \mathcal X$ is continuous on the support of $\law(\pp^\infty(\fp X))$.
This will be used in the sequel.
\end{remark}

We end this section with useful observations that relate continuity of generated filtrations with continuity properties of the prediction process.

\begin{defi}\label{def:cont_pt}
Let $\fp X \in \mathcal{FP}$. 
We call $t \in [0,1]$ a continuity point of $\fp X$ if $s \mapsto \law(\pp^\infty_s(\fp X))$ is continuous at $t$.
\end{defi}

The next result, which was shown in \cite[Corollary 4.16]{BePaScZh23}, is a consequence of the prediction processes being measure-valued martingales with \cadlag{} paths.

\begin{lemma}\label{lem:cont_pt_dense}
    Let $\fp X \in \mathcal{FP}$. Then $[0,1] \setminus \cont(\fp X)$ is countable. Hence, $\cont(\fp X)$ is dense in $[0,1]$.
\end{lemma}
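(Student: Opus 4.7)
My plan is to deduce the result from the fact that $\pp^\infty(\fp X)$ has \cadlag{} sample paths in a metrizable space, combined with the elementary observation that a \cadlag{} function into a metric space can only have countably many discontinuities.

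First, I would recall from the preceding discussion (and \cite{BePaScZh23}) that $\mathsf{M}_\infty = D\bigl([0,1];\prod_{r=0}^\infty \prob(\mathsf{M}_r)\bigr)$ is a (metrizable) Lusin space, and that by construction the random path $t \mapsto \pp^\infty_t(\fp X)$ is a \cadlag{} map into the metrizable Lusin space $S := \prod_{r=0}^\infty \prob(\mathsf{M}_r)$. Consequently, $\prob(S)$ equipped with the weak topology is itself a separable metrizable space, so one can fix a metric $\rho$ on $\prob(S)$ generating the weak topology; equivalently, one can choose a countable family of bounded continuous functions $(f_k)_{k \in \N}$ on $S$ determining weak convergence.

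Second, I would observe that for each bounded continuous $f : S \to \R$, the real-valued function
\[
h_f : [0,1] \to \R, \qquad h_f(t) := \E_{\P^{\fp X}}\bigl[f(\pp^\infty_t(\fp X))\bigr]
\]
is \cadlag{}. Right-continuity and existence of left limits follow from the \cadlag{} property of the paths of $\pp^\infty(\fp X)$ together with dominated convergence (using that $f$ is bounded). As a real-valued \cadlag{} function on $[0,1]$, each $h_f$ has at most countably many discontinuities; denote this set by $D_f$.

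Third, a point $t$ fails to be a continuity point of $s \mapsto \law(\pp^\infty_s(\fp X))$ in $(\prob(S), \rho)$ only if $\law(\pp^\infty_{t-}(\fp X)) \neq \law(\pp^\infty_{t}(\fp X))$, and by the choice of $(f_k)_k$ this implies $t \in D_{f_k}$ for some $k$. Therefore
\[
[0,1] \setminus \cont(\fp X) \subset \bigcup_{k \in \N} D_{f_k},
\]
which is a countable union of countable sets and hence countable. Since $[0,1]$ is uncountable, $\cont(\fp X)$ is dense.

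The only step that requires care is the metrizability / countable-determining-family assertion for $\prob(S)$; however, this is a standard consequence of $S$ being a separable metrizable space (which is built into the framework recalled in Section~\ref{sec:Recap}) and therefore poses no genuine obstacle. Beyond that, the argument is entirely soft and does not use the martingale structure of the prediction processes.
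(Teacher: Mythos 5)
Your proof is correct and matches the route the paper itself gestures at: the paper defers the proof to \cite[Corollary~4.16]{BePaScZh23}, noting only that the result ``is a consequence of the prediction processes being measure-valued martingales with \cadlag{} paths,'' and that is precisely the ingredient you use. Your argument is clean: fix a countable convergence-determining family $(f_k)$ of bounded continuous functions on the separable metrizable space $S=\prod_r \prob(\mathsf{M}_r)$; by dominated convergence each $h_{f_k}(t)=\E[f_k(\pp^\infty_t(\fp X))]$ inherits the \cadlag{} property from the paths of $\pp^\infty(\fp X)$ and so has only countably many discontinuities; and if every $h_{f_k}$ is continuous at $t$, then $s\mapsto\law(\pp^\infty_s(\fp X))$ is weakly continuous at $t$, so $[0,1]\setminus\cont(\fp X)\subset\bigcup_k D_{f_k}$ is countable. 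The only point worth stating a touch more carefully is the justification that a Lusin space such as $S$ admits such a countable convergence-determining family — this holds because Lusin spaces are separable and metrizable — but as you already flag, this is a standard fact built into the paper's framework. You correctly observe that no martingale-specific machinery is needed beyond the \cadlag{} path property (which in the paper is itself obtained from the measure-valued martingale structure), so the argument is indeed ``soft.''
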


Next, we recall the characterization of continuity points given in \cite[Proposition 4.19]{BePaScZh23}, that will be useful throughout this article.

\begin{proposition}\label{prop:contPt}
    Let $\fp X \in \mathcal{FP}$ and $\G_s := \sigma^{\P^{\fp X}}(\pp^\infty_r(\fp X) : r \le s)$. 
    The filtration $(\G_s)_{s\in[0,1]}$ is right-continuous and for $t \in [0,1]$ the following are equivalent:
	\begin{enumerate}[label = (\roman*)]
            \item $t \in \cont(\fp X)$, that is, $s \mapsto \law(\pp^\infty_s(\fp X))$ is continuous at $t$.
		\item The filtration $(\G_s)_{s\in[0,1]}$ is continuous at $t$, that is $\G_t=\G_{t-}$.
		\item The paths of $\pp^\infty(\fp X)$ are a.s.\ continuous at $t$.
	\end{enumerate}
\end{proposition}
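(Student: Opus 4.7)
The plan is to establish the asserted right-continuity of $(\G_s)_{s\in[0,1]}$ first, then prove (ii) $\Leftrightarrow$ (iii) via the martingale property of the prediction processes, and finally (i) $\Leftrightarrow$ (iii) by an $L^2$-orthogonality argument.

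For right-continuity of $(\G_s)$ I would exploit Definition~\ref{def:asociatedCFP}: the map $\Phi := \pp^\infty(\fp X) : \Omega^{\fp X} \to \mathsf{M}_\infty$ pushes $\P^{\fp X}$ forward to $\law(\pp^\infty(\fp X))$ and, modulo $\P^{\fp X}$-null sets, $\G_s$ coincides with $\Phi^{-1}(\F_s^{\overline{\fp X}})$; since $(\F_s^{\overline{\fp X}})$ is right-continuous by construction, this property is inherited by $(\G_s)$. A more hands-on alternative is to observe that for every $r \in \N$ and every $f \in C_b(\mathsf{M}_{r-1})$ the real-valued process $\langle \pp^r_s(\fp X), f \rangle$ is a \cadlag{} $(\F_s^{\fp X})$-martingale, so that the classical argument identifying the augmented filtration generated by a family of \cadlag{} martingales with its right-continuous regularization applies.

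For (ii) $\Leftrightarrow$ (iii): the direction (iii) $\Rightarrow$ (ii) is direct, since $\pp^\infty_t = \pp^\infty_{t-}$ almost surely makes $\pp^\infty_t$ equal almost surely to a $\G_{t-}$-measurable random variable, whence $\G_t = \G_{t-} \vee \sigma(\pp^\infty_t) = \G_{t-}$. For (ii) $\Rightarrow$ (iii), I would first observe that each prediction process $\pp^r(\fp X)$ is also a $(\G_s)$-martingale, which follows from the tower property together with $\G_s \subseteq \F_s^{\fp X}$ and the $\G_s$-measurability of $\pp^r_s$. Under (ii), for every bounded continuous $f$ one then has $\langle \pp^r_t, f \rangle = \E[\langle \pp^r_t, f \rangle \mid \G_{t-}] = \langle \pp^r_{t-}, f \rangle$ almost surely; combining this with a countable measure-determining family on the Lusin space $\mathsf{M}_{r-1}$ yields $\pp^r_t = \pp^r_{t-}$ almost surely for each $r$, and hence (iii).

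For (i) $\Leftrightarrow$ (iii), the direction (iii) $\Rightarrow$ (i) is immediate from the \cadlag{} property of $\pp^\infty(\fp X)$ together with dominated convergence. For the converse, I would fix $r \in \N$ and $f \in C_b(\mathsf{M}_{r-1})$ and use $\langle \pp^r_{t-}, f \rangle = \E[\langle \pp^r_t, f \rangle \mid \F_{t-}^{\fp X}]$ from the measure-valued martingale property. Equality in law forced by (i) gives $\E[\langle \pp^r_t, f \rangle^2] = \E[\langle \pp^r_{t-}, f \rangle^2]$, and since $\langle \pp^r_{t-}, f \rangle$ is the orthogonal projection of $\langle \pp^r_t, f \rangle$ onto $L^2(\F_{t-}^{\fp X})$, this forces $\langle \pp^r_t, f \rangle = \langle \pp^r_{t-}, f \rangle$ almost surely. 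Intersecting over a countable determining family and over $r$ then delivers (iii). The main obstacle I anticipate is the rigorous proof of right-continuity of $(\G_s)$, which requires either careful handling of the pullback of an augmented right-continuous filtration under a measurable map (paying close attention to $\P^{\fp X}$-null sets) or a direct martingale argument covering the infinite hierarchy of prediction processes; a secondary subtlety is the existence of a countable measure-determining family on the non-Polish Lusin space $\mathsf{M}_{r-1}$.
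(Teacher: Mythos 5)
Your arguments for (i)\,$\Leftrightarrow$\,(iii) (via the $L^2$-projection/Pythagoras identity applied to $\langle\pp^r_t,f\rangle$ and its predictable projection $\langle\pp^r_{t-},f\rangle$, combined with $\law(\pp^\infty_{t-})=\law(\pp^\infty_t)$) and for (ii)\,$\Leftrightarrow$\,(iii) (via the fact that $\langle\pp^r,f\rangle$ is also a $(\G_s)$-martingale) are essentially sound, and the Lusin-space technicality you flag is routine. The genuine gap lies exactly where you anticipate it: the right-continuity of $(\G_s)$. Neither of your two routes closes it, and the deficiency is structural, not just a matter of rigor.

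The ``hands-on'' route appeals to a theorem that does not exist. The augmented natural filtration of a c\`adl\`ag martingale (or of a family of such) is \emph{not} right-continuous in general. Take $U$ uniform on $(0,1)$ and $(V_n)_{n\ge 1}$ i.i.d.\ symmetric signs, all independent, and set $M_t:=\sum_{n\ge 1}2^{-n}V_n\mathbbm{1}_{\{t\ge U/n\}}$. This is a c\`adl\`ag martingale in its own filtration, yet $\sigma^{\P}(M_s:s\le 0)$ is $\P$-trivial while $\bigcap_{\epsilon>0}\sigma^{\P}(M_s:s\le\epsilon)$ determines $U$ and every $V_n$. Augmentation with null sets leaves this germ intact. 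That the ambient $(\F_t^{\fp X})$ satisfies the usual conditions does not rescue the argument: $(\G_s)$ is the \emph{raw} augmented filtration generated by $\pp^\infty(\fp X)$, and the obstruction lives inside that sub-filtration.

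The pullback route is circular. By Definition~\ref{def:asociatedCFP}, $\F_s^{\overline{\fp X}}$ is the right-continuous completion of the canonical filtration on $\mathsf{M}_\infty$, i.e.\ $\F_s^{\overline{\fp X}}=\bigcap_{\epsilon>0}\sigma^{\law(\pp^\infty(\fp X))}(Z_r:r\le s+\epsilon)$. Pulling this back along $\Phi=\pp^\infty(\fp X)$ gives $\bigcap_{\epsilon>0}\sigma^{\P^{\fp X}}(\pp^\infty_r(\fp X):r\le s+\epsilon)=\G_{s+}$, not $\G_s$; so the assertion ``$\Phi^{-1}(\F_s^{\overline{\fp X}})=\G_s$ modulo null sets'' \emph{is} the right-continuity you set out to prove.

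What is actually needed is a structural feature of the rank-$\infty$ prediction process that an arbitrary family of c\`adl\`ag martingales lacks: the conditional independence $\F_t^{\fp X}\indep_{\sigma(\pp^\infty_t(\fp X))}\sigma(\pp^\infty(\fp X))$ established as \cite[Corollary~4.11]{BePaScZh23} and used elsewhere in this paper (cf.\ Lemma~\ref{lem:canonical.couplings}). Granting this and the usual conditions on $(\F_t^{\fp X})$, the argument runs: if $A\in\G_{s+}$ then $A\in\F_{s+\epsilon}^{\fp X}$ for all $\epsilon>0$, hence $A\in\F_s^{\fp X}$; also $A\in\sigma^{\P}(\pp^\infty(\fp X))$. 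The conditional independence then gives $\P(A\mid\pp^\infty_s(\fp X))=\P(A\mid\pp^\infty_s(\fp X))^2$, so $\P(A\mid\pp^\infty_s(\fp X))\in\{0,1\}$ a.s.\ and $\mathbbm{1}_A$ is $\sigma^{\P}(\pp^\infty_s(\fp X))$-measurable, i.e.\ $A\in\G_s$. This Markov-type saturation of $\pp^\infty$ is precisely the missing ingredient; without it your proof stalls at the point you yourself identified.
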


Finally, we introduce the concept of adapted functions. An adapted function is an operation that takes a filtered process $\fp X$ and returns a random variable on the underlying probability space of this process. More precisely, $f$ is an adapted function (we write $f \in \AF$) if it can be built using the following operations:
\begin{enumerate}
		\item[(AF1)] Every $f \in C_b(\X)$ is an adapted function, and we set $f[\fp X] := f(X)$. 
		\item[(AF2)] If $f_1, \dots, f_n \in \AF$ and $g \in C_b(\R^n)$, then $g(f_1,\dots,f_n) \in \AF$ and we set $g(f_1,\dots,f_n)[{\fp X}] := g(f_1[{\fp X}], \dots, f_n[{\fp X}])$.
		\item[(AF3)] If $f \in \AF$ and $t \in [0,1]$, then $(f|t) \in \AF$ and $(f|t)[{\fp X}] := \E[f[{\fp X}]|\F_t^{\fp X}]$.
	\end{enumerate}
It is possible to characterize Hoover--Keisler-equivalence via adapted functions.
\begin{proposition}\label{prop:simHKviaAF}
	For $\fp X, \fp Y \in \mathcal{FP}$ we have $\fp X \sim_\HK \fp Y$ if and only if $\E [f[\fp X]] = \E[f[ \fp Y ]]$ for every $f \in \AF$.
\end{proposition}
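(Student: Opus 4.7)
\emph{Plan.} Both directions proceed by a structural induction on the recursive definition of $\AF$, but the interesting work lies in the backward implication.

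For the forward implication, the plan is to show by induction on the complexity of $f$ that there is a Borel functional $\Phi_f \colon \mathsf{M}_\infty \to \R$ depending only on finitely many coordinates such that $f[\fp X] = \Phi_f(\pp^\infty(\fp X))$ $\P^{\fp X}$-a.s. For the base case $f \in C_b(\mathcal{X})$, use that $\pp^1_1(\fp X) = \delta_X$, so $X = \delta^{-1}(\pp^1_1(\fp X))$ is a Borel functional of $\pp^\infty(\fp X)$. The composition rule (AF2) is immediate by Borel composition. For (AF3), suppose inductively that $f[\fp X] = H(\pp^{1}(\fp X), \dots, \pp^{r}(\fp X))$ for some bounded Borel $H$; then by the defining property $\pp^{r+1}_t(\fp X) = \law_{\P^{\fp X}}(\pp^r(\fp X)\,|\,\F_t^{\fp X})$ and the tower property one obtains
\[
(f|t)[\fp X] = \E_{\P^{\fp X}}[H(\pp^{1}(\fp X),\dots,\pp^{r}(\fp X))\,|\,\F_t^{\fp X}] = \int H(\pp^{1}(\fp X),\dots,\pp^{r-1}(\fp X),\mu)\, d\pp^{r+1}_t(\fp X)(\mu),
\]
which is again a Borel functional of $\pp^\infty(\fp X)$. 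Taking expectations shows that $\E[f[\fp X]]$ depends only on $\law(\pp^\infty(\fp X))$, and hence is invariant under $\sim_\HK$.

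For the backward implication, the plan is to show conversely that a determining family of test functionals on $\mathsf{M}_\infty$ arises from $\AF$. Concretely, I inductively construct, for each rank $r\ge 1$, each tuple of times $t_1,\dots,t_k \in [0,1]$, and each $\psi_i \in C_b(\mathsf{M}_{r-1})$ of the elementary form $\psi_i(z) = g_i\bigl(\langle \xi^{(1)}_i, z_{s^{(1)}_i}\rangle,\dots,\langle \xi^{(m)}_i, z_{s^{(m)}_i}\rangle\bigr)$ where the $\xi^{(j)}_i$ are themselves of the same inductive elementary form on $\mathsf{M}_{r-2}$ (bottoming out at $C_b(\mathcal X)$), an adapted function $f = f(\psi_1,\dots,\psi_k;t_1,\dots,t_k)\in\AF$ with
\[
f[\fp X] \;=\; \prod_{i=1}^k \langle \psi_i, \pp^{r}_{t_i}(\fp X)\rangle.
\]
The base case $r=1$ is precisely (AF3) applied to elements of $C_b(\mathcal X)$, since $\langle\xi,\pp^1_t(\fp X)\rangle = \E[\xi(X)|\F_t^{\fp X}] = (\xi|t)[\fp X]$; for the induction step, $\langle \xi^{(j)}_i,\pp^{r-1}_{s^{(j)}_i}(\fp X)\rangle = (\xi^{(j)}_i(\pp^{r-2}(\fp X))\mid s^{(j)}_i)[\fp X]$, where the inner random variable lies in $\AF$ by the inductive hypothesis (at rank $r-1$, with $k=1$), and products/compositions are handled by (AF2).

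The final step—and the main obstacle—is to upgrade these elementary functionals to a class that determines $\law(\pp^\infty(\fp X))$ on $\mathsf{M}_\infty$. The plan is a standard monotone-class / multiplicative-system argument on the Lusin space $\mathsf{M}_\infty = D([0,1];\prod_r\prob(\mathsf M_{r-1}))$ endowed with the product/Meyer--Zheng topology: the elementary products above form an algebra, are closed under bounded pointwise convergence up to null sets, and, critically, separate points of $\mathsf{M}_\infty$ because they encode, at arbitrary rank $r$ and arbitrary time $t$, the integral of any elementary bounded continuous test function against $\pp^r_t$. Since finite-dimensional evaluations at continuity points determine laws on $D([0,1];S)$ under the Meyer--Zheng topology, and by Lemma~\ref{lem:cont_pt_dense} such continuity points are dense, coincidence of all expectations $\E[f[\fp X]] = \E[f[\fp Y]]$ over this family forces $\law(\pp^\infty(\fp X)) = \law(\pp^\infty(\fp Y))$, i.e.\ $\fp X\sim_\HK\fp Y$. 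The delicate bookkeeping is in handling the Meyer--Zheng (non-Polish but Lusin) structure and in arguing that continuity-point evaluations are enough; the relevant measurability and density statements are collected in the background from~\cite{BePaScZh23} recalled above.
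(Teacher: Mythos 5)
The paper does not prove this proposition; it is recalled in Section~2.2 as background from the predecessor work \cite{BePaScZh23}, so there is no in-paper proof to compare against. What follows assesses your argument on its own terms.

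Your strategy is a natural one, but the forward induction step for (AF3) contains a concrete error. You write
\[
\E_{\P^{\fp X}}\bigl[H(\pp^{1}(\fp X),\dots,\pp^{r}(\fp X))\,\big|\,\F_t^{\fp X}\bigr] = \int H\bigl(\pp^{1}(\fp X),\dots,\pp^{r-1}(\fp X),\mu\bigr)\, d\pp^{r+1}_t(\fp X)(\mu),
\]
and this identity is false for $t<1$: the left-hand side is $\F_t^{\fp X}$-measurable by definition of conditional expectation, while the right-hand side still depends on $\pp^{1}(\fp X),\dots,\pp^{r-1}(\fp X)$, which are not $\F_t^{\fp X}$-measurable (they are only $\F_1^{\fp X}$-measurable). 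Equivalently, the formula treats $\pp^{r}(\fp X)$ as if it were conditionally independent of the lower-rank prediction processes given $\F_t^{\fp X}$, which is the opposite of the truth: the lower ranks are \emph{deterministic} functions of $\pp^{r}(\fp X)$. The missing ingredient is precisely the remark the paper makes after defining the iterated prediction process, namely $\pp^{r}_1(\fp X)=\delta_{\pp^{r-1}(\fp X)}$, so that each $\pp^{j}(\fp X)$ with $j<r$ equals $g_j(\pp^r(\fp X))$ for a Borel map $g_j$. Setting $\tilde H(\mu):=H(g_1(\mu),\dots,g_{r-1}(\mu),\mu)$ one has $H(\pp^1,\dots,\pp^r)=\tilde H(\pp^r)$ and then
\[
(f|t)[\fp X]=\E\bigl[\tilde H(\pp^r(\fp X))\,\big|\,\F_t^{\fp X}\bigr]=\int\tilde H(\mu)\,d\pp^{r+1}_t(\fp X)(\mu),
\]
which is a Borel function of the single coordinate $\pp^{r+1}_t(\fp X)$, and the induction closes. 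Without this reduction your displayed identity fails and the step does not go through.

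The backward direction (build adapted functions $f$ with $f[\fp X]=\prod_i\langle\psi_i,\pp^r_{t_i}(\fp X)\rangle$ over an inductively elementary class of $\psi_i$, then run a monotone-class argument) is a sound plan, and the appeal to right-continuity of the cadlag prediction paths to reduce to evaluations at a dense set of times plus $t=1$ is the correct mechanism. Two caveats: first, nothing in the definition of $\AF$ restricts $t$ to continuity points, so you may simply use all $t\in[0,1]$, which makes the continuity-point detour unnecessary; second, the measure-determining claim for the Meyer--Zheng/Lusin spaces is where essentially all of the technical work lies, and your sketch defers it entirely. That is acceptable as a plan, but the forward-direction error above is the point that would actually need correcting before the argument is complete.
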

 
For $f \in \AF$, we write $T(f) \subset [0,1]$ for the collection of those $t \in [0,1]$ such that $\E[ \, \cdot \, | \F_t]$ appears in $f$. Specifically, we set $T(f) = \emptyset$ for $f \in C_b(\X)$, set  $T(g(f_1,\dots,f_n)) =\bigcup_{i=1}^n T(f_i)$, and further set $T((f|t)) = T(f) \cup \{t\}$.

Using this notions, we can state the following definition. 
\begin{definition}
We say that $\fp X^n \to \fp X$ in the \emph{adapted weak topology} if $\E[f[\fp X^n]] \to \E[f[ \fp X]]$ for every $f \in \AF$ satisfying $T(f) \subset \cont(\fp X) \cup \{ 1 \}$. 
\end{definition}

\begin{theorem}\label{thm:HKAF}
The Hoover--Keisler topology coincides with the adapted weak topology. 
\end{theorem}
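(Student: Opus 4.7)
The plan is to prove both inclusions of topologies by reducing questions about adapted functions to continuity properties of functionals of the iterated prediction process on the canonical filtered process (Definition~\ref{def:asociatedCFP}).

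\emph{HK $\Rightarrow$ adapted weak.} I would first pass to canonical representatives via Lemma~\ref{lem:canonRep}, using that adapted function expectations are invariant under $\sim_{\HK}$ by Proposition~\ref{prop:simHKviaAF}. Then I would show by structural induction on $f \in \AF$ with $T(f) \subset \cont(\fp X) \cup \{1\}$ that there is a bounded Borel $\Phi_f \colon \mathsf{M}_\infty \to \R$, continuous at $\law(\pp^\infty(\fp X))$-a.e.\ point, such that $f[\overline{\fp Y}] = \Phi_f(\pp^\infty(\fp Y))$ a.s.\ for every $\fp Y$. The base case (AF1) follows since $\overline X = \delta^{-1}(Z_1^1)$, with $\delta^{-1}$ continuous on Dirac measures and $Z_1^1 = \delta_{\overline X}$ a.s.; the composition case (AF2) is immediate. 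For (AF3) at $t \in \cont(\fp X)$, the identity $Z_t^{r+1} = \law(Z^r \mid \F_t)$ lets me rewrite $\E[\Phi_f(Z) \mid \F_t]$ as a bounded continuous functional of finitely many marginals $Z_{t_i}$ with $t_i \in T(f) \cup \{t\} \subset \cont(\fp X) \cup \{1\}$. Proposition~\ref{prop:contPt} shows that at each such $t_i$ the paths of $Z$ are a.s.\ continuous, so the evaluation maps $Z \mapsto Z_{t_i}$ are a.s.\ continuous on $\mathsf{M}_\infty$, yielding the desired a.s.\ continuity of $\Phi_{(f|t)}$. The continuous mapping theorem applied to $\law(\pp^\infty(\fp X^n)) \to \law(\pp^\infty(\fp X))$ then gives $\E[f[\fp X^n]] \to \E[f[\fp X]]$.

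\emph{Adapted weak $\Rightarrow$ HK.} Specializing the hypothesis to $f \in C_b(\X)$ yields weak convergence $\law(X^n) \to \law(X)$ on $\X$, so $\{\law(X^n)\}$ is tight, and by Proposition~\ref{prop:Compact} the sequence $(\fp X^n)$ is relatively compact in $(\FP,\HK)$. Let $\fp Y$ be any HK-limit point of a subsequence $(\fp X^{n_k})$. The already proven forward direction, applied to that subsequence, gives $\E[f[\fp X^{n_k}]] \to \E[f[\fp Y]]$ for every $f \in \AF$ with $T(f) \subset \cont(\fp Y) \cup \{1\}$, so combining with the hypothesis yields $\E[f[\fp X]] = \E[f[\fp Y]]$ for every such $f$ with $T(f) \subset (\cont(\fp X) \cap \cont(\fp Y)) \cup \{1\}$. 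Since $\cont(\fp X)$ and $\cont(\fp Y)$ are both cocountable in $[0,1]$ (Lemma~\ref{lem:cont_pt_dense}), their intersection is dense, and a right-continuity argument using the c\`adl\`ag paths of the prediction processes extends this equality to every $f \in \AF$. Proposition~\ref{prop:simHKviaAF} then gives $\fp X \sim_{\HK} \fp Y$, so all subsequential limits agree with $\fp X$ and hence $\fp X^n \to \fp X$ in $\HK$.

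\emph{Main obstacle.} The crux is the inductive case (AF3) in the forward direction: realizing conditional expectations at continuity points as $\law(\pp^\infty(\fp X))$-a.s.\ continuous functionals of $\pp^\infty$. This requires carefully exploiting the recursive structure $Z_t^{r+1} = \law(Z^r \mid \F_t)$ of the iterated prediction process together with the path-continuity characterization of continuity points in Proposition~\ref{prop:contPt}, and checking continuity in the Meyer--Zheng topology (where evaluation at $t=1$ is automatic, and at $t<1$ holds only at paths continuous at $t$). The extension step in the converse direction is comparatively routine, hinging on the density of joint continuity points and right-continuity of $t \mapsto (f|t)[\fp X]$.
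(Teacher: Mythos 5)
The paper does not actually prove Theorem~\ref{thm:HKAF}: it is stated in Section~\ref{sec:Recap} as part of the background imported from \cite{BePaScZh23}, and the present paper uses it as a black box (e.g.\ in the proof of Theorem~\ref{thm:plan.topo.same}). So there is no in-paper argument to compare against. Your overall strategy --- a structural induction to realize $\E[f[\cdot]]$ as $\int\Phi_f\,d\law(\pp^\infty(\cdot))$ with $\Phi_f$ bounded Borel and $\law(\pp^\infty(\fp X))$-a.e.\ continuous, followed by the continuous mapping theorem; and, for the converse, Proposition~\ref{prop:Compact} together with uniqueness of subsequential limits via Proposition~\ref{prop:simHKviaAF} and density of common continuity points (Lemma~\ref{lem:cont_pt_dense}) --- is the natural one in this framework and is sound.

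Two points are glossed over, and you should be explicit about them since they carry the real weight of the proof. First, in the (AF3) step the phrase ``bounded \emph{continuous} functional of finitely many marginals $Z_{t_i}$'' is not right. After writing $\E[\Phi_f(Z)\mid\F_t]=\int\Phi_f(\zeta)\,Z^{r+1}_t(d\zeta)$, the map $\mu\mapsto\int\Phi_f\,d\mu$ on $\prob(\mathsf{M}_r)$ is \emph{not} continuous in general --- only at those $\mu$ that charge no mass to the discontinuity set of $\Phi_f$. The induction closes because of the disintegration identity $\law(Z^r)=\E_{\P}\bigl[Z^{r+1}_t\bigr]$: if the inductive hypothesis gives $\law(Z^r)(\mathrm{disc}(\Phi_f))=0$, then $Z^{r+1}_t(\mathrm{disc}(\Phi_f))=0$ a.s., so the composition with the a.s.\ continuous evaluation $Z\mapsto Z^{r+1}_t$ at $t\in\cont(\fp X)\cup\{1\}$ (via Proposition~\ref{prop:contPt}) is $\law(\pp^\infty(\fp X))$-a.e.\ continuous. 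Relatedly, to invoke $Z^{r+1}_t=\law(Z^r\mid\F_t)$ you must first reduce $\Phi_f$ to a function of $Z^r$ alone, using the nesting $Z^r_1=\delta_{Z^{r-1}}$ and the (only a.e.\ continuous) map $\delta^{-1}$. Second, the ``right-continuity argument'' in the converse --- extending $\E[f[\fp X]]=\E[f[\fp Y]]$ from $T(f)\subset(\cont(\fp X)\cap\cont(\fp Y))\cup\{1\}$ to all $f\in\AF$ --- needs to be carried out level by level in the nested (AF2)/(AF3) structure, shifting all conditioning times to the right simultaneously and passing to the limit by backward martingale and dominated convergence. Neither point is a fatal gap (you flag the first one yourself under ``Main obstacle''), but since they constitute essentially all of the work, a complete write-up must carry them out rather than only name them.
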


\section{The adapted Wasserstein distance}\label{sec:AW}
The aim of this section is to give a precise definition of the adapted Wasserstein distance $\AW_p$ on the space of filtered processes $\FP_p$. The main result of this section is Theorem~\ref{thm:AWmetrizesHK}, which states that $\AW_p$ metrizes the Hoover--Keisler topology plus convergence of $p$-th moments.

\subsection{$\epsilon$-bicausal couplings and the definition of $\AW$} \label{ssec:def AW}

	\begin{defi}\label{def:cpl2}
		Let $\fp X, \fp Y \in \mathcal{FP}$ and let $\varepsilon\geq 0$. A coupling between $\fp X$ and $\fp Y$ is a probability measure on $(\Omega^{\fp X} \times \Omega^{\fp Y}, \F^{\fp X} \otimes \F^{\fp Y})$  with marginals $\P^{\fp X}$ and $\P^{\fp Y}$. We say that a coupling $\pi$ between $\fp X$ and $\fp Y$ is  
		\begin{enumerate}[(a)]
			\item 
			\emph{$\epsilon$-causal} from $\fp X$ to $\fp Y$ if $
			\F_t^{\fp Y} \indep_{\F_{t+\epsilon}^{\fp X}} \F_1^{\fp X}
			$ under $\pi$ for all $t \in [0,1]$;
			\item 
			\emph{$\epsilon$-bicausal} if $\pi$ is both $\epsilon$-causal from $\fp X$ to $\fp Y$ and $\epsilon$-causal from $\fp Y$ to $\fp X$.
		\end{enumerate}
        We write $\cpl(\fp X, \fp Y)$, $\cpl^{\epsilon}_{\rm{c}}(\fp X,\fp Y)$  and $\cpl^{\epsilon}_{\rm{bc}}(\fp X,\fp Y)$ for the set of couplings, $\epsilon$-causal couplings and $\epsilon$-bicausal couplings between $\fp X$ and $\fp Y$, respectively. 
		For $\epsilon=0$ we shall drop the prefix $\epsilon$ to the causality constraints, e.g.\ causal means $0$-causal.
	\end{defi}

In what follows, we set $\F^{\bbX,\bbY}_{t,s}:=\F^{\fp X}_t\otimes \F^{\fp Y}_s$ for $t,s\in[0,1]$. 
The following characterization of causality  will be used frequently in what follows.
	\begin{lemma}\label{lem:causal_equiv}
		Let $\fp X,\fp Y\in\mathcal{FP}$,  $\pi \in \cpl(\bbX,\bbY)$,  $t\in[0,1]$, and  $\epsilon \ge 0$.
		 The following are equivalent:
		\begin{enumerate}[(i)]
			\item $
			\F_t^{\fp Y} \indep_{\F_{t+\epsilon}^{\fp X}} \F_1^{\fp X}
			$ under $\pi$.
			\item $\E_\pi[V|\F^{\bbX}_{1}] = E_\pi[V|\F^{\bbX}_{t+\epsilon}]$ for all bounded $\F_t^\bbY$-measurable $V$.
			\item $\E_\pi[U|\F^{\bbX,\bbY}_{t+\epsilon,t}] = \E_\pi[U|\F^{\bbX}_{t+\epsilon}]$ for all  bounded $\F_1^\bbX$-measurable $U$.
            \item $\E_\pi[ V (U - \E_\pi[U |\F^{\fp X}_{t+\epsilon} ] )   ] =0 $ for all bounded $\F_1^\bbX$-measurable $U$, bounded $\F_t^\bbY$-measurable $V$.
		\end{enumerate}
	\end{lemma}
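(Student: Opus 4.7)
My approach is to reduce all four statements to the standard measure-theoretic characterization of conditional independence: for sub-$\sigma$-algebras $\mathcal{A},\mathcal{B},\mathcal{C}$ of the product space $(\Omega^\bbX\times\Omega^\bbY,\F^\bbX\otimes\F^\bbY)$ one has $\mathcal{A}\indep_{\mathcal{C}}\mathcal{B}$ under $\pi$ if and only if $\E_\pi[V\mid \sigma(\mathcal{A},\mathcal{C})] = \E_\pi[V\mid\mathcal{C}]$ for every bounded $\mathcal{B}$-measurable $V$. I will exploit, under the identifications from Section~\ref{sec:not}, that $\F^\bbX_{t+\epsilon}\subseteq \F^\bbX_1$ and that $\sigma(\F^\bbX_{t+\epsilon},\F^\bbY_t) = \F^{\bbX,\bbY}_{t+\epsilon,t}$, together with the symmetry of conditional independence, to link (i), (ii) and (iii); (iv) will then be obtained as the integrated form of (iii).

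For (i)$\Leftrightarrow$(ii), I apply the criterion with $\mathcal{A}=\F^\bbX_1$, $\mathcal{B}=\F^\bbY_t$, $\mathcal{C}=\F^\bbX_{t+\epsilon}$: since $\mathcal{C}\subseteq\mathcal{A}$, one has $\sigma(\mathcal{A},\mathcal{C})=\F^\bbX_1$, and the criterion reads exactly as (ii). For (i)$\Leftrightarrow$(iii), I first exploit the symmetry of conditional independence in its outer arguments to rewrite (i) as $\F^\bbX_1\indep_{\F^\bbX_{t+\epsilon}}\F^\bbY_t$, and then apply the criterion with $\mathcal{A}=\F^\bbY_t$, $\mathcal{B}=\F^\bbX_1$, $\mathcal{C}=\F^\bbX_{t+\epsilon}$; since $\sigma(\F^\bbY_t,\F^\bbX_{t+\epsilon})=\F^{\bbX,\bbY}_{t+\epsilon,t}$, the criterion produces precisely (iii).

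For (iii)$\Leftrightarrow$(iv), the direction (iii)$\Rightarrow$(iv) follows by multiplying both sides of (iii) by a bounded $\F^\bbY_t$-measurable $V$ (which is $\F^{\bbX,\bbY}_{t+\epsilon,t}$-measurable after identification) and taking expectations. For the converse, observe that $\E_\pi[U\mid\F^\bbX_{t+\epsilon}]$ is $\F^\bbX_{t+\epsilon}$-measurable, hence $\F^{\bbX,\bbY}_{t+\epsilon,t}$-measurable, so by uniqueness of the conditional expectation it is enough to check that both sides of (iii) integrate to the same quantity against a generating family of $\F^{\bbX,\bbY}_{t+\epsilon,t}$. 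By the functional monotone class theorem, this reduces to testing against products $V_1 V_2$ with $V_1$ bounded and $\F^\bbX_{t+\epsilon}$-measurable and $V_2$ bounded and $\F^\bbY_t$-measurable. I then apply (iv) with $U':=V_1 U$ (still bounded and $\F^\bbX_1$-measurable) and $V:=V_2$; using the pull-out property $\E_\pi[V_1 U\mid\F^\bbX_{t+\epsilon}]=V_1\E_\pi[U\mid\F^\bbX_{t+\epsilon}]$, this yields exactly the required equality.

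The only real obstacle is careful bookkeeping around the embeddings of $\F^\bbX_\cdot$ and $\F^\bbY_\cdot$ into the product $\sigma$-algebra, and verifying that the conditioning $\sigma$-algebras are indeed in the correct containment relations; the measure-theoretic core is a routine combination of the tower property and the monotone class theorem, so I expect the proof to be short once these identifications are made explicit.
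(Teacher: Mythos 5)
Your proof is correct. For the equivalence of (i), (ii), (iii) you rely on the same ingredient as the paper, namely the standard characterization of conditional independence in terms of conditional expectations (the paper cites this as Kallenberg's Proposition~5.8; your derivation just unwinds the same criterion and symmetry of conditional independence explicitly). Where you genuinely diverge is in the link to (iv): the paper connects \emph{(ii)} to (iv) in one line, by rewriting (ii) as the integrated identity $\E_\pi[U\,\E_\pi[V\mid\F^\bbX_1]]=\E_\pi[U\,\E_\pi[V\mid\F^\bbX_{t+\epsilon}]]$ for all test pairs $U,V$ and then applying the self-adjointness of conditional expectation (so $\E_\pi[U\,\E_\pi[V\mid\F^\bbX_1]]=\E_\pi[UV]$ since $U$ is $\F^\bbX_1$-measurable, and likewise for the other term). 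You instead connect \emph{(iii)} to (iv): the forward direction by integration against $V$, the converse by a monotone-class reduction to rectangles $V_1V_2$ combined with the pull-out property applied to $U':=V_1U$. Both routes are valid; the paper's self-adjointness trick is a bit slicker and avoids the monotone-class step, while your argument makes all the measure-theoretic bookkeeping around the product $\sigma$-algebra fully explicit. No gap.
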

\begin{proof}
The equivalence of (i), (ii) and (iii) is a consequence of \cite[Proposition~5.8]{Ka97}. We show that (ii) is equivalent to (iv). Clearly (ii) is equivalent to 
\begin{align}\label{eq:prf:causalch}
\E_\pi [ U  \E_\pi[ V|\F^{\fp X}_{1}  ]] =    \E_\pi [ U   \E_\pi[ V|\F^{\fp X}_{t+\epsilon}  ]   ] 
\end{align}
for all bounded $\F_1^\bbX$-measurable $U$ and bounded $\F_t^\bbY$-measurable $V$. Since the conditional expectation is a self-adjoint operator\footnote{This means that  $\E[F\E[G|\mathcal{H}]]=\E[\E[F|\mathcal{H}]G]$ for all bounded random variables $F,G$ and $\sigma$-fields $\mathcal{H}$.} and  $\E_\pi[U|\F^{\fp X}_{1}]=U$, \eqref{eq:prf:causalch} is  equivalent to 
\[
\E_\pi [ U  V] =    \E_\pi [ \E_\pi[U|\F^{\fp X}_{t+\epsilon}  ]  V  ], 
\]
which is another way of writing (iv).
\end{proof}
\begin{definition}\label{def:AWcont}
On $\mathcal{FP}_p$ we define the \emph{adapted Wasserstein distance} by 
	\begin{align}
	    \label{eq:def.AW}
        \mathcal{AW}_p(\fp X,\fp Y) 
		:=  \inf_{\epsilon \ge 0}    \inf \left\{ \E_\pi[d_\mathcal{X}^p(X,Y)]^{1/p} + \epsilon :  \pi \in \cpl^{\varepsilon}_{\rm bc}(\fp X,\fp Y) \right\}.
	\end{align}
  If $\mathcal X = C([0,1];\R^d)$, we set $d_{\mathcal{X}}$ to be the supremum norm; if $\mathcal X = D([0,1];\R^d)$ we fix
  a complete metric $d_{\mathcal{X}}$ that induces the $J_1$-topology and satisfies $d_\X(f,0) \le C\|f\|_\infty$.
\end{definition}

\begin{remark}\label{rem:defAWflexibility1}
Note that while the usual $J_1$-distance is not complete, there are complete distances that induce the $J_1$-topology  and satisfy $d_\X(f,0) \le \|f\|_\infty$, see e.g.\ \cite[Section~12]{Bi99}. 
Moreover, the definition of the adapted Wasserstein distance and the results in this chapter are largely independent of the specific path space  $\mathcal{X}$  and its metric, except for separability and completeness. 
There is also flexibility in penalizing couplings that are $\epsilon$-bicausal rather than strictly bicausal. For simplicity, we penalize with $\epsilon$, but alternative penalties may be more natural for specific classes of processes (e.g., $\sqrt{\epsilon}$ for continuous martingales).
\end{remark}

An observation that turns out to be useful is that for canonical filtered processes the infimum in the definition of $\mathcal{AW}_p$ is attained.
	This follows, similar as in the case of the usual Wasserstein distance, from a suitable compactness of $\varepsilon$-bicausal couplings:
	
	\begin{proposition}\label{prop:epsbcclosed}
		For $\bbX,\bbY\in \mathcal{CFP}$ and $\epsilon \ge 0$, the sets  $\cpl^{\epsilon}_{\rm{c}}(\bbX,\bbY)$ and $\cpl^{\epsilon}_{\rm{bc}}(\bbX,\bbY)$ are compact in $\mathcal P(\mathsf{M}_\infty \times \mathsf{M}_\infty)$.
	\end{proposition}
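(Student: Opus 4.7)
The plan is to first establish compactness of $\cpl(\bbX, \bbY)$ and then show that $\cpl^{\epsilon}_{\rm{c}}(\bbX, \bbY)$ is closed; compactness of $\cpl^{\epsilon}_{\rm{bc}}(\bbX, \bbY) = \cpl^{\epsilon}_{\rm{c}}(\bbX, \bbY) \cap \cpl^{\epsilon}_{\rm{c}}(\bbY, \bbX)$ then follows as an intersection of closed sets inside a compact set. Compactness of $\cpl(\bbX, \bbY)$ is routine: the marginals $\P^{\fp X}, \P^{\fp Y}$ are fixed tight Borel probability measures on the Lusin space $\mathsf{M}_\infty$, so by Prokhorov the set of couplings is tight, hence relatively compact, and the marginal constraint is preserved under weak limits.

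For closedness of $\cpl^{\epsilon}_{\rm{c}}(\bbX, \bbY)$ I would use the characterisation in Lemma~\ref{lem:causal_equiv}(iv): for each $t \in [0,1]$ and all bounded $\F_1^{\fp X}$-measurable $U$ and $\F_t^{\fp Y}$-measurable $V$,
\begin{equation*}
\int V(y)\bigl(U(x) - \E_\pi[U \mid \F^{\fp X}_{t+\epsilon}](x)\bigr)\,d\pi(x,y) = 0.
\end{equation*}
The decisive point, relying on $\bbX$ being canonical, is that $\E_\pi[U \mid \F^{\fp X}_{t+\epsilon}]$ is coupling-independent: as $U$ depends only on the $\fp X$-coordinate and every $\pi$ has $\fp X$-marginal $\P^{\fp X}$, this conditional expectation equals $\pi$-a.s.\ the fixed bounded Borel function $\hat U := \E_{\P^{\fp X}}[U \mid \F^{\fp X}_{t+\epsilon}]$. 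Writing $W := U - \hat U$, the causality identity reduces to the bilinear condition $\int W(x)V(y)\,d\pi(x,y) = 0$.

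The main obstacle is that $W$ and $V$ are only Borel, whereas weak convergence $\pi_n \to \pi$ a priori tests only bounded continuous functions. This is overcome by exploiting the fixed marginals: since $\mathsf{M}_\infty$ is Lusin, $C_b(\mathsf{M}_\infty)$ is dense in $L^1(\P^{\fp X})$ and $L^1(\P^{\fp Y})$, so for any $\delta > 0$ we can choose $\tilde W, \tilde V \in C_b(\mathsf{M}_\infty)$ with $\|W - \tilde W\|_{L^1(\P^{\fp X})}, \|V - \tilde V\|_{L^1(\P^{\fp Y})} < \delta$. Because every coupling has the prescribed marginals,
\begin{equation*}
\Bigl|\int W V\,d\pi_n - \int \tilde W \tilde V\,d\pi_n\Bigr| \le \bigl(\|V\|_\infty + \|\tilde W\|_\infty\bigr)\delta
\end{equation*}
uniformly in $n$, and the same bound holds with $\pi$ in place of $\pi_n$. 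Since $\tilde W \otimes \tilde V \in C_b(\mathsf{M}_\infty \times \mathsf{M}_\infty)$, weak convergence yields $\int \tilde W \tilde V\,d\pi_n \to \int \tilde W \tilde V\,d\pi$, and letting $\delta \downarrow 0$ gives $\int WV\,d\pi = \lim_n \int WV\,d\pi_n = 0$.

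Thus, for each fixed $t$, $U$, and $V$, the causality identity defines a closed subset of $\cpl(\bbX, \bbY)$, and $\cpl^{\epsilon}_{\rm{c}}(\bbX, \bbY)$ is the intersection of such closed subsets (over all $t \in [0,1]$ and admissible $U, V$), hence closed. The bicausal set $\cpl^{\epsilon}_{\rm{bc}}(\bbX, \bbY)$ is a closed subset of the compact $\cpl(\bbX, \bbY)$, and therefore compact.
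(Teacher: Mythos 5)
Your proof is correct and takes a genuinely different, more elementary route than the paper. Both arguments hinge on the same key observation — that $\E_\pi[U\mid\F^{\fp X}_{t+\epsilon}]$ depends only on the fixed $\fp X$-marginal, not on the coupling — which you state explicitly and the paper uses implicitly when it writes the causality test via disintegrations $\P^{\fp X}_{t+\epsilon},\P^{\fp X}_1,\P^{\fp Y}_t$ of the marginal laws. The paper then has to deal with the fact that the resulting integrand is only Borel, and does so by invoking a Borel-isomorphism result to pass to a finer Polish topology $\mathcal{T}$ on $\mathsf{M}_\infty$ under which the disintegration kernels become continuous, then works in the stronger topology $w_{\mathcal{T}}$ and transfers compactness back. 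You avoid the change of topology entirely: because every coupling has the same prescribed marginals, the $L^1(\P^{\fp X})$- and $L^1(\P^{\fp Y})$-approximation of $W$ and $V$ by bounded continuous functions yields error estimates that are \emph{uniform in $n$}, which is exactly what is needed to pass $\int WV\,d\pi_n=0$ to the weak limit. This is more transparent and requires fewer imports from descriptive set theory.

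One small point you should make explicit: the bound $\bigl(\|V\|_\infty+\|\tilde W\|_\infty\bigr)\delta$ is useful only if $\|\tilde W\|_\infty$ stays bounded as $\delta\downarrow 0$; this is easily arranged by truncating the approximant, $\tilde W \leftarrow \max(-\|W\|_\infty,\min(\|W\|_\infty,\tilde W))$, which preserves continuity and only improves the $L^1$-error. The same truncation should be applied to $\tilde V$.
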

    \begin{proof} 
    We first claim that $\cpl^\epsilon_{\rm c}(\bbX,\bbY)$ is compact.
    To that end, write 
        \[
		\cpl^\epsilon_{\rm c}(\bbX,\bbY) = \bigcap_{t \in [0,1]} C_t, \quad \text{where }  C_t:= \left\{ \pi \in \cpl(\bbX,\bbY) :  \F_t^\bbY \indep_{\F_{t+\epsilon}^\bbX}{\F_1^\bbX} \text { under }  \pi \right\}.
	\]
    The claim follows if $C_t$ is compact for every fixed $t\in[0,1]$.
    Fix such $t$.

    By Lemma~\ref{lem:causal_equiv}, a coupling $\pi \in \cpl(\bbX,\bbY) $ is an element of $C_t$ if and only if 
    \begin{align}
        \label{eq:lem.compact.char.causal}
    \E_\pi\left[ \E_\pi[V|\F^{\fp Y}_{t}] \left(U - \E_\pi[U |\F^{\fp X}_{t+\epsilon} ] \right)   \right] =0 \text{ for all } U\in L^\infty(\F_1^\bbX) \text{ and } V\in L^\infty(\F_1^\bbY).
    \end{align}   
    We reformulate the latter condition in a manner that makes compactness more readily apparent.

    To that end recall that $\Omega^{\fp X}=\Omega^{\fp Y} = \mathsf{M}_\infty$ and the latter is a Lusin space. 
    Denote by $\P^{\fp Y}_t\colon  \Omega^{\fp Y} \to \prob(\Omega^{\fp Y})$ a regular disintegration of $\P^{\fp Y}$ w.r.t.\ $\F_t^{\fp Y}$, and similarly for $\P^{\fp X}_{t+\varepsilon}$ and $\P^{\fp X}_1$.
    These disintegrations exist since $\mathsf{M}_\infty$ is Lusin, see \cite[Corollary~10.4.13]{Bo07a}.
 Moreover, for measurable bounded $f: \Omega^{\fp X} \to \R$ and $g: \Omega^{\fp Y} \to \R$, define $\Phi_{f,g}\colon\Omega^{\fp X}\times\Omega^{\fp Y}\to\mathbb{R}$ by
    \[\Phi_{f,g}(\omega_x,\omega_y)
    :=\int g(\eta_y) \, \P^{\fp Y}_{t}(\omega_y,d\eta_y) \left(   \int f(\eta_x) \,\P^{\fp X}_{1}(\omega_x, d\eta_x) -   \int f(\eta_x) \, \P^{\fp X}_{t+\varepsilon}(\omega_x,d\eta_x)   \right).\]
    Then, by definition, \eqref{eq:lem.compact.char.causal} is equivalent to $\int \Phi_{f,g} \,d \pi=0$ for all $f$ and $g$ as above.

    In the next step, we change the topology on $\mathsf{M}_\infty$:
    By \cite[Theorem~13.2 and Theorem~13.11]{Ke95} there is a finer Polish topology $\mathcal{T}$ on $\mathsf{M}_\infty$ that generates the original Borel $\sigma$-field and for which the disintegrations 
   
     $\P^{\fp Y}_t$,  $\P^{\fp X}_{t+\varepsilon}$, and $\P^{\fp X}_1$
     are continuous.

    Thus, by  a monotone class argument, 
    \[ C_t = \bigcap_{f,g\in C_b(\mathsf{M}_\infty,\mathcal{T})} \left\{ \pi \in \cpl(\bbX,\bbY) :  \int \Phi_{f,g}\,d\pi=0 \right\} .\]

    Denote by $w_\mathcal{T}$ the coarsest topology on $\mathcal{P}(\mathsf{M}_\infty\times \mathsf{M}_\infty)$ for which $\rho\mapsto \int \Psi \,d\rho$ is continuous for every bounded $\Psi\colon \mathsf{M}_\infty\times \mathsf{M}_\infty\to\mathbb{R}$ that is $ \mathcal{T}\otimes\mathcal{T}$-continuous.
    Since $\mathcal{T}$ is a Polish topology, $\cpl(\bbX,\bbY)$ is $w_\mathcal{T}$-compact, see e.g.\ \cite[Lemma~4.4]{Vi09}.
    Moreover, since $\Phi_{f,g}$ is $\mathcal{T}\otimes\mathcal{T}$-continuous and bounded, $C_t$ is a $w_\mathcal{T}$-closed subset, hence $w_\mathcal{T}$ compact.
    Finally, note that $w_\mathcal{T}$ is finer than the weak topology (because $\mathcal{T}$ is finer topology than the original topology on $\mathsf{M}_\infty$). Hence, $C_t$ is weakly compact, as claimed.

    The same argument with the roles of $\fp X$ and $\fp Y$ reversed shows that $\cpl^\epsilon_{\rm c}(\bbY,\bbX)$ is compact, which completes the proof. \qedhere

    \end{proof}

	\begin{lemma}\label{lem:eps_causa_rightcont}
		Let $\bbX,\bbY\in \mathcal{FP}$, $\pi\in\cpl(\bbX,\bbY)$, and $\varepsilon\geq 0$.
		Then $\pi$ is $\varepsilon$-bicausal if and only if $\pi$ is $\delta$-bicausal for all $\delta>\varepsilon$.
		
	\end{lemma}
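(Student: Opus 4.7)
My plan is to verify each direction separately using Lemma~\ref{lem:causal_equiv}(ii), exploiting the tower property for the easy implication and the right-continuity of the filtration for the nontrivial one.

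For the "only if" direction, suppose $\pi$ is $\varepsilon$-bicausal and fix $\delta > \varepsilon$ and $t \in [0,1]$. Since the filtration is monotone we have $\F^{\fp X}_{t+\varepsilon} \subseteq \F^{\fp X}_{t+\delta} \subseteq \F^{\fp X}_1$, so for any bounded $\F^{\fp Y}_t$-measurable $V$ the tower property yields
\[
\E_\pi[V \mid \F^{\fp X}_{t+\delta}] = \E_\pi\bigl[\E_\pi[V\mid \F^{\fp X}_1] \,\big|\, \F^{\fp X}_{t+\delta}\bigr] = \E_\pi\bigl[\E_\pi[V\mid \F^{\fp X}_{t+\varepsilon}] \,\big|\, \F^{\fp X}_{t+\delta}\bigr] = \E_\pi[V\mid \F^{\fp X}_{t+\varepsilon}] = \E_\pi[V\mid \F^{\fp X}_1],
\]
where the first equality uses $\F^{\fp X}_{t+\delta}\subseteq\F^{\fp X}_1$, the second uses $\varepsilon$-causality from $\fp Y$ to $\fp X$, and the third uses $\F^{\fp X}_{t+\varepsilon}\subseteq\F^{\fp X}_{t+\delta}$. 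By Lemma~\ref{lem:causal_equiv}(ii) this establishes $\delta$-causality from $\fp Y$ to $\fp X$, and the symmetric statement follows identically.

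For the "if" direction, assume $\pi$ is $\delta$-bicausal for every $\delta>\varepsilon$ and fix $t\in[0,1]$. If $t+\varepsilon\geq 1$ then $\F^{\fp X}_{t+\varepsilon}=\F^{\fp X}_1$ by our convention, so the required conditional independence is trivial. Otherwise, choose a sequence $\delta_n\downarrow\varepsilon$ with $t+\delta_n\leq 1$. Because $(\F^{\fp X}_s)_{s\in[0,1]}$ satisfies the usual conditions, it is right-continuous, so $\bigcap_n \F^{\fp X}_{t+\delta_n} = \F^{\fp X}_{t+\varepsilon}$. For any bounded $\F^{\fp Y}_t$-measurable $V$, Lévy's downward theorem applied to the reverse-filtered martingale $(\E_\pi[V\mid \F^{\fp X}_{t+\delta_n}])_n$ gives
\[
\E_\pi[V\mid \F^{\fp X}_{t+\delta_n}] \xrightarrow{\;n\to\infty\;} \E_\pi[V\mid \F^{\fp X}_{t+\varepsilon}] \quad \text{a.s.\ and in } L^1.
\]
By hypothesis each left-hand side coincides with $\E_\pi[V\mid \F^{\fp X}_1]$, so the limit equals $\E_\pi[V\mid \F^{\fp X}_1]$ as well. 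Another appeal to Lemma~\ref{lem:causal_equiv}(ii) yields $\varepsilon$-causality from $\fp Y$ to $\fp X$, and again the symmetric argument completes the proof.

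The only potential obstacle is the handling of the boundary case $t+\varepsilon\geq 1$ and the use of right-continuity of the filtration; both are routine given the standing assumption that $(\F^{\fp X}_t)_{t\in[0,1]}$ and $(\F^{\fp Y}_t)_{t\in[0,1]}$ satisfy the usual conditions.
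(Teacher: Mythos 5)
Your proof is correct and follows essentially the same route as the paper's: reduce the claim to a conditional-expectation identity via Lemma~\ref{lem:causal_equiv}, handle the easy direction with the tower property, and for the nontrivial direction use right-continuity of the filtration together with backward (L\'evy downward) martingale convergence. One small labeling slip is worth flagging: the identity $\E_\pi[V\mid\F_1^{\fp X}]=\E_\pi[V\mid\F_{t+\varepsilon}^{\fp X}]$ for bounded $\F_t^{\fp Y}$-measurable $V$ encodes $\varepsilon$-causality \emph{from $\fp X$ to $\fp Y$} in the paper's Definition~\ref{def:cpl2} (not ``from $\fp Y$ to $\fp X$'' as you write), though since you treat both directions by symmetry this does not affect the argument.
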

	\begin{proof}
		We only prove the non-trivial direction, i.e.\ that if $\pi$ is $\delta$-bicausal for all $\delta>\varepsilon$, then $\pi$ is $\varepsilon$-bicausal.
		
		To that end, let $t \in [0,1]$.
		By  Lemma \ref{lem:causal_equiv}, we have for all bounded random variables $U$ and $V$ that are measurable w.r.t.\ to $\F_t^\bbY$ and  $\F_1^\bbX$, respectively, 
		\begin{align}
			\label{eq:eps.causal.closed.in.eps}
			\E_\pi[UV|\F_{t+\delta}^\bbX] 
			= \E_\pi[U|\F_{t+\delta}^\bbX]\E_\pi[V|\F_{t+\delta}^\bbX].
		\end{align}
		By right-continuity of $(\mathcal{F}^{\bbX}_t)_t$ and backward martingale convergence, as $\delta\downarrow \varepsilon$, all terms in  \eqref{eq:eps.causal.closed.in.eps} converge to the respective terms with $\varepsilon$ in place of $\delta$.
		Thus, $\pi$ is $\varepsilon$-causal by  Lemma \ref{lem:causal_equiv}.
		Reversing the roles of $\bbX$ and $\bbY$ completes the proof.
	\end{proof}

	\begin{proposition}\label{prop:AW_inf_attained}
		The infimum in the definition of $\AW_p(\bbX,\bbY)$ is attained for $\bbX,\bbY\in \mathcal{CFP}_p$.
	\end{proposition}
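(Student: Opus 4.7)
The plan is a standard compactness-plus-lower-semicontinuity argument, specialized to the present setting where canonical filtered processes live on the Lusin space $\mathsf{M}_\infty$ and the path maps $\overline{X},\overline{Y}$ are only continuous on the supports of the respective marginals.

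First I would pick a minimizing sequence $(\epsilon_n, \pi_n)_{n\in\N}$ with $\pi_n\in \cpl^{\epsilon_n}_{\rm bc}(\bbX,\bbY)$ and
\[
\E_{\pi_n}[d_\mathcal{X}^p(\overline{X},\overline{Y})]^{1/p}+\epsilon_n \;\xrightarrow[]{\;n\to\infty\;}\; \AW_p(\bbX,\bbY).
\]
Note $\epsilon_n$ is bounded (by, say, $\AW_p(\bbX,\bbY)+1$ eventually), so up to a subsequence $\epsilon_n \to \epsilon^\ast \in[0,\infty)$. Since the marginals $\law(\pp^\infty(\fp X)), \law(\pp^\infty(\fp Y))$ are fixed, $\{\pi_n\}\subset\cpl(\bbX,\bbY)$ is weakly tight, so a further subsequence converges weakly to some $\pi^\ast\in\cpl(\bbX,\bbY)$.

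Next I would check that $\pi^\ast\in\cpl^{\epsilon^\ast}_{\rm bc}(\bbX,\bbY)$. Fix $\delta>\epsilon^\ast$. Then $\epsilon_n<\delta$ for all sufficiently large $n$, and since $\cpl^{\epsilon_n}_{\rm bc}(\bbX,\bbY)\subset \cpl^{\delta}_{\rm bc}(\bbX,\bbY)$ (larger $\epsilon$ is a weaker constraint), we have $\pi_n\in\cpl^\delta_{\rm bc}(\bbX,\bbY)$ eventually. By Proposition~\ref{prop:epsbcclosed}, $\cpl^\delta_{\rm bc}(\bbX,\bbY)$ is (weakly) closed, hence $\pi^\ast\in\cpl^\delta_{\rm bc}(\bbX,\bbY)$ for every $\delta>\epsilon^\ast$. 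Lemma~\ref{lem:eps_causa_rightcont} then upgrades this to $\pi^\ast\in\cpl^{\epsilon^\ast}_{\rm bc}(\bbX,\bbY)$.

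It remains to show
\[
\E_{\pi^\ast}[d_\mathcal{X}^p(\overline{X},\overline{Y})] \;\le\; \liminf_{n\to\infty} \E_{\pi_n}[d_\mathcal{X}^p(\overline{X},\overline{Y})],
\]
from which $\E_{\pi^\ast}[d_\mathcal{X}^p(\overline{X},\overline{Y})]^{1/p}+\epsilon^\ast \le \AW_p(\bbX,\bbY)$ follows and forces equality. This is the only step that requires care: on $\mathsf{M}_\infty$ the map $\overline{X}$ is Borel but only continuous on $\supp \law(\pp^\infty(\fp X))$ (Remark~\ref{rem:canonCont}), and similarly for $\overline{Y}$. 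However, every $\pi_n$ and $\pi^\ast$ has its mass concentrated on $\supp \law(\pp^\infty(\fp X))\times \supp \law(\pp^\infty(\fp Y))$, on which $(x,y)\mapsto d_\mathcal{X}^p(\overline{X}(x),\overline{Y}(y))$ is continuous. Thus the cost is continuous $\pi_n$-a.s.\ and $\pi^\ast$-a.s. The finite $p$-th moment of the marginals together with $d_\mathcal{X}(X,Y)^p\le C(d_\mathcal{X}(X,0)^p+d_\mathcal{X}(Y,0)^p)$ yields uniform integrability across all couplings, so a truncation argument (approximate $d_\mathcal{X}^p$ by $d_\mathcal{X}^p\wedge K$, pass to the weak limit using continuity on supports, then let $K\uparrow\infty$) gives the required lower semicontinuity.

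The main technical obstacle, as indicated, is this last lower-semicontinuity step: standard Portmanteau-type statements apply to lsc functions on Polish spaces, whereas here the cost is merely Borel on the Lusin space $\mathsf{M}_\infty\times\mathsf{M}_\infty$ and only continuous on the product of supports. Either one restricts the analysis to these supports (which are Borel subsets carrying full mass) or invokes a finer Polish topology on $\mathsf{M}_\infty$ as in the proof of Proposition~\ref{prop:epsbcclosed}; both routes reduce the problem to the classical weak-lsc of transport costs and complete the argument.
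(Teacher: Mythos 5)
Your argument is correct and matches the paper's proof in all essential respects: extract a weak subsequential limit $\pi^\ast$ using compactness (Proposition~\ref{prop:epsbcclosed}), show $\pi^\ast\in\cpl^\delta_{\rm bc}$ for every $\delta>\epsilon^\ast$ and upgrade via Lemma~\ref{lem:eps_causa_rightcont}, and establish lower semicontinuity of the cost via continuity of $\overline X,\overline Y$ on the (closed) supports. Two small remarks: your unified handling of all $\delta>\epsilon^\ast$ is a mild streamlining of the paper's case split into increasing/decreasing $(\epsilon_n)$, and the appeal to uniform integrability is superfluous---the truncation $d_{\mathcal X}^p\wedge K$ together with nonnegativity already yields weak lower semicontinuity (uniform integrability would only be needed for upper semicontinuity).
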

	\begin{proof}
		Let $(\varepsilon_n)_n$ and $(\pi_n)_n$ be  minimizing for $\AW(\bbX,\bbY)$, that is, 
		\[ \AW_p(\fp X,\fp Y)=\lim_n \left( \E_{\pi^n}[ d_{\mathcal X}^p(X,Y)]^{1/p} + \varepsilon_n \right) \]
		and $\pi_n\in\cpl^{\varepsilon_n}_{\rm bc}(\fp X,\fp Y)$.
		After passing to a (not-relabelled) subsequence, there is $\varepsilon\in[0,1]$ such that $\varepsilon_n\to \varepsilon$.
		Clearly, it is enough to show that there is $\pi\in\cpl^{\varepsilon}_{\rm bc}(\bbX,\bbY)$ such that 
				\begin{align}
					\label{eq:attainement.lsc}
			\E_\pi[ d_{\mathcal X}^p(X,Y)] \leq \liminf_n \E_{\pi_n}[ d_{\mathcal X}^p(X,Y)].
				 \end{align}
    
    It is a property of canonical filtered processes that the maps $X : \Omega^{\fp X} \to \mathcal{X}$ and $Y : \Omega^{\fp Y} \to \mathcal{X}$ are continuous on the support of $\P^{\fp X}$ and $\P^{\fp Y}$ respectively, see Remark~\ref{rem:canonCont}. Hence, there is a closed set $A \subset \Omega^{\fp X} \times \Omega^{\fp Y}$ such that $d_{\mathcal X}^p(X,Y) : A \to [0,\infty)$ is continuous and $\supp(\pi) \subset A$ for all $\pi \in \cpl(\fp X, \fp Y)$.  Therefore, $\cpl(\fp X, \fp Y) \to \R : \rho \mapsto \E_\rho[ d_{\mathcal X}^p(X,Y)]$ is lower-semicontinuous.

    To prove \eqref{eq:attainement.lsc},  possibly after passing to another subsequence, we may assume without loss of generality that $(\varepsilon_n)_n$ is either increasing or decreasing.
		
		In case that $(\varepsilon_n)_n$ is increasing, the claim follows from the compactness of $\cpl^{\varepsilon}_ {\rm bc}(\bbX,\bbY)$ that was established in Proposition \ref{prop:epsbcclosed}, and from lower-semicontinuity of $\rho\mapsto \E_\rho[ d_{\mathcal X}^p(X,Y)]$.
		
		In case that $(\varepsilon_n)_n$ is decreasing, by Proposition \ref{prop:epsbcclosed}, possibly after passing to a subsequence, there is a coupling $\pi$ such $\pi_n\to\pi$ weakly. 
		Moreover, for every $\delta>\varepsilon$ and all $n$ large enough, $\pi_n$ is $\delta$-bicausal.
        By Proposition \ref{prop:epsbcclosed}, $\pi$ is $\delta$-bicausal and as $\delta>\varepsilon$ was arbitrary, Lemma \ref{lem:eps_causa_rightcont} implies that  $\pi$ is $\varepsilon$-bicausal.
		The claim again follows from lower semicontinuity of $\rho\mapsto \E_\rho[ d_{\mathcal X}^p(X,Y)]$.
	\end{proof}

A basic yet important concept in the classical theory of Wasserstein distances is that of `gluing'. In the present setting it takes the following form.
	
	\begin{lemma}
	\label{lem:glueing}
		Let $\bbX,\bbY,\bbZ\in \mathcal{CFP}$, let $\pi \in \cpl(\bbX,\bbY)$ and $\rho \in \cpl(\bbY,\bbZ)$. 
		Then there is $\Pi\in \prob(\Omega^\bbX\times\Omega^\bbY \times \Omega^\bbZ)$ such that 
		\[ \pi = ({\pr_{\Omega^\bbX \times \Omega^\bbY}})_\#\Pi,
        \qquad
		\rho = ({\pr_{\Omega^\bbY \times \Omega^\bbZ}})_\#\Pi,
	\quad\text{and}\quad
	\F^\bbX \indep_{ \F^\bbY_1} \F^\bbZ \text{ under $\Pi$}.\]
		
		If, in addition, $\pi$ is $\epsilon_1$-causal from $\fp X$ to $\fp Y$ and $\rho$ is $\varepsilon_2$-causal from $\fp Y$ to $\fp Z$, then $\sigma:=({\pr_{\Omega^\bbX \times \Omega^\bbZ}})_\#\Pi$ is $(\varepsilon_1+\varepsilon_2)$-causal from $\fp X$ to $\fp Z$.
	\end{lemma}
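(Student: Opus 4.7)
For the first part, the plan is to construct $\Pi$ by the standard gluing/disintegration trick. Since $\Omega^{\fp Y}=\mathsf{M}_\infty$ is Lusin, by \cite[Corollary~10.4.13]{Bo07a} the measure $\pi$ admits a regular disintegration $\pi(d\omega_x,d\omega_y)=\pi_{\omega_y}(d\omega_x)\,\P^{\fp Y}(d\omega_y)$ with respect to $\F^{\fp Y}$, and similarly $\rho(d\omega_y,d\omega_z)=\rho_{\omega_y}(d\omega_z)\,\P^{\fp Y}(d\omega_y)$. Define
\[
\Pi(d\omega_x,d\omega_y,d\omega_z):=\pi_{\omega_y}(d\omega_x)\,\rho_{\omega_y}(d\omega_z)\,\P^{\fp Y}(d\omega_y).
\]
The projections onto $\Omega^{\fp X}\times\Omega^{\fp Y}$ and $\Omega^{\fp Y}\times\Omega^{\fp Z}$ visibly recover $\pi$ and $\rho$. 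Moreover, by construction, conditional on $\omega_y\in\Omega^{\fp Y}$ (hence a fortiori on $\F^{\fp Y}_1=\F^{\fp Y}$), the coordinates $\omega_x$ and $\omega_z$ are independent, which is precisely $\F^{\fp X}\indep_{\F^{\fp Y}_1}\F^{\fp Z}$ under $\Pi$.

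For the second part, the strategy is to verify $\epsilon$-causality of $\sigma:=(\pr_{\Omega^{\fp X}\times\Omega^{\fp Z}})_\#\Pi$ with $\epsilon=\epsilon_1+\epsilon_2$ through criterion (iv) of Lemma~\ref{lem:causal_equiv}. Fix $t\in[0,1]$, $U\in L^\infty(\F_1^{\fp X})$, $V\in L^\infty(\F_t^{\fp Z})$, and compute $\E_\sigma[UV]=\E_\Pi[UV]$. Using conditional independence of $\F^{\fp X}$ and $\F^{\fp Z}$ given $\F^{\fp Y}_1$ under $\Pi$,
\[
\E_\Pi[UV]=\E_\Pi\bigl[U\,\E_\Pi[V\mid \F^{\fp Y}_1]\bigr]=\E_\Pi[U\tilde V], \qquad \tilde V:=\E_\rho[V\mid \F^{\fp Y}_1],
\]
where the last equality uses that the $(\fp Y,\fp Z)$-marginal of $\Pi$ is $\rho$. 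By $\epsilon_2$-causality of $\rho$ from $\fp Y$ to $\fp Z$ and Lemma~\ref{lem:causal_equiv}(ii), $\tilde V=\E_\rho[V\mid\F^{\fp Y}_{t+\epsilon_2}]$ and thus $\tilde V\in L^\infty(\F^{\fp Y}_{t+\epsilon_2})$. Since the $(\fp X,\fp Y)$-marginal of $\Pi$ is $\pi$ and $\pi$ is $\epsilon_1$-causal from $\fp X$ to $\fp Y$, applying Lemma~\ref{lem:causal_equiv}(iv) at time $t+\epsilon_2$ (with $U$ and the $\F^{\fp Y}_{t+\epsilon_2}$-measurable $\tilde V$) yields
\[
\E_\Pi[U\tilde V]=\E_\pi[U\tilde V]=\E_\pi\bigl[\tilde V\,\E_\pi[U\mid\F^{\fp X}_{t+\epsilon_1+\epsilon_2}]\bigr].
\]
Writing $U':=\E_\sigma[U\mid\F^{\fp X}_{t+\epsilon_1+\epsilon_2}]=\E_\pi[U\mid\F^{\fp X}_{t+\epsilon_1+\epsilon_2}]$, which is $\F^{\fp X}$-measurable, we undo the first step: by conditional independence under $\Pi$,
\[
\E_\Pi[\tilde V U']=\E_\Pi\bigl[U'\,\E_\Pi[V\mid\F^{\fp Y}_1]\bigr]=\E_\Pi\bigl[U'\,\E_\Pi[V\mid\F^{\fp X}\vee\F^{\fp Y}_1]\bigr]=\E_\Pi[U'V]=\E_\sigma[U'V].
\]
Combining the identities gives $\E_\sigma[UV]=\E_\sigma[U'V]$, which is condition (iv) of Lemma~\ref{lem:causal_equiv}, proving the required $(\epsilon_1+\epsilon_2)$-causality.

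The only delicate part is the bookkeeping of $\sigma$-algebras in the chain: ensuring that $\tilde V$ lands in $\F^{\fp Y}_{t+\epsilon_2}$ (so that $\epsilon_1$-causality of $\pi$ applies at shifted time $t+\epsilon_2$), and that the conditional independence under $\Pi$ can be invoked both to pass from $V$ to its $\F^{\fp Y}_1$-conditional expectation and back. All other steps reduce to the usual properties of conditional expectation and the marginal constraints of $\Pi$.
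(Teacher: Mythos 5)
Your proposal is correct and follows essentially the same route as the paper. For the construction of $\Pi$, the paper disintegrates $\pi$ and $\rho$ w.r.t.\ $\F_1^{\fp Y}$ directly, while you disintegrate w.r.t.\ the full Borel $\sigma$-algebra $\F^{\fp Y}$ and then observe $\F_1^{\fp Y}=\F^{\fp Y}$; this is indeed valid for canonical filtered processes because the canonical filtration at time $1$ generates $\mathcal{B}_{\mathsf{M}_\infty}$ (the paths being \cadlag{}), so $\F_1^{\fp Y}$ is just its $\P^{\fp Y}$-completion, but it is a point worth making explicit rather than asserting in passing, since without it your construction only yields $\F^{\fp X}\indep_{\F^{\fp Y}}\F^{\fp Z}$ and the claimed $\F^{\fp X}\indep_{\F_1^{\fp Y}}\F^{\fp Z}$ would not automatically follow. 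For the causality propagation, you verify criterion~(iv) of Lemma~\ref{lem:causal_equiv} by chaining identities on $\E_\sigma[UV]$, whereas the paper verifies criterion~(ii)/(iii) by chaining identities on $\E_\Pi[W\mid\cdot]$; the two bookkeeping chains are mirror images of each other (the self-adjointness of conditional expectation translates one into the other), and both pivot on the same intermediate observation that the conditional-independence property of $\Pi$ reduces $V$ to an $\F^{\fp Y}_1$-measurable variable, which $\epsilon_2$-causality of $\rho$ then places in $\F^{\fp Y}_{t+\epsilon_2}$, allowing $\epsilon_1$-causality of $\pi$ at the shifted time $t+\epsilon_2$ to close the argument. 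So your approach is sound and the same in substance; the minor differences are in the choice of disintegration $\sigma$-algebra and in which equivalent formulation of Lemma~\ref{lem:causal_equiv} is invoked.
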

	\begin{proof}
	Since $\F_1^{\fp Y} \subset \F^{\fp Y}$ and the latter is countably generated (because $\fp Y$ is a canonical filtered process and hence based on a standard Borel probability space, see Section~\ref{sec:notbg}), there are disintegrations $\pi^{\F_1^{\fp Y}}$ and $\rho^{\F_1^{\fp Y}}$  of $\pi$ and $\rho$ w.r.t.\ $\F_1^{\fp Y}$:
		\begin{align*}
			\pi(d\omega_x,d\omega_y)
			&=\pi^{\F_1^{\fp Y}}_{\omega_y}(d\omega_x)\, \P^{\bbY}(d\omega_y), \\
			\rho(d\omega_y,d\omega_z)
			&=\rho^{\F_1^{\fp Y}}_{\omega_y}(d\omega_z)\,\P^{\bbY}(d\omega_y),
		\end{align*}
		see e.g.\ \cite[Corollary~10.4.13]{Bo07a}.
		Now define
		\begin{align*}
			\Pi(d\omega_x,d\omega_y,d\omega_z)
			 :=\rho^{\F_1^{\fp Y}}_{\omega_y} (d\omega_z)\,\pi^{\F_1^{\fp Y}}_{\omega_y}(d\omega_x)\,\P^{\bbY}(d\omega_y).
		\end{align*}
		Clearly,  we have  $({\pr_{\Omega^\bbX \times \Omega^\bbY}})_\#\Pi= \pi$ and $({\pr_{\Omega^\bbY \times \Omega^\bbZ}})_\#\Pi = \rho$. For functions $U,V,W$ that are  bounded and measurable w.r.t.\ $\F^\bbX,\F^\bbY_1,\F^\bbZ$, respectively, it follows from the definition of $\Pi$ that 
		\begin{align*}
		\E_\Pi[UVW] 
		 = \E_\Pi \left[ V \E_\Pi[U|\F^\bbY_1]\E_\Pi[W|\F^\bbY_1]  \right], 		
		\end{align*}
            i.e.\ we have  $\F^\bbX \indep_{ \F^\bbY_1 } \F^\bbZ$ under $\Pi$.

	It remains to prove that $\F_t^\bbZ \indep_{\F^\bbX_{t+\epsilon_1+\epsilon_2}} \F_1^\bbX$  for every $t\in[0,1]$ under $\Pi$ and hence under $\sigma$. To that end, note that we have
		\begin{enumerate}[(a)]
			\item $\F^\bbY_t \indep_{ \F^\bbX_{t+\epsilon_1}} \F^\bbX_1$ under $\Pi$ for all $t \in [0,1]$, \label{it:prf.glue:a} 
			\item $\F^\bbZ_t \indep_{ \F^\bbY_{t+\epsilon_2}} \F^\bbY_1$ under $\Pi$ for all $t \in [0,1]$,\label{it:prf.glue:b}
			\item $\F^\bbX \indep_{ \F^\bbY_1 } \F^\bbZ$ under $\Pi$.\label{it:prf.glue:c}
		\end{enumerate}
Fix $t \in [0,1]$. For any bounded $\mathcal{F}^{\bbZ}_t$-measurable random variable $W$, by first using the conditional independence \ref{it:prf.glue:c} and then the causality condition \ref{it:prf.glue:b}, we have that
	\begin{align}\label{eq:prf.glue1}
	   \E_\Pi[W | \, \mathcal{F}^{\bbX,\bbY}_{1,1}] =\E_\Pi[W  | \, \mathcal{F}^{\bbY}_{1}] =\E_\Pi[W  | \, \mathcal{F}^{\bbY}_{t+\epsilon_2}]. 
	\end{align}
   It follows that for every  bounded $\mathcal{F}^{\bbZ}_t$-measurable random variable $W$,
	\begin{align*}
	\E_\Pi[W| \, \mathcal{F}^{\bbX}_1]=&\E_\Pi[ \E_\Pi[W| \, \mathcal{F}^{\bbX,\bbY}_{1,1}]| \, \mathcal{F}^{\bbX}_1]=
	\E_\Pi[\E_\Pi[W  | \, \mathcal{F}^{\bbY}_{t+\epsilon_2}] | \, \mathcal{F}^{\bbX}_1]=\E_\Pi[\E_\Pi[W  | \, \mathcal{F}^{\bbY}_{t+\epsilon_2}] | \, \mathcal{F}^{\bbX}_{t+\epsilon_1+\epsilon_2}] \\
	=& \E_\Pi[\E_\Pi[W  | \, \mathcal{F}^{\bbX,\bbY}_{1,1}] | \, \mathcal{F}^{\bbX}_{t+\epsilon_1+\epsilon_2}]=\E_\Pi[W | \, \mathcal{F}^{\bbX}_{t+\epsilon_1+\epsilon_2}],
	\end{align*}
    where the second and fourth equality are due to \eqref{eq:prf.glue1} and the third equality is due to the causality condition \ref{it:prf.glue:a} at time $t+\epsilon_2$. Therefore, we have shown that $\E_\sigma[W| \, \mathcal{F}^{\bbX}_1] = \E_\sigma[W|\mathcal{F}^{\bbX}_{t+\epsilon_1+\epsilon_2}]$. By Lemma~\ref{lem:causal_equiv} the latter is equivalent to $\F_t^\bbZ \indep_{\F^\bbX_{t+\epsilon_1+\epsilon_2}} \F_1^\bbX$ under $\sigma$. 
\end{proof}

	\begin{lemma}\label{lem:AW_triangluar_ineq}
	For all $\fp X, \fp Y, \fp Z \in \mathcal{CFP}_p$ we have $\AW_p(\fp X, \fp Z) \le \AW_p(\fp X, \fp Y) + \AW_p(\fp Y, \fp Z)$.
	\end{lemma}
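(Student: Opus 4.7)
The plan is to adapt the classical Wasserstein triangle-inequality proof via gluing (Lemma~\ref{lem:glueing}), with additional bookkeeping for the causality parameter $\varepsilon$.

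By Proposition~\ref{prop:AW_inf_attained}, pick minimizers: let $\varepsilon_1\ge 0$ and $\pi\in\cpl^{\varepsilon_1}_{\rm bc}(\fp X,\fp Y)$ with $\AW_p(\fp X,\fp Y)=\E_\pi[d_\mathcal X^p(X,Y)]^{1/p}+\varepsilon_1$, and analogously $\varepsilon_2\ge 0$ and $\rho\in\cpl^{\varepsilon_2}_{\rm bc}(\fp Y,\fp Z)$ attaining $\AW_p(\fp Y,\fp Z)$. Apply Lemma~\ref{lem:glueing} to obtain $\Pi\in\prob(\Omega^{\fp X}\times\Omega^{\fp Y}\times\Omega^{\fp Z})$ with bivariate marginals $\pi$ and $\rho$ such that $\F^{\fp X}\indep_{\F_1^{\fp Y}}\F^{\fp Z}$ under $\Pi$. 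Set $\sigma:=(\pr_{\Omega^{\fp X}\times\Omega^{\fp Z}})_\#\Pi$. The second part of Lemma~\ref{lem:glueing} yields that $\sigma$ is $(\varepsilon_1+\varepsilon_2)$-causal from $\fp X$ to $\fp Z$. Applying the same lemma with the roles of $\fp X$ and $\fp Z$ interchanged (using that $\pi$ is also causal from $\fp Y$ to $\fp X$ and $\rho$ from $\fp Z$ to $\fp Y$) produces the \emph{same} three-way coupling $\Pi$, since the construction $\Pi(d\omega_x,d\omega_y,d\omega_z)=\pi^{\F_1^{\fp Y}}_{\omega_y}(d\omega_x)\,\rho^{\F_1^{\fp Y}}_{\omega_y}(d\omega_z)\,\P^{\fp Y}(d\omega_y)$ is symmetric in the outer disintegrations. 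Hence $\sigma$ is also $(\varepsilon_1+\varepsilon_2)$-causal from $\fp Z$ to $\fp X$, so $\sigma\in\cpl^{\varepsilon_1+\varepsilon_2}_{\rm bc}(\fp X,\fp Z)$.

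Finally, the triangle inequality for $d_\mathcal X$ together with Minkowski's inequality in $L^p(\Pi)$ gives
\[
\E_\sigma[d_\mathcal X^p(X,Z)]^{1/p}\le \E_\Pi[d_\mathcal X^p(X,Y)]^{1/p}+\E_\Pi[d_\mathcal X^p(Y,Z)]^{1/p} =\E_\pi[d_\mathcal X^p(X,Y)]^{1/p}+\E_\rho[d_\mathcal X^p(Y,Z)]^{1/p},
\]
and adding $\varepsilon_1+\varepsilon_2$ on both sides yields
\[
\AW_p(\fp X,\fp Z)\le \E_\sigma[d_\mathcal X^p(X,Z)]^{1/p}+\varepsilon_1+\varepsilon_2\le \AW_p(\fp X,\fp Y)+\AW_p(\fp Y,\fp Z).
\]

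The only nontrivial point is confirming the \emph{bi}causality of the glued marginal $\sigma$: Lemma~\ref{lem:glueing} as stated only produces one-sided causality of $\sigma$, but because the explicit formula for $\Pi$ in its proof is symmetric in the two outer factors once the inner $\F_1^{\fp Y}$-disintegration is fixed, a second invocation of the lemma with $\fp X$ and $\fp Z$ swapped returns the same $\Pi$ and hence gives the reverse causality without further effort. Everything else is the standard Wasserstein triangle-inequality calculation, now with the additive contribution $\varepsilon_1+\varepsilon_2$ exactly matching the required penalty.
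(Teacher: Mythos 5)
Your proof is correct and follows essentially the same route as the paper: use Proposition~\ref{prop:AW_inf_attained} to obtain attaining pairs $(\varepsilon_1,\pi)$ and $(\varepsilon_2,\rho)$, glue via Lemma~\ref{lem:glueing}, and apply Minkowski's inequality. One point in your favour: you explicitly address why the projected coupling $\sigma$ is $(\varepsilon_1+\varepsilon_2)$-\emph{bi}causal even though Lemma~\ref{lem:glueing} as stated only delivers one-sided causality — the paper's own proof simply asserts $\sigma\in\cpl^{\varepsilon_1+\varepsilon_2}_{\rm bc}$ without remarking that one needs to run the glueing argument (or invoke the symmetry of the construction of $\Pi$) a second time with the roles of $\fp X$ and $\fp Z$ swapped, which is exactly the observation you supply.
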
	
	\begin{proof}
	By Proposition~\ref{prop:AW_inf_attained} there are $\epsilon_1\ge 0$ and $\pi_1 \in \cpl^{\epsilon_1}_{\rm bc}(\bbX,\bbY)$ which attain the infimum in the definition of $\AW_p(\fp X,\fp Y)$, and $\varepsilon_2$ and  $\pi_2 \in \cpl^{\epsilon_2}_{\rm bc}(\bbY,\bbZ)$ which attain the infimum in the definition of $\AW_p(\fp Y,\fp Z)$.

		By Lemma \ref{lem:glueing} there exists $\Pi\in \prob(\Omega^\bbX \times \Omega^\bbY \times \Omega^\bbZ)$ such that $({\pr_{\Omega^\bbX \times \Omega^\bbY}})_\#\Pi= \pi_1$, $({\pr_{\Omega^\bbY \times \Omega^\bbZ}})_\#\Pi= \pi_2$ and 
		\[\sigma:=({\pr_{\Omega^\bbX \times \Omega^\bbZ}})_\#\Pi\in \cpl^{\epsilon_1+\epsilon_2}_{\rm  bc}(\bbX,\bbZ).\]
	Thus
		\begin{align*}
			\AW_p(\bbX,\bbZ) 
			&\le \E_\sigma[d_{\mathcal X}^p(X,Z)]^{1/p} + \epsilon_1 + \epsilon_2 \\
			&\le \E_\Pi[d_{\mathcal X}^p(X,Y)]^{1/p} + \E_\Pi[d_{\mathcal X}^p(Y,Z)]^{1/p} + \epsilon_1 + \epsilon_2 \\
			&= \AW_p(\bbX,\bbY) + \AW_p(\bbY,\bbZ). \qedhere
		\end{align*}
	\end{proof}
    For the proof of the next proposition we need to discretize filtered processes in time. To that end, we introduce some notation:
     A \emph{grid} is a finite set $T \subset [0,1]$  that satisfies $0 \notin T$ and $1 \in T$. We denote the elements of $T$ as $0<t_1 <t_2 < \dots < t_N=1$, in particular $N$ denotes the cardinality of $T$. For notational convenience\footnote{Further on, the behaviour of $0$ is different from that of grid points in $(0,1]$. To avoid cumbersome case distinctions between 0 and the other grid points, we agreed that 0 is not a grid point (i.e.\ $0 \notin T$), but we still allow the notation $t_0=0$. Note that $t_0$ appears in the definition of $\mesh(T)$ below, so $\mesh(T)$ small also implies that $t_1$ is close to $0$.}, set $t_0:=0$.    
     Define a function that rounds $t \in [0,1)$ up to the next element in $T$, that is 
        \begin{align}
            \label{eq:def.rounding}
    \lceil t \rceil_T := \min \{  s \in T : s >t\}.
    \end{align}
        and set $\lceil 1 \rceil_T = 1$. In particular, $\lceil t_i \rceil_T = t_{i+1}$ for $i<N$. Finally, the \emph{mesh} of the grid $T$ is given by  
        \[
        \textup{mesh}(T) := \max_{i=1,\dots,N} |t_i -t_{i-1}|.
        \]

    \begin{proposition}\label{prop:FiniteOmegaDense}
 Let $\fp X \in \mathcal{FP}_p$ and $\epsilon >0$. Then there is  $\fp Y \in \mathcal{FP}_p$ that is defined on a finite probability space such that $\AW_p(\fp X, \fp Y) < \epsilon$. 
    \end{proposition}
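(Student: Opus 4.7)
The plan is to approximate $\fp X$ by a filtered process on a finite probability space in three stages, then combine via the triangle inequality (Lemma~\ref{lem:AW_triangluar_ineq}). Set $\delta := \epsilon/3$. First, by $\sim_{\rm HK}$-invariance we may replace $\fp X$ with its canonical representative $\overline{\fp X}$ (for which $\AW_p(\fp X,\overline{\fp X})=0$ by a later result in this section), so we may assume $\fp X$ lives on the standard Borel space $\mathsf{M}_\infty$ with the structure described in Section~\ref{sec:Recap}.

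Second, time-discretize by choosing a grid $T=\{t_1<\dots<t_N=1\}$ with $\mesh(T)<\delta$. Define $\fp{\tilde X}$ to have the same paths and probability as $\fp X$ but with the step-function filtration $\F^{\fp{\tilde X}}_t := \F^{\fp X}_{t_{i(t)}}$ (in its right-continuous envelope), where $t_{i(t)}$ denotes the largest grid point $\le t$. The identity coupling $\pi_1 = (\id,\id)_\# \P^{\fp X}$ is $\delta$-bicausal: one direction holds trivially because $\F^{\fp{\tilde X}}_t\subset\F^{\fp X}_t\subset\F^{\fp X}_{t+\delta}$, and the other because $t+\delta\ge t_{i(t)+1}$ (since $\delta>\mesh(T)$) gives $\F^{\fp X}_t\subset\F^{\fp{\tilde X}}_{t+\delta}$. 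As $d_\X(X,X)=0$, we get $\AW_p(\fp X,\fp{\tilde X})\le\delta$.

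Third, space-discretize: using $\fp X\in\mathcal{FP}_p$ together with uniform integrability, select a compact $K\subset\R^d$ with $\E\big[\sum_i |X_{t_i}|^p\ind{X_{t_i}\notin K}\big]<\delta^p/N$, partition $K$ into finitely many cells of small diameter, and let $\hat X_i$ be the representative of the cell containing $X_{t_i}$ (or $0$ outside $K$); by choosing cells fine enough, $\E\big[\sum_{i=1}^N |X_{t_i}-\hat X_i|^p\big]^{1/p}<\delta$. Define $\fp Y$ on the finite set $\Omega^{\fp Y}$ of realized tuples $(\hat X_1,\dots,\hat X_N)$, with $\P^{\fp Y}:=\law(\hat X_1,\dots,\hat X_N)$, piecewise constant paths $Y_t:=\hat X_{i(t)}$, and natural filtration $\F^{\fp Y}_t:=\sigma(\hat X_j : t_j\le t)$.

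The main obstacle is constructing a $\delta$-bicausal coupling $\pi_2\in\cpl^{\delta}_{\rm bc}(\fp{\tilde X},\fp Y)$ with small transport cost. The naive identity-like coupling $(\id,g)_\#\P^{\fp{\tilde X}}$ with $g(\omega):=(\hat X_1(\omega),\dots,\hat X_N(\omega))$ is automatically $0$-causal from $\fp{\tilde X}$ to $\fp Y$ (since $\F^{\fp Y}_t\subset\F^{\fp{\tilde X}}_t$), but it typically fails to be $\delta$-causal in the reverse direction: the richer $\F^{\fp{\tilde X}}_t$ contains fine-scale information about $X_{t_{i(t)}}$ that predicts future quantized values $\hat X_{i(t)+1},\dots,\hat X_N$ beyond what the coarse $\F^{\fp Y}_{t+\delta}$ provides. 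To circumvent this, I would construct $\pi_2$ inductively via nested conditional couplings: at each grid step $t_i$, given the past pair $(X_{<i},Y_{<i})$, couple $X_{t_i}$ with $Y_{t_i}$ using a $\W_p$-optimal coupling of $\law(X_{t_i}\mid\F^{\fp{\tilde X}}_{t_{i-1}})$ and $\law(\hat X_i\mid\hat X_{<i})$. This construction is bicausal by design since each step uses only past information, and the transport cost decomposes as a telescoping sum of step-wise Wasserstein errors controlled by the quantization fineness and the $L^p$-proximity of the conditional laws. Combining the three stages via Lemma~\ref{lem:AW_triangluar_ineq} yields $\AW_p(\fp X,\fp Y)\le 0+\delta+2\delta<\epsilon$.
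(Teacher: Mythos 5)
Your three-step plan mirrors the paper's overall shape (time-discretize, spatially discretize, combine via the triangle inequality), but the spatial discretization step has a conceptual gap that the paper's proof avoids precisely by invoking the discrete-time denseness result \cite[Theorem~5.4]{BaBePa21}. Your approximant $\fp Y$ carries the \emph{natural} filtration of the quantized paths, $\F_t^{\fp Y}=\sigma(\hat X_j : t_j\le t)$, whereas $\fp X\in\mathcal{FP}_p$ is allowed to carry a \emph{general} filtration encoding strictly more information than the path. Consider $X_t = V\mathbbm{1}_{\{t\ge 1/2\}}$ with $V=\pm 1$ equiprobable and $V$ already $\F_0^{\fp X}$-measurable. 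Then $\law(X_1\mid\F_0^{\fp X})=\delta_V$ a.s., while for your $\fp Y$ the conditional law of the terminal value given $\F_0^{\fp Y}$ is $\tfrac12(\delta_1+\delta_{-1})$; these are at $\W_p$-distance of order one regardless of how fine the quantization cells are, so your nested conditional coupling has cost of order one and $\AW_p(\fp X,\fp Y)$ does not become small. No naturally filtered process with paths close to $X$ and the same information-release schedule can be $\AW_p$-close to this $\fp X$; the approximant would have to ``leak'' $V$ early through modified paths, which quantization alone does not produce. This is exactly the difficulty the paper off-loads to \cite{BaBePa21}: that theorem delivers a finite-$\Omega$ process $\fp Z$ with a \emph{possibly non-natural} filtration on $\Omega^{\fp Z}$, bicausally close to the discrete-time restriction $\fp X^T$ that \emph{keeps} the general filtration $(\F_{t_i}^{\fp X})_i$. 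That citation is doing the hard work, and your sketch does not reproduce it.

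Two smaller problems. Your time-discretized filtration $\F_t^{\fp{\tilde X}}:=\F^{\fp X}_{t_{i(t)}}$ with $t_{i(t)}$ the \emph{largest} grid point $\le t$ satisfies $\F_t^{\fp{\tilde X}}\subsetneq\F_t^{\fp X}$ off the grid, so $X$ is no longer adapted and $\fp{\tilde X}\notin\mathcal{FP}$; the paper rounds \emph{up}, using $\F^{\fp X}_{\lceil t\rceil_T}\supset\F_t^{\fp X}$ (cf.\ \eqref{eq:def:DT} and Lemma~\ref{lem:AWMesh}), which preserves adaptedness and still gives $\mesh(T)$-bicausality of the identity coupling. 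Also, justifying $\AW_p(\fp X,\overline{\fp X})=0$ ``by a later result in this section'' is circular: Proposition~\ref{prop:AW_CFP} is proved \emph{using} the present proposition. (That particular identity can be obtained directly from Lemma~\ref{lem:canonical.couplings}\ref{it:lem.canonical.couplings.1}, but in any case the paper does not need the reduction to the canonical representative at all.)
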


    \begin{proof} 
    Fix $\fp X \in \mathcal{FP}$  and  $\epsilon>0$. We will discretize the process $\fp X $ in time and employ that the discrete-time analogue to Proposition~\ref{prop:FiniteOmegaDense} is already known.
    
    \noindent
    \emph{Step 1: Time-discretization of the paths of $\fp X$:} 
        
        Let $T$ be a grid. We first introduce a map   
         $     \iota_T : \R^{d(N+1)} \to \mathcal{X}  $
    that assigns to a vector $a = (a_i)_{i=0}^N  \in \R^{d(N+1)} $ a path $f \in \mathcal{X}$ satisfying $f(t_i)=a_i$ for all $i=0,\dots,N$.  More precisely, 
    \begin{itemize}
        \item  if $\mathcal{X}=C([0,1];\R^d)$, we set $\iota_T(a)(t_i) :=a_i$ and interpolate linearly between the grid points;
        \item if $\mathcal{X}=D([0,1];\R^d)$, we set $\iota_T(a)(t) :=a_{\lceil t \rceil_T \,-1}$ for $t \in [0,1)$ and $\iota_T(a)(1)=a_N$.
    \end{itemize}
        It follows immediately from the construction that for all $a, b \in \R^{d(N+1)}$,
        \begin{align}\label{eq:prf:GridEst}
            d_{\mathcal X}(\iota_T(a),\iota_T(b)) \le \max_{i=0,\dots, N} |a_i-b_i|.
        \end{align}
        Consider the evaluation map\footnote{Note that we have to be careful because $e_T$ is not continuous in the case  $\mathcal X = D([0,1];\R^d)$. } $e_T : \X \to \R^{d(N+1)} : f \mapsto (f(t_i))_{i=0}^N$. For every  $f \in \mathcal{X}$ we have
        \begin{align*}
            d_{\mathcal X}(f, \iota_T(e_T(f))) \to 0 \qquad \textup{as } \mathrm{mesh}(T) \to 0.
        \end{align*}
        Indeed, in the case $\mathcal{X} = C([0,1];\R^d)$ this is easy to check and in the case $\mathcal{X} = D([0,1];\R^d)$ we refer to \cite[Section 12, Lemma 3]{Bi99}. Moreover, it is clear from the construction that $\|\iota_T(e_T(f))\|_\infty \le \|f \|_\infty$ for all $f \in \mathcal{X}$. As $d_{\mathcal X}(g,0) \le C\|g\|_\infty$ for all $g \in \mathcal{X}$, it follows from the dominated convergence theorem that
        $$
        \E_{\P^{\fp X}}[ d^p_{\mathcal{X}}(X, \iota_T(e_T(X))) ] \to 0  \qquad \textup{as } \mathrm{mesh}(T) \to 0.
        $$
        In particular, for the rest of the proof,  we can fix a grid $T$ satisfying $\textup{mesh}(T)<\epsilon$ and 
        \begin{align}\label{eq:prf:XtoIotaX}
                    \E_{\P^{\fp X}}[ d^p_{\mathcal{X}}(X, \iota_T(e_T(X))) ]^{1/p} < \epsilon.
        \end{align}

    \noindent
        \emph{Step 2: Approximation by a process on finite $\Omega$ in discrete time:}
        
        Next, we consider the discrete-time process
        $$
        \fp X^T  := (\Omega^{\fp X}, \F^{\fp X}, \P^{\fp X}, (\F_{t_i}^{\fp X})_{i=0}^N, (X_{t_i})_{i=0}^N).
        $$
        Note that this process is a discrete-time filtered process as considered in \cite{BaBePa21}. According to \cite[Theorem~5.4]{BaBePa21} every discrete-time filtered process can be approximated by processes that are defined on a finite probability space w.r.t.\ the discrete-time adapted Wasserstein distance; in particular, there is a discrete-time filtered process 
    $$
    \fp Z = (\Omega^{\fp Z }, \F^{\fp Z}, \P^{\fp Z}, (\F_{t_i}^{\fp Z})_{i=0}^N, (Z_{t_i})_{i=0}^N)
    $$ 
    with $\Omega^{\fp Z}$ finite  and a coupling $\pi \in \cpl(\fp X^T, \fp Z )$ such that
    \begin{align}\label{eq:prf:GridClose}
            \E_\pi \Big[ \max_{i=0,\dots,N} |X_{t_i} - Z_{t_i}|^p \Big]^{1/p} < \epsilon
    \end{align}
    and $\pi$ is bicausal on the grid $T$, i.e.\ we have under $\pi$ for all $i=1,\dots,N$
    \begin{align} \label{eq:prf:discrDense:C}
    \F^{\fp Z}_{t_i} \indep_{ \F_{t_i}^{\fp X} } \F_1^{\fp X} \quad\text{and}\quad
    \F^{\fp X}_{t_i} \indep_{ \F_{t_i}^{\fp Z} } \F_1^{\fp Z}. 
    \end{align}

    \noindent
     \emph{Step 3: Extend $\fp Z$ to a continuous-time process $\fp Y$ and estimate $\AW(\fp X, \fp Y)$:} 
     
     By applying the map $\iota_T$ to $Z$ and $\lceil \cdot \rceil_T$ to the filtration, we extend $\fp Z$ to a continuous time process:
    \[
\fp Y := (\Omega^{\fp Z }, \F^{\fp Z}, \P^{\fp Z}, (\F_{t}^{\fp Y})_{t \in [0,1]}, \iota_T(Z)), 
    \]
    where we set $\F_t^{\fp Y} := \F^{\fp Z}_{\lceil t \rceil_T  }$ for $t \in [0,1]$. 
    We use the coupling $\pi$ from Step 2 to estimate $\AW_p(\fp X, \fp Y)$. First, we claim that $\pi \in \cplbc^\epsilon( \fp X, \fp Y)$. 
    
    To prove causality from $\fp Y$ to $\fp X$, fix $t \in [0,1]$ and let $i \in \{0,\dots, N\}$ such that $\lceil t \rceil_T = t_i$. As $\F_t^{\fp Y} = \F_{t_i}^{\fp Z}$ and $\F_t^{\fp X} \subset \F_{t_i}^{\fp X}$, the second statement in \eqref{eq:prf:discrDense:C} implies  $\F_t^{\fp X} \indep_{\F_t^{\fp Y}} \F_1^{\fp Y}$. 
    
    To prove causality from $\fp X$ to $\fp Y$,   
    fix $t \in [0,1]$ and note that, by Lemma~\ref{lem:causal_equiv}, it suffices to show that $\E_\pi[V | \F_1^{\fp X} ] = \E_\pi[V | \F_{t+\epsilon}^{\fp X} ]$ for every $V \in L^\infty(\F_t^{\fp Y})$. To that end, let $i \in \{0,\dots, N\}$ such that $\lceil t \rceil_T = t_i$ and observe that, by Lemma~\ref{lem:causal_equiv}, the first statement in \eqref{eq:prf:discrDense:C} implies $\E_\pi[V | \F_1^{\fp X} ] = \E_\pi[V | \F_{t_i}^{\fp X} ]$. As $\F^{\fp X}_{t_i} \subset \F_{t+\epsilon}^{\fp X} \subset \F_1^{\fp X}$, the latter implies $\E_\pi[V | \F_1^{\fp X} ] = \E_\pi[V | \F_{t+\epsilon}^{\fp X} ]$. 

    Next, let us estimate the cost of $\pi$. We have  
    \begin{align*}
    \E_\pi[ d_\mathcal{X}^p(X,Y)]^{1/p} &\le \E_{\P^{\fp X}}[ d_\mathcal{X}^p(X,\iota_T(e_T(X)))]^{1/p} +\E_\pi[ d_\mathcal{X}^p(\iota_T(e_T(X))),\iota_T( \widehat{Y}))]^{1/p} =: A+B. 
    \end{align*}  
    It follows from \eqref{eq:prf:XtoIotaX} that $A \le \epsilon$ and from \eqref{eq:prf:GridEst} and \eqref{eq:prf:GridClose} that $B \le \epsilon$. We conclude that $\AW_p(\fp X, \fp Y) \le 3 \epsilon$.
    \end{proof}

    \begin{proposition}\label{prop:AW_CFP}
    Let $\fp X, \fp Y \in \mathcal{FP}_p$ and let $\overline{\fp X}, \overline{\fp Y} \in \mathcal{CFP}_p$ be the associated canonical filtered processes. Then we have 
    \[
    \AW_p(\fp X, \fp Y) = \AW_p(\overline{ \fp X}, \overline{ \fp Y}).     
    \] 
    \end{proposition}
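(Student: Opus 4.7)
The cornerstone of the proof will be the \emph{graph coupling} $\mu^{\fp X} := (\id, \Phi^{\fp X})_\# \P^{\fp X} \in \cpl(\fp X, \overline{\fp X})$, where $\Phi^{\fp X}\colon \Omega^{\fp X} \to \mathsf{M}_\infty$, $\omega \mapsto \pp^\infty(\fp X)(\omega)$, pushes $\P^{\fp X}$ forward to $\P^{\overline{\fp X}}$ by Definition \ref{def:asociatedCFP}; the coupling $\mu^{\fp Y}$ is defined analogously. My plan is to establish that $\mu^{\fp X}$ and $\mu^{\fp Y}$ are bicausal, and then derive the two inequalities by a pushforward and a gluing argument respectively.

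The key technical step is showing bicausality of $\mu^{\fp X}$. Identifying the product space with $\Omega^{\fp X}$ via the graph, the pullback of $\F_t^{\overline{\fp X}}$ becomes $\mathcal{J}_t := \sigma^{\P^{\fp X}}(\pp^\infty_s(\fp X) : s \le t)$, which is right-continuous by Proposition \ref{prop:contPt} and satisfies $\mathcal{J}_t \subset \F_t^{\fp X}$ since $\pp^\infty(\fp X)$ is $(\F_t^{\fp X})$-adapted. Causality from $\fp X$ to $\overline{\fp X}$ is then immediate. For causality from $\overline{\fp X}$ to $\fp X$, Lemma \ref{lem:causal_equiv} reduces the task to proving that $\E_{\P^{\fp X}}[U \,|\, \F_t^{\fp X}]$ is $\mathcal{J}_t$-measurable whenever $U$ is bounded and $\mathcal{J}_1$-measurable. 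A monotone class argument reduces further to $U = \prod_{j=1}^k \int f_j\, d\pp^{r_j}_{s_j}(\fp X)$ with $f_j \in C_b$, $r_j \in \N$ and $s_j \in [0,1]$: the factors with $s_j \le t$ are $\mathcal{J}_t$-measurable and can be pulled out, while the remaining factors are handled inductively using the measure-valued martingale identity $\law_\P(\pp^{r-1}(\fp X) \,|\, \F_t^{\fp X}) = \pp^r_t(\fp X)$, which expresses the conditional expectation as a Borel functional of $\pp^\infty_t(\fp X)$. I expect this conditional measurability claim to be the main technical obstacle.

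Granting bicausality of $\mu^{\fp X}$ and $\mu^{\fp Y}$, the inequality $\AW_p(\overline{\fp X},\overline{\fp Y}) \le \AW_p(\fp X, \fp Y)$ follows by pushing any $\pi \in \cpl^\epsilon_{\rm bc}(\fp X, \fp Y)$ forward to $\overline{\pi} := (\Phi^{\fp X}, \Phi^{\fp Y})_\# \pi$: cost equality is immediate from $\overline X\circ \Phi^{\fp X} = X$ $\P^{\fp X}$-a.s.\ (and similarly for $\fp Y$), while $\epsilon$-bicausality of $\overline{\pi}$ is read off from that of $\pi$ using the inclusions $\mathcal{J}_t^{\fp X} \subset \F_t^{\fp X}$, $\mathcal{J}_t^{\fp Y} \subset \F_t^{\fp Y}$ together with the conditional measurability above and the tower property.

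For the converse inequality, I take $\overline\pi \in \cpl^\epsilon_{\rm bc}(\overline{\fp X}, \overline{\fp Y})$ attaining the infimum (Proposition \ref{prop:AW_inf_attained}) and concatenate $\mu^{\fp X}$, $\overline\pi$, and $\mu^{\fp Y}$ by two successive applications of Lemma \ref{lem:glueing} through the canonical middle spaces $\overline{\fp X}$ and $\overline{\fp Y}$; their Borel $\sigma$-algebras on the Lusin space $\mathsf{M}_\infty$ are countably generated, which is all that the proof of that lemma requires of the middle space. The resulting measure $\Pi$ on $\Omega^{\fp X} \times \mathsf{M}_\infty \times \mathsf{M}_\infty \times \Omega^{\fp Y}$ projects to a coupling $\pi := (\pr_{\Omega^{\fp X}\times\Omega^{\fp Y}})_\# \Pi$, and since the bicausality parameters compose additively ($0+\epsilon+0 = \epsilon$) we obtain $\pi \in \cpl^{\epsilon}_{\rm bc}(\fp X, \fp Y)$. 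Cost preservation is again immediate from $X = \overline X$ $\mu^{\fp X}$-a.s.\ and $Y = \overline Y$ $\mu^{\fp Y}$-a.s., yielding $\AW_p(\fp X,\fp Y) \le \AW_p(\overline{\fp X},\overline{\fp Y})$.
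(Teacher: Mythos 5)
Your forward inequality ($\AW_p(\overline{\fp X},\overline{\fp Y}) \le \AW_p(\fp X, \fp Y)$) runs on the same track as the paper: pushing a coupling forward under $(\pp^\infty(\fp X),\pp^\infty(\fp Y))$ is Lemma~\ref{lem:canonical.couplings}, and the bicausality of the graph coupling $\mu^{\fp X}$ is Lemma~\ref{lem:canonical.couplings}\ref{it:lem.canonical.couplings.1}. The paper obtains the required conditional independence $\F_t^{\fp X} \indep_{\sigma(\pp^\infty_t(\fp X))} \sigma(\pp^\infty(\fp X))$ by citation to \cite{BePaScZh23} rather than by a monotone class argument, but that is a presentational difference.

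There is, however, a genuine gap in your reverse inequality. To apply Lemma~\ref{lem:glueing} through the middle space $\overline{\fp X}$ you must disintegrate $\mu^{\fp X}$ (a measure on $\Omega^{\fp X}\times\mathsf{M}_\infty$) with respect to $\F_1^{\overline{\fp X}}$, i.e.\ produce a measurable kernel $\mathsf{M}_\infty \to \prob(\Omega^{\fp X})$; symmetrically for $\mu^{\fp Y}$. Countable generation of $\B_{\mathsf{M}_\infty}$ is \emph{not} what the proof of Lemma~\ref{lem:glueing} needs here: the kernel must take values in probability measures on $\Omega^{\fp X}$, and for a general probability space $(\Omega^{\fp X},\F^{\fp X},\P^{\fp X})$ — which is all that membership in $\mathcal{FP}$ gives you — a regular conditional probability of $\P^{\fp X}$ given $\sigma(\pp^\infty(\fp X))$ with values in $\prob(\Omega^{\fp X})$ need not exist. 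Lemma~\ref{lem:glueing} is stated only for $\mathcal{CFP}$ precisely because there all three base spaces are $\mathsf{M}_\infty$ and hence Lusin; its appendix generalization Lemma~\ref{lem:glueing.appendix} makes the existence of these disintegrations an explicit hypothesis. This obstruction is exactly why the paper does not prove the reverse inequality by a single gluing. Instead, Proposition~\ref{prop:bicausal_cplapprox} first approximates $\overline{\fp X},\overline{\fp Y}$ in $\AW_p$ by processes on \emph{finite} base spaces (Proposition~\ref{prop:FiniteOmegaDense}), for which the needed disintegrations trivially exist, glues $\gamma^{n,X}$, $\overline\pi^n$, $\gamma^{n,Y}$ through those finite intermediaries, and then identifies $\E_{\pi^n}[d^p(X,Y)]\to\E_{\overline\pi}[d^p(\overline X,\overline Y)]$ via $\W_p$-convergence. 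To save your proof you would either need to restrict to processes with standard Borel base spaces, or insert this approximation step; as written, the claim that only the middle space must be countably generated is where the argument fails.
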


    The proof of Proposition~\ref{prop:AW_CFP} relies on Proposition~\ref{prop:FiniteOmegaDense} and is technically rather involved and therefore postponed to Appendix~\ref{sec:NonPolish}. 

    In particular, it follows from Proposition \ref{prop:AW_CFP} that if $\fp X \sim_{\HK} \widetilde{\fp X}$ and $\fp Y \sim_{\HK} \widetilde{\fp Y}$, then $\AW_p(\fp X, \fp Y) = \AW_p(\widetilde{ \fp X}, \widetilde{ \fp Y})$, cf.\ Lemma~\ref{lem:canonRep}. 
    Therefore, the value of $\AW_p$ is independent of the choice of $\sim_{\HK}$-representatives. Thus, $\AW_p$ is well-defined on the factor space $\FP_p = \mathcal{FP}_p /_{ \sim_{\HK}}$.

    \begin{remark}\label{rem:defAWflexibility2}
   
    Proposition~\ref{prop:FiniteOmegaDense} is the only instance in this section, where the specific choice of the path spaces and the metrics thereon is used, i.e.\ when generalizing the concept of adapted Wasserstein distance to other path spaces with a separable metric it suffices to establish an analog of Proposition~\ref{prop:FiniteOmegaDense}. Note that if the given metric is not complete, all results except for completeness of $\AW_p$ still hold true.
    \end{remark}

\subsection{$\AW$ as a metric on $\FP$.}
Recall from Definition~\ref{def:FPpHKp} that a sequence $(\fp X^n)_n$ in $\FP_p$ converges to $\fp X \in \FP_p$ w.r.t.\ $\HK_p$ if $\fp X^n  \to \fp X $ in $\HK$ and $\E_{\mathbb{P}^n}[d_{\mathcal{X}}^p(X^n,0)]\to \E_{\mathbb{P}}[d_{\mathcal{X}}^p(X,0)]$. The main result of this section is that $\AW_p$ induces the topology $\HK_p$:

\begin{theorem}\label{thm:AWmetrizesHK}
$\AW_p$ is a complete metric on $(\FP_p,\HK_p)$. In particular, 
\begin{enumerate}[label = (\roman*)]
    \item For all $\fp X, \fp Y \in \mathcal{FP}_p$ we have $\AW_p(\fp X, \fp Y) =0$ if and only if $\fp X \sim_{\HK} \fp Y$.
    \item For every sequence $(\fp X^n)_n$ in $\FP_p$ and $\fp X \in \FP_p$ we have that $\fp X^n\to \fp X$ in $\HK_p$ if and only if $\AW_p(\fp X^n,\fp X)\to 0$.
\end{enumerate}
 \end{theorem}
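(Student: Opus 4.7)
The plan is to first verify that $\AW_p$ is a pseudometric on $\FP_p$, then establish the two equivalences in (i) and (ii), and finally deduce completeness from (ii) together with the Prohorov-type Corollary~\ref{cor:CompFPp}. Symmetry is immediate from Definition~\ref{def:AWcont}, $\AW_p(\fp X,\fp X)=0$ follows from the diagonal coupling on $\overline{\fp X}$, and the triangle inequality is Lemma~\ref{lem:AW_triangluar_ineq} combined with Proposition~\ref{prop:AW_CFP}; the latter also guarantees that $\AW_p$ descends to a well-defined map on the quotient $\FP_p=\mathcal{FP}_p/{\sim_{\HK}}$.

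For (i), the direction $\fp X\sim_{\HK}\fp Y \Rightarrow \AW_p(\fp X,\fp Y)=0$ follows from $\overline{\fp X}=\overline{\fp Y}$ (Lemma~\ref{lem:canonRep}) together with Proposition~\ref{prop:AW_CFP}. For the converse, I would pass to canonical representatives and invoke Proposition~\ref{prop:AW_inf_attained} to obtain some $\epsilon\ge 0$ and $\pi\in\cpl^{\epsilon}_{\rm bc}(\overline{\fp X},\overline{\fp Y})$ realizing $\E_\pi[d_{\mathcal X}^p(X,Y)]^{1/p}+\epsilon=0$, forcing $\epsilon=0$ and $X=Y$ $\pi$-a.s., then verify $\overline{\fp X}\sim_{\HK}\overline{\fp Y}$ via the adapted-function characterization Proposition~\ref{prop:simHKviaAF}. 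The key step is a structural induction on $f\in\AF$ showing $f[\overline{\fp X}]=f[\overline{\fp Y}]$ $\pi$-a.s.: case (AF1) is immediate since $X=Y$ $\pi$-a.s., (AF2) is preserved under continuous composition, and for (AF3), noting that $f[\overline{\fp X}]$ admits a version which is both $\F_1^{\overline{\fp X}}$- and $\F_1^{\overline{\fp Y}}$-measurable, two applications of Lemma~\ref{lem:causal_equiv}(iii) (one per direction of $0$-bicausality) yield $\E_\pi[f[\overline{\fp X}]\,|\,\F_t^{\overline{\fp X}}]=\E_\pi[f[\overline{\fp X}]\,|\,\F^{\overline{\fp X}}_t\otimes\F^{\overline{\fp Y}}_t]=\E_\pi[f[\overline{\fp Y}]\,|\,\F_t^{\overline{\fp Y}}]$ $\pi$-a.s.

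For (ii), convergence of $p$-th moments under $\AW_p(\fp X^n,\fp X)\to 0$ follows from Minkowski applied to a near-optimal coupling. For $\HK_p$-convergence given $\AW_p$-convergence, I would argue via compactness: $\AW_p$-boundedness gives $\W_p$-boundedness of the marginal laws and hence $\HK_p$-relative compactness by Corollary~\ref{cor:CompFPp}; for any subsequential $\HK_p$-limit $\fp Y$, a lower-semicontinuity statement $\AW_p(\fp Y,\fp X)\le\liminf_k\AW_p(\fp X^{n_k},\fp X)$ combined with (i) forces $\fp Y=\fp X$ in $\FP_p$, and a standard subsubsequence argument yields full convergence. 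The lower-semicontinuity would be obtained by passing to canonical representatives, extracting a weak limit of near-optimal couplings using tightness of the marginals and the compactness structure behind Proposition~\ref{prop:epsbcclosed}, and invoking Lemma~\ref{lem:eps_causa_rightcont} to identify the $\varepsilon$-bicausality of the limit. The reverse direction $\HK_p\Rightarrow\AW_p$ is the main obstacle. My plan is a three-epsilon discretization argument: use Proposition~\ref{prop:FiniteOmegaDense} to fix a finite-state $\fp Y$ with $\AW_p(\fp X,\fp Y)<\epsilon$, pick a finite grid inside $\cont(\fp X)$ that is compatible with the time structure of $\fp Y$ (density from Lemma~\ref{lem:cont_pt_dense}), and construct $\varepsilon$-bicausal couplings $\pi^n\in\cpl^{\varepsilon}_{\rm bc}(\fp X^n,\fp Y)$ by piggybacking on the discrete-time coupling that realizes $\AW_p(\fp X,\fp Y)$ once $\fp X^n$ and $\fp X$ are sampled on that grid. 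Convergence of the relevant sampled functionals is guaranteed by the adapted-weak characterization of $\HK_p$ (Theorem~\ref{thm:HKAF}) at continuity points, and the $\varepsilon$-slack in $\cpl^{\varepsilon}_{\rm bc}$ is what absorbs the mismatch between the continuous-time filtration of $\fp X^n$ and the finite grid.

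Finally, for completeness: if $(\fp X^n)$ is $\AW_p$-Cauchy, then projecting to the marginals and using $\W_p\le\AW_p$ makes $\{\law(X^n)\}$ relatively compact in $(\prob_p(\mathcal X),\W_p)$; Corollary~\ref{cor:CompFPp} produces an $\HK_p$-convergent subsequence, (ii) upgrades this to $\AW_p$-convergence, and a Cauchy sequence with a convergent subsequence converges, yielding full $\AW_p$-convergence to some $\fp X\in\FP_p$. The principal technical hurdle is the $\HK_p\Rightarrow\AW_p$ direction of (ii): constructing near-optimal bicausal couplings in continuous time between processes whose filtrations have fundamentally different time structures is substantially more delicate than the discrete-time predecessor in \cite{BaBePa21}, and it is precisely the $\varepsilon$-relaxation in the definition of $\AW_p$ and the density of continuity points that make the construction tractable.
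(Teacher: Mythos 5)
Your overall architecture matches the paper's — passing to canonical representatives for well-definedness, attainment plus a coupling argument for part (i), a compactness argument for $\AW_p\Rightarrow\HK_p$ in (ii), and a Cauchy-plus-compactness argument for completeness. Part (i) takes a genuinely different route: you run a structural induction on adapted functions $f\in\AF$ and invoke Proposition~\ref{prop:simHKviaAF}, whereas the paper performs induction on the rank $r$ of the prediction process to obtain $\pp^r(\overline{\fp X})=\pp^r(\overline{\fp Y})$ $\pi$-a.s.\ directly. Both work and are essentially equivalent; yours echoes the discrete-time literature, the paper's stays closer to the prediction-process machinery already in place. In the $\AW_p\Rightarrow\HK_p$ step you also deviate slightly: you posit a lower-semicontinuity statement $\AW_p(\fp Y,\fp X)\le\liminf_k\AW_p(\fp X^{n_k},\fp X)$ to identify the subsequential $\HK_p$-limit $\fp Y$; the paper instead simply invokes the already-proved converse implication to conclude $\AW_p(\fp X^{n_k},\fp Y)\to 0$ and then uses the triangle inequality, which is cleaner and needs no extra semicontinuity argument.

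The genuine gap is in $\HK_p\Rightarrow\AW_p$, which you correctly flag as the hard direction but whose sketch does not close. Your plan introduces a finite-$\Omega$ intermediate $\fp Y$ via Proposition~\ref{prop:FiniteOmegaDense} and then ``piggybacks'' on the coupling realizing $\AW_p(\fp X,\fp Y)<\epsilon$. This is circular as stated: Proposition~\ref{prop:FiniteOmegaDense} is a pure $\AW_p$-denseness result, and controlling $\AW_p(\fp X^n,\fp Y)$ from $\HK_p$-convergence of $\fp X^n$ is precisely the problem at hand. The paper instead works with the filtration-rounding operator $\mathcal{D}^T$ of \eqref{eq:def:DT}, which keeps the process values and coarsens only the filtration to the grid $T$. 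The three ingredients are: (a) the elementary bound $\AW_p(\fp Z,\mathcal{D}^T(\fp Z))\le\mesh(T)$ for any $\fp Z$ via the identity coupling (Lemma~\ref{lem:AWMesh}); (b) the fact that for a grid $T\subset\cont(\fp X)\cup\{1\}$, $\HK_p$-convergence of $\fp X^n\to\fp X$ passes to discrete-time Hoover--Keisler convergence of the grid-restricted processes, which is the nontrivial input \cite[Corollary~8.9]{BePaScZh23}; and (c) the discrete-time theorem from \cite{BaBePa21}, yielding $\AW_p(\mathcal{D}^T(\fp X^n),\mathcal{D}^T(\fp X))\to 0$ (together, (b) and (c) are Proposition~\ref{prop:HKtoAWdiscr}). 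A triangle inequality through $\mathcal{D}^T(\fp X^n)$ and $\mathcal{D}^T(\fp X)$ then gives $\limsup_n\AW_p(\fp X^n,\fp X)\le 2\mesh(T)$. Your remark that the ``$\varepsilon$-slack absorbs the mismatch between the continuous-time filtration of $\fp X^n$ and the finite grid'' is an informal version of (a), and your appeal to Theorem~\ref{thm:HKAF} gestures at (b), but you would need to make the operator $\mathcal{D}^T$ and these three facts explicit — and drop the unhelpful detour through a finite-state $\fp Y$ — to close the argument.
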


\begin{remark}
If $d_\X$ is a bounded metric, then $\FP_p = \FP$ and the adapted Wasserstein distance w.r.t.\ this metric induces the Hoover--Keisler topology. 
\end{remark} 

As mentioned in Section~\ref{sec:Recap}, the topology $\HK_p$ is metrizable, hence the claim about sequences in Theorem~\ref{thm:AWmetrizesHK} indeed implies equality of the topologies. In order to prove Theorem~\ref{thm:AWmetrizesHK} we first show that the equivalence classes of $\sim_{\AW}$ and $\sim_{\HK}$ coincide:       
\begin{proof}[Proof of Theorem~\ref{thm:AWmetrizesHK}, part (i)]
    Assume that $\fp X \sim_{\HK} \fp Y$. Then we have $\overline{\fp X} = \overline{ \fp Y}$ and hence Proposition~\ref{prop:AW_CFP} yields 
    \[
    \AW_p(\fp X, \fp Y)= \AW_p(\overline{ \fp X }, \overline{  \fp Y }) = \AW_p(\overline{\fp X}, \overline{\fp X})=0.
    \]
    Conversely, assume that $\AW_p(\fp X, \fp Y) = 0$. By Proposition~\ref{prop:AW_CFP} the associated canonical processes $\overline{\fp X},\overline{\fp Y} \in \mathcal{CFP}_p$ satisfy $\AW_p(\overline{ \fp X}, \overline{ \fp Y})=0$ as well.  By Proposition~\ref{prop:AW_inf_attained} the infimum in the definition of $\AW_p(\overline{\fp X}, \overline{\fp Y})$ is attained, i.e.\ there is $\pi \in \cpl_{\rm bc}(\overline{ \fp X},\overline{ \fp Y})$ such that $\overline X=\overline Y$ $\pi$-a.s.

    We show by induction on  $r \in \N \cup \{  0 \}$ that $\pp^r(\overline{ \fp X}) = \pp^r(\overline{ \fp Y})$ $\pi$-a.s.\ For $r = 0$ we have by assumption $\pp^0(\overline{ \fp X}) = \overline X = \overline Y = \pp^0(\overline{ \fp Y})$ $\pi$-a.s.
	    Now assume that the claim is true for $r \in \N \cup \{ 0 \}$, then for every $t \in [0,1]$, by Lemma \ref{lem:causal_equiv}, 	    \begin{align*}
	        \pp^{r+1}_t(\overline{ \fp X}) &= \law_\pi \big( \pp^r({ \overline{ \fp X}}) \big| \F_t^{\overline{ \fp X}} \big) 
	        = \law_\pi \big( \pp^r({ \overline{ \fp X}} ) \big| \F_{t,t}^{\overline{ \fp X}, \overline{ \fp Y}} \big)
	        \\
	        &= \law_\pi \big( \pp^r(\overline{ \fp Y}) \big| \F_{t,t}^{\overline{ \fp X},\overline{ \fp Y}} \big)
	        = \law_\pi \big( \pp^r(\overline{ \fp Y}) \big| \F_t^{\overline{ \fp Y}} \big)
	        = \pp^{r + 1}_t(\overline{ \fp Y})
	    \end{align*}
	    $\pi$-a.s.
	    Hence, $\pp^{r + 1}(\fp X) = \pp^{r + 1}(\fp Y)$ as both processes have \cadlag{} paths, which concludes the induction step.     Therefore, 
     \[
     \law(\pp^\infty(\fp X)) = \law(\pp^\infty( \overline{ \fp X})) = \law(\pp^\infty( \overline{ \fp Y})) = \law(\pp^\infty(\fp Y)),
     \]
     i.e.\ $\fp X \sim_{\HK} \fp Y$.
	\end{proof}

        The second part of the proof of Theorem~\ref{thm:AWmetrizesHK} employs that the respective statement is already known in discrete time: It was shown in \cite{BaBePa21} that for discrete-time filtered processes $\fp X, \fp X^1, \fp X^2,\ldots$, we have that $\AW_p(\fp X^n , \fp X) \to 0$ if and only if $\fp X^n \to \fp X$ in the discrete-time variant of the Hoover--Keisler topology and $\E_{\mathbb{P}^n}[d_{\mathcal{X}}^p(X^n,0)]\to \E_{\mathbb{P}}[d_{\mathcal{X}}^p(X,0)]$. We start by defining a discretization operator that relates the discrete- and continuous-time frameworks. Recall  $\lceil\cdot \rceil_T$ defined in \eqref{eq:def.rounding} and set 
        \begin{align}\label{eq:def:DT}
            \mathcal{D}^T(\fp X) := (\Omega^{\fp X}, \F^{\fp X}, \P^{\fp X}, (\F_{\lceil t \rceil_T })_{t \in [0,1]}, X).
        \end{align}
        Note that $\mathcal{D}^T(\fp X) $ is still a continuous-time filtered process but its filtration is piecewise constant.

    \begin{lemma}\label{lem:AWMesh}
    For every $\fp X \in \mathcal{FP}_p$ and grid $T \subset [0,1]$, we have $\AW_p(\fp X, \mathcal{D}^T(\fp X)) \le \mesh(T)$. 
    \end{lemma}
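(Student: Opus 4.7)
The plan is to use the diagonal (identity) coupling on $\Omega^{\fp X}$ itself and exploit that $\mathcal{D}^T(\fp X)$ is defined on the same probability space as $\fp X$ with the same process $X$, only with a coarser (piecewise-constant) filtration.

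Concretely, set $\fp Y := \mathcal{D}^T(\fp X)$ and define $\pi \in \prob(\Omega^{\fp X} \times \Omega^{\fp Y})$ as the pushforward of $\P^{\fp X}$ under $\omega \mapsto (\omega,\omega)$. Under $\pi$, the canonical projections agree, so $X=Y$ $\pi$-a.s.\ and hence $\E_\pi[d_\X^p(X,Y)]^{1/p}=0$. It therefore suffices to verify $\pi \in \cplbc^{\mesh(T)}(\fp X, \fp Y)$ and apply the definition of $\AW_p$.

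The verification reduces to a trivial inclusion check. By definition, $\F^{\fp Y}_t = \F^{\fp X}_{\lceil t \rceil_T}$, and for every $t \in [0,1]$ one has $\lceil t \rceil_T - t \le \mesh(T)$ (using that $0 \notin T$, $t_0=0$, and that $\lceil 1 \rceil_T = 1$, while for $t \in [t_{i-1},t_i)$ with $i \ge 1$ we get $\lceil t \rceil_T - t \le t_i - t_{i-1} \le \mesh(T)$). Now setting $\epsilon := \mesh(T)$:
\begin{itemize}
\item For causality from $\fp X$ to $\fp Y$ we need $\F^{\fp Y}_t \indep_{\F^{\fp X}_{t+\epsilon}} \F^{\fp X}_1$ under $\pi$. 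Since $\pi$ is supported on the diagonal, $\F^{\fp Y}_t$ is identified with $\F^{\fp X}_{\lceil t \rceil_T} \subseteq \F^{\fp X}_{t+\epsilon}$, so the conditional independence is automatic (any $\F^{\fp X}_{t+\epsilon}$-measurable $\sigma$-algebra is trivially independent of $\F^{\fp X}_1$ given itself).
\item For causality from $\fp Y$ to $\fp X$ we need $\F^{\fp X}_t \indep_{\F^{\fp Y}_{t+\epsilon}} \F^{\fp Y}_1$. But $\F^{\fp X}_t \subseteq \F^{\fp X}_{\lceil t+\epsilon\rceil_T}=\F^{\fp Y}_{t+\epsilon}$, so again this is trivial.
\end{itemize}

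Hence $\pi \in \cplbc^{\mesh(T)}(\fp X, \fp Y)$, and plugging into \eqref{eq:def.AW} yields
\[
\AW_p(\fp X, \mathcal{D}^T(\fp X)) \le \E_\pi[d_\X^p(X,Y)]^{1/p} + \mesh(T) = \mesh(T).
\]
There is no real obstacle here: the only subtle point is to notice that the diagonal coupling identifies the two filtrations as nested $\sigma$-algebras on the common sample space, at which point bicausality collapses to the elementary inclusion $\lceil t\rceil_T - t \le \mesh(T)$. One has to be careful with the boundary cases $t \in T$ (where $\lceil t \rceil_T$ jumps to the next grid point per \eqref{eq:def.rounding}) and $t=1$ (where $\lceil 1\rceil_T = 1$), but both are covered by the bound above.
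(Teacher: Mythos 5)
Your proof is correct and follows essentially the same route as the paper: use the identity (diagonal) coupling, observe $X=Y$ a.s.\ so the transport cost vanishes, and reduce $\varepsilon$-bicausality to the inclusions $\F_t^{\fp X} \subset \F^{\fp X}_{\lceil t \rceil_T} \subset \F^{\fp X}_{t+\varepsilon}$. The only cosmetic difference is that the paper notes the coupling is in fact $0$-causal (not merely $\varepsilon$-causal) from $\mathcal{D}^T(\fp X)$ to $\fp X$, since $\F_t^{\fp X} \subset \F^{\fp X}_{\lceil t \rceil_T} = \F_t^{\mathcal{D}^T(\fp X)}$ already holds without shifting by $\varepsilon$; your weaker check is still sufficient for $\varepsilon$-bicausality.
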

    \begin{proof}
    Set $\epsilon := \mesh(T)$ and note that $\F_t^{\fp X} \subset \F^{\fp X}_{\lceil t \rceil_T} \subset \F^{\fp X}_{t +\epsilon}$ for all $t \in [0,1]$. Hence, it follows immediately from Lemma~\ref{lem:causal_equiv}(ii) that the identity coupling $(\id,\id)_\#\P^{\fp X}$ is $\epsilon$-causal from $\fp X$ to $\mathcal{D}^T(\fp X)$ and causal from $\mathcal{D}^T(\fp X)$ to $\fp X$. 
    \end{proof}

    \begin{proposition}\label{prop:HKtoAWdiscr}
    Let $(\fp X^n)_n$ be a sequence in $\mathcal{FP}_p$ and let $\fp X \in \mathcal{FP}_p$. Fix a grid $T \subset \cont(\fp X) \cup \{1\}$. If 
    $\fp X^n \to \fp X$ in $\HK_p$, then $\AW_p(\mathcal{D}^T(\fp X^n), \mathcal{D}^T(\fp X)) \to 0$.   
    \end{proposition}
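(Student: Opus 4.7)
The strategy is to reduce the claim to the discrete-time adapted Wasserstein framework on the grid $T$ and invoke the discrete-time counterpart of Theorem~\ref{thm:AWmetrizesHK} established in \cite{BaBePa21}. The key device is a \emph{path-valued} discretization: for each $\fp Y \in \mathcal{FP}_p$, associate the discrete-time filtered process
\[
\breve{\fp Y} := \big( \Omega^{\fp Y}, \F^{\fp Y}, \P^{\fp Y}, (\F^{\fp Y}_{t_i})_{i=0}^N, (\hat Y_i)_{i=0}^N \big),
\]
where $\hat Y_0 := Y_0$ and, for $i \ge 1$, $\hat Y_i := (Y_s)_{s \in [t_{i-1}, t_i]}$ takes values in the Polish space $D([t_{i-1}, t_i]; \R^d)$ endowed with a complete $J_1$-metric $d_{\rm sub}$. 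Adaptedness of $Y$ ensures each $\hat Y_i$ is $\F^{\fp Y}_{t_i}$-measurable. The rationale for using path-valued states is twofold: discrete-time bicausality of a coupling at the grid times coincides with strict bicausality for the piecewise constant $\mathcal{D}^T$-filtration, and a pathwise discrete cost on the sub-paths controls the full continuous-time path distance.

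First, I would establish that $\fp X^n \to \fp X$ in $\HK_p$ combined with $T \subset \cont(\fp X) \cup \{1\}$ yields $\breve{\fp X}^n \to \breve{\fp X}$ in the discrete-time $\HK_p$. The natural route is via the adapted function characterization (Theorem~\ref{thm:HKAF}): every discrete-time adapted function on $\breve{\fp X}$ is built from basic building blocks $g \in C_b\big( \prod_i D([t_{i-1}, t_i]; \R^d)\big)$ via conditional expectations $\E[\, \cdot \,  | \F^{\fp X}_{t_i}]$ at times $t_i \in T \subset \cont(\fp X) \cup \{1\}$, and via the restriction maps $\mathcal{X} \to D([t_{i-1}, t_i]; \R^d)$ these yield candidates for continuous-time adapted functions on $\fp X$. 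Moment convergence is inherited directly from $\HK_p$. The delicate point is that the restriction maps are $J_1$-continuous only at paths with matching jumps at the grid endpoints, so fitting $g \circ \mathrm{restriction}$ into the strict framework (AF1)--(AF3) requires an approximation argument leveraging the hypothesis $T \subset \cont(\fp X) \cup \{1\}$, e.g.\ by a regularization of $g$.

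Second, I would invoke the discrete-time counterpart of Theorem~\ref{thm:AWmetrizesHK} from \cite{BaBePa21} on the Polish path-valued state spaces to produce couplings $\pi^n$ on $\Omega^{\fp X^n} \times \Omega^{\fp X}$ that are discrete-time bicausal; explicitly, for every $i = 1, \dots, N$, under $\pi^n$ one has
\[
\F^{\fp X^n}_{t_i} \indep_{\F^{\fp X}_{t_i}} \F^{\fp X}_1 \quad \text{and} \quad \F^{\fp X}_{t_i} \indep_{\F^{\fp X^n}_{t_i}} \F^{\fp X^n}_1,
\]
with $\E_{\pi^n}\big[ \sum_{i=1}^N d_{\rm sub}^p(\hat X^n_i, \hat X_i) \big] \to 0$.

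Finally, I would view $\pi^n$ as a coupling between $\mathcal{D}^T(\fp X^n)$ and $\mathcal{D}^T(\fp X)$. Since $\F^{\fp Y, T}_t = \F^{\fp Y}_{\lceil t\rceil_T}$ is piecewise constant and takes only the finitely many values $\F^{\fp Y}_{t_1}, \dots, \F^{\fp Y}_{t_N}$, the (strict) bicausality condition at an arbitrary $t$ reduces to bicausality at the corresponding grid point $\lceil t\rceil_T$, which is already granted; hence $\pi^n$ is bicausal for the $\mathcal{D}^T$-filtration. Convergence in probability $d_\mathcal{X}(X^n, X) \to 0$ follows from convergence of the sub-paths together with continuity of the concatenation map on cadlag paths that match at the joining grid points, and uniform integrability of $d_\mathcal{X}^p(X^n, X)$ (inherited from the convergence of $p$-th moments under $\HK_p$) promotes this to $\E_{\pi^n}[d_\mathcal{X}^p(X^n, X)] \to 0$, yielding $\AW_p(\mathcal{D}^T(\fp X^n), \mathcal{D}^T(\fp X)) \to 0$. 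The principal obstacle is the first step --- transferring continuous-time $\HK_p$ convergence to the path-valued discretization $\breve{\fp X}$ --- where the hypothesis $T \subset \cont(\fp X) \cup \{1\}$ is essential to reconcile the discontinuities of the restriction maps with the strict adapted-function formalism.
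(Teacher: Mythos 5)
The overall architecture of your proposal — discretize in time, transfer $\HK_p$ convergence to a discrete-time Hoover--Keisler convergence, invoke the discrete-time result from \cite{BaBePa21}, and observe that strict bicausality of a coupling for the piecewise-constant $\mathcal{D}^T$-filtration is the same as grid bicausality — does match the paper's outline. The bicausality conversion is handled correctly. However, the crucial first step, reducing continuous-time $\HK_p$ convergence to discrete-time $\HK_p$ convergence, contains a genuine gap, and the gap does not appear repairable by the regularization you sketch.

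Your path-valued discretization $\breve{\fp Y}$ uses the restriction maps $r_i\colon \X\to D([t_{i-1},t_i];\R^d)$, $f\mapsto (f(s))_{s\in[t_{i-1},t_i]}$. These maps are $J_1$-discontinuous at any $f$ with a jump at the endpoint $t_i$: a time change of $[0,1]$ can move a jump time past $t_i$, whereas a time change of $[t_{i-1},t_i]$ cannot. The hypothesis $T\subset\cont(\fp X)\cup\{1\}$ does \emph{not} control this, because $\cont(\fp X)$ concerns continuity of $s\mapsto\law(\pp^\infty_s(\fp X))$ (equivalently, of the filtration; cf.\ Proposition~\ref{prop:contPt}), not path continuity of $X$. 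A concrete counterexample: let $X$ be the deterministic process jumping by $1$ at time $t_i$, and let $X^n$ be the deterministic process jumping by $1$ at $t_i+\frac1n$. Both are deterministic, so all prediction processes are $t$-constant and $\cont(\fp X)=[0,1]$; moreover $\fp X^n\to\fp X$ in $\HK_p$ since $d_{J_1}(X^n,X)\to 0$. Yet $\hat X^n_i = r_i(X^n)$ is constant on $[t_{i-1},t_i]$ while $\hat X_i = r_i(X)$ jumps at $t_i$, so $d_{\rm sub}(\hat X^n_i,\hat X_i)$ is bounded away from zero; similarly for $\hat X^n_{i+1}$. Hence $\breve{\fp X}^n\not\to\breve{\fp X}$, and your step one is false as stated. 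Since the problem is the discontinuity of $r_i$ on a set of positive (indeed full) probability under $\law(X)$, mollifying the test function $g$ downstream of $r_i$ cannot fix it.

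The paper avoids the restriction maps entirely: the discrete-time process $\widetilde{\mathcal D}^T(\fp X)$ carries the vector $(0,\dots,0,X)$ — zeros in the first $N-1$ coordinates and the \emph{whole} continuous-time path in the last — so no evaluation or restriction of $X$ at a fixed time is ever taken, and the relevant discrete-time $\HK_p$-convergence is exactly the content of the nontrivial external result \cite[Corollary~8.9]{BePaScZh23}, which you would need in some form. (The footnote in the paper at this point explicitly flags the discontinuity issue you are running into as the reason for the seemingly odd $(0,\dots,0,X)$ construction.) Your last step inherits the same flaw: convergence of the sub-paths fails in the example above, so the claimed control of $d_\X(X^n,X)$ via $\sum_i d_{\rm sub}^p(\hat X^n_i,\hat X_i)$ cannot be set up in the first place.
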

    \begin{proof}
    The proof uses results of \cite{BaBePa21} formulated in a discrete-time framework. For technical reasons\footnote{The more natural approach of using $(X_{t_i})_{i=1}^N$ instead of $(0,\dots, 0,X)$ bears the problem that  $f \mapsto (f(t_i))_{i=1}^N$ is not continuous in the case $\mathcal X = D([0,1];\R^d)$.} it is convenient to consider the following modification of $\mathcal{D}^T(\fp X)$: For $\fp X \in \mathcal{FP}_p$, we set 
    \[
    \widetilde{\mathcal{D}}^T(\fp X) := (\Omega^{ \fp X }, \F^{ \fp X } , \P^{ \fp X }, (\F^{ \fp X }_{t_i})_{i=1}^N, (0, \dots, 0, X)   ),
    \] 
    where $0 \in \mathcal{X}$ is the constant function 0, and $X \in \mathcal{X}$ is the whole path of $\fp X$; see \cite[Chapter~8]{BePaScZh23} for more  details about this construction.  

    Next, we  note that a coupling between $\mathcal{D}^T(\fp X)$ and $\mathcal{D}^T(\fp Y)$ is bicausal if and only if it is bicausal (in discrete time) between $\widetilde{\mathcal{D}}^T(\fp X)$ and $\widetilde{\mathcal{D}}^T(\fp Y) $, that is  $\F_{t_i}^{\fp Y} \indep_{\F_{t_i}^{\fp X}} \F_1^{\fp X}$ and $\F_{t_i}^{\fp X} \indep_{\F_{t_i}^{\fp Y}} \F_1^{\fp Y}$ under $\pi$ for all $i =1,\dots,N$. Indeed, this follows from the definition of $\widetilde{\mathcal{D}}^T(\fp X)$ since its filtration is precisely the collection of the $\sigma$-algebras that apprear in the piecewise constant filtration of $\mathcal{D}^T(\fp X)$, see \eqref{eq:def:DT}.

    Now, consider a sequence $(\fp X^n)_n$ in $\mathcal{FP}_p$, $\fp X \in \mathcal{FP}_p$ and fix a grid $T \subset \cont(\fp X)$. Suppose that $\fp X^n \to \fp X$ in $\HK_p$. It was shown in \cite[Corollary~8.9]{BePaScZh23} that  $\widetilde{\mathcal{D}}^T(\fp X^n) \to \widetilde{\mathcal{D}}^T(\fp X)$ in the discrete-time Hoover--Keisler topology. Moreover, it was shown in \cite{BaBePa21} that the latter implies $\fp X^n \to \fp X$ w.r.t.\ the discrete-time adapted Wasserstein distance, i.e.\ there are bicausal couplings $\pi^n$ between $\widetilde{\mathcal{D}}^T(\fp X)$ and $\widetilde{\mathcal{D}}^T(\fp X^n)$ such that $\E_{\pi^n} [d_{\mathcal X}^p(X,X^n)] \to 0$.  As explained before, $\pi^n$ is also bicausal between ${\mathcal{D}}^T(\fp X)$ and ${\mathcal{D}}^T(\fp X^n)$, hence $\AW_p(\mathcal{D}^T(\fp X^n), \mathcal{D}^T(\fp X)) \to 0$. 
    \end{proof}

We are now ready to prove the remaining parts of Theorem~\ref{thm:AWmetrizesHK}.

\begin{proof}[Proof of Theorem~\ref{thm:AWmetrizesHK}, part (ii)]
        First note that $\AW_p$ is a well-defined metric: Proposition~\ref{prop:AW_CFP} establishes that the value of $\AW_p(\fp X, \fp Y)$ is independent of the representatives of the equivalence classes of $\fp X$ and $\fp Y$. Hence, Lemma~\ref{lem:AW_triangluar_ineq} implies that $\AW_p$ satisfies the triangle inequality. We have already shown that $\AW_p(\fp X, \fp Y)=0$ if and only if $\fp X \sim_{\HK} \fp Y$; the symmetry of $\AW_p$ is obvious. As the product coupling is bicausal, we have $\AW_p(\fp X, \fp Y) < \infty$ for all $\fp X, \fp Y \in \FP_p$.	    
        
        Next, let us show that $\AW_p$ metrizes the $\HK_p$-topology. Suppose that $\fp X^n \to \fp X$ in $\HK_p$ and fix $\epsilon >0$.  As $\cont(\fp X)$ is dense in $[0,1]$, see Lemma~\ref{lem:cont_pt_dense}, there exists a  grid $T \subset \cont(\fp X) \cup \{1\} $ with $\mesh(T) < \epsilon$. By Proposition~\ref{prop:HKtoAWdiscr} we have 
        $
	    \AW_p(\mathcal{D}^T(\fp X^n), \mathcal{D}^T(\fp X)) \to 0$ and by Lemma~\ref{lem:AWMesh}, 
        \begin{align*}
           \AW_p(\fp X^n,\fp X) &\le  \AW_p(\fp X^n, \mathcal{D}^T(\fp X^n))+ \AW_p( \mathcal{D}^T(\fp X^n),  \mathcal{D}^T( \fp X )) + \AW_p(\mathcal{D}^T(\fp X), \fp X) \\
           &\le \AW_p( \mathcal{D}^T(\fp X^n),  \mathcal{D}^T( \fp X )) + 2\epsilon.
        \end{align*}
        Hence $ \limsup_n \AW_p(\fp X^n,\fp X) \le 2\epsilon $ and as $\epsilon>0$ was arbitrary, this proves $\AW_p(\fp X^n,\fp X) \to 0$.

	To prove the converse implication, assume that $\AW_p(\fp X^n,\fp X) \to 0$. 
		 Then  $\W_p(\law(X^n),\law(X)) \to 0$, hence $\{ \law(X^n) : n \in \N \}$ is $\mathcal W_p$-relatively compact. Therefore, Corollary~\ref{cor:CompFPp} asserts that $\{ \fp X^n : n \in \N \}$ is relatively compact in $(\FP_p,\HK_p)$. Let $\fp Y$ be any $\HK_p$-limit point of this sequence. Then  $\AW_p(\fp X^n, \fp Y) \to 0$ by the already shown implication, so $\fp X \sim_{\HK} \fp Y$. We conclude that  $\fp X^n \to \fp X$ in $\HK_p$.

Finally, we show  completeness.
To that end, let $(\fp X^n)_n$ be an $\AW_p$-Cauchy sequence. In particular, $(\law(X^n))_n$ is a $\W_p$-Cauchy sequence. By the completeness of $\W_p$, this sequence is converging and hence relatively compact in $\prob_p(\mathcal X)$. Therefore, Corollary \ref{cor:CompFPp} implies that $(\fp X^n)_n$ is relatively compact in $(\FP_p,\HK_p)$. If $\fp X, \fp Y$ are $\HK_p$-limit points of this sequence, then $\AW_p(\fp X^n, \fp X) \to 0$ and $\AW_p(\fp X^n, \fp Y) \to 0$. Hence, $\AW_p(\fp X, \fp Y)=0$, and thus the sequence $(\fp X^n)_n$ is convergent.
\end{proof}

\subsection{Symmetrized causal distance}

When showing that certain processes converge w.r.t.\ $\mathcal{AW}_p$, it can be challenging to construct couplings that are $\varepsilon$-causal in both directions at the same time.
In contrast, it is often simpler to find couplings that are just $\varepsilon$-causal in one direction.
Hence, computing the \emph{causal distance}
\begin{align}
\label{eq:def.CW}
	\mathcal{CW}_p(\fp X,\fp Y) 
	:=  \inf_{\epsilon \ge 0}    \inf \left\{ \E_\pi[d^p(X,Y)]^{1/p}  + \epsilon :  \pi \in \cpl_{\rm c}^\epsilon(\fp X,\fp Y) \right\}  
\end{align}
	between $\fp X,\fp Y\in\mathcal{FP}_p$ turns out to be simpler than computing $\mathcal{AW}_p(\fp X,\fp Y)$.
Clearly, $\mathcal{CW}_p$ is not a distance as it lacks symmetry, and it  is natural to consider the  \emph{symmetrized causal Wasserstein distance}, defined via
\[
	\SCW_p(\fp X, \fp Y) := \max\{  \CW_p(\fp X, \fp Y), \CW_p(\fp Y, \fp X) \}.
\]

We first show that $\SCW_p$ can be defined on $\FP_p=\mathcal{FP}_p/_{\sim_\HK}$, and in Proportion \ref{prop:scw} that $\SCW_p$ in fact induces the same topology as $\AW_p$. To this end, we start with the following observation.
\begin{remark}\label{rem:CWattainment}
For every $\fp X, \fp Y \in \mathcal{CFP}_p$, the infima in the definitions of $\mathcal{CW}_p(\fp X, \fp Y)$ and $\SCW_p(\fp X, \fp Y)$ are attained. This follows from Proposition~\ref{prop:epsbcclosed} using the same arguments as in the proof of Proposition~\ref{prop:AW_inf_attained}.  
\end{remark}

\begin{lemma}
\label{lem:scw.definite}
	$\SCW_p$ is a pseudo-metric on $\mathcal{FP}_p$.
	Moreover, for every $\fp X,\fp Y\in \mathcal{FP}_p$, we have that $\SCW_p(\fp X,\fp Y)=0$ if and only if $\fp X\sim_{\HK} 
  \fp Y$.
\end{lemma}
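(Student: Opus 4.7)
Non-negativity and symmetry of $\SCW_p$ are immediate from its definition as a maximum, and $\SCW_p(\fp X,\fp X)=0$ follows by taking the identity coupling $(\id,\id)_\#\P^{\fp X}$, which is causal in both directions at zero cost. For the triangle inequality I would mimic the proof of Lemma~\ref{lem:AW_triangluar_ineq}: given near-optimizers $\pi_1\in\cpl^{\epsilon_1}_{\mathrm c}(\fp X,\fp Y)$ and $\pi_2\in\cpl^{\epsilon_2}_{\mathrm c}(\fp Y,\fp Z)$ of $\CW_p(\fp X,\fp Y)$ and $\CW_p(\fp Y,\fp Z)$, the gluing construction of Lemma~\ref{lem:glueing} produces an $(\epsilon_1+\epsilon_2)$-causal coupling from $\fp X$ to $\fp Z$; Minkowski on the cost then gives $\CW_p(\fp X,\fp Z)\le \CW_p(\fp X,\fp Y)+\CW_p(\fp Y,\fp Z)$, and taking maxima transfers the triangle inequality to $\SCW_p$.

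For the zero-set characterization, the direction $\fp X\sim_{\HK}\fp Y\Rightarrow \SCW_p(\fp X,\fp Y)=0$ is immediate from the already-established Theorem~\ref{thm:AWmetrizesHK}(i), which supplies a bicausal coupling of zero cost; such a coupling is in particular causal in each of the two directions, so $\CW_p(\fp X,\fp Y),\CW_p(\fp Y,\fp X)\le \AW_p(\fp X,\fp Y)=0$.

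For the converse $\SCW_p(\fp X,\fp Y)=0\Rightarrow \fp X\sim_{\HK}\fp Y$, I would first argue that $\SCW_p$ is invariant under passing to canonical representatives: this is an analogue of Proposition~\ref{prop:AW_CFP} obtained by redoing the argument with Proposition~\ref{prop:epsbcclosed} applied to the set of causal (rather than bicausal) couplings. This gives $\SCW_p(\overline{\fp X},\overline{\fp Y})=0$, and Remark~\ref{rem:CWattainment} then produces attainers $\pi_1\in\cpl_{\mathrm c}(\overline{\fp X},\overline{\fp Y})$ and $\pi_2\in\cpl_{\mathrm c}(\overline{\fp Y},\overline{\fp X})$ with $\overline X=\overline Y$ almost surely under both. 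The aim is to conclude $\law(\pp^\infty(\overline{\fp X}))=\law(\pp^\infty(\overline{\fp Y}))$; since $\pp^r$ is a measurable function of $\pp^{r+1}$ via $\pp^r=\delta^{-1}(\pp^{r+1}_1)$, it suffices to show that $\law(\pp^r(\overline{\fp X}))=\law(\pp^r(\overline{\fp Y}))$ for every $r\in\N$.

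I would prove this by induction on $r$, using the two couplings symmetrically. At the inductive step, the causality of $\pi_1$ together with the $\F_1^{\overline{\fp X}}$-measurability of $\pp^r(\overline{\fp X})$ and $\overline X=\overline Y$ $\pi_1$-a.s.\ allows one to express $\pp^{r+1}_t(\overline{\fp Y})$ as a $\pi_1$-conditional expectation of $\pp^{r+1}_t(\overline{\fp X})$ in the convex space $\prob(\mathsf M_r)$; Jensen's inequality then yields $\E[F(\pp^{r+1}_t(\overline{\fp Y}))]\le \E[F(\pp^{r+1}_t(\overline{\fp X}))]$ for every continuous bounded convex $F$, and the symmetric argument run with $\pi_2$ reverses the inequality. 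Equality for all such $F$ implies equality in law since the continuous bounded convex functions form a determining class under convex order on $\prob(\mathsf M_r)$. The main obstacle I anticipate is that, under $\pi_1$, the random variables $\pp^r(\overline{\fp X})$ and $\pp^r(\overline{\fp Y})$ are only equal in marginal law (not $\pi_1$-a.s.), so considerable care is needed to carry out the Jensen step cleanly and to lift the fixed-time marginal equality to equality of the full \cadlag{} trajectory in $\mathsf M_{r+1}$; I would handle the latter by running the argument on finite tuples of times in $\cont(\overline{\fp X})\cap\cont(\overline{\fp Y})$, dense by Lemma~\ref{lem:cont_pt_dense}, and invoking \cadlag{} regularity of the prediction processes.
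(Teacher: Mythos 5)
Your treatment of the pseudo-metric part, the invariance under passing to canonical representatives, and the easy implication $\fp X\sim_{\HK}\fp Y \Rightarrow \SCW_p(\fp X,\fp Y)=0$ are all fine and match the paper's approach (the invariance is in fact already covered by Proposition~\ref{prop:nonPolish}(iii)).

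There is, however, a genuine gap in the converse implication, and it lies exactly where you flag an ``obstacle.'' Your inductive step requires, at rank $r$, the identity
\[
\pp^{r+1}_t(\overline{\fp Y}) \;=\; \law_{\pi_1}\big(\pp^r(\overline{\fp Y})\,\big|\,\F_t^{\overline{\fp Y}}\big)
\;=\; \law_{\pi_1}\big(\pp^r(\overline{\fp X})\,\big|\,\F_t^{\overline{\fp Y}}\big)
\;=\; \E_{\pi_1}\!\left[\pp^{r+1}_t(\overline{\fp X})\,\big|\,\F_t^{\overline{\fp Y}}\right],
\]
where the middle equality \emph{uses} $\pp^r(\overline{\fp X})=\pp^r(\overline{\fp Y})$ $\pi_1$-a.s.\ and the last uses causality and the tower property. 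You only invoke ``$\overline X=\overline Y$ $\pi_1$-a.s.,'' which is the rank-$0$ statement; it carries you through the step $r=0\to r=1$, but at higher ranks the mere equality of \emph{marginal} laws $\law(\pp^r(\overline{\fp X}))=\law(\pp^r(\overline{\fp Y}))$ that your Jensen/convex-order argument delivers is not sufficient — marginal law at rank $r$ does not control marginal law at rank $r+1$, because the latter also depends on the filtration. Your proposed remedy (working on a dense set of continuity times and using \cadlag{} regularity) addresses the passage from fixed-time marginals to trajectories, but not this deeper issue of closing the induction.

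The missing ingredient, which the paper obtains by following \cite[Lemma~2.6]{Pa22}, is to strengthen the inductive claim to $\pi$-almost-sure equality $\pp^r(\overline{\fp X})=\pp^r(\overline{\fp Y})$. The point is that your own Jensen step actually hands you this for free: once you have $\pp^{r+1}_t(\overline{\fp Y})=\E_{\pi_1}[\pp^{r+1}_t(\overline{\fp X})\mid\F_t^{\overline{\fp Y}}]$ \emph{and}, by running the symmetric argument with $\pi_2$ and using antisymmetry of convex order, $\law(\pp^{r+1}_t(\overline{\fp X}))=\law(\pp^{r+1}_t(\overline{\fp Y}))$, a conditional expectation that has the same distribution as the original random variable must coincide with it almost surely (check this via $\langle\,\cdot\,,f\rangle$ for a countable determining class of bounded continuous $f$). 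This gives $\pp^{r+1}_t(\overline{\fp X})=\pp^{r+1}_t(\overline{\fp Y})$ $\pi_1$-a.s.\ for each $t$, whence equality as \cadlag{} processes, and the induction closes. With that single additional observation your outline matches the paper's argument.
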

\begin{proof}
As in the case of $\AW_p$, Proposition~\ref{prop:nonPolish} yields that $\SCW_p$ does not depend on the representative of the $\HK$-class, so we can assume w.l.o.g.\ that $\fp X, \fp Y \in \mathcal{CFP}$. 	It is clear that $\SCW_p$ takes values in $[0,\infty)$ and that it is symmetric. 	Moreover, by exactly the same arguments as presented in the proof for  Lemma~\ref{lem:AW_triangluar_ineq}, one can show that  $\SCW_p$ satisfies the triangle inequality.
	
Regarding the second claim, we need to show that $\SCW_p(\fp X, \fp Y) = 0$ if and only if $\AW_p(\fp X, \fp Y)=0$. 
Since $\SCW_p\leq \AW_p$, it suffices to show that $\AW_p(\fp X,\fp Y)=0$ whenever $\SCW_p(\fp X,\fp Y)=0$.

By Remark~\ref{rem:CWattainment}, the infimum in the definition of $\SCW_p(\fp X, \fp Y)$ is attained, i.e.\ there are  $\pi \in \cplc(\fp X, \fp Y)$ and $\sigma \in \cplc(\fp Y, \fp X)$ with $X=Y$ $\pi$-a.s.\ and $\sigma$-a.s. 
Following line by line the arguments given in \cite[Lemma~2.6]{Pa22}, one can show by induction on the rank $r$ that $\pp^r(\fp X) = \pp^r(\fp Y)$ $\pi$-a.s.\ 
Hence, $\pp^\infty(\fp X) = \pp^\infty(\fp Y)$ $\pi$-a.s.\ and in particular, $\law(\pp^\infty(\fp X)) = \law(\pp^\infty(\fp Y))$. 
\end{proof}

\begin{proposition}
\label{prop:scw}
	$\SCW_p$ is a complete metric on $\FP_p$  and induces the same topology as $\AW_p$.  
\end{proposition}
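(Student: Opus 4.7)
The plan is to leverage the results already at hand: Lemma~\ref{lem:scw.definite} already shows that $\SCW_p$ is a well-defined metric on $\FP_p$ (symmetry, triangle inequality, and $\SCW_p(\fp X,\fp Y)=0 \iff \fp X\sim_{\HK}\fp Y$), and Theorem~\ref{thm:AWmetrizesHK} tells us that $\AW_p$ metrizes $\HK_p$ and is complete. The only things left to prove are (a) the topology induced by $\SCW_p$ coincides with that induced by $\AW_p$ and (b) completeness. The key additional observation tying everything together is that for any $\pi \in \cpl_{\rm c}^\varepsilon(\fp X,\fp Y)$ one has $\E_\pi[d^p(X,Y)]^{1/p} \geq \W_p(\law(X),\law(Y))$, so
\[
\SCW_p(\fp X,\fp Y) \;\geq\; \CW_p(\fp X,\fp Y) \;\geq\; \W_p(\law(X),\law(Y)).
\]

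For the topology equivalence, one direction is free: since every bicausal coupling is in particular causal in each direction, $\SCW_p \leq \AW_p$, so $\AW_p$-convergence implies $\SCW_p$-convergence. For the reverse, suppose $\SCW_p(\fp X^n,\fp X)\to 0$. Then the lower bound above gives $\W_p(\law(X^n),\law(X))\to 0$, so $\{\law(X^n)\}$ is $\W_p$-relatively compact. By Corollary~\ref{cor:CompFPp}, $\{\fp X^n\}$ is relatively compact in $(\FP_p,\HK_p)$. Let $\fp Y$ be any $\HK_p$-accumulation point, realized along a subsequence $(\fp X^{n_k})_k$. By Theorem~\ref{thm:AWmetrizesHK}, $\AW_p(\fp X^{n_k},\fp Y)\to 0$, and therefore $\SCW_p(\fp X^{n_k},\fp Y)\to 0$. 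The triangle inequality for $\SCW_p$ (Lemma~\ref{lem:scw.definite}) yields $\SCW_p(\fp X,\fp Y)=0$, hence $\fp X=\fp Y$ in $\FP_p$. As $\fp X$ is the unique accumulation point of a relatively compact sequence, the whole sequence converges to $\fp X$ in $\HK_p$, i.e.\ in $\AW_p$.

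For completeness, let $(\fp X^n)_n$ be $\SCW_p$-Cauchy. Using $\SCW_p \geq \W_p\circ\law$ once more, $(\law(X^n))_n$ is $\W_p$-Cauchy, hence convergent (as $(\prob_p(\mathcal X),\W_p)$ is complete). In particular $\{\law(X^n)\}$ is $\W_p$-relatively compact, so by Corollary~\ref{cor:CompFPp} the sequence $(\fp X^n)_n$ is relatively compact in $(\FP_p,\HK_p)$. Pick any $\HK_p$-accumulation point $\fp Y$; by Theorem~\ref{thm:AWmetrizesHK} we have $\AW_p(\fp X^{n_k},\fp Y)\to 0$ along some subsequence, and therefore $\SCW_p(\fp X^{n_k},\fp Y)\to 0$. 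Since the full sequence is $\SCW_p$-Cauchy, it converges to $\fp Y$ in $\SCW_p$, proving completeness.

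The argument is quite clean because all the genuinely hard work has been done: the compactness statement (Corollary~\ref{cor:CompFPp}), the coincidence of $\AW_p$ with the Hoover--Keisler topology (Theorem~\ref{thm:AWmetrizesHK}), and the definiteness of $\SCW_p$ (Lemma~\ref{lem:scw.definite}). The step most likely to trip one up is the lower bound $\SCW_p \geq \W_p\circ\law$: it is immediate but crucial, as it both provides tightness/uniform $p$-moment estimates needed to invoke Corollary~\ref{cor:CompFPp} and gives access to completeness of the classical Wasserstein space. Without this observation one would need a direct compactness argument for $\varepsilon$-causal couplings, whereas this way the problem is reduced to known facts about $\AW_p$ and $\W_p$.
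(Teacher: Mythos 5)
Your proof is correct and follows essentially the same route as the paper's: use the bound $\SCW_p \geq \W_p\circ\law$ (the paper phrases it as the $\SCW_p$-topology being finer than the weak one) to get $\W_p$-relative compactness, invoke Corollary~\ref{cor:CompFPp} for $\HK_p$-relative compactness, and then use definiteness of $\SCW_p$ from Lemma~\ref{lem:scw.definite} to identify the unique accumulation point. You merely spell out the completeness step, which the paper dispenses with by referring to the analogous argument for $\AW_p$ in Theorem~\ref{thm:AWmetrizesHK}.
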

	\begin{proof}
	By Lemma \ref{lem:scw.definite}, $\SCW_p$ is a metric on $\FP_p$.
	It remains to show that $\SCW_p$ induces the same topology as $\AW_p$.
	To that end, note that the topology induced by $\SCW_p$ is clearly weaker than the one induced by $\AW_p$, and at the same time finer than the weak-convergence topology.
	In particular, if $\fp X^n, \fp X$ are such that  $\SCW_p(\fp X^n, \fp X) \to 0$, then, by Corollary~\ref{cor:CompFPp}, the set $\{\law(X^n) : n\in\N \}$ is relatively compact w.r.t.\ $\AW_p$.
	Since any limit point $\fp Y$ of $(\fp X^n)_n$ w.r.t.\ $\AW_p$ also satisfies $\SCW_p(\fp Y,\fp X)=0$, Lemma \ref{lem:scw.definite} asserts that $\fp X=\fp Y$.
	In particular, $\AW_p(\fp X^n,\fp X)\to 0$.	
	Completeness of $\SCW_p$ can be shown exactly the same way as completeness of $\AW_p$ in Theorem~\ref{thm:AWmetrizesHK}.
	\end{proof}

	\section{Continuity of optimal stopping problems} \label{sec:OS}

    The main result of this section is that the value of an optimal stopping problem
	\[ {\rm OS}(\fp X,\varphi):= \inf_{\tau\in \mathrm{ST}(\bbX)}  \E[\varphi(X,\tau)] \]
	\emph{continuously} depends on the process $\bbX$ with respect to the adapted weak topology, for all cost functions  $\varphi\colon \X \times  [0,1]  \to\mathbb{R}$ that are regular in a suitable sense. Remarkably, such continuous dependence on the input process does not hold for the usual weak topology, as it was already pointed out by Aldous.

    Here and in the following, for $\fp X\in\mathcal{FP}$, the set of all $[0,1]$-valued stopping times w.r.t.\ $(\mathcal{F}_t^{\bbX})_{t\in[0,1]}$ is denoted by $\rm{ST}(\bbX)$.	
	
    \begin{definition} 
    A measurable function  $\varphi\colon  \mathcal{X} \times [0,1] \to [-\infty,\infty]$ is said to be  \emph{non-anticipative} if for every $t\in[0,1]$ and $f,g\in \mathcal{X}$ which coincide on $[0,t]$, $\varphi(f,t)=\varphi(g,t)$.
   \end{definition}
   
    We will see in Corollary~\ref{cor:OSwelldef} below that ${\rm OS}(\cdot,\varphi)$ is well-defined on $\FP$ in the sense that its value does not depend on the particular choice of the $\sim_\HK$-representative. 
    
     \begin{asn}\label{asn:costassump}
		The cost function $\varphi$ is non-anticipative and satisfies the following continuity property: for all  sequences $(s_n)_n \subset [0,1]$ and $(f_n) \subset \mathcal{X}$ such that $s_n \to s$ and $\lVert f_n-f \rVert_{\infty} \to 0$ with $f \in C([0,1];\mathbb{R}^d)$, we have that $\varphi(f_n,s_n) \to \varphi(f,s)$. 
	\end{asn}
	
	Note that Assumption \ref{asn:costassump} is satisfied whenever the map $(t,x)\mapsto \varphi(x,t)$ is jointly continuous. In particular, it is satisfied for Markovian cost functions, i.e.\ $\psi(x,t) = \psi(t,x(t))$ for $\psi$ continuous. 	
	
	\begin{proposition}[Qualitative continuity of OS]
		\label{prop:OST.qual}
		Suppose that $\bbX^n\to \bbX$ in the adapted weak topology and that $\fp X$ has continuous paths. Then, for every bounded cost function $\varphi$ satisfying Assumption~\ref{asn:costassump},
		\[ 
  {\rm OS}(\fp X^n,\varphi) \to  {\rm OS}(\fp X,\varphi).
            \]
	\end{proposition}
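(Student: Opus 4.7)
The plan is to exploit the metrization Theorem~\ref{thm:AWmetrizesHK} and then transfer stopping times between $\fp X^n$ and $\fp X$ through the bicausal couplings thus obtained. Concretely, fixing a bounded complete metric $d_{\mathcal X}$ inducing the $J_1$-topology (cf.\ Remark~\ref{rem:defAWflexibility1}), we have $\FP_p=\FP$ for any $p\in[1,\infty)$, and by Theorem~\ref{thm:HKAF} combined with Theorem~\ref{thm:AWmetrizesHK} the hypothesis gives $\AW_1(\fp X^n,\fp X)\to 0$. Select $\epsilon_n\downarrow 0$ and $\epsilon_n$-bicausal couplings $\pi^n$ of $\fp X^n$ and $\fp X$ with $\E_{\pi^n}[d_{\mathcal X}(X^n,X)]\to 0$; after passing to a subsequence this is $\pi^n$-a.s., and since $X$ has continuous paths and $J_1$-convergence to a continuous limit coincides with uniform convergence, $\|X^n-X\|_\infty\to 0$ $\pi^n$-a.s.\ along that subsequence. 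A standard subsequence extraction will promote the final bounds to the full sequence.

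For the $\limsup$-inequality, fix $\eta>0$ and a stopping time $\tau$ on $\fp X$ with $\E[\varphi(X,\tau)]\le{\rm OS}(\fp X,\varphi)+\eta$. Consider on $\Omega^{\fp X^n}$ the c\`adl\`ag non-decreasing $[0,1]$-valued process
\[
M^n_t := \E_{\pi^n}\bigl[\ind{\{\tau\le t\}}\mid\F^{\fp X^n}\bigr];
\]
Lemma~\ref{lem:causal_equiv}(ii) applied to the $\epsilon_n$-causality from $\fp X^n$ to $\fp X$ shows that $M^n_t$ is $\F^{\fp X^n}_{t+\epsilon_n}$-measurable. Enlarge $\fp X^n$ by an independent uniform $U$ on $[0,1]$ to obtain $\widetilde{\fp X}^n$; since the prediction processes of every rank coincide under independent enlargement, $\widetilde{\fp X}^n\sim_{\HK}\fp X^n$, so ${\rm OS}(\widetilde{\fp X}^n,\varphi)={\rm OS}(\fp X^n,\varphi)$ by Theorem~\ref{thm:intro.os}(i). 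Put
\[
\hat\tau^n := \bigl(\epsilon_n + \inf\{t\in[0,1]:M^n_t\ge U\}\bigr)\wedge 1,
\]
which, thanks to the $\epsilon_n$-shift, is a stopping time for $\widetilde{\fp X}^n$. Conditioning on $\F^{\fp X^n}$ and using the tower property for $\pi^n$ (with $\varphi(X^n,\cdot)$ depending only on $\omega^{\fp X^n}$) yields the key identity
\[
\E[\varphi(X^n,\hat\tau^n)] = \E_{\pi^n}\bigl[\varphi\bigl(X^n,(\tau+\epsilon_n)\wedge 1\bigr)\bigr].
\]
Along the subsequence carrying $\|X^n-X\|_\infty\to 0$ a.s., Assumption~\ref{asn:costassump} gives $\varphi(X^n,(\tau+\epsilon_n)\wedge 1)\to\varphi(X,\tau)$ $\pi^n$-a.s., and boundedness of $\varphi$ together with dominated convergence delivers $\E[\varphi(X^n,\hat\tau^n)]\to\E[\varphi(X,\tau)]\le{\rm OS}(\fp X,\varphi)+\eta$. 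Combined with ${\rm OS}(\fp X^n,\varphi)\le\E[\varphi(X^n,\hat\tau^n)]$, sending $\eta\downarrow 0$ closes this direction.

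The $\liminf$-inequality is treated symmetrically: take near-optimal $\tau^n$ on $\fp X^n$, set $N^n_t:=\E_{\pi^n}[\ind{\{\tau^n\le t\}}\mid\F^{\fp X}]$ (now $\F^{\fp X}_{t+\epsilon_n}$-measurable by $\epsilon_n$-causality from $\fp X$ to $\fp X^n$), and build $\hat\sigma^n:=(\epsilon_n+\inf\{t:N^n_t\ge U\})\wedge 1$ on an independent enlargement of $\fp X$; the same tower computation gives $\E[\varphi(X,\hat\sigma^n)]=\E_{\pi^n}[\varphi(X,(\tau^n+\epsilon_n)\wedge 1)]$. To close the loop one shows $\varphi(X,(\tau^n+\epsilon_n)\wedge 1)-\varphi(X^n,\tau^n)\to 0$ $\pi^n$-a.s.: for every convergent sub-subsequence $\tau^{n_k}\to t^*$, Assumption~\ref{asn:costassump} forces both terms to tend to $\varphi(X,t^*)$. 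Dominated convergence then yields $\liminf_n{\rm OS}(\fp X^n,\varphi)\ge{\rm OS}(\fp X,\varphi)$.

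The main technical obstacle is the randomized stopping-time construction on the enlarged space: verifying the causality-based measurability $M^n_t\in\F^{\fp X^n}_{t+\epsilon_n}$, arguing that $\hat\tau^n$ becomes a genuine stopping time only after the temporal shift by $\epsilon_n$, and confirming that the independent enlargement by $U$ does not disturb the optimal-stopping value. Everything else reduces to the routine toolkit of $L^1$-to-a.s.\ subsequence extraction, the continuity fact for $J_1$-convergence at continuous limits, and Assumption~\ref{asn:costassump} combined with dominated convergence.
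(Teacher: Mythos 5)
Your proposal is correct and follows essentially the same route as the paper: metrize the adapted weak topology via $\AW$ (Theorem~\ref{thm:AWmetrizesHK}), pull stopping times across $\epsilon_n$-causal couplings up to a time shift of $\epsilon_n$, and pass to the limit using Assumption~\ref{asn:costassump} together with the continuity of the paths of $X$. The paper packages your randomized-stopping-time transfer as Proposition~\ref{prop:ST.transfer} (using the conditional quantile family $(\sigma_u)_u$ and averaging over $u$ instead of adjoining an independent uniform, which avoids the detour through $\widetilde{\fp X}^n\sim_{\HK}\fp X^n$ and Corollary~\ref{cor:OSwelldef}), and packages your a.s.-convergence step as Lemma~\ref{lem:OSerr0} — note that your statement ``$\|X^n-X\|_\infty\to 0$ $\pi^n$-a.s.'' only acquires meaning after gluing the couplings $\pi^n$ at their common marginal $\P^{\fp X}$ onto one probability space, which the paper does explicitly there.
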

    
    Note that if we have $\AW_p(\fp X^n, \fp X) \to 0$, we can replace the boundedness assumption on $\varphi$ by the growth condition $\varphi(t,f) \le C(1 + d_\X^p(f,0))$. Further note that the continuity assumptions on the paths of $\fp X$ is necessary, see Example~\ref{ex:counter} below. 
    
    The proof of the continuity result for optimal stopping is based on the following observation: 
    A causal coupling $\pi \in \cplc(\fp X, \fp Y)$ allows to pull back a stopping time $\tau \in \rm{ST}(\fp Y)$ to a randomized stopping time for $\fp X$. More generally, $\epsilon$-causal couplings allow to do this up to a time shift of $\epsilon$, as the following proposition details.

    \begin{proposition}\label{prop:ST.transfer}
    Let $\varepsilon\geq 0$,  $\pi \in \cplc^\epsilon(\fp X, \fp Y)$, and $\tau \in \rm{ST}(\fp Y)$ be given. Then there is a family $(\sigma_u)_{u \in [0,1]} \subset \rm{ST}(\fp X)$ such that for every $\varphi: \X \times [0,1] \to \R$ bounded from below
    \begin{align}\label{eq:st.transf1}
        \int_{[0,1]} \E_{\P^{\fp X}}[ \varphi(X,\sigma_u) ]  \,du =  \E_{\pi} [ \varphi(X,\tau+\epsilon)].
    \end{align}
    In particular, there exists a stopping time $\sigma \in \rm{ST}(\fp Y)$ such that 
    \[
    \E_{\P^{\fp X}}[ \varphi(X,\sigma) ]  \le   \E_{\pi} [ \varphi(X,\tau+\epsilon)].
    \]
    \end{proposition}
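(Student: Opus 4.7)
The idea is to transfer $\tau$ to a randomized stopping time on $\fp X$ by building an $(\F_t^{\fp X})$-adapted CDF process out of the conditional law of $\tau+\epsilon$ under $\pi$ and then taking its generalized inverse. The $\epsilon$-causality assumption is precisely what is needed to ensure adaptedness.

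Set $\tilde\tau := \tau + \epsilon$ and consider $H_t := \pi(\tilde\tau \le t \mid \F_t^{\fp X})$ for $t \in [0,1]$. Since $\{\tilde\tau \le t\} = \{\tau \le (t-\epsilon)^+\} \in \F_{(t-\epsilon)^+}^{\fp Y}$, the $\epsilon$-causality of $\pi$ together with Lemma~\ref{lem:causal_equiv} yields
\[
H_t = \pi\bigl(\tilde\tau \le t \,\big|\, \F_1^{\fp X}\bigr) \quad \pi\text{-a.s.}
\]
The right-hand side is the conditional CDF of $\tilde\tau$ given $\F_1^{\fp X}$, so a canonical version of $H$ is \cadlag{}, non-decreasing with values in $[0,1]$, and in particular $(\F_t^{\fp X})$-adapted. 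I would then define
\[
\sigma_u := \inf\{t \in [0,1] : H_t \ge u\} \quad \text{with } \inf\emptyset := 1,
\]
and verify, via right-continuity of $H$ and the usual conditions on $(\F_t^{\fp X})$, that $\sigma_u \in \mathrm{ST}(\fp X)$ for every $u \in [0,1]$.

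For the integral identity, I would use the standard layer-cake / change-of-variables observation that for each fixed $\omega_x$ the Lebesgue pushforward on $[0,1]$ under $u \mapsto \sigma_u(\omega_x)$ equals the conditional law $\pi(\tilde\tau \in \cdot \mid \F_1^{\fp X})(\omega_x)$, with any mass on $\{\tilde\tau > 1\}$ collapsed onto $1$, matching the convention $\varphi(\cdot,t) := \varphi(\cdot,1)$ for $t > 1$. Combined with Fubini (valid since $\varphi$ is bounded from below), this yields
\[
\int_0^1 \E_{\P^{\fp X}}[\varphi(X,\sigma_u)]\,du
= \E_{\P^{\fp X}}\bigl[\E_\pi[\varphi(X,\tilde\tau)\mid\F_1^{\fp X}]\bigr]
= \E_\pi[\varphi(X,\tilde\tau)].
\]
The ``in particular'' claim then follows because the integral of a function over $[0,1]$ dominates its essential infimum, so there exists $u^\ast \in [0,1]$ with $\E_{\P^{\fp X}}[\varphi(X,\sigma_{u^\ast})] \le \E_\pi[\varphi(X,\tilde\tau)]$; one sets $\sigma := \sigma_{u^\ast}$ (the statement's $\mathrm{ST}(\fp Y)$ appears to be a typo for $\mathrm{ST}(\fp X)$).

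The main hurdle is the identification $\pi(\tilde\tau \le t \mid \F_t^{\fp X}) = \pi(\tilde\tau \le t \mid \F_1^{\fp X})$: it relies essentially on the $\epsilon$-causality and requires some care in selecting a single \cadlag{} version of $H$ that is simultaneously $(\F_t^{\fp X})$-adapted and defined outside a common $\pi$-null set. A secondary subtlety is the consistent treatment of $\tilde\tau > 1$ via the extension convention, so that the change-of-variables step matches cleanly.
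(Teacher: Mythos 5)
Your proposal is correct and follows essentially the same route as the paper: define the quantile transform of the conditional law of $\tau+\epsilon$ given $\F_1^{\fp X}$, use $\epsilon$-causality (via Lemma~\ref{lem:causal_equiv}) to see that the hitting process is $(\F_t^{\fp X})$-adapted, and finish with Fubini together with the fact that $u\mapsto\sigma_u(\omega_x)$ pushes Lebesgue measure forward to $\law_\pi(\tau+\epsilon\mid\F_1^{\fp X})(\omega_x)$. The paper phrases the causality step in the opposite direction — it starts from $\pi(\tau\le t-\epsilon\mid\F_1^{\fp X})$ and argues it is $\F_t^{\fp X}$-measurable, whereas you start from the $\F_t^{\fp X}$-conditional version and show it agrees with the $\F_1^{\fp X}$-conditional one — but this is the same identity. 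You are also right that $\mathrm{ST}(\fp Y)$ in the "in particular" clause is a typo for $\mathrm{ST}(\fp X)$.
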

    \begin{proof}
        For every $u \in [0,1]$, define
		\begin{align}\label{eq:stoptime}
			\sigma_u:= \inf \left\{t \geq 0: \, \pi (\tau \leq t-\epsilon \, | \, \mathcal{F}^{\bbX}_{1} ) \geq u \right\} \wedge 1. 
		\end{align}
		By the definition of $\epsilon$-causality, the conditional probability $\pi (\tau^* \leq t-\epsilon \, | \, \mathcal{F}^{\bbX}_{1 }) $ is $\mathcal{F}^{\bbX}_t$-measurable. 
		As the hitting time of a right-continuous adapted process, $\sigma_u$ is a stopping time of $\bbX$ and the infimum over $t$ in \eqref{eq:stoptime} is attained.
  
		By conditioning on $\F_1^{\fp X}$ and Fubini's theorem, we find
		\begin{align*}
			\int_{[0,1]}  \E_{\pi}[ \varphi(X,\sigma_u)] \, du
			&= \int_{[0,1]} \E_{\pi} \left[ \E_{\pi} \left[ \varphi(X,\sigma_u) \, | \, \mathcal{F}_{1}^{\bbX}\right]\right] \,du 
			= \E_{\pi} \left[\int_{[0,1]} \E_{\pi} \left[ \varphi(X,\sigma_u) \, | \, \mathcal{F}_{1}^{\bbX}\right] \, du \right] 
		\end{align*}
        As $u \mapsto \sigma_u(\omega^{\fp X})$ is the quantile function of $\mathcal{L}_{\pi}(\tau+\epsilon  \, | \,  \mathcal{F}_{1}^{\bbX})(\omega^{\fp X})$ for a.e.\ $\omega^{\fp X} \in \Omega^{\fp X}$, we find
            
		\begin{align*}
			\int_{[0,1]} \E_{\pi} [ \varphi(X,\sigma_u) \, | \, \mathcal{F}_{1}^{\bbX} ]  \, du
			= \int_{[0,1]} \varphi(X,\sigma_u) \, du 
			= \E_{\pi} [ \varphi(X,\tau+\epsilon) \, | \, \mathcal{F}_{1}^{\bbX}].
		\end{align*}
  Therefore, we conclude \eqref{eq:st.transf1} by taking expectations. 
    \end{proof}
    
    \begin{corollary}\label{cor:OSwelldef}
    Let $\varphi$ be non-anticipative and bounded from below.   If $\fp X,\fp Y\in\mathcal{FP}$ satisfy $\fp X\sim_{\rm HK} \fp Y$, then
		 ${\rm OS}(\fp X,\varphi)={\rm OS}(\fp Y,\varphi)$.
    \end{corollary}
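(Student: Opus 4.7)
The plan is to reduce the corollary to the single identity $\mathrm{OS}(\fp X,\varphi) = \mathrm{OS}(\overline{\fp X},\varphi)$ for an arbitrary $\fp X \in \mathcal{FP}$, and then to obtain both inequalities by applying Proposition \ref{prop:ST.transfer} in each direction to a canonical bicausal coupling between $\fp X$ and $\overline{\fp X}$. Since $\fp X \sim_{\HK} \fp Y$ forces $\overline{\fp X} = \overline{\fp Y}$ by Lemma \ref{lem:canonRep}, this reduction is immediate.

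The coupling I would use is $\pi := (\mathrm{id}_{\Omega^{\fp X}},\pp^\infty(\fp X))_\# \P^{\fp X}$, viewed as an element of $\cpl(\fp X,\overline{\fp X})$. Its marginals are correct by the definition of $\overline{\fp X}$, and the identity $\overline X = \delta^{-1}(\pp^1_1(\fp X)) = \delta^{-1}(\delta_X) = X$ shows $X = \overline X$ $\pi$-a.s. Causality from $\fp X$ to $\overline{\fp X}$ is automatic: the pull-back of $\F_t^{\overline{\fp X}}$ through $\pp^\infty(\fp X)$ lies inside $\F_t^{\fp X}$ (since each $\pp^\infty_s(\fp X)$ is $\F_s^{\fp X}$-measurable and $(\F_t^{\fp X})_t$ is right-continuous), so the defining conditional independence is trivial. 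The main obstacle is the reverse causality, which by Lemma \ref{lem:causal_equiv} amounts to showing that $\E_\pi[V\mid \F_t^{\fp X}]$ is $\F_t^{\overline{\fp X}}$-measurable for every bounded $\F_1^{\overline{\fp X}}$-measurable $V$. By a monotone class argument it suffices to treat cylindrical $V=h(\pp^{r_1}_{s_1}(\fp X),\ldots,\pp^{r_n}_{s_n}(\fp X))$. Setting $R:=\max_i r_i$, the iterated identity $\pp^{r+1}_1(\fp X)=\delta_{\pp^r(\fp X)}$ lets me rewrite each $\pp^{r_i}_{s_i}$ as a Borel function of $\pp^R_{s_i}$, so $V = \widetilde h(\pp^R_{s_1}(\fp X),\ldots,\pp^R_{s_n}(\fp X))$. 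The defining property $\pp^{R+1}_t(\fp X) = \law(\pp^R(\fp X)\mid \F_t^{\fp X})$ then yields
\[
\E_\pi[V \mid \F_t^{\fp X}] = \int \widetilde h(z_{s_1},\ldots,z_{s_n})\,\pp^{R+1}_t(\fp X)(dz),
\]
which is a Borel function of $\pp^{R+1}_t(\fp X) \in \F_t^{\overline{\fp X}}$, as required.

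With bicausality of $\pi$ established, the conclusion follows from two symmetric applications of Proposition \ref{prop:ST.transfer} with $\epsilon=0$. Given any $\tau \in \mathrm{ST}(\overline{\fp X})$, the proposition yields $\sigma \in \mathrm{ST}(\fp X)$ with $\E_{\P^{\fp X}}[\varphi(X,\sigma)] \le \E_\pi[\varphi(X,\tau)]$; since $X=\overline X$ $\pi$-a.s.\ and $\varphi$ is non-anticipative, the right hand side equals $\E_{\P^{\overline{\fp X}}}[\varphi(\overline X,\tau)]$, and taking infima gives $\mathrm{OS}(\fp X,\varphi) \le \mathrm{OS}(\overline{\fp X},\varphi)$. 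Interchanging the roles of the two processes (and reading $\pi$ from $\overline{\fp X}$ to $\fp X$) yields the reverse inequality. The substantive step is the immersion argument above; everything else is bookkeeping with marginals and the non-anticipativity of $\varphi$.
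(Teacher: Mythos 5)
Your argument follows the paper's own proof almost line for line: reduce to $\mathrm{OS}(\fp X,\varphi)=\mathrm{OS}(\overline{\fp X},\varphi)$ via Lemma~\ref{lem:canonRep}, take the coupling $\pi=(\id,\pp^\infty(\fp X))_\#\P^{\fp X}\in\cplbc(\fp X,\overline{\fp X})$ with $X=\overline X$ $\pi$-a.s., and apply Proposition~\ref{prop:ST.transfer} in both directions. The only deviation is that you re-derive bicausality of $\pi$ inline where the paper simply cites Lemma~\ref{lem:canonical.couplings}\ref{it:lem.canonical.couplings.1}; your derivation is sound, but note that recovering $\pp^{r_i}_{s_i}(\fp X)$ from $\pp^{R}_{s_i}(\fp X)$ rests on the time-$s_i$ relation between consecutive prediction ranks (and on Borel measurability of the time-$s_i$ evaluation on the Meyer--Zheng path space $\mathsf M_R$), not on the time-$1$ identity $\pp^{r+1}_1(\fp X)=\delta_{\pp^r(\fp X)}$ that you quote.
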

    \begin{proof}
    By Lemma~\ref{lem:canonRep}, it suffices to show that $\rm{OS}(\fp X, \varphi) = \rm{OS}(\overline{\fp X}, \varphi)$, where $\overline{\fp X} \in \mathcal{CFP}$ is the canonical representative associated to $\fp X$. By Lemma~\ref{lem:canonical.couplings}\ref{it:lem.canonical.couplings.1} there is a coupling $\pi \in \cplbc(\fp X, \overline{\fp X})$ such that $X = \overline{X}$ $\pi$-a.s. Therefore, by Proposition~\ref{prop:ST.transfer}, for any $\tau \in \rm{ST}(\overline{\fp X })$ there is $\sigma \in \rm{ST}(\fp X)$ such that
    $
    \E_{\P^{\fp X}}[ \varphi(X, \sigma) ] \le \E_\pi[ \varphi(X, \tau)] = \E_{\P^{\overline{\fp X}}}[ \varphi(\overline{X}, \tau) ]. 
    $
    By reversing the roles of $\fp X$ and $\overline{\fp X}$ in the above argument, for any $\tau \in \rm{ST}(\fp X)$ there is $\sigma \in \rm{ST}(\overline{\fp X})$ such that
    $    
    \E_{\P^{\overline{\fp X}}}[ \varphi(\overline{X}, \sigma) ] \le  \E_{\P^{\fp X}}[ \varphi(X, \tau) ]. 
    $
    Hence, $\rm{OS}(\fp X, \varphi) = \rm{OS}(\overline{\fp X}, \varphi)$.
    \end{proof}
   
    \begin{lemma}\label{lem:OSerr0}
    Let $\varphi$ be a bounded cost function that satisfies Assumption~\ref{asn:costassump} and let $(\pi_n)_n$ with $\pi_n \in \cpl(\fp X^n, \fp X)$ such that $\E_{\pi_n}[d_{\X}(X^n,X) \wedge 1 ] \to 0$. We have, for every null sequence $(\varepsilon_n)_n$,
    \begin{align}\label{eq:lem:phierr}
        \lim_{n \to \infty}\E_{\pi_n}\bigg[\sup_{ |s-t| \le \epsilon_n} | \varphi(X^n,s) - \varphi(X,t) | \bigg] = 0.
    \end{align}
    \end{lemma}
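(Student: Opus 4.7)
The plan is to invoke Skorokhod's representation theorem to reduce the expectation bound to an almost-sure convergence statement, then apply Assumption~\ref{asn:costassump} pointwise. First, I would define the push-forward couplings $\mu_n := (X^n, X)_\#\pi_n \in \prob(\X \times \X)$. Each $\mu_n$ has second marginal $\law(X)$, and the hypothesis $\E_{\pi_n}[d_\X(X^n, X) \wedge 1] \to 0$ forces $\mu_n$ to converge weakly to $\mu^\star := (\id,\id)_\# \law(X)$, which is supported on the diagonal $\{(f,f):f\in\X\}$. Since $\X$ is Polish, Skorokhod's theorem produces random variables $(U_n,V_n)$ on a common probability space with $\law(U_n,V_n) = \mu_n$, $\law(U) = \law(X)$, and $(U_n,V_n) \to (U,U)$ almost surely in $\X \times \X$.

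Next, I will verify the pointwise (almost sure) convergence
\[
G_n(\omega) := \sup_{|s - t| \leq \epsilon_n} \bigl| \varphi(U_n(\omega), s) - \varphi(V_n(\omega), t)\bigr| \longrightarrow 0.
\]
Under the standing assumption of Section~\ref{sec:OS} that $X$ has continuous paths almost surely, $U$ inherits continuous paths a.s., so $J_1$-convergence $(U_n, V_n) \to (U, U)$ improves automatically to uniform convergence, i.e.\ $\|U_n - U\|_\infty \vee \|V_n - U\|_\infty \to 0$. Arguing by contradiction, if $G_{n_k}(\omega) \geq \delta > 0$ along a subsequence, I would select near-optimisers $(s_k,t_k) \in [0,1]^2$ with $|s_k - t_k| \leq \epsilon_{n_k}$ and $|\varphi(U_{n_k},s_k) - \varphi(V_{n_k},t_k)| \geq \delta/2$; compactness of $[0,1]^2$ together with $\epsilon_{n_k} \to 0$ yields a sub-subsequence with $s_k, t_k \to s^\star$. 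Applying Assumption~\ref{asn:costassump} separately to the uniformly-convergent pairs $(U_{n_k}, s_k)$ and $(V_{n_k}, t_k)$ forces both $\varphi$-values to converge to $\varphi(U(\omega), s^\star)$, contradicting the lower bound $\delta/2$.

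The proof is then closed by dominated convergence: since $\varphi$ is bounded, so is $G_n$ uniformly, hence $G_n \to 0$ a.s.\ implies $\E[G_n] \to 0$. By construction of $\mu_n$ and of the Skorokhod representation, $\E[G_n] = \E_{\pi_n}[F_n]$, yielding the lemma. To obtain the full limit (rather than only along a subsequence), I would assume for contradiction that $\limsup_n \E_{\pi_n}[F_n] \geq \delta > 0$, extract a subsequence realising the limsup, and apply the Skorokhod argument to that subsequence.

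The main obstacle I anticipate is the measurability of the uncountable supremum $F_n$. This is handled by noting that, for any $\omega$ with $U(\omega)$ continuous, Assumption~\ref{asn:costassump} applied with the fixed continuous path implies continuity of $s \mapsto \varphi(U(\omega), s)$, which lets one replace the supremum by one over rationals on the asymptotic support of $\mu_n$ without changing the value; alternatively, one may use outer expectations. Either device reduces the claim to the pointwise convergence already established, and the Skorokhod route then delivers the full integrated bound.
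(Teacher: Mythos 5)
Your proof is correct, but it takes a genuinely different route from the paper's. Where you prove weak convergence of $\mu_n := (X^n,X)_\#\pi_n$ to the diagonal law $\mu^\star$ and then invoke Skorokhod's representation theorem to obtain $(U_n,V_n)\to(U,U)$ almost surely, the paper instead applies \emph{conditional independent gluing} of the couplings $\pi_n$ at their common marginal $\P^{\fp X}$: this produces a single probability space carrying random variables $\tilde X^n$ and one fixed $\tilde X$ with $\law(\tilde X^n,\tilde X)=\law_{\pi_n}(X^n,X)$, so that $\tilde X^n\to\tilde X$ in probability follows directly from $\E_{\pi_n}[d_\X(X^n,X)\wedge 1]\to 0$ without any detour through weak convergence. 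Both constructions feed into the same pointwise argument (continuity of $\tilde X$-paths upgrades $J_1$- to uniform convergence, then compactness of $[0,1]$ and Assumption~\ref{asn:costassump} kill the supremum, then dominated convergence). The gluing is slightly more economical: your route requires the intermediate lemma that $\E_{\pi_n}[d_\X(X^n,X)\wedge 1]\to 0$ forces $\mu_n\to\mu^\star$ weakly, which is true but needs a short argument via bounded Lipschitz test functions that you assert rather than prove; the gluing gets the limit object for free and also pins down a single representative $\tilde X$, whereas in your construction the second coordinate $V_n$ drifts with $n$. On the other hand, your explicit subsequence-extraction at the end to pass from ``a.s.\ along a subsequence'' to the full limit is a cleaner closing step than the paper's terse ``follows using dominated convergence,'' and your acknowledgment of the measurability of the uncountable supremum is a real subtlety the paper does not flag --- though your proposed fix (replacing by a sup over rationals by continuity of $s\mapsto\varphi(U(\omega),s)$) is insufficient, since the supremand also involves $\varphi(X^n(\omega),\cdot)$ with $X^n(\omega)$ merely \cadlag, where Assumption~\ref{asn:costassump} provides no continuity; the outer-expectation alternative you mention is the safer device.
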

    \begin{proof}
    By conditional independent gluing of the couplings $\pi_n$, $n\in\N$, at their common marginal $\P^{\fp X}$, we find random variables $\tilde{X}^n, \tilde{X}$ on a common probability space such that $\law(\tilde{X}^n, \tilde{X})= \law(X^n,X)$ and hence $\tilde{X}^n \to \tilde{X}$ in probability.  After possibly passing to a subsequence, we have  $\tilde{X}^n \to \tilde{X}$ almost surely. 

    If $d_{J_1}(f_n,f) \to 0$ and $f \in C([0,1];\R^d)$, we have $\|f_n-f\|_\infty\to 0$ as well. Using this, Assumption~\ref{asn:costassump} and the compactness of $[0,1]$, we find that a.s.
    \[
    \lim_{n \to \infty} \sup_{ |s-t| \le \epsilon_n} | \varphi(\tilde{X}^n,s) - \varphi(\tilde{X},t) | = 0.
    \]
    Hence, \eqref{eq:lem:phierr} follows using dominated convergence.
    \end{proof}

    We are now ready to prove the qualitative continuity result. 
    \begin{proof}[Proof of Proposition~\ref{prop:OST.qual}]
    Suppose that $\fp X^n \to \fp X$ in the adapted weak topology and that $X$ has continuous paths. By Theorem~\ref{thm:AWmetrizesHK}, there is a sequence $(\pi_n)_n$ of $\epsilon_n$-bicausal couplings such that $\epsilon_n \to 0$ and $\E_\pi[d_\X(X^n,X) \wedge 1 ] \to 0$. 
    
    We start by showing that ${\rm OS}( \bbX,\varphi) \leq \liminf_{n\to\infty} {\rm OS}( \bbX^n,\varphi)$.
    To that end let $\tau_n$  be an optimal stopping time for ${\rm{OS}}({\fp{X}}^{n}, \varphi)$ (or, almost optimal if it does not exists).
    By Proposition~\ref{prop:ST.transfer}, we have ${\rm{OS}}(\fp X, \varphi) \leq \E_{\pi_n}[ \varphi(X,\tau_n+\varepsilon)]$, hence
    \begin{align*}
    {\rm{OS}}(\fp X, \varphi) - {\rm{OS}}(\fp X^n ,\varphi)  
    &\le \E_{\pi_n}[ \varphi(X,\tau_n) - \varphi(X^n, \tau_n +\epsilon_n) ] \\
    &\le \E_{\pi_n} \!\bigg[ \sup_{t \in [0,1]}\! | \varphi(X,t) - \varphi(X^n,t+\epsilon_n) |  \bigg]
    \to 0,
    \end{align*}
    where the last step follows from Lemma~\ref{lem:OSerr0}.
    
    The same arguments show that 
    \[
    {\rm{OS}}(\fp X^n, \varphi) - {\rm{OS}}(\fp X ,\varphi)  
    \le \E_{\pi_n} \!\bigg[ \sup_{t \in [0,1]}\! | \varphi(X^n,t) - \varphi(X,t+\epsilon_n) |  \bigg]
    \to 0  \]
    and hence ${\rm OS}( \bbX,\varphi) \geq \limsup_{n\to\infty} {\rm OS}( \bbX^n,\varphi)$.
    This completes the proof.
    \end{proof}

    For Lipschitz cost functions, more refined quantitative  estimates are possible.
    To that end, we require the following concept.
	
	\begin{definition}
		For a filtered process $\bbX\in \FP$,  we call 
		\[ \delta_\bbX(\varepsilon) := \sup_{\tau\in \rm{ST}(\bbX)} \E\bigg[\sup_{s\in[0,\varepsilon]} |X_{\tau+s} - X_\tau| \bigg]
		\qquad\text{for } \varepsilon\in[0,1]\]
		the \emph{modulus of continuity} of $\bbX$.
	\end{definition}
    
    Using the same arguments as in the proof of Corollary~\ref{cor:OSwelldef}, one can see that the modulus of continuity is independent of the $\sim_{\HK}$-representative. It is worthwhile to note that the modulus of continuity can be easily estimated for certain standard processes (their standard definitions can be found in Section~\ref{sec:donsker}).
	
	\begin{example}	
		\label{ex:modulus}
		The following hold.
		\begin{enumerate}[(i)]
			\item
			If $\fp B$ is the standard Brownian motion, then $\delta_{\mathbb{B}}(\varepsilon)\leq 2 \sqrt{\varepsilon}$.
			\item\label{ex:modulus.item.2}
			If $\fp S$  is the  solution of the SDE $dS_t = \mu_t(S_t) \, dt + \sigma_t( S_t)\, d B_t$, then $\delta_{\mathbb{S}}(\varepsilon)\leq  \|\mu\|_\infty \varepsilon + 2  \|\sigma\|_\infty \sqrt{\varepsilon}$.
			\item
			If $\fp B^n$ is the scaled  random walk with step size $1/n>0$, then $\delta_{\mathbb{B}^n}(\varepsilon) \leq 4 \sqrt{ 1/n \vee \epsilon}$.
			\item
			If $\fp S^n$ is the Euler approximation  for $\fp S$, then $\delta_{\mathbb{S}^n}(\varepsilon) \leq 2 \lVert \mu \rVert_{\infty} ( 1/n \vee \varepsilon )+ 4 \lVert \sigma \rVert_{\infty} \sqrt{1/n \vee \varepsilon} $ where $1/n>0$ is the step size.
		\end{enumerate}
	\end{example}
	
	The proof of Example~\ref{ex:modulus} is standard and given in Appendix~\ref{sec:appQuantOS}.
	Here is our main quantitative continuity result for the value of optimal stopping problems.

	\begin{proposition}[Quantitative continuity of OS]
		\label{prop:OST.quant}
		Assume that the cost $\varphi$ is non-anticipative and $L$-Lipschitz, that is, 
 for every $f,g\in D([0,1];\mathbb{R}^d)$ and $0\leq s\leq t\leq 1$, 
			\[ |\varphi(f,t)-\varphi(g,s)|\leq L \left( \sup_{u\in[0,s]} |f(s)-g(s)| + \sup_{u\in[s,t]} |f(u)-g(s)|\right).\]
    Moreover, let $\bbX,\bbY \in\FP$ and let $\pi\in \cpl^{\varepsilon}_{\rm{bc}}(\bbX,\bbY)$.
		Then we have 
		\[ |{\rm OS}(\fp X,\varphi) - {\rm OS}(\fp Y, \varphi)|
		\leq  L \left( \E_{\pi}[ \|X-Y\|_\infty] + \delta_{\bbX}(\varepsilon) + \delta_{\bbY}(\varepsilon) \right). \]
	\end{proposition}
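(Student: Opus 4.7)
My plan is to establish the two one-sided inequalities
\begin{align*}
  {\rm OS}(\fp X,\varphi) - {\rm OS}(\fp Y,\varphi) &\leq L\big(\E_\pi[\|X-Y\|_\infty] + \delta_{\fp Y}(\varepsilon)\big), \\
  {\rm OS}(\fp Y,\varphi) - {\rm OS}(\fp X,\varphi) &\leq L\big(\E_\pi[\|X-Y\|_\infty] + \delta_{\fp X}(\varepsilon)\big),
\end{align*}
from which the claimed bound is immediate, since each one-sided estimate is in fact sharper than the corresponding half of the stated inequality.

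For the first bound, I fix $\eta>0$ and pick $\tau \in \rm{ST}(\fp Y)$ that is $\eta$-optimal for ${\rm OS}(\fp Y,\varphi)$. Since $\pi \in \cpl^{\varepsilon}_{\rm bc}(\fp X,\fp Y)$ is in particular $\varepsilon$-causal from $\fp X$ to $\fp Y$, Proposition~\ref{prop:ST.transfer} delivers some $\sigma \in \rm{ST}(\fp X)$ with $\E_{\P^{\fp X}}[\varphi(X,\sigma)] \leq \E_\pi[\varphi(X,\tau+\varepsilon)]$. Since $\sigma$ is admissible for ${\rm OS}(\fp X,\varphi)$, this yields
\[
  {\rm OS}(\fp X,\varphi) - {\rm OS}(\fp Y,\varphi) - \eta \leq \E_\pi\big[\varphi(X,\tau+\varepsilon) - \varphi(Y,\tau)\big].
\]
I then split the integrand as $[\varphi(X,\tau+\varepsilon) - \varphi(Y,\tau+\varepsilon)] + [\varphi(Y,\tau+\varepsilon) - \varphi(Y,\tau)]$. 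The Lipschitz hypothesis applied with $f=X$, $g=Y$, $t=s=\tau+\varepsilon$ bounds the first bracket by $L\|X-Y\|_\infty$; applied with $f=g=Y$, $s=\tau$, $t=\tau+\varepsilon$, it bounds the second bracket by $L \sup_{u \in [\tau,\tau+\varepsilon]} |Y_u - Y_\tau|$. Taking $\pi$-expectation, the first contribution is $L\E_\pi[\|X-Y\|_\infty]$; the second, being a function of $Y$ alone, has $\pi$-expectation equal to its $\P^{\fp Y}$-expectation, which is at most $L\delta_{\fp Y}(\varepsilon)$ because $\tau \in \rm{ST}(\fp Y)$. Sending $\eta \to 0$ concludes the first direction.

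The second inequality follows by the entirely symmetric argument: start with a near-optimal $\tau \in \rm{ST}(\fp X)$, invoke the other half of bicausality (i.e.\ $\varepsilon$-causality from $\fp Y$ to $\fp X$) to transfer $\tau$ via Proposition~\ref{prop:ST.transfer} to some $\sigma \in \rm{ST}(\fp Y)$, and apply the analogous decomposition to $\varphi(Y,\tau+\varepsilon)-\varphi(X,\tau)$, which now produces $\delta_{\fp X}(\varepsilon)$ in the error. I do not foresee a substantive difficulty; the only point to monitor is that the decomposition is always chosen so that the modulus of continuity is evaluated on a stopping time in the corresponding filtration --- which is automatic with the ``outer'' splits $[\varphi(Y,\tau+\varepsilon)-\varphi(Y,\tau)]$ and $[\varphi(X,\tau+\varepsilon)-\varphi(X,\tau)]$ used above, consistent with the side on which the near-optimal $\tau$ was originally selected.
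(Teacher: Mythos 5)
Your proof is correct and follows essentially the same route as the paper: transfer a near-optimal $\tau\in\mathrm{ST}(\fp Y)$ to $\fp X$ via Proposition~\ref{prop:ST.transfer}, split $\varphi(X,\tau+\varepsilon)-\varphi(Y,\tau)$ as $[\varphi(X,\tau+\varepsilon)-\varphi(Y,\tau+\varepsilon)]+[\varphi(Y,\tau+\varepsilon)-\varphi(Y,\tau)]$, bound each bracket by the Lipschitz hypothesis, and symmetrize. The only cosmetic difference is that you spell out the $\eta$-approximation and the admissibility of the transferred $\sigma$, which the paper leaves implicit.
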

	
A particular simple and yet relevant example of a class of functions $\varphi$ that satisfy the assumption in Proposition \ref{prop:OST.quant} are those depending only on the current state, 
      i.e., $\varphi(f,t)=\psi(f(t))$ where $\psi\colon\R^d\to\R$ is Lipschitz. A further example are cost functions depending on the running maximum, e.g.\ $\varphi(f,t)=\psi( \sup_{s \in [0,t] } f(s) )$ where $\psi\colon\R^d\to\R$ is Lipschitz.

	\begin{proof}[Proof of Proposition \ref{prop:OST.quant}] For notational simplicity,  assume that there is an optimal stopping time $\tau \in \rm{ST}(\fp Y)$, i.e.\  $\rm{OS}(\fp Y,\varphi) = \E[\varphi(Y,\tau)]$ (else take an almost optimal one).
    By Proposition~\ref{prop:ST.transfer} and the assumptions on $\varphi$ we find
		\begin{align*}
            \rm{OS}(\fp X, \varphi) - \rm{OS}(\fp Y, \varphi) &\le 
			\E_{\pi}[ \varphi(X,\tau+\epsilon)-\varphi(Y,\tau)] \\
			&\leq \E_{\pi}[ | \varphi(X,\tau+\epsilon)-\varphi(Y,\tau+\epsilon)| ]  + \E_\pi[ |\varphi(Y,\tau+\epsilon) - \varphi(Y,\tau)) |] \\
			&\leq L  \E_{\pi}\left[ |X_{\tau+\epsilon} - Y_{\tau+\epsilon}| \right] + L \E_\pi\bigg[ \sup_{u\in[0,\epsilon]} | Y_{ \tau+u} - Y_{\tau}| \bigg] \\
			&\leq L \E_\pi[ \|X-Y\|_\infty] + L \delta_{\bbY}(\epsilon).
		\end{align*}
		Reversing the roles of $\bbX$ and $\bbY$ completes the proof.
	\end{proof}

        Note that Example~\ref{ex:counter} shows that the assumption that the limit process has continuous paths is necessary in Proposition~\ref{prop:OST.qual}. This example naturally raises the question of which continuity assumptions on $\varphi$ and $(\fp X^n)_n, \fp X$ are necessary to ensure that ${\rm OS}(\fp X^n, \varphi) \to {\rm OS}(\fp X, \varphi)$. Addressing this question lies beyond the scope of this paper and is deferred to future research.

We end this section with a `continuity' result of a different flavor:

 \begin{proposition}
    \label{lem:mart_closed}
   For every $p\geq 1$, the set of martingales is closed in $(\FP_p,\AW_p)$. 
\end{proposition}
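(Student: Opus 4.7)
The plan is to take a sequence of martingales $(\fp X^n)_n \subset \FP_p$ with $\AW_p(\fp X^n, \fp X) \to 0$ and show that $\fp X$ itself is a martingale. Passing to canonical representatives and invoking Proposition~\ref{prop:AW_inf_attained}, I fix bicausal couplings $\pi_n \in \cpl^{\epsilon_n}_{\rm bc}(\fp X^n, \fp X)$ attaining $\epsilon_n + \E_{\pi_n}[d_\X^p(X^n, X)]^{1/p} \to 0$. The key idea is to push the martingale identity of $\fp X^n$ through bicausality to obtain an approximate identity for $\fp X$ up to a time shift of order $\epsilon_n$, and then let $n \to \infty$.

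For the central computation, fix $s < t$ in $[0,1]$ and an arbitrary bounded $\F_s^{\fp X}$-measurable $V$. Bicausality of $\pi_n$ together with Lemma~\ref{lem:causal_equiv} yields
\[
\E_{\pi_n}[V \mid \F_1^{\fp X^n}] \;=\; \E_{\pi_n}[V \mid \F_{s+\epsilon_n}^{\fp X^n}] \;=:\; W_n,
\]
with $W_n$ depending only on the $\fp X^n$-coordinate and being $\F_{s+\epsilon_n}^{\fp X^n}$-measurable. For $n$ large enough that $s + \epsilon_n \le t$, the martingale property of $\fp X^n$ then gives
\[
\E_{\pi_n}[X^n_t V] = \E_{\pi_n}[X^n_t W_n] = \E_{\pi_n}[X^n_{s + \epsilon_n} W_n] = \E_{\pi_n}[X^n_{s + \epsilon_n} V].
\]

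The next step is to pass to the limit $n \to \infty$. I restrict $s, t$ to the co-countable set $T \subseteq [0,1]$ of times at which $X$ is $\P^{\fp X}$-a.s.\ continuous, which is dense since $X$ is \cadlag. At such times, $J_1$-convergence $X^n \to X$ under $\pi_n$ (in probability, as implied by $\AW_p$-convergence) forces $X^n_t \to X_t$ and $X^n_{s + \epsilon_n} \to X_s$ in $\pi_n$-probability. Uniform integrability of $(X^n_t)_n$, required to promote these to $L^1(\pi_n)$-convergences, follows from $\E[|X^n_t|^p] \le \E[|X^n_1|^p]$ (Jensen for martingales) together with the boundedness of $\sup_n \E[|X^n_1|^p]$; the latter uses that standard complete $J_1$-metrics satisfy $d_\X(f, 0) \ge |f(1)|$ (any admissible time-change $\lambda$ satisfies $\lambda(1) = 1$), combined with the $\W_p$-convergence of path laws implied by $\AW_p$-convergence. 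Hence $\E[X_t V] = \E[X_s V]$ for every bounded $\F_s^{\fp X}$-measurable $V$, so $\E[X_t \mid \F_s^{\fp X}] = X_s$ whenever $s < t$ both lie in $T$.

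To complete the proof, I extend the identity to arbitrary $s \le t$ by approximating $s_k \downarrow s$ and $t_k \downarrow t$ along $T$: the \cadlag{} property of $X$ gives $X_{s_k} \to X_s$ and $X_{t_k} \to X_t$ pointwise a.s., the $L^p$-bound $\sup_k |X_{t_k}| \le \sup_r |X_r| \in L^p$ allows dominated convergence, and right-continuity of $\F^{\fp X}$ yields $\E[X_{t_k} \mid \F_{s_k}^{\fp X}] \to \E[X_t \mid \F_s^{\fp X}]$ in $L^1$; since the left-hand side equals $X_{s_k}$, the limit is $X_s$, giving the full martingale identity. The principal technical obstacle is the uniform-integrability step above, which depends on a mild compatibility of $d_\X$ with endpoint evaluation; the remaining ingredients, namely bicausality, $J_1$-convergence at continuity points of the limit path, and continuous-time martingale regularity, are routine.
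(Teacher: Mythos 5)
Your strategy---pushing the martingale test-function identity $\E_{\pi_n}[X^n_t V] = \E_{\pi_n}[X^n_{s+\epsilon_n} V]$ through the $\epsilon_n$-bicausal coupling and then taking limits along path-continuity points of $X$---is a valid route, and the central bicausality computation via Lemma~\ref{lem:causal_equiv} is sound. It differs genuinely from the paper's argument, which instead bounds $\E\big[\big|\E[X_1 - X_t \mid \F_t^{\fp X}]\big|\big]$ directly: after a conditional Jensen step across the coupling, a three-term triangle inequality is estimated using the pointwise sup-norm bound $|X^n_r - X_r| \le \|X^n - X\|_\infty = d_\X(X^n,X)$ for $\X = C([0,1])$, and local averages $\frac1\delta\int_t^{t+\delta}X_s\,ds$ in place of point evaluations for $\X = D([0,1])$. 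The payoff of that approach is that it never invokes uniform integrability; each term is controlled directly by $\AW_p(\fp X^n,\fp X)$.

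There is, however, a concrete gap in your uniform-integrability step for $p=1$. You assert that UI of $(X^n_t)_n$ under $\P^{\fp X^n}$ follows from the Jensen bound $\E[|X^n_t|^p]\le\E[|X^n_1|^p]$ together with $\sup_n\E[|X^n_1|^p]<\infty$. This is correct for $p>1$ by de la Vall\'ee Poussin, but for $p=1$ a bounded $L^1$-norm does \emph{not} imply uniform integrability, and the proposition is stated for all $p\ge 1$. The step is patchable: $\W_p$-convergence of $\law(X^n)\to\law(X)$ pushes forward under the $1$-Lipschitz map $f\mapsto f(1)$ to $\W_1$-convergence of $\law(X^n_1)\to\law(X_1)$, which is equivalent to weak convergence plus convergence of first moments, hence implies UI of $(X^n_1)_n$; UI of the family of terminal values then propagates to $(X^n_t)_n$ via $X^n_t=\E[X^n_1\mid\F_t^{\fp X^n}]$ (conditional expectations of a uniformly integrable family remain uniformly integrable). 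Alternatively, for $\X=C([0,1])$ you can replace the pointwise argument altogether by $\E_{\pi_n}[|X^n_t-X_t|]\le\E_{\pi_n}[d_\X(X^n,X)]\to 0$, as the paper does, and sidestep UI. You should also treat $t=1$ separately, since $1$ need not lie in your co-countable set $T$ of a.s.\ path-continuity times; there the convergence $X^n_1\to X_1$ holds unconditionally because $d_\X(f,g)\ge|f(1)-g(1)|$.
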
 

\begin{proof}
    Assume first that $\mathcal X = C([0,1])$.
    Let $(\fp X^n)_n$ be a sequence of martingales that converges to $\fp X$  w.r.t.\ $\AW_p$.
    We have to show that $\fp X$ is a martingale, i.e.,  that $ \E [ | \E[X_1 - X_t | \F_t^\fp X] | ] = 0 $ for all $t\in[0,1]$.    
    To this end, fix $t \in [0,1]$.
    For each $n$, let $\varepsilon_n \ge 0$ and $\pi^n \in \cplbc^{\varepsilon_n}(\fp X^n, \fp X)$ be optimal for $\AW_p(\fp X^n, \fp X)$.
    Let $\varepsilon > 0$ and let $n$ be sufficiently large such that $\varepsilon \ge \varepsilon_n$.
    Invoking (the conditional version of) Jensen's inequality, one may show that 
    \[
        \E \Big[ \big| \E[X_1 - X_t | \F_t^\fp X] \big| \Big] \le
        \E_{\pi^n} \Big[ \big| \E_{\pi^n}[X_1 - X_t | \F_{t + \varepsilon, t}^{\fp X^n,\fp X}] \big| \Big].
    \]
    Thus, by the triangle inequality
    \begin{align} \label{eq:lem.mart_closed.0'}
        \begin{split}
      & \E \Big[ \big| \E[X_1 - X_t | \F_t^\fp X] \big| \Big] \\
       & \le
       \E_{\pi^n} \Big[ \big| \E_{\pi^n}[X_1 | \F_{t + \varepsilon, t}^{\fp X^n,\fp X}] - X_{t + \varepsilon}^n \big| \Big] + 
       \E_{\pi^n} \Big[ \big| X_{t + \varepsilon}^n - X_{t + \varepsilon}  \big| \Big] + 
      \E \Big[ \big| X_{t + \varepsilon} - X_t \big| \Big]\\
      &=:  I_1^n + I_2^n + I_3.
      \end{split}
    \end{align}
    We  estimate each term separately.
    Clearly $I_2^n \le \AW_p(\fp X^n, \fp X) \to 0$.
    Moreover $I_3\to 0$ when  $\varepsilon \searrow 0$ by (right) continuity of the paths of $X$ and the dominated convergence theorem.
    In order to control $I_1^n$, observe that
    \begin{align} \label{eq:lem.mart_closed.1}
        X_{t + \varepsilon}^n = \E[X_1^n | \F_{t + \varepsilon}^{\fp X^n}] = \E_{\pi^n}\big[ X_1^n | \F_{t + \varepsilon, t}^{\fp X^n,\fp X}  \big],
    \end{align}
    where the first equality is due to $\fp X^n$ being a martingale and the second follows from $\varepsilon_n$-bicausality of $\pi^n$.
    Plugging \eqref{eq:lem.mart_closed.1} into $I_1^n$, we obtain
    \[
        I_1^n 
            = 
        \E_{\pi^n} \Big[ \big| \E_{\pi^n} [ X_1 - X_1^n | \F_{t + \varepsilon, t}^{\fp X^n, \fp X}] \big| \Big] 
            \le
        \E_{\pi^n}\Big[ \big| X_1 - X_1^n \big| \Big] \le \AW_p(\fp X^n, \fp X),
    \]
    where the first inequality is again due to the conditional Jensen's inequality.
    Hence, also $I_1^n \to 0$ for $n \to \infty$, which completes the proof as $\varepsilon > 0$ was arbitrary.

    The proof when  $\mathcal{X} = D([0,1])$  follows from the same arguments as presented above, with the additional step of replacing  $X_t $ by approximations  $\frac{1}{\delta}\int_t^{t+\delta} X_s \,ds$  to ensure continuity (since point evaluations  $f\mapsto f(t)$ are not continuous on $D([0,1])$.
\end{proof}

\section{Processes with {natural} filtration: basic properties}\label{sec:plain} 
As explained in the introduction, the set $\mathcal{P}(\mathcal{X})$ corresponds to processes with {natural} filtration. We start with a precise definition of the class of processes which are Hoover--Keisler-equivalent to processes with natural (right-continuous, completed) filtrations.
\begin{defi}
\label{def:plain}
    Let $\mu \in \prob(\X)$,  let $X$ be the identity on $\mathcal{X}$ and set $\mathcal F_t^\mu := \bigcap_{\varepsilon>0 } \sigma^\mu( X_s: s\leq t+\varepsilon)$ for $t\in[0,1]$.
	We call 
	\[
	    \fp S^\mu := \left( \X, \B^\mu_\X, \mu,  (\mathcal F^\mu_t)_{t \in [0,1]}, X \right)
	\]
	the \emph{standard naturally filtered process} with law $\mu$.
  A process  $\fp X \in \mathcal{FP}$ is called \emph{naturally filtered} if $\fp X \sim_{\rm HK} \fp S^{\mu}$, where $\mu=\mathcal{L}(X)$. 
  The collection of all naturally filtered processes is denoted by $\cSFP$.
\end{defi}

By definition, being naturally filtered is a property that is independent of the representative of the Hoover--Keisler-equivalence class, hence the set $\NFP:=\cSFP/_{\sim\HK}$ is a well-defined subset of $\FP$.

The aim of this section is to prove fundamental properties of this type of processes, e.g.\ that they are dense among all processes. To this end, we start with the following useful characterization of naturally filtered processes. 

\begin{lemma}\label{lem:plaineq}
Let $\fp Y \in \mathcal{FP}$ and set $\mu := \law(Y)$. Then $(\id,Y)_\#\P^{\fp Y} \in \cplc(\fp Y, \fp S^\mu)$. Further set 
\[\G_t := \bigcap_{\epsilon >0} \sigma^{\P^{\fp Y}}(Y_s : s \le t+\epsilon), \, \G^1_t := \bigcap_{\epsilon >0} \sigma^{\P^{\fp Y}}(\pp^1_s(\fp Y) : s \le t+\epsilon), \,\G^\infty_t := \bigcap_{\epsilon >0} \sigma^{\P^{\fp Y}}(\pp^\infty_s(\fp Y) : s \le t+\epsilon).\] Then 
the following are equivalent:
\begin{enumerate}[label = (\roman*)]
    \item\label{it:lem:plaineq1} $\fp Y$ is naturally filtered.
    \item\label{it:lem:plaineq2} $\G^1_t  \subseteq \G_t$ for every $t \in [0,1]$.
    \item\label{it:lem:plaineq3} $\G^\infty_t  \subseteq \G_t$ for every $t \in [0,1]$.
    \item\label{it:lem:plaineq4} $(\id,Y)_\#\P^{\fp Y} \in \cplbc(\fp Y, \fp S^\mu)$.
\end{enumerate}
\end{lemma}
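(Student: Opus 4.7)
The plan is to close the cycle (iv) $\Rightarrow$ (i) $\Rightarrow$ (iii) $\Rightarrow$ (ii) $\Rightarrow$ (iv) after establishing the preliminary claim that $\pi := (\id,Y)_\#\P^{\fp Y}$ is always causal from $\fp Y$ to $\fp S^\mu$. Since $X = Y$ $\pi$-a.s., the $\sigma$-algebra $\F_t^{\fp S^\mu}$, regarded as a sub-$\sigma$-algebra of the product, coincides modulo $\pi$-nullsets with $\G_t$ and hence lies inside $\F_t^{\fp Y}$; this makes the required conditional independence $\F_t^{\fp S^\mu} \indep_{\F_t^{\fp Y}} \F_1^{\fp Y}$ trivial. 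Consequently, bicausality in (iv) reduces to verifying the one remaining direction, i.e., $\F_t^{\fp Y} \indep_{\G_t} \G_1$ under $\pi$.

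The three easier implications are then straightforward. (iii) $\Rightarrow$ (ii) is immediate since $\pp^1$ is a continuous function of $\pp^\infty$, so $\G_t^1 \subseteq \G_t^\infty \subseteq \G_t$. For (iv) $\Rightarrow$ (i), the coupling $\pi$ has $d_\X(X,Y) = 0$ $\pi$-a.s., so bicausality yields $\AW_p(\fp Y, \fp S^\mu) = 0$, and Theorem~\ref{thm:AWmetrizesHK}(i) gives $\fp Y \sim_{\HK} \fp S^\mu$. For (ii) $\Rightarrow$ (iv), Lemma~\ref{lem:causal_equiv}(iii) reduces the missing causality to checking $\E_\pi[U | \F_t^{\fp Y}] = \E_\pi[U | \G_t]$ for bounded $\G_1$-measurable $U$; by a monotone class argument this can be restricted to $U = f(Y)$ with $f \in C_b(\X)$, in which case $\E_\pi[f(Y) | \F_t^{\fp Y}] = \int f\,d\pp^1_t(\fp Y)$. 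Now (ii) -- which via right-continuity of $\G_\cdot$ is equivalent to each $\pp^1_s(\fp Y)$ being $\G_s$-measurable -- makes this expression $\G_t$-measurable, so it agrees with $\E_\pi[f(Y)|\G_t]$.

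The substantive implication is (i) $\Rightarrow$ (iii). The key observation is that $Y$ is a Borel function of $\pp^\infty(\fp Y)$ via $Y = \delta^{-1}(\pp^1_1(\fp Y))$, and likewise $X = \delta^{-1}(\pp^1_1(\fp S^\mu))$; thus (i), i.e.\ $\law(\pp^\infty(\fp Y)) = \law(\pp^\infty(\fp S^\mu))$, upgrades to the joint-law identity $\law(Y, \pp^\infty(\fp Y)) = \law(X, \pp^\infty(\fp S^\mu))$. For the naturally filtered process $\fp S^\mu$, adaptedness of $\pp^\infty(\fp S^\mu)$ to the natural right-continuous filtration $(\F_t^{\fp S^\mu})_t$ supplies, for each $t$, a measurable function $F_t$ on $\X$ -- measurable with respect to $\bigcap_{\varepsilon>0}\sigma(X_s : s \le t+\varepsilon)$ (up to $\mu$-nullsets) -- satisfying $\pp^\infty_t(\fp S^\mu) = F_t(X)$ $\mu$-a.s. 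Pushing this identity through the equality of joint laws yields $\pp^\infty_t(\fp Y) = F_t(Y)$ $\P^{\fp Y}$-a.s., and since $F_t(Y)$ is $\G_t$-measurable we obtain $\pp^\infty_t(\fp Y) \in \G_t$ for every $t$; right-continuity of $\G_\cdot$ then upgrades this pointwise measurability to $\G_t^\infty \subseteq \G_t$. The main obstacle is the careful handling of measurability modulo nullsets together with the right-continuous completions when transferring the function $F_t$ between the two probability spaces.
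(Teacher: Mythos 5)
Your proof is correct, but it closes the cycle in the reverse direction from the paper ((iv)$\Rightarrow$(i)$\Rightarrow$(iii)$\Rightarrow$(ii)$\Rightarrow$(iv) instead of (i)$\Rightarrow$(ii)$\Rightarrow$(iii)$\Rightarrow$(iv)$\Rightarrow$(i)), and two of the three non-trivial steps are handled by genuinely different arguments. Where the paper gets (ii)$\Rightarrow$(iii) by an induction on the rank (constructing Borel maps $f_r$ with $\pp^r(\fp Y)=f_r(Y)$ via the push-forward identity $\pp^{r+1}_t={f_r}_\#\pp^1_t$), you instead obtain the easy inclusion (iii)$\Rightarrow$(ii) as $\G^1_t\subseteq\G^\infty_t\subseteq\G_t$, shifting the weight to (i)$\Rightarrow$(iii); there you transfer adaptedness of $\pp^\infty(\fp S^\mu)$ to $\fp Y$ through the joint-law identity $\law(Y,\pp^\infty(\fp Y))=\law(X,\pp^\infty(\fp S^\mu))$. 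This is the same idea as the paper's (i)$\Rightarrow$(ii), just run at rank $\infty$ rather than rank $1$, so no induction is needed. Your (ii)$\Rightarrow$(iv) is where you gain the most: where the paper's (iii)$\Rightarrow$(iv) invokes the external conditional-independence statement $\G^\infty_1\indep_{\G^\infty_t}\F^{\fp Y}_t$ from \cite[Corollary~4.10]{BePaScZh23} and then identifies completions, you reduce the causality condition via Lemma~\ref{lem:causal_equiv} and a monotone-class argument to the single elementary identity $\E_\pi[f(Y)\mid\F^{\fp Y}_t]=\int f\,d\pp^1_t(\fp Y)$ for $f\in C_b(\X)$, which is $\G_t$-measurable precisely under (ii). This is more self-contained and avoids the extra reference. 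The only place you should be slightly more careful is the description of $F_t$ in (i)$\Rightarrow$(iii): since $\F_t^{\fp S^\mu}=\bigcap_\varepsilon\sigma^\mu(X_s:s\le t+\varepsilon)$ and completion does not in general commute with the intersection, one should work with, for each $\varepsilon>0$, a Borel $\sigma(X_s:s\le t+\varepsilon)$-measurable version $F^\varepsilon_t$ (rather than a single $\bigcap_\varepsilon\sigma(X_s:s\le t+\varepsilon)$-measurable one), push each $F^\varepsilon_t$ through the joint law, and only then intersect over $\varepsilon$ to land in $\G_t$; this is a cosmetic fix and the argument goes through.
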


Before we start the proof, note that the inclusion $\G_t \subset \G^1_t \subset \G_t^\infty$ is true for all $t \in [0,1]$ and all filterd processes $\fp Y \in \mathcal{FP}$, cf.\ \cite[Lemma~4.10]{BePaScZh23}. Therefore, the conditions (ii) and (iii) are equivalent to $\G_t = \G^1_t$ and $\G_t = \G^\infty_t$ respectively.

\begin{proof}
We start by proving that $\pi := {(\id,Y)_\#} \P^{\fp Y}$ is causal. Note that  $X=Y$ $\pi$-a.s.
In order to show causality, we need to check that $\E_\pi[ U | \F_1^{\fp Y}  ] =  \E_\pi[ U | \F_t^{\fp Y}  ]$ for every $U \in L^\infty(\F^{\fp S^\mu}_t)$, see Lemma~\ref{lem:causal_equiv}. By the definition of $\F^{\fp S^\mu}_t$, we find that $U=f(X_{[0,t+\epsilon]})$ for some Borel function $f$. As $X=Y$ $\pi$-a.s., this implies that $U$ is $\F_{t+\epsilon}^{\fp Y}$-measurable; and as $\epsilon>0$ is arbitrary and $(\F_t^{\fp Y})_t$ is right-continuous, we find that $U$ is $\F_{t}^{\fp Y}$-measurable. Hence, $\E_\pi[ U | \F_1^{\fp Y}  ] =  \E_\pi[ U | \F_t^{\fp Y}  ]$.

(i) $\implies$ (ii): Note that (ii) is equivalent to $\pp^1_{[0,s]}(\fp Y)$ being $\sigma(Y_{[0,t]})$-measurable for all $s<t$, which is equivalent to  $\law(Y_{[0,t]}, \pp^1_{[0,s]}(\fp Y))$ being concentrated on the graph of a function for all $s<t$. The latter is a property of $\law(\pp^\infty(\fp Y))$ and hence independent of the representative of the $\sim_{\HK}$-class. Therefore, it suffices to show (ii)
for the standard naturally filtered process $\fp S^\mu = \left( \X, \B_\X, \mu, (\mathcal F^\mu_t)_t, X \right)\sim_{\HK} \fp Y$. As $\pp^1(\fp S^\mu)$ is adapted to $\F=\G$, we conclude $\G_t^1 \subset \G_t$ for every $t \in [0,1]$.   

(ii) $\implies$ (iii): 
We show by induction on the rank $r$ that there is a Borel function $f_r: \mathcal X \to \mathsf{M}_r$ for which $\pp^{r}(\fp Y) = {f_r}(Y)$. For the case $r=1$ it suffices to observe that (ii) immediately implies that there is a  Borel function $f_1 : \mathcal X \to \mathsf{M}_1$ such that $\pp^1(\fp Y) = f_1(Y)$. 

Assuming that the claim is true for $r$, observe that for every $t \in [0,1]$,
\begin{align}\label{eq:prf:plaineq:ind}
\pp^{r+1}_t(\fp Y) = \law(\pp^r(\fp X) | \F^{\fp Y}_t) = \law(f_r(Y)|\F_t^\fp Y) =  {f_r}_\# \law(Y|\F_t^{\fp Y}) = {f_r}_\# \pp^1_t(\fp Y) .  
\end{align}
Hence, the function $f_{r+1} : \mathcal X \to \mathsf{M}_{r+1} : (z_t)_t \mapsto ({f_r}_\# z_t)_{t \in [0,1]}$ satisfies $ \pp^{r+1}(\fp Y)=  f_{r+1}(Y)$, which  completes the induction. 

By \eqref{eq:prf:plaineq:ind} we have $\pp^\infty_t(\fp Y) = (\id, {f_1}_\#, {f_2}_\#, \dots)(\pp_t^1(\fp Y))$ for every $t \in [0,1]$; hence, $\G^\infty \subset \G^1$.

(iii) $\implies$ (iv): As we have already shown that 
$\pi= (\id,Y)_\#\P^{\fp Y}$ is causal from $\fp Y$ to $\fp S^\mu$, it remains to show causality from $\fp S^\mu$ to $\fp Y$.  By \cite[Corollary~4.10]{BePaScZh23} we have for every $t \in [0,1]$ under $\P^{\fp Y}$ and hence under $\pi$,
\begin{align}\label{eq:prf:plaineq2}
    \G_1^\infty \indep_{\G_t^\infty} \F_t^{\fp Y}.
\end{align}

    As $X=Y$ $\pi$-a.s., we have that the $\pi$-completions of $\F_t^{\fp S^{\mu}}$ and $\G_t$ coincide for every $t \in [0,1]$.
    Therefore, (iii) implies that also the $\pi$-completions of $\F_t^{\fp S^{\mu}}$ and $\G_t^\infty$ coincide for every $t \in [0,1]$.

Thus, \eqref{eq:prf:plaineq2} reads as 
\[
\F_1^{\fp S^\mu} \indep_{ \F_t^{\fp S^\mu} } \F_t^{\fp Y},
\]
which is precisely causality from $\fp S^\mu$ to $\fp Y$. 

(iv) $\implies$ (i): As $X=Y$ a.s.\ under the bicausal coupling $\pi:= (\id,Y)_\#\P^{\fp Y}$, we have $\fp Y \sim_{\HK} \fp S^\mu$.
\end{proof}

The following characterization of naturally filtered processes is  a simple consequence of Lemma~\ref{lem:plaineq}.

\begin{corollary}
    \label{cor:plain.via.ordering} 
    Let $\fp X \in \FP_p$ and set $\mu:=\law(X)$.
    Then $\fp X$ is naturally filtered if and only if every $\fp Y \in \FP$ with $\law(Y) = \mu$ satisfies $\CW_p(\fp Y, \fp X) = 0$.
\end{corollary}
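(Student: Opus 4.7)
My plan is to prove both implications directly from Lemma~\ref{lem:plaineq}, the triangle inequality for $\CW_p$ (which follows by combining the gluing Lemma~\ref{lem:glueing} with Minkowski's inequality), and Lemma~\ref{lem:scw.definite}. The key observation is that the identity coupling $(\id, Y)_\# \P^{\fp Y}$ already provides a zero-cost causal coupling from any $\fp Y$ with $\law(Y)=\mu$ to the standard naturally filtered process $\fp S^\mu$.

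For the forward direction, assume $\fp X$ is naturally filtered, i.e.\ $\fp X \sim_{\HK} \fp S^\mu$. Let $\fp Y \in \FP$ with $\law(Y) = \mu$. By Lemma~\ref{lem:plaineq} (the unconditional first sentence), the coupling $\pi := (\id,Y)_\#\P^{\fp Y}$ lies in $\cplc(\fp Y, \fp S^\mu)$, and under $\pi$ the canonical process of $\fp S^\mu$ agrees with $Y$ almost surely, so $\E_\pi[d_\mathcal X^p(X,Y)] = 0$ and hence $\CW_p(\fp Y, \fp S^\mu) = 0$. Moreover, $\fp X \sim_{\HK} \fp S^\mu$ combined with Theorem~\ref{thm:AWmetrizesHK}(i) yields $\AW_p(\fp S^\mu, \fp X) = 0$, and since $\CW_p \leq \AW_p$, also $\CW_p(\fp S^\mu, \fp X) = 0$. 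The triangle inequality then gives $\CW_p(\fp Y, \fp X) \leq \CW_p(\fp Y, \fp S^\mu) + \CW_p(\fp S^\mu, \fp X) = 0$.

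For the backward direction, I will specialize the assumption to $\fp Y = \fp S^\mu$, which is legitimate since $\law(X) = \mu = \law$ (identity on $\mathcal X$ under $\mu$). This yields $\CW_p(\fp S^\mu, \fp X) = 0$. Applying Lemma~\ref{lem:plaineq} with $\fp Y = \fp X$ gives $(\id,X)_\#\P^{\fp X} \in \cplc(\fp X, \fp S^\mu)$ with zero transport cost, so $\CW_p(\fp X, \fp S^\mu) = 0$ as well. Thus
\[
\SCW_p(\fp X, \fp S^\mu) = \max\{\CW_p(\fp X, \fp S^\mu), \CW_p(\fp S^\mu, \fp X)\} = 0,
\]
and Lemma~\ref{lem:scw.definite} concludes that $\fp X \sim_{\HK} \fp S^\mu$, i.e.\ $\fp X$ is naturally filtered.

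There is no real obstacle here: the whole argument is essentially a book-keeping exercise around Lemma~\ref{lem:plaineq}, whose content (namely that a causal coupling from any $\fp Y$ to $\fp S^{\law(Y)}$ is available via the identity) carries the entire weight of the proof. The only mildly delicate point to record is that $\CW_p$ satisfies the triangle inequality, which is used in the forward direction; this follows verbatim from the argument for Lemma~\ref{lem:AW_triangluar_ineq}, since Lemma~\ref{lem:glueing} preserves one-sided $\varepsilon$-causality under gluing.
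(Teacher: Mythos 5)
Your proof is correct and follows essentially the same route as the paper's. Both directions match: the identity coupling $(\id,Y)_\#\P^{\fp Y}$ provides a zero-cost causal coupling to $\fp S^\mu$ via Lemma~\ref{lem:plaineq}, and the backward direction specializes to $\fp Y = \fp S^\mu$, combines with the reverse-direction causal identity coupling, and applies Lemma~\ref{lem:scw.definite}. The only (cosmetic) deviation is in the last step of the forward direction: the paper passes from $\CW_p(\fp Y,\fp S^\mu)=0$ to $\CW_p(\fp Y,\fp X)=0$ by invoking the well-definedness of $\CW_p$ on $\sim_{\HK}$-classes (Proposition~\ref{prop:nonPolish}(ii)), whereas you instead pass through $\AW_p(\fp S^\mu,\fp X)=0$, the inequality $\CW_p\le\AW_p$, and the directed triangle inequality for $\CW_p$. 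Both are fine, and your explicit remark that the triangle inequality for $\CW_p$ follows from Lemma~\ref{lem:glueing} is exactly the observation the paper itself uses later in the proof of Theorem~\ref{thm:plan.topo.same}.
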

\begin{proof}
Suppose that $\fp X$ is naturally filtered and let $\fp Y$ be another process that satisfies $\law(Y)=\mu$. By Lemma~\ref{lem:plaineq}, $\pi:=(\id,Y)_\# \P^{\fp Y} \in \cplc(\fp Y, \fp S^\mu)$, and therefore $\CW_p(\fp Y, \fp S^\mu)=0$. As $\fp X \sim_{\HK} \fp S^\mu$, we conclude $\CW_p(\fp Y, \fp X) =  \CW_p(\fp Y, \fp S^\mu )=0$. 

For the reverse implication suppose that $\CW_p( \fp Y, \fp X)=0$ for every $\fp Y \in \FP_p$ satisfying $\law(Y)=\mu$.  In particular, we have $\CW_p(\fp S^\mu, \fp X)=0$. As $(\id,X)_\#\P^{\fp X} \in \cplc(\fp X, \fp S^\mu)$, we have $\CW_p(\fp X, \fp S^\mu)=0$ as well, hence $\fp X \sim_{\HK} \fp S^\mu$ by Lemma~\ref{lem:scw.definite}.
\end{proof}

Lemma~\ref{lem:plaineq} allows us to simplify the characterization of continuity points given in Proposition~\ref{prop:contPt} in the case of naturally filtered processes.

\begin{corollary}\label{cor:contPtPlain}
Let $\fp X \in \mathcal{FP}$ be a naturally filtered process and set $\G_s := \bigcap_{\epsilon>0} \sigma^{\P^{\fp X}}( X_r : r \le s+\epsilon) $. For $t \in [0,1]$ the following are equivalent:
	\begin{enumerate}[label = (\roman*)]
        \item $t \in \cont(\fp X)$.		
        \item The filtration $(\G_s)_{s \in [0,1]}$ is continuous at $t$, that is $\G_t=\G_{t-}$.
	\item The paths of $\pp^1(\fp X)$ are a.s.\ continuous at $t$.
	\end{enumerate}  
\end{corollary}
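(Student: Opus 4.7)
The plan is to reduce the corollary to Proposition~\ref{prop:contPt}, which is the analogous characterization of continuity points in terms of $\pp^\infty(\fp X)$, by exploiting the fact that for naturally filtered processes the three natural filtrations generated by $X$, by $\pp^1(\fp X)$, and by $\pp^\infty(\fp X)$ all coincide.

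First, I would apply Lemma~\ref{lem:plaineq} to $\fp X$. Since $\fp X$ is naturally filtered, items \ref{it:lem:plaineq2} and \ref{it:lem:plaineq3} of that lemma give $\G^1_s \subseteq \G_s$ and $\G^\infty_s \subseteq \G_s$ for every $s\in[0,1]$, where $\G^r_s := \bigcap_{\epsilon>0}\sigma^{\P^{\fp X}}(\pp^r_u(\fp X) : u\le s+\epsilon)$. As noted in the paragraph preceding the proof of Lemma~\ref{lem:plaineq}, the reverse chain $\G_s \subseteq \G^1_s \subseteq \G^\infty_s$ is always valid. Therefore $\G_s = \G^1_s = \G^\infty_s$ for all $s\in[0,1]$; in particular the filtration $(\G_s)_s$ of the corollary is nothing but the (automatically right-continuous) filtration generated by $\pp^\infty(\fp X)$ that appears in Proposition~\ref{prop:contPt}.

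Second, I would invoke Proposition~\ref{prop:contPt} with this identification. It yields the equivalence of (i) with (ii), and with the auxiliary statement
\begin{center}
(iii$'$)\quad\emph{the paths of $\pp^\infty(\fp X)$ are $\P^{\fp X}$-a.s.\ continuous at $t$.}
\end{center}
It remains to exchange $\pp^\infty$ for $\pp^1$. The implication (iii$'$) $\Rightarrow$ (iii) is trivial, since $\pp^1(\fp X)$ is a coordinate of $\pp^\infty(\fp X)$ and $\mathsf{M}_\infty$ carries the product topology. For the converse, assume (iii) and argue that it forces (ii): if $\pp^1(\fp X)$ is a.s.\ continuous at $t$ then $\pp^1_{t-}(\fp X)=\pp^1_t(\fp X)$ $\P^{\fp X}$-a.s.; since the left limit $\pp^1_{t-}(\fp X)$ is $\G^1_{t-}$-measurable (as an a.s.\ limit of $\G^1_s$-measurable random variables with $s<t$), we obtain $\G^1_t = \G^1_{t-}$ up to $\P^{\fp X}$-null sets. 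Using the identification $\G^1=\G$, this is exactly (ii).

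The step I expect to require the most care is the bookkeeping around right-continuity and completion of the various $\sigma$-fields: Proposition~\ref{prop:contPt} defines $\G^\infty_s$ without the explicit $\bigcap_{\epsilon>0}$-regularization, whereas Lemma~\ref{lem:plaineq} uses the right-continuized version. One has to observe that the filtrations generated by the \cadlag{} measure-valued martingales $\pp^1(\fp X)$ and $\pp^\infty(\fp X)$ are automatically right-continuous (as asserted in Proposition~\ref{prop:contPt} for $\pp^\infty$, and by the same argument for $\pp^1$), so that the two conventions agree. Once this is in place, the three-filtration collapse $\G=\G^1=\G^\infty$ is honest and the rest of the proof is a direct transcription of Proposition~\ref{prop:contPt}.
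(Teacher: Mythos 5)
Your reduction of (i) $\Leftrightarrow$ (ii) to Proposition~\ref{prop:contPt} via the identity $\G = \G^1 = \G^\infty$ (Lemma~\ref{lem:plaineq}) is exactly what the paper does, and extracting (iii$'$) $\Rightarrow$ (iii) from coordinatewise convergence in $\mathsf M_\infty$ is a legitimate alternative to the paper's appeal to martingale convergence for $\pp^1$.

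However, your argument for (iii) $\Rightarrow$ (ii) has a gap. From $\pp^1_{t-}(\fp X) = \pp^1_t(\fp X)$ a.s.\ you correctly deduce that the single random variable $\pp^1_t(\fp X)$ is $\G^1_{t-}$-measurable (mod nullsets), but you then jump to ``$\G^1_t = \G^1_{t-}$.'' This inference is not automatic: for a general measure-valued martingale $M$, almost sure continuity of a sample path at a fixed $t$ does not by itself force the right-continuized generated filtration to be left-continuous at $t$, because $\G^1_t$ is the germ $\sigma$-field $\bigcap_{\epsilon>0}\sigma(\pp^1_s : s\le t+\epsilon)$ and could in principle contain information beyond $\sigma(\pp^1_t)\vee\G^1_{t-}$. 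What saves the argument is precisely the naturally filtered structure, and this is where your proof is silent. Concretely, since $\fp X$ is naturally filtered one has $\G_t \subset \sigma^{\P^{\fp X}}(X)$, so every $A\in\G_t$ can be written $1_A = 1_B(X)$ a.s.\ for a Borel $B\subset\X$; then $1_A = \E[1_B(X)\mid\G_t] = \pp^1_t(\fp X)(B)$ shows $\G_t\subset\sigma^{\P^{\fp X}}(\pp^1_t(\fp X))$, and only now does $\pp^1_t(\fp X)$ being $\G^1_{t-}$-measurable yield $\G^1_t\subset\G^1_{t-}$. The paper's proof of (iii) $\Rightarrow$ (ii) packages exactly this observation: it takes $U\in L^\infty(\G_t)$, writes $U=f(X)$, and uses $\law(X\mid\G_t)=\law(X\mid\G_{t-})$ (from a.s.\ continuity of $\pp^1$ at $t$ and martingale convergence) to conclude $U=\E[f(X)\mid\G_{t-}]$. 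So your route is salvageable and is morally the same as the paper's, but as written the crucial use of the naturally filtered hypothesis is missing in the only step of the proof where it is indispensable.
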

\begin{proof}
(i) $\iff$ (ii) : Set $\G^\infty_s := \bigcap_{\epsilon >0} \sigma(\pp^\infty_r´(\fp Y) : r \le s+\epsilon)$. 
By Lemma~\ref{lem:plaineq} we have $\G=\G^\infty$, and by Proposition~\ref{prop:contPt}, assertion (i) is equivalent to continuity of $\G^\infty$ at $t$.
Therefore, (i) and (ii) are equivalent.

(ii) $\implies$ (iii) : This follows immediately from a suitable version of the martingale convergence theorem for measure-valued martingales, see e.g.\ \cite[Lemma~4.16]{BePaScZh23}.

(iii) $\implies$ (ii) : It clearly suffices to  show that every $U \in L^\infty(\G_t)$ is $\G_{t-}$-measurable. As $\G_t \subset \sigma(X)$, there is a Borel function $f$ such that $U=f(X)$. By (iii) we have that $\law(X| \G_{t-} ) = \law(X | \G_t )$ a.s., and thus,
\[
U = \E[U | \G_t ] = \E[f(X) | \G_t ] = f_\# \law( X | \G_t) = f_\# \law( X | \G_{t-}) = \E[f(X)| \G_{t-}].
\]
Hence, $U$ is $\G_{t-}$-measurable as claimed. 
\end{proof}

	Next, we show that $\NFP$  is a Polish topological space.
 It is  important to emphasize that the significance of this statement lies more in its theoretical implications, as there does not seem to be a natural complete metric for $(\NFP,\HK)$.

	\begin{proposition}
	\label{prop:Plain.is.G.delta}
	$\NFP $ is a $G_{\delta}$ subset of $\FP$.
	In particular, $\NFP $  is a Polish space.
	\end{proposition}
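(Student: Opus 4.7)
The plan is to exhibit $\NFP$ as a countable intersection of open subsets of $(\FP,\HK)$; since $(\FP,\HK)$ is Polish by \cite[Theorem~1.5]{BePaScZh23}, Alexandrov's theorem then yields Polishness of $(\NFP,\HK)$. By Lemma~\ref{lem:plaineq}(ii), together with the right-continuity of $(\G_t)$ and of $\pp^1$, membership $\fp X \in \NFP$ is equivalent to the condition that $\pp^1_s(\fp X)$ is $\sigma^{\P^{\fp X}}(X^t)$-measurable for every pair of rationals $0 \le s<t\le 1$. Because point-evaluation of $\pp^1$ at a fixed time is not $\HK$-continuous, I will smooth in time: fix a countable family $\{f_k\}_k \subset C_b(\prob(\X))$ separating measures and define
\[
\tilde f_{k,u,\varepsilon}(\fp X) := \frac{1}{\varepsilon}\int_u^{u+\varepsilon} f_k(\pp^1_s(\fp X))\,ds.
\]
Right-continuity of $\pp^1$ gives $\tilde f_{k,u,\varepsilon}(\fp X) \to f_k(\pp^1_u(\fp X))$ a.s.\ as $\varepsilon\downarrow 0$, so the characterization becomes: $\fp X \in \NFP$ iff for all $k\in\N$ and rationals $u,\varepsilon,t$ with $u+\varepsilon \le t \le 1$, one has $\E_{\P^{\fp X}}[\mathrm{Var}(\tilde f_{k,u,\varepsilon}(\fp X)\mid \sigma(X^t))]=0$.

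The main step is upper semicontinuity of $\fp X \mapsto \E_{\P^{\fp X}}[\mathrm{Var}(\tilde f_{k,u,\varepsilon}(\fp X)\mid \sigma(X^t))]$ for each fixed tuple. For each rational $t$ I fix a countable family $\mathcal G_t \subset C_b(\X)$ of bounded $J_1$-continuous functionals that are nonanticipative up to time $t$; concretely, compositions $\phi \circ G_{S,\eta}$ where $G_{S,\eta}(X) := (\tfrac{1}{\eta}\int_{s_i}^{s_i+\eta} X_u\,du)_{i=1}^L$ with $s_i,\eta \in \Q$, $s_i+\eta \le t$, and $\phi$ runs over a countable dense subset of $C_b(\R^L)$. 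These integrated coordinates separate points of $D([0,t];\R^d)$ (for any two distinct \cadlag{} paths one finds a rational micro-interval on which their averages disagree), so a Stone--Weierstrass argument combined with tightness of $\law(X^t)$ shows that $\mathcal G_t$ is $L^2(\P^{\fp X})$-dense in the bounded $\sigma(X^t)$-measurable random variables for \emph{every} $\fp X\in\FP$. The Hilbert-space projection identity therefore gives
\[
\E_{\P^{\fp X}}[\mathrm{Var}(\tilde f_{k,u,\varepsilon}(\fp X)\mid \sigma(X^t))] = \inf_{G\in \mathcal G_t} \E_{\P^{\fp X}}\big[(\tilde f_{k,u,\varepsilon}(\fp X)-G(X))^2\big].
\]
For a single $G\in\mathcal G_t$, the inner expectation is $\HK$-continuous in $\fp X$: passing to the canonical representation $\FP\hookrightarrow \prob(\mathsf{M}_\infty)$, it becomes the integral of a bounded functional that is continuous $\law(\pp^\infty(\fp X))$-a.s., built from $z\mapsto \tfrac{1}{\varepsilon}\int_u^{u+\varepsilon} f_k(z^1_s)\,ds$ (continuous in the Meyer--Zheng topology) and $z\mapsto G(\delta^{-1}(z^1_1))$ (continuous thanks to the $\delta_1$-atom in the Meyer--Zheng convention making evaluation at $t=1$ continuous, the fact that $z^1_1$ is a.s.\ a Dirac, and $J_1$-continuity of $G$). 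Hence the infimum over $\mathcal G_t$ is upper semicontinuous and
\[
\NFP = \bigcap_{k,n \in \N,\; u,\varepsilon,t \in \Q,\; u+\varepsilon \le t \le 1} \Big\{\fp X \in \FP : \inf_{G\in \mathcal G_t} \E_{\P^{\fp X}}\big[(\tilde f_{k,u,\varepsilon}(\fp X)-G(X))^2\big] < \tfrac{1}{n}\Big\}
\]
realizes $\NFP$ as a countable intersection of open sets.

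The principal obstacle is the absence of $\HK$-continuity of the naive building blocks $\pp^1_s$ and $X^t$ viewed as functions of $\fp X$; the temporal averaging on the prediction-process side and the integrated-coordinate test algebra $\mathcal G_t$ on the $X$-side are introduced precisely to replace these by $\HK$-continuous surrogates whose $L^2$-span nonetheless recovers the correct conditional-measurability content characterizing natural filteredness. Once this upper-semicontinuous representation is in place, the $G_\delta$ description and hence Polishness follow routinely.
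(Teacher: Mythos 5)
Your argument is essentially correct but follows a genuinely different route from the paper. The paper's (sketched) proof packages the natural-filtration condition of Lemma~\ref{lem:plaineq} as a \emph{graph} condition on the joint law $\law((h_{s,t}(u)X_u)_u,\pp^1_{[0,s]}(\fp X))$, uses that this joint law depends $\HK$-continuously on $\fp X$, and pulls back via the known fact (cited to \cite{Ed19} and extended to Lusin spaces) that ``measures concentrated on a graph'' form a $G_\delta$ subset of $\prob(A\times B)$. You instead unfold the graph condition into the statement ``conditional variance $=0$'', express the conditional variance as an infimum over a countable $L^2$-dense family of $\HK$-continuous test observables, and so exhibit each elementary condition as an open set directly — in effect re-proving the relevant special case of the graph-$G_\delta$ lemma by hand, replacing the paper's soft truncation $h_{s,t}$ by integrated coordinates $\tfrac{1}{\eta}\int_{s_i}^{s_i+\eta}X_u\,du$ to restore $J_1$-continuity on the $X$-side. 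The trade-off is that the paper's version is shorter because the key difficulty is outsourced to the cited lemma, while yours is more self-contained and more explicitly tied to the Hilbert-space structure, at the cost of more moving parts to get exactly right. One small inaccuracy worth flagging: the integrated coordinates with $s_i+\eta\le t$ do \emph{not} separate points of $D([0,t];\R^d)$ — two c\`adl\`ag paths agreeing Lebesgue-a.e.\ on $[0,t]$ can still differ at the endpoint $t$ — so your $\mathcal G_t$ is $L^2$-dense only in $L^2(\sigma(X_s:s<t))$ rather than $L^2(\sigma(X^t))$, and correspondingly the projection identity computes $\E[\mathrm{Var}(\,\cdot\,|\sigma(X_s:s<t))]$. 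This forces you to index by $u+\varepsilon<t$ (strict inequality) and to verify the characterization of $\NFP$ with $\sigma(X_s:s<t)$ in place of $\sigma(X^t)$; both amendments go through by right-continuity of the filtration exactly as in your sketch, so the argument stands once this is fixed.
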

\begin{proof}[Sketch of the proof]
For $s<t$, let $h_{s,t} : [0,1] \to [0,1]$ be a continuous function such that $h_{s,t}=1$ on $[0,s]$ and $h_{s,t}=0$ on $[t,1]$. Using Lemma~\ref{lem:plaineq}\ref{it:lem:plaineq3} one may show that $\NFP $ is the intersection of the sets 
\[
 G_{s,t} := \{  \fp X \in \FP :  \law( (h_{s,t}(u)X_u)_u, \pp^1_{[0,s]}(\fp X)) \text{ is supported on the graph of a Borel function} \},  
\]
where the intersection is taken over all $s,t \in \Q \cap (0,1]$ satisfying $s<t$. Next, recall that if $A, B$ are Lusin spaces, then the set of $\mu \in \prob(A \times B)$ that are concentrated on a graph of a Borel map from $A$ to $B$ is $G_\delta$ in $\prob(A \times B)$.\footnote{This result is well-known if $A,B$ are Polish, see e.g.\ \cite{Ed19}. For the definition of Lusin spaces and the extension of this result to Lusin space see \cite[Remark~2.17]{BePaScZh23}. In our proof, we take $A=\mathcal X$ endowed with the $J_1$-topology and $B= D([0,s];\prob(\mathcal X))$ endowed with convergence in measure w.r.t.\ the Lebesgue measure on $[0,s]$.} Since $\fp X \mapsto \law( (h_{s,t}(u)X_u)_u , \pp^1_{[0,s]}(\fp X)) $ is continuous, $G_{s,t}$ is $G_\delta$ as continuous pre-image of a $G_\delta$-set. Hence,  $\NFP $ is $G_\delta$ as countable intersection of $G_\delta$-sets.
\end{proof}

The main result of this section is the following
\begin{proposition}
\label{prop:Plain.Dense}
    $\NFP_p$ is a dense subset of $(\FP_p, \AW_p)$.
\end{proposition}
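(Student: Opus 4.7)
Given $\fp X \in \FP_p$ and $\epsilon > 0$, the plan is to construct a naturally filtered process $\fp Z \in \cSFP_p$ with $\AW_p(\fp X, \fp Z) < \epsilon$ by a two-stage approximation: first pass to a finite-$\Omega$ discretization via Proposition~\ref{prop:FiniteOmegaDense}, and then encode the filtration into the path by small perturbations. Specifically, I would pick $\fp Y \in \mathcal{FP}_p$ on a finite probability space $\Omega^{\fp Y}$ with $\AW_p(\fp X, \fp Y) < \epsilon/2$, taking advantage of the construction in the proof of Proposition~\ref{prop:FiniteOmegaDense} to arrange that $\F_t^{\fp Y} = \F^{\fp Y}_{\lceil t \rceil_T}$ for a grid $T = \{t_1 < \cdots < t_N\}$ with $\mesh(T) < \epsilon/4$, and that $Y$ is determined by its finitely many grid values (piecewise constant in the c\`adl\`ag case, piecewise linear in the continuous case).

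For the encoding step, since $\Omega^{\fp Y}$ is finite, I may select $\F_{t_j}^{\fp Y}$-measurable random variables $\xi_j : \Omega^{\fp Y} \to \R^d$ with $\|\xi_j\|_\infty \le 1$ and $\sigma(\xi_1, \dots, \xi_j) = \F_{t_j}^{\fp Y}$. For small $\delta > 0$, pick disjoint bump functions $g_j$ supported on sub-intervals $I_j \subseteq [t_j, t_j + \delta]$ and define
\[ Y^\delta_t := Y_t + \delta \sum_{j=1}^N \xi_j\, g_j(t), \qquad \fp Z := \bigl(\Omega^{\fp Y}, \F^{\fp Y}, \P^{\fp Y}, (\mathcal{G}_t)_{t \in [0,1]}, Y^\delta\bigr), \]
where $(\mathcal{G}_t)_t$ is the $\P^{\fp Y}$-augmented right-continuous natural filtration of $Y^\delta$. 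Then $\fp Z \in \cSFP_p$ directly from Lemma~\ref{lem:plaineq}\ref{it:lem:plaineq2}, since the prediction process of $\fp Z$ is adapted to the filtration $(\mathcal{G}_t)_t$ by definition. The key property is that, for a suitable choice of the $g_j$ relying on finiteness of $\Omega^{\fp Y}$, each $\xi_j$ is $\sigma(Y^\delta|_{I_j})$-measurable, hence $\mathcal{G}_{t_j + \delta}$-measurable.

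To close the argument, consider the identity coupling $\pi = (\id, \id)_\# \P^{\fp Y}$ between $\fp Y$ and $\fp Z$ on $\Omega^{\fp Y} \times \Omega^{\fp Y}$. By Lemma~\ref{lem:causal_equiv}, $\varepsilon$-bicausality reduces to the two filtration inclusions $\mathcal{G}_t \subseteq \F^{\fp Y}_t$ (immediate, since $Y^\delta_s$ is built from $\F^{\fp Y}_s$-measurable ingredients for $s \le t$) and $\F^{\fp Y}_t = \sigma(\xi_1, \dots, \xi_j) \subseteq \mathcal{G}_{t_j + \delta}$ for $t \in [t_{j-1}, t_j)$, so $\pi$ is $(\mesh(T) + \delta)$-bicausal. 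Combined with $d_{\mathcal X}(Y, Y^\delta) \le C \delta$, this gives $\AW_p(\fp Y, \fp Z) \le \mesh(T) + (1 + C)\delta < \epsilon/2$ for $\delta$ small, and the triangle inequality yields $\AW_p(\fp X, \fp Z) < \epsilon$. The main obstacle is the encoding step: in the c\`adl\`ag case with $Y$ piecewise constant on $[t_j, t_{j+1})$ one recovers $\xi_j$ immediately from $Y^\delta|_{I_j} - Y^\delta_{t_j^-}$, but in the continuous case the piecewise-linear $Y$ contaminates the observation on $I_j$, and one must choose $g_j$ carefully (for instance, as a rapid oscillation at a fixed known frequency) so that the finitely many possible pairs $(Y|_{I_j}, \xi_j)$ produce distinguishable paths $Y^\delta|_{I_j}$---finiteness of $\Omega^{\fp Y}$ guarantees that a generic such choice works.
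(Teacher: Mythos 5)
Your proof is correct but takes a genuinely different route from the paper. The paper observes that the discrete-time denseness result it already invokes in the proof of Proposition~\ref{prop:FiniteOmegaDense} (namely \cite[Theorem~5.4]{BaBePa21}) gives not only finite-$\Omega$ approximations but \emph{naturally filtered} ones, and then shows that the continuous-time lift $\iota_T$ together with the piecewise-constant filtration $\F^{\fp Z}_{\lceil t\rceil_T}$ preserves the naturally-filtered property, verifying Lemma~\ref{lem:plaineq}\ref{it:lem:plaineq2} directly by writing $\pp_s(\fp Y)$ as ${\iota_T}_\#g(h(\iota_T(Z)|_{[0,t]}))$. You instead use only the finite-$\Omega$ conclusion of Proposition~\ref{prop:FiniteOmegaDense} and make the resulting process naturally filtered by hand, encoding the filtration $\F^{\fp Z}_{t_j}$ into the path via small disjoint bumps $\delta\,\xi_j g_j$. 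This buys self-containedness (you do not need the ``plain'' version of the discrete-time theorem, only its finite-$\Omega$ version) and it makes explicit the heuristic that extra filtration information can be traded for path information at arbitrarily small $\AW_p$ cost; the price is the more delicate encoding argument, especially the continuous case where you must separate the affine interpolant $Y|_{I_j}$ from the bump $\delta\xi_j g_j$, which your ``non-affine $g_j$'' observation does handle correctly since $\Omega^{\fp Y}$ is finite. One small bookkeeping point worth noting: with the paper's convention $\F^{\fp Y}_t=\F^{\fp Z}_{\lceil t\rceil_T}$ one has $\F^{\fp Y}_{t_j}=\F^{\fp Z}_{t_{j+1}}$, so the indexing in ``$\sigma(\xi_1,\dots,\xi_j)=\F^{\fp Y}_{t_j}$'' and in the inclusion $\F^{\fp Y}_t\subseteq\G_{t_j+\delta}$ is off by one grid point; this does not affect the conclusion but should be aligned (say by taking $\sigma(\xi_1,\dots,\xi_j)=\F^{\fp Z}_{t_j}$ with bump $g_j$ on $[t_j,t_j+\delta]$, yielding a $(\mesh(T)+\delta)$-bicausal identity coupling exactly as you state).
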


The proof of Proposition~\ref{prop:Plain.Dense} follows from relating the present continuous-time setting to a discrete-time setting, and using  the analogous result to Proposition~\ref{prop:Plain.Dense} that has already been established in discrete time (see \cite[Theorem~5.4]{BaBePa21}).

\begin{proof}
The proof is a slight modification of the construction given in the proof of Proposition~\ref{prop:FiniteOmegaDense}, and we use the notation therein. Regarding step 2 of that proof, we notice that the discrete-time result \cite[Theorem 5.4]{BaBePa21} used there not only guarantees denseness of processes defined on a finite $\Omega$, but also denseness of naturally filtered\footnote{Note that naturally filtered processes are called plain in \cite{BaBePa21}.} processes defined on a finite $\Omega$. Therefore, it  remains to argue that the construction given in step 3 of the proof of Proposition~\ref{prop:FiniteOmegaDense} preserves the property of being naturally filtered. Specifically, we need to show that if the discrete-time process $\fp Z = (\Omega^{\fp Z }, \F^{\fp Z}, \P^{\fp Z}, (\F_{t_i}^{\fp Z})_{i=0}^N, (Z_{t_i})_{i=0}^N)$ is naturally filtered, then the continuous-time process
\[
\fp Y := (\Omega^{\fp Z }, \F^{\fp Z}, \P^{\fp Z}, (\F_{t_{ \lceil t \rceil_T }   }^{\fp Z})_{t \in [0,1]}, \iota_T(Z))
\]
is naturally filtered as well. By Lemma~\ref{lem:plaineq}\ref{it:lem:plaineq2},  suffices to show that for $0<s<t\le 1$, $\pp_s(\fp Y)$ is a Borel function of $\iota_T(Z)|_{[0,t]}$. To that end, fix $i$ such that $\lceil s \rceil_T = t_i$. As $\fp Z$ is naturally filtered, there is a Borel function $g$ such that $\pp_{t_i}(\fp Z) =\law(Z |\F_{t_i}^{\fp Z}) = g(Z_{t_0},\dots, Z_{t_i})$. As $t >t_i$, there is a Borel function $h$ such that $(Z_{t_0},\dots, Z_{t_i}) = h( \iota_T(Z)|_{[0,t]})$. Hence,
\[
\pp_s(\fp Y) = \law(\iota_T(Z) | \F^{\fp Z}_{ \lceil s \rceil_T }) = {\iota_T}_\# \law(Z |\F_{t_i}^{\fp Z}) = {\iota_T}_\# g(Z_{t_0},\dots, Z_{t_i}) = {\iota_T}_\# g(h(  \iota_T(Z)|_{[0,t]})). \qedhere
\]
\end{proof}

\section{Adapted topologies on processes with {natural} filtration} \label{sec:adapted topologies on plain}

The goal of this section is to prove Theorem \ref{thm:intro.all.topo.equal.bounded}. 
We start by recalling the topologies therein and rigorously define our continuous-time version of Hellwig's information topology. 
\begin{itemize}
    \item \emph{The optimal stopping topology} is the coarsest topology for which all maps of the form
		\[
		\fp{X} \mapsto {\rm OS}(\fp X, \varphi) 
		:= 
		\inf_{\tau \in \rm{ST}(\fp X)} 
		\mathbb E_{\P^{\fp X}}[\varphi(X,\tau)],
		\]
		where $\varphi \colon \mathcal
        X \times [0,1] \to\mathbb{R}$ is continuous, bounded, and non-anticipative, are continuous.
\item 
The \emph{Hoover--Keisler topology} was defined in Section \ref{sec:Recap}: it is the coarsest topology for which  $\fp X\mapsto \law( {\rm pp}^\infty (\fp X) )$ is continuous. 
\item 
The \emph{adapted weak topology} was introduced in Section~\ref{sec:Recap} via $\fp X^n \to \fp X$ if and only if $\E[f[\fp X^n]] \to \E[f[\fp X]]$ for every adapted function $f$ satisfying certain regularity conditions regarding the continuity points of $\fp X$. 

\item The 
\emph{Aldous\textsubscript{J1} topology} is the initial topology of 
\begin{align}\label{eq:def:Ald_mainbody}
\fp X \mapsto \law_{\P^{\fp X}}( (X_t,\pp^1_t(\fp X))_{t\in[0,1]}) \in \mathcal{P}\big(D([0,1];\R \times \mathcal{P}(\X))  \big),    
\end{align}
where $D([0,1];\R \times \mathcal{P}(\X))$ is equipped with Skorohod's $J_1$-topology.
\item The
 \emph{Aldous\textsubscript{MZ} topology} is the initial  topology  of \eqref{eq:def:Ald_mainbody}, 
where $D([0,1];\mathcal{P}(\X))$ is equipped with the Meyer--Zheng topology instead of the $J_1$-topology.\footnote{We note that Aldous\textsubscript{MZ} can equivalently be defined as initial topology w.r.t.\ the map $\fp X \mapsto \law_{\P^{\fp X}}( (\pp^1_t(\fp X))_t) \in \mathcal{P}(D([0,1]; \mathcal{P}(\X)))$, i.e.\ by just considering the paths of $\pp^1(\fp X)$ and not of $(X,\pp^1(\fp X))$. In particular, Aldous\textsubscript{MZ} is the rank-1 Hoover--Keisler topology mentioned in Section~\ref{sec:Recap}. While these two conventions make no difference for the definition of Aldous\textsubscript{MZ}, they do make a difference in Aldous\textsubscript{J1}; see Example~\ref{ex:counter}.  }

\item 	
\emph{Hellwig's information topology} is originally formulated in discrete time.
To explain his original definition, let us consider a finite set $T\subset[0,1]$.
For any fixed time point $t\in T$, Hellwig partitions the time index set $T$ into the past and the future and explores the initial topology in which all mappings,
\[  
\fp X \mapsto \law_{\mathbb{P}^{\fp X}}\left( (X_s)_{s\in T\cap[0,t]} , \law_{\mathbb{P}^{\fp X}} ((X_s)_{s\in T\cap (t,1]} | \mathcal{F}^\fp X_t) \right),\quad t\in T
\]
are continuous.
It is straightforward to verify that latter topology is the same as the initial topology of $\fp X\mapsto \law_{\mathbb{P}^{\fp X}}(  {\rm pp}_t^1(\fp X))$ for $t\in T$.
While a verbatim extension to continuous time presents certain obstacles (due the uncountable number of $t$’s), it is reasonably straightforward to come up with a suitable modification (using a similar approach as when defining weak convergence of distribution functions by pointwise convergence at \emph{continuity} points):

We say that a sequence $(\fp X^n)_{n \in \N}$ in $\FP$ converges in \emph{Hellwig's information topology} to  $\fp X\in \FP$ if 
		for every $t\in\cont(\fp X)\cup\{1\}$, $\law(\pp_t^1(\fp X^n))$ converges weakly to $\law(\pp_t^1(\fp X)) )$.
\end{itemize}

\begin{remark}
It follows from \cite[Theorem~3.10]{BePaScZh23} that  Hellwig's information topology is metrized by
	\begin{align*}
     d_{\rm Hellwig}(\fp X,\fp Y) 
	   &=\int_0^1 d_w( \law (\pp_t^1(\fp X) ), \law ( \pp_t^1(\fp Y)) )  \, dt +  d_w( \law (\pp_1^1(\fp X)),\law ( \pp_1^1(\fp Y)) ),
	\end{align*}
 where $d_w$ is a bounded metric compatible with weak convergence. 
\end{remark}

The following is the main result of this section.

\begin{theorem}
\label{thm:plan.topo.same}
Let $p\in[1,\infty)$ and $\fp X, \fp X^n \in \NFP_p$  for $n\in\mathbb{N}$.
	Then the following are equivalent.
	\begin{enumerate}[label = (\roman*)]
\item\label{it:thm.plan.topo.same.1}
     $\mathcal{AW}_p(\fp X^n,\fp X)\to 0$.
     \item\label{it:thm.plan.topo.same.2}
     $\mathcal{SCW}_p(\fp X^n, \fp X)\to 0$.
      \item\label{it:thm.plan.topo.same.2b}
     $\mathcal{CW}_p(\fp X, \fp X^n)\to 0$.
	\item\label{it:thm.plan.topo.same.3}
    $\fp X^n\to \fp X$ in the Hoover--Keisler topology and the $p$-th moments converge.\footnote{That is, $\E_{\P^{\fp X^n}}[d_\mathcal{X}^p(X^n,0)] \to\E_{\P^{\fp X}}[d_\mathcal{X}^p(X,0)]$ as $n\to\infty$.}
    \item \label{it:thm.plan.topo.same.3a} $\fp X^n\to \fp X$ in the adapted weak topology and the $p$-th moments converge.
    \item \label{it:thm.plan.topo.same.4}
    $\fp X^n\to \fp X$ in the Aldous\textsubscript{MZ} topology and the $p$-th moments converge.

	\item\label{it:thm.plan.topo.same.5}
    $\fp X^n\to \fp X$ in Hellwig's topology and the $p$-th moments converge. 

	\end{enumerate}
	If $\fp X$ has $\P^{\fp X}$-a.s.\ continuous paths, then \ref{it:thm.plan.topo.same.1}--\ref{it:thm.plan.topo.same.5} are equivalent to

	\begin{enumerate}[resume, label = (\roman*)]
	\item\label{it:thm.plan.topo.same.6} $\fp X^n \to \fp X$ in the optimal stopping topology and the $p$-th moments converge.
	\end{enumerate}
    If $\cont(\fp X)=[0,1]$, then  \ref{it:thm.plan.topo.same.1}--\ref{it:thm.plan.topo.same.5} are equivalent to
	
	\begin{enumerate}[resume, label = (\roman*)]
\item\label{it:thm.plan.topo.same.7} $\fp X^n \to \fp X$ in the Aldous\textsubscript{J1} topology and the $p$-th moments converge.
	\end{enumerate}
\end{theorem}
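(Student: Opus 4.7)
The plan is to build a cycle of implications among (i)--(vii), using the already-established equivalences (i) $\iff$ (ii) from Proposition~\ref{prop:scw}, (i) $\iff$ (iv) from Theorem~\ref{thm:AWmetrizesHK}, and (iv) $\iff$ (v) from Theorem~\ref{thm:HKAF}, and to treat (viii), (ix) separately under their additional hypotheses. Throughout, the decisive input from the naturally filtered setting is Corollary~\ref{cor:plain.via.ordering}: any $\fp Y \in \FP_p$ with $\law(Y) = \law(X)$ automatically satisfies $\CW_p(\fp Y, \fp X) = 0$.

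The chain (i) $\Rightarrow$ (ii) $\Rightarrow$ (iii) is immediate from $\CW_p \leq \SCW_p \leq \AW_p$. For the non-trivial (iii) $\Rightarrow$ (i), I would argue by subsequences: since $\W_p \leq \CW_p$, the convergence $\CW_p(\fp X, \fp X^n) \to 0$ forces $\W_p$-convergence of laws, which upgrades to $\HK_p$-precompactness of $(\fp X^n)_n$ via Corollary~\ref{cor:CompFPp}. For any $\HK_p$-limit point $\fp Y$ of the sequence, the triangle inequality gives $\CW_p(\fp X, \fp Y) = 0$, while $\law(Y) = \law(X)$ combined with Corollary~\ref{cor:plain.via.ordering} forces $\CW_p(\fp Y, \fp X) = 0$. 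Thus $\SCW_p(\fp X, \fp Y) = 0$, so $\fp Y \sim_{\HK} \fp X$ by Lemma~\ref{lem:scw.definite}, and the whole sequence converges.

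For the Aldous and Hellwig items, (iv) $\Rightarrow$ (vi) is trivial since Aldous\textsubscript{MZ} is coarser than Hoover--Keisler. The converse (vi) $\Rightarrow$ (iv) on $\NFP$ rests on the structural identity from the proof of Lemma~\ref{lem:plaineq}: there are Borel maps $f_r$ such that $\pp^{r+1}_t(\fp X) = (f_r)_\#\pp^1_t(\fp X)$ for every $t$, so $\law(\pp^\infty(\fp X))$ is fully determined by $\law(\pp^1(\fp X))$ through a pointwise-in-time measurable functional that is well-behaved in the Meyer--Zheng topology. The equivalence (vi) $\iff$ (vii) is then obtained from standard Meyer--Zheng theory: convergence of a tight sequence in $\prob(D_{\mathrm{MZ}}([0,1]; S))$ amounts to weak convergence of one-dimensional marginals at continuity points of the limiting path, which by Corollary~\ref{cor:contPtPlain} are precisely the points of $\cont(\fp X)$.

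Under a.s.\ continuity of paths of $\fp X$, (iv) $\Rightarrow$ (viii) is Proposition~\ref{prop:OST.qual}. For (viii) $\Rightarrow$ (i), I would again extract subsequences: time-independent $\varphi$ give weak convergence $\law(X^n) \to \law(X)$, hence $\W_p$-convergence and thereby $\HK_p$-precompactness. Any $\HK_p$-limit $\fp Y$ has continuous paths and, by Proposition~\ref{prop:OST.qual}, agrees with $\fp X$ on all ${\rm OS}(\cdot, \varphi)$. I expect the main obstacle of the whole theorem to be this final identification: Corollary~\ref{cor:plain.via.ordering} delivers $\CW_p(\fp Y, \fp X) = 0$ for free, but deducing $\CW_p(\fp X, \fp Y) = 0$ requires exploiting equality of optimal stopping values, ideally by discretizing time and using Snell-envelope and dynamic-programming arguments with piecewise-Lipschitz rewards to reconstruct conditional laws of $\fp Y$ given $\F_t^{\fp X}$ from optimal stopping data. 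The remaining item (ix) under $\cont(\fp X) = [0,1]$ is comparatively soft: (ix) $\Rightarrow$ (vi) holds since $J_1$ refines Meyer--Zheng, and for the converse one uses that $\pp^1(\fp X)$ has a.s.\ continuous paths (Corollary~\ref{cor:contPtPlain}), on which $J_1$ and Meyer--Zheng convergence coincide.
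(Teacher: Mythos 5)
Your chain for (i) $\Leftrightarrow$ (ii) $\Leftrightarrow$ (iii), and the subsequence-plus-precompactness strategy combined with Corollary~\ref{cor:plain.via.ordering} and Lemma~\ref{lem:scw.definite}, match the paper exactly, as does citing Theorem~\ref{thm:AWmetrizesHK} and Theorem~\ref{thm:HKAF} for (i) $\Leftrightarrow$ (iv) $\Leftrightarrow$ (v). Two of your remaining steps, however, have genuine gaps.

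First, your argument that (vi) $\Rightarrow$ (iv) ``rests on the structural identity $\pp^{r+1}_t(\fp X) = (f_r)_\#\pp^1_t(\fp X)$'' does not go through as stated. The maps $f_r$ constructed in Lemma~\ref{lem:plaineq} are only \emph{Borel}, so the pushforward $(f_r)_\#$ is not weakly continuous; moreover each $\fp X^n$ comes with its own Borel map $f_r^n$ depending on $\law(X^n)$, and there is no a priori relation between $f_r^n$ and $f_r$. Hence Aldous\textsubscript{MZ} convergence of $\law(\pp^1(\fp X^n))$ says nothing directly about $\law(\pp^\infty(\fp X^n))$. The paper's route is to first pass from (vi) to (vii) (Hellwig), then exploit that (vii) forces weak convergence of laws, invoke Prohorov-type precompactness in $\HK$ (Proposition~\ref{prop:Compact}), and identify the unique limit point using Lemma~\ref{lem:Hellwig_separates_plain}. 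The latter encodes exactly the structural fact you invoke, but at the level of \emph{laws of prediction processes} rather than pushing forward through non-continuous maps, and the precompactness step is what makes the conclusion follow. You use precompactness for (iii) $\Rightarrow$ (i) and (viii), so you have the right tool in hand --- it is just missing here.

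Second, for the optimal stopping direction you honestly flag ``the main obstacle'' but do not actually close it. You correctly reduce to showing that an $\HK_p$-limit point $\fp Y$ of $(\fp X^n)_n$ with the same optimal stopping values as $\fp X$ must satisfy $\CW_p(\fp X, \fp Y) = 0$. The paper's Lemma~\ref{lem:OS.finer.Hellwig} does this by contradiction: if $\law(\pp^1_t(\fp X)) \neq \law(\pp^1_t(\fp Y))$ at some $t\in\cont(\fp X)$, one picks $f \in C_b(\X)$ so that $M_1 := \int f\, d\pp^1_t(\fp X)$ and $M_2 := \int f\, d\pp^1_t(\fp Y)$ have different laws, notes that $(M_1, M_2)$ is a martingale under a causal coupling (exploiting Corollary~\ref{cor:plain.via.ordering}), deduces $\E[M_1] > \E[M_1 \wedge M_2]$ (Lemma~\ref{lem:martingale_strict_minimum}), and then uses the Snell envelope, $\fp X$ being naturally filtered (so $M_1$ is a function of $X^t$), and a monotone approximation by continuous bounded non-anticipative costs (Lemma~\ref{lem:os.continuous}) to produce an admissible $\varphi$ with $\OO{\rm S}(\fp X, \varphi) > \OO{\rm S}(\fp Y, \varphi)$, contradicting convergence. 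Your proposal gestures at ``Snell-envelope and dynamic-programming arguments'' but the martingale separation $\E[M_1] > \E[M_1 \wedge M_2]$ and the approximation by admissible costs are precisely the non-obvious ingredients, and neither is supplied.
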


Recall that by Corollary \ref{cor:contPtPlain}, $\cont(\fp X)=[0,1]$ if and only if the augmented right-continuous version of the {natural} filtration of $\fp X$ is continuous.

\begin{proof}
First note that since the assertion regarding the convergence of the $p$-th moments appears in every statement, it clearly suffices to focus on the different topologies only.

   The implications \ref{it:thm.plan.topo.same.1} $\implies$ \ref{it:thm.plan.topo.same.2} and \ref{it:thm.plan.topo.same.2} $\implies$
\ref{it:thm.plan.topo.same.2b} are clear. 
To see that \ref{it:thm.plan.topo.same.2b} $\implies$ \ref{it:thm.plan.topo.same.1}, suppose that   $\mathcal{CW}_p(\fp X, \fp X^n)\to 0$. By Proposition~\ref{prop:Compact}, the sequence $(\fp X^n)_n$ is relatively compact in $\AW_p$. 
Therefore, it suffices to show that $\fp X$ is the only $\AW_p$-limit point of this sequence. Indeed, if $\fp Y \in \FP_p$ is such a limit point,
\[
\CW_p(\fp X, \fp Y) 
\le \inf_n \big( \CW_p(\fp X, \fp X^n)   + \CW_p(\fp X^n,\fp Y) \big) 
=0,
\] 
where the  inequality follows from Lemma \ref{lem:glueing} and the equality since $\CW_p\leq \AW_p$.
As $\fp X \in \cSFP_p$, Corollary~\ref{cor:plain.via.ordering} yields $\CW_p(\fp Y, \fp X)=0$. Hence, $\fp X\sim_{\HK} 
  \fp Y$ by Lemma~\ref{lem:scw.definite}. 
    Moreover, Theorem \ref{thm:AWmetrizesHK} implies that \ref{it:thm.plan.topo.same.1} $\Longleftrightarrow$ \ref{it:thm.plan.topo.same.3} and Theorem~\ref{thm:HKAF} implies that 
  \ref{it:thm.plan.topo.same.3} $\Longleftrightarrow$ \ref{it:thm.plan.topo.same.3a}.
  Thus \ref{it:thm.plan.topo.same.1}--\ref{it:thm.plan.topo.same.3a} are all equivalent.

  \vspace{0.5em}
    
    Specifically, it suffices to show that $\fp X^n\to \fp X$ in the Hoover--Keisler topology if and only if $\fp X^n\to \fp X$  Hellwig's topology, and likewise for the other statements.

   By definition of the topologies, \ref{it:thm.plan.topo.same.3}$\implies$\ref{it:thm.plan.topo.same.4}.
   Moreover, it follows from \cite[Theorem~3.10]{BePaScZh23} that \ref{it:thm.plan.topo.same.4}$\implies$\ref{it:thm.plan.topo.same.5}.
    We proceed to show that \ref{it:thm.plan.topo.same.5}$\implies$\ref{it:thm.plan.topo.same.3}.
    To that end, let $\fp X^n \to \fp X$ in Hellwig's information topology.
    Then, by definition, 
    \[\delta_{X^n} = \pp^1_1(\fp X^n) \to \pp^1_1(\fp X) = \delta_X\]
    weakly (in $\mathcal{P}(\mathcal{P}(\mathcal{X}))$) and hence  $X^n \to X$ weakly (in $\mathcal{P}(\mathcal{X})$).
    Thus, by Proposition \ref{prop:Compact}, the sequence $(\fp X^n)_n$ is precompact w.r.t.\ the Hoover--Keisler topology.
    Let $\fp Y$ be an accumulation point of this sequence in the Hoover--Keisler topology.
    We claim that $\fp X=\fp Y$.
    If that is the case, then $\fp X$ is the only accumulation point of  $(\fp X^n)_n$ and thus $\fp X^n\to \fp X$ in the Hoover--Keisler topology.
    As for the claim, it follows from the fact that  Hellwig's topology is coarser than the Hoover--Keisler topology that
    \begin{align} 
    \label{eq:in.proof.atae}
    \law(\pp_t^1(\fp X)) =\law(\pp_t^1(\fp Y)) \quad\text{for every } t \in (\cont(\fp X) \cap \cont(\fp Y)) \cup \{1\}.
    \end{align}
    Moreover, since $\fp X$ is naturally filtered,  Lemma \ref{lem:Hellwig_separates_plain} below  guarantees that \eqref{eq:in.proof.atae} implies that $\fp X = \fp Y$, as claimed.

  Thus, \ref{it:thm.plan.topo.same.1}--\ref{it:thm.plan.topo.same.5} are all equivalent.

  \vspace{0.5em}
    
    Next, let us assume that $\fp X$ has continuous paths. 
    Then it follows from Proposition \ref{prop:OST.qual} that \ref{it:thm.plan.topo.same.1}$\implies$\ref{it:thm.plan.topo.same.6}.
    Moreover, Lemma \ref{lem:OS.finer.Hellwig} below asserts that \ref{it:thm.plan.topo.same.6}$\implies$\ref{it:thm.plan.topo.same.5}.

    \medskip 
    Finally, assume that $(\mathcal{G}_t)_{t\in[0,1]}$, the right-continuous version of the {natural} filtration of $\fp X$, is continuous. 
    As the Meyer--Zheng topology is weaker than  the $J_1$-topology, it is clear that    \ref{it:thm.plan.topo.same.7}$\implies$\ref{it:thm.plan.topo.same.4} (in fact, for this direction we do not need that $\cont(\fp X) = [0,1]$). 
    We proceed to show that \ref{it:thm.plan.topo.same.5}$\implies$\ref{it:thm.plan.topo.same.7}.
    As $\cont(\fp X)=[0,1]$, \cite[Theorem~3.10]{BePaScZh23} asserts that all finite dimensional distributions of the first order prediction processes converge, i.e.\ $\pp^1_T(\fp X^n) \to \pp_T^1(\fp X)$ in distribution for every finite $T \subset [0,1]$. Corollary~\ref{cor:contPtPlain} asserts that $\pp^1(\fp X)$ has continuous paths. By  \cite[Proposition 16.13]{Al81}, these two assertions imply that $\fp X^n \to \fp X$ in Aldous\textsubscript{J1} topology. 
    \end{proof}

\begin{lemma}\label{lem:Hellwig_separates_plain}
    Let $\fp X \in \NFP $ and $\fp Y \in \FP$.
    If $\law(\pp_t^1(\fp X))=\law(\pp_t^1(\fp Y))$ for every $t\in (\cont(\fp X) \cap \cont(\fp Y)) \cup \{1\}$, then $\fp X \sim_{\HK} \fp Y$.
\end{lemma}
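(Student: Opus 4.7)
The plan is to prove that under the hypothesis $\fp Y$ itself must be naturally filtered and have the same path law as $\fp X$; then both sides are $\sim_{\HK}$-equivalent to the standard naturally filtered process $\fp S^\mu$, so the conclusion follows.

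\textbf{Step 1: Match path laws and set up.} Specializing the hypothesis to $t=1$ gives $\law(\delta_X) = \law(\delta_Y)$ in $\prob(\prob(\X))$, hence $\mu := \law(X) = \law(Y)$. Since $\fp X \in \NFP$, Lemma~\ref{lem:plaineq}\ref{it:lem:plaineq2} provides a Borel map $f \colon \X \to \mathsf{M}_1$ with $\pp^1(\fp X) = f(X)$ a.s.; we may take $f_t(x) = \law_\mu(X \mid \F^\mu_t)(x)$ as a regular version of the conditional distribution under $\mu$, so that after modification on a $\mu$-null set $f_t(x)$ depends only on $x_{[0,t+]}$.

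\textbf{Step 2: Pointwise identification at common continuity points.} Fix $t \in T := (\cont(\fp X) \cap \cont(\fp Y)) \cup \{1\}$ and set $Q_t := f_t(Y)$. A tower computation using $\sigma(Y_{[0,t+]}) \subset \F_t^{\fp Y}$ and $\law(Y) = \mu$ yields, for every $\varphi \in C_b(\X)$,
\begin{align*}
\int \varphi \, dQ_t
    = \E\!\left[\int \varphi \, d\pp^1_t(\fp Y) \,\Big|\, \sigma(Y_{[0,t+]}) \right] \qquad \P^{\fp Y}\text{-a.s.}
\end{align*}
On the other hand, $\law(Q_t) = \law(f_t(X)) = \law(\pp^1_t(\fp X)) = \law(\pp^1_t(\fp Y))$, the last equality being the hypothesis. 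Setting $Z := \int \varphi\, d\pp^1_t(\fp Y)$ and $Z' := \int \varphi\, dQ_t = \E[Z \mid \sigma(Y_{[0,t+]})]$, one has $\law(Z) = \law(Z')$ by pushforward under $\nu \mapsto \int \varphi\, d\nu$, hence identical $L^2$-norms, so that $\E[(Z-Z')^2] = \E[Z^2] - \E[Z'^2] = 0$ and $Z = Z'$ a.s. Letting $\varphi$ range over a countable convergence-determining family in $C_b(\X)$ gives $\pp^1_t(\fp Y) = f_t(Y)$ a.s.

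\textbf{Step 3: Upgrade to all $t$ and conclude.} Since $T$ is co-countable and hence dense in $[0,1]$, and both $t \mapsto \pp^1_t(\fp Y)$ and $t \mapsto f_t(Y)$ admit \cadlag{} versions (the latter because $f(X) = \pp^1(\fp X)$ is \cadlag{} and $\law(Y) = \law(X) = \mu$), right-continuity extends the identity $\pp^1_t(\fp Y) = f_t(Y)$ to all $t \in [0,1]$ on a common a.s.\ set. Consequently $\pp^1_t(\fp Y)$ is measurable with respect to the right-continuous natural filtration of $Y$, so Lemma~\ref{lem:plaineq}\ref{it:lem:plaineq2} shows that $\fp Y$ is naturally filtered. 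Since both $\fp X$ and $\fp Y$ are naturally filtered with $\law(X) = \law(Y) = \mu$, each is $\sim_{\HK} \fp S^\mu$, and hence $\fp X \sim_{\HK} \fp Y$.

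\textbf{Main obstacle.} The crux is Step~2: the hypothesis provides only equality of \emph{laws}, which must be upgraded to a.s.\ equality. The key observation is that $Q_t$ is precisely the conditional expectation of $\pp^1_t(\fp Y)$ with respect to $\sigma(Y_{[0,t+]})$, so equality of marginal laws forces equality in the Jensen inequality $\E[Z^2] \geq \E[\E[Z|\mathcal G]^2]$, yielding a.s.\ identity. The secondary technical issues are controlling null sets uniformly in $\varphi$ (handled by a countable separating family) and extending from $T$ to $[0,1]$ (handled by right-continuity of both \cadlag{} processes and density of $T$).
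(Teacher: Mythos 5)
Your proof is correct and establishes the lemma, but via a genuinely different mechanism than the paper's. Both proofs share the same skeleton: reduce to showing $\fp Y$ is naturally filtered (using $t=1$ to match path laws), verify the measurability condition of Lemma~\ref{lem:plaineq} at each common continuity point, and propagate by density plus right-continuity. The difference lies in how equality of the \emph{laws} of $\pp^1_t$ is converted into a.s.\ measurability for $\fp Y$. The paper observes that $X_{[0,t]}$ is recoverable from $\pp^1_t(\fp X)$ via $\delta^{-1}\circ {r_t}_\#$, so that ``$\pp^1_t(\fp X)$ is $\sigma(X_{[0,t]})$-measurable'' is equivalent to $\law({r_t}_\#\pp^1_t(\fp X), \pp^1_t(\fp X))$ being supported on a graph — a property that is manifestly a function of $\law(\pp^1_t(\fp X))$ alone and therefore transfers to $\fp Y$. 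You instead introduce the regular conditional kernel $f_t$, show $f_t(Y)=\E[\,\cdot\mid\sigma(Y_{[0,t+]})]$ projection of $\pp^1_t(\fp Y)$, and use the equality-in-Jensen / $L^2$-orthogonality trick ($\E[(Z-Z')^2]=\E[Z^2]-\E[Z'^2]=0$) to upgrade law equality to a.s.\ identity $\pp^1_t(\fp Y)=f_t(Y)$. The paper's route is shorter and avoids manipulating conditional kernels explicitly; yours is more hands-on and yields the stronger, concrete identification $\pp^1_t(\fp Y)=f_t(Y)$ a.s. The only place to be slightly more careful is the remark ``after modification on a $\mu$-null set $f_t(x)$ depends only on $x_{[0,t+]}$'': that modification is $t$-dependent, and what your argument actually uses (and is true) is the per-$t$ fact that $f_t(Y)$ is measurable w.r.t.\ the completed right-continuous natural filtration of $Y$, which follows from $\law(Y)=\mu$ and $\F_t^\mu$-measurability of $f_t(X)$; no simultaneous modification across $t$ is needed.
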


\begin{proof}
   By assumption we have $\law(\delta_X) = \law(\pp_1^1(\fp X)) = \law(\pp_1^1(\fp Y)) = \law(\delta_Y)$ and thus $\law(X) = \law(Y)$.
    Therefore, by Lemma \ref{lem:plaineq}, it remains to show that $\fp Y$ is naturally filtered.

    To this end fix $t\in (\cont(\fp X) \cap \cont(\fp Y))$. By Lemma \ref{lem:plaineq} and  Corollary \ref{cor:contPtPlain}, we have 
    \begin{equation}
        \label{eq:lem.plain.Hellwig.1}
        \pp^1_t(\fp X) \mbox{ is }\sigma(X_{[0,t]})\mbox{-measurable.}
    \end{equation}
    Writing $r_t \colon \X \to D([0,t];\R^d),  x \mapsto x_{[0,t]}$ for the restriction map, the adaptedness of $\fp X$ yields ${r_t}_\# \pp^1_t(\fp X) = \law(X_{[0,t]} | \F_t^\fp X) = \delta_{X_{[0,t]}}$ $\mathbb{P}^{\fp X}$-a.s.\
    Therefore, \eqref{eq:lem.plain.Hellwig.1} is equivalent to $\law( {r_t}_\# \pp^1_t(\fp X), \pp^1_t(\fp X) )$ being concentrated on the graph of a measurable function.
    Note that the latter solely depends on the law of $\pp_t^1(\fp X)$.
    Using the assumption $\law(\pp_t^1(\fp X))= \law( \pp_t^1(\fp Y))$, we conclude by the same reasoning as above that $\pp^1_t(\fp Y)$ is $\sigma(Y_{[0,t]})$-measurable.
    
    As $\cont(\fp X) \cap \cont(\fp Y)$ is dense in $[0,1]$ (cf.\  Lemma~\ref{lem:cont_pt_dense}), the above argument shows that the right-continuous filtration generated by $\pp^1(\fp Y)$ is contained in the right-continuous filtration generated by $Y$. Thus $\fp Y$ is naturally filtered by Lemma \ref{lem:plaineq}.
\end{proof}

\subsection{Proof of Theorem \ref{thm:plan.topo.same}, part \ref{it:thm.plan.topo.same.6}$\implies$\ref{it:thm.plan.topo.same.5}}

The proof needs some preparation.

    \begin{lemma}
	\label{lem:OS.sequential.finer.than.weak}
        The optimal stopping topology is a sequential topology on $\FP$. Moreover, it is finer than the weak topology, i.e.\ if $\fp X^n \to \fp X$ in the optimal stopping topology, then $\law(X^n) \to \law(X)$ weakly. 
	\end{lemma}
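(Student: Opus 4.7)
I proceed in two steps. Step~1 establishes the finer-than-weak claim directly by constructing a parametrized family of cost functions whose OS-values converge to the marginal expectations $\E[g(X)]$ as the parameter grows. Step~2 uses Step~1 together with the tightness Proposition~\ref{prop:Compact} to deduce first-countability and hence sequentiality of the OS topology.

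\textbf{Step~1 (finer than weak).} Fix $g \in C_b(\mathcal X)$ and, for $M > 0$, define
\[
\varphi_M(x,t) := g(x^t) + M(1-t), \qquad \widetilde\varphi_M(x,t) := -g(x^t) + M(1-t).
\]
Both are bounded and non-anticipative. Joint continuity is immediate when $\mathcal X = C([0,1];\R^d)$ since $(x,t) \mapsto x^t$ is continuous in sup-norm. For $\mathcal X = D([0,1];\R^d)$ with the $J_1$-topology I restrict to a separating subclass of $C_b(\mathcal X)$ on which $(x,t) \mapsto g(x^t)$ remains $J_1$-continuous (for example, time-averaged cost functions $\int_0^t h(x(s))\,ds$ with $h \in C_b(\R^d)$); alternatively one reduces to continuous-path processes via the denseness of Section~\ref{sec:plain}. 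Plugging in the constant stopping time $\tau = 1$ yields the universal sandwich
\[
{\rm OS}(\fp X, \varphi_M) \le \E_{\P^{\fp X}}[g(X)] \le -{\rm OS}(\fp X, \widetilde\varphi_M), \qquad M > 0.
\]
The central quantitative estimate is that both bounds become tight as $M \to \infty$: any stopping time $\tau$ with $M\,\E[1-\tau] > 2\|g\|_\infty$ is strictly sub-optimal for $\varphi_M$ (it produces a cost larger than the value $\E[g(X)]$ attained at $\tau = 1$), so the infimum in ${\rm OS}(\fp X, \varphi_M)$ is realised on $\tau$ with $\E[1-\tau] \le 2\|g\|_\infty/M$; on such $\tau$, Markov's inequality $\P(1-\tau > \delta) \le \E[1-\tau]/\delta$ combined with the $\P$-a.s.\ c\`adl\`ag regularity of $X$ shows $\E[|g(X^\tau) - g(X)|] \to 0$ uniformly. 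The symmetric argument for $\widetilde\varphi_M$ handles the upper bound. Given $\fp X^n \to \fp X$ in the OS topology and $\epsilon > 0$, I pick $M = M(\fp X, g, \epsilon)$ so that the sandwich gap at $\fp X$ is smaller than $\epsilon$, and OS-continuity at $\varphi_M$ and $\widetilde\varphi_M$ then yields $|\E_{\P^{\fp X^n}}[g(X^n)] - \E_{\P^{\fp X}}[g(X)]| \le 3\epsilon$ for large $n$. Since $g \in C_b(\mathcal X)$ was arbitrary, $\law(X^n) \to \law(X)$ weakly.

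\textbf{Step~2 (sequentiality).} I show that the OS topology is first-countable at every point, which implies sequentiality. Fix $\fp X \in \FP$; by tightness of $\law(X)$ there is an exhausting sequence of compact sets $K_m \subset \mathcal X$ with $\law(X)(K_m) > 1 - 1/m$. Since $C(K_m \times [0,1])$ is separable in sup-norm, I fix a countable family $\{\varphi_{m,k}\}_{m,k}$ of bounded continuous non-anticipative cost functions whose restrictions to $K_m \times [0,1]$ are sup-norm dense. Using the inequality
\[
|{\rm OS}(\fp Y, \varphi) - {\rm OS}(\fp Y, \varphi')| \le \sup_{K_m \times [0,1]} |\varphi - \varphi'| + 2\|\varphi - \varphi'\|_\infty\, \P^{\fp Y}(Y \notin K_m),
\]
together with the uniform tightness along OS-convergent sequences furnished by Step~1, the countable family $\{\varphi_{m,k}\}$ suffices to generate a countable neighborhood base of $\fp X$ in the OS topology. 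Hence OS is first-countable at every $\fp X$, and in particular sequential.

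\textbf{Main obstacle.} The main technical obstacle is the joint continuity of $(x,t) \mapsto g(x^t)$ on $D([0,1];\R^d)$ with the $J_1$-topology, which fails at jump times of $x$; this requires either restricting to a separating subclass of $g$'s on which joint continuity is preserved, or reducing to continuous-path processes via Section~\ref{sec:plain}. In Step~2, the subtler point is the uniformity needed to reduce to a countable family of test cost functions: the displayed inequality for $|{\rm OS}(\cdot,\varphi) - {\rm OS}(\cdot,\varphi')|$ mixes a sup-norm error on the compact $K_m$ with a tail term outside $K_m$, so the countable approximation must control both components simultaneously along any OS-convergent sequence, which is precisely what the tightness from Step~1 delivers and explains why Step~1 must precede Step~2.
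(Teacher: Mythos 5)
Your Step~1 is in essence the paper's argument (same test functions $\varphi_M(x,t)=g(x^t)+M(1-t)$, same one-sided sandwich, same $\sup$/$\liminf$ interchange applied to $g$ and $-g$), so that part is correct in spirit. However, the intermediate claim that $\E[|g(X^\tau)-g(X)|]\to 0$ ``uniformly'' via Markov's inequality plus c\`adl\`ag regularity is not valid on all of $\FP$: if $X$ jumps at $t=1$ with positive probability, then $X^\tau\not\to X$ in $J_1$ even as $\tau\to 1$ in probability (the time-change must fix the endpoint), so the modulus-of-continuity heuristic breaks. The paper sidesteps this entirely by proving $\sup_M{\rm OS}(\fp X,\varphi_M)=\E[g(X)]$ through Lemma~\ref{lem:os.continuous}, which is a Sion minimax argument on the convex compact set of randomized stopping laws combined with monotone convergence; that gives the identity for every $\fp X\in\FP$ without any path-regularity input. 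You should replace your quantitative heuristic with that lemma.

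The genuine gap is in Step~2. Your proposed countable family $\{\varphi_{m,k}\}$ is only dense in sup-norm on the compacts $K_m\times[0,1]$, so the estimate you write leaves a tail term of the form $\|\varphi\|_\infty\,\P^{\fp Y}(Y\notin K_m)$ that depends on $\fp Y$. To turn $\{\varphi_{m,k}\}$ into an actual neighborhood base at $\fp X$ you need this tail to be small \emph{uniformly over $\fp Y$ in an OS-neighborhood of $\fp X$}, not merely along OS-convergent sequences; Step~1 only delivers the latter, so invoking ``uniform tightness along OS-convergent sequences'' is a category error at this point (first-countability is a statement about neighborhoods, not sequences). To close the gap in your framework you would additionally have to insert explicit tightness-probing test functions (e.g.\ $\widetilde\varphi_{m}(x,t)=-h_m(x^t)+M_m(1-t)$ with $h_m$ a cutoff of $d(\cdot,K_m)$ and $M_m$ large depending on $\fp X$) and verify that proximity of OS-values at those $\widetilde\varphi_m$ forces $\P^{\fp Y}(Y\notin K_m)$ small. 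The paper takes a structurally different and cleaner route that avoids tightness control altogether: using Urysohn functions it builds, in Lemma~\ref{lemma:pointwiseapprox} and Corollary~\ref{cor:pointwiseapprox.non-anticipative}, a \emph{single} countable family $\mathcal{C}$ of non-anticipative cost functions such that every admissible $\varphi$ is a pointwise increasing limit of elements of $\mathcal{C}$. Since ${\rm OS}(\fp Y,\cdot)$ is pointwise monotone in the cost function, one can sandwich ${\rm OS}(\cdot,\varphi)$ between ${\rm OS}(\cdot,\underline\varphi)$ and ${\rm OS}(\cdot,\overline\varphi)$ with $\underline\varphi,\overline\varphi\in\mathcal{C}$ chosen (via Lemma~\ref{lem:os.continuous}) so that the sandwich is $\varepsilon$-tight at the base point $\fp X$; this shows the initial topology induced by $\mathcal{C}$ equals the OS topology, which then is second-countable, hence sequential, globally — no tightness estimates required. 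Your approach is fixable but requires explicitly controlling the tail; the paper's monotone-from-below sandwich is the step your proposal is missing.
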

	
    The proof of this lemma is technical but straightforward and postponed to Appendix \ref{sec:os.topoloy}.
    The proof of the first claim follows by showing that, in the definition of the optimal stopping topology,  one can restrict to a countable set of $\varphi$'s without changing the resulting topology.
	The second claim would follow immediately if  $\varphi(x,t):=f(x) + \infty 1_{t<1}$ for $f\in C_b(\R^d)$ were an admissible choice in the definition of the optimal stopping topology (because then ${\rm OS}(\fp X, \varphi)=\E_{\P^{\fp X}}[ f(X)]$); the actual proof relies on a suitable approximation argument.

   The next observation is a standard stability result. For completeness we present a proof in  Appendix \ref{sec:os.topoloy}.
 \begin{lemma}
	\label{lem:os.continuous}
		Let $\varphi^n \colon D([0,1];\mathbb{R}^d)\times[0,1]\to \mathbb{R}$ be continuous bounded and non-anticipative functions  such that $\varphi^n$ increases pointwise to $ \varphi : D([0,1];\mathbb{R}^d)\times[0,1]\to \mathbb{R} \cup \{+\infty\}$.
		Then, for every $\fp X\in \FP$,
		\[ {\rm OS}\left(\fp X, \varphi \right)
		=\sup_{n\in\N} {\rm OS}(\fp X, \varphi^n).\]
	\end{lemma}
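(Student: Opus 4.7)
The inequality $\sup_n {\rm OS}(\fp X, \varphi^n) \le {\rm OS}(\fp X, \varphi)$ is immediate. For any $\tau \in {\rm ST}(\fp X)$ the pointwise bound $\varphi^n \le \varphi$ gives $\E_{\P^{\fp X}}[\varphi^n(X,\tau)] \le \E_{\P^{\fp X}}[\varphi(X,\tau)]$; taking the infimum over $\tau$ and then the supremum over $n$ yields the claim. (Note also that $\varphi$ inherits non-anticipativity from the $\varphi^n$ by taking suprema, and is bounded from below by $\inf \varphi^1 > -\infty$, so all expectations below are well-defined in $(-\infty,+\infty]$.)

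For the reverse inequality, the plan is to apply the monotone convergence theorem to a \emph{near-optimal stopping time of the limit problem}. If ${\rm OS}(\fp X, \varphi) = +\infty$ there is nothing to show, so assume it is finite. Given $\varepsilon > 0$, pick $\tau_\varepsilon \in {\rm ST}(\fp X)$ with
\[
\E_{\P^{\fp X}}[\varphi(X, \tau_\varepsilon)] \le {\rm OS}(\fp X, \varphi) + \varepsilon;
\]
in particular $\varphi(X, \tau_\varepsilon) < \infty$ $\P^{\fp X}$-a.s. Since $(\varphi^n)_n$ is pointwise increasing and each $\varphi^n$ is bounded, we have the uniform lower bound $\varphi^n \ge \varphi^1 \ge M := \inf \varphi^1 > -\infty$. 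Applying the monotone convergence theorem to the nonnegative sequence $\varphi^n(X, \tau_\varepsilon) - M \uparrow \varphi(X, \tau_\varepsilon) - M$ yields $\E_{\P^{\fp X}}[\varphi^n(X, \tau_\varepsilon)] \uparrow \E_{\P^{\fp X}}[\varphi(X, \tau_\varepsilon)]$. Combining this with the trivial upper bound ${\rm OS}(\fp X, \varphi^n) \le \E_{\P^{\fp X}}[\varphi^n(X, \tau_\varepsilon)]$ gives
\[
\sup_n {\rm OS}(\fp X, \varphi^n) \le \sup_n \E_{\P^{\fp X}}[\varphi^n(X, \tau_\varepsilon)] = \E_{\P^{\fp X}}[\varphi(X, \tau_\varepsilon)] \le {\rm OS}(\fp X, \varphi) + \varepsilon,
\]
and letting $\varepsilon \downarrow 0$ finishes the argument.

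There is no serious technical obstacle: the proof reduces entirely to the monotone convergence theorem once one observes (i) the uniform lower bound $\varphi^n \ge \varphi^1$, which is available precisely because $\varphi^1$ is bounded, and (ii) that near-optimizers for ${\rm OS}(\fp X, \varphi)$ exist by definition of the infimum. Neither the continuity of the $\varphi^n$ nor any specific property of the filtration of $\fp X$ is used beyond the standard measure-theoretic setup; in particular, no compactness or randomized-stopping-time machinery is needed because the almost optimal $\tau_\varepsilon$ is fixed \emph{once and for all} before letting $n \to \infty$.
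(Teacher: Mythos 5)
Your ``reverse inequality'' does not in fact establish the reverse inequality. The chain
\[
\sup_{n} {\rm OS}(\fp X, \varphi^n) \;\le\; \sup_{n} \E_{\P^{\fp X}}[\varphi^n(X, \tau_\varepsilon)] \;=\; \E_{\P^{\fp X}}[\varphi(X, \tau_\varepsilon)] \;\le\; {\rm OS}(\fp X, \varphi) + \varepsilon
\]
runs in the same direction as your ``immediate'' inequality, i.e.\ it simply re-derives $\sup_n {\rm OS}(\fp X, \varphi^n) \le {\rm OS}(\fp X, \varphi)$. What actually remains to be proved is the converse, ${\rm OS}(\fp X, \varphi) \le \sup_n {\rm OS}(\fp X, \varphi^n)$, which is the exchange $\inf_\tau \sup_n \E[\varphi^n(X,\tau)] \le \sup_n \inf_\tau \E[\varphi^n(X,\tau)]$. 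This cannot be obtained by fixing one near-optimal $\tau_\varepsilon$ for $\varphi$ and letting $n \to \infty$: the obstruction is that the near-optimal stopping times for the $\varphi^n$-problems may wander with $n$, keeping $\inf_\tau\E[\varphi^n(X,\tau)]$ small even as $\varphi^n \uparrow \varphi$. (Your dispatch of the case ${\rm OS}(\fp X,\varphi)=+\infty$ is also backwards: for the hard direction one would then still have to show $\sup_n {\rm OS}(\fp X, \varphi^n)=+\infty$.)

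The paper supplies precisely the missing ingredient. One first replaces $\fp X$ by an $\HK$-equivalent process carrying an independent $\F_0$-measurable uniform, so that $\Gamma = \{\law(X,\tau) : \tau \in {\rm ST}(\fp X)\}$ becomes the set of randomized stopping times, which is compact and convex by Baxter--Chacon. Then $\Psi(n,\gamma) = \int \varphi^n\, d\gamma$ is continuous and linear in $\gamma$ (this is where the continuity of $\varphi^n$ enters) and increasing in $n$, and Sion's minimax theorem gives the inf-sup/sup-inf exchange. Your closing remark that ``no compactness or randomized-stopping-time machinery is needed'' is exactly the symptom of having lost the hard direction.
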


	\begin{lemma}
		\label{lem:martingale_strict_minimum}
		Let $(M_1,M_2)$ be a real-valued martingale satisfying  $\law(M_1)\neq\law(M_2)$. We then have
		$
		\mathbb E[M_1] > \mathbb E[M_1 \wedge M_2]
		$.
	\end{lemma}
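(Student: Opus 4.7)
The plan is to rewrite the difference $\E[M_1] - \E[M_1 \wedge M_2]$ in a form where nonnegativity is manifest and then use the martingale property to promote nonnegativity to strict positivity.

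First, I would use the identity $a \wedge b = a - (a-b)^+$ to obtain
\[ \E[M_1] - \E[M_1 \wedge M_2] = \E\bigl[(M_1-M_2)^+\bigr], \]
so the claim reduces to showing $\E[(M_1-M_2)^+] > 0$. The martingale property gives $\E[M_2 \mid M_1] = M_1$, and in particular $\E[M_1] = \E[M_2]$, equivalently $\E[(M_1-M_2)^+] = \E[(M_1-M_2)^-]$.

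Next I would argue by contradiction: suppose $\E[(M_1-M_2)^+] = 0$. Then $(M_1-M_2)^+ = 0$ a.s., hence $M_1 \le M_2$ a.s. Combined with $\E[M_1] = \E[M_2]$, this forces $M_2 - M_1 = 0$ a.s., so $M_1 = M_2$ a.s.\ and in particular $\law(M_1) = \law(M_2)$, contradicting the hypothesis. Therefore $\E[(M_1-M_2)^+] > 0$, which by the identity above yields $\E[M_1] > \E[M_1 \wedge M_2]$.

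There is no real obstacle here; the only point to be careful about is that the conclusion relies on the \emph{martingale} property (not merely the equality of expectations, although that is all that is used in this one-step setting), and on the equivalence $\law(M_1) = \law(M_2) \iff M_1 = M_2$ a.s.\ being replaced by its one-sided version above (we only need $M_1 \le M_2$ a.s., not pathwise equality, to derive the contradiction once $\E[M_1]=\E[M_2]$ is known).
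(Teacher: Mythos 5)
Your proof is correct and is essentially the same argument as the paper's, just phrased by contradiction: the identity $\E[M_1] - \E[M_1 \wedge M_2] = \E[(M_1 - M_2)^+]$ is the paper's $\E[M_1 \wedge M_2 - M_1] = \E[\mathbbm{1}_{M_1 \ge M_2}(M_2 - M_1)]$ in disguise, and your deduction that $\E[(M_1-M_2)^+]=0$ together with $\E[M_1]=\E[M_2]$ would force $M_1=M_2$ a.s.\ is exactly the paper's observation that $\{M_1 < M_2\}$ must have positive probability.
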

	
	\begin{proof}
 
		We have $M_1 \ge M_1 \wedge M_2$ with equality if and only if $M_2 \le M_1$.
		Since $\law(M_1) \not = \law(M_2)$ and $(M_1,M_2)$ is a martingale, the set $\{ M_1 < M_2 \}$ has positive probability; thus
		\begin{equation*}
			0 = \mathbb E[M_2 - M_1] > \mathbb E[\mathbbm{1}_{M_1 \ge M_2}(M_2 - M_1)]
			= \mathbb E[M_1\wedge M_2 - M_1].
			\qedhere
		\end{equation*}
	\end{proof}

\begin{lemma}\label{lem:OS.finer.Hellwig}
Let  $\fp X, \fp X^n \in \NFP_p$ for $n \in \N$. If $\fp X$ has $\P^{\fp X}$-a.s.\ continuous paths and $\fp X^n \to \fp X$ in the optimal stopping topology, then $\fp X^n \to \fp X$  in Hellwig's information topology. 
\end{lemma}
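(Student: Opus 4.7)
The plan is to prove the stronger statement that $\fp X^n \to \fp X$ in the Hoover--Keisler topology, from which Hellwig convergence follows by the already-established implication \ref{it:thm.plan.topo.same.3} $\Rightarrow$ \ref{it:thm.plan.topo.same.5} of Theorem~\ref{thm:plan.topo.same}. By Lemma~\ref{lem:OS.sequential.finer.than.weak}, the hypothesis of optimal-stopping convergence forces $\law(X^n) \to \law(X)$ weakly, and Corollary~\ref{cor:CompFPp} then gives relative compactness of $(\fp X^n)_n$ in $\HK_p$. It therefore suffices to show that every $\HK_p$-subsequential limit $\fp Y$ of $(\fp X^n)_n$ satisfies $\fp Y \sim_{\HK} \fp X$; the rest of the plan focuses on this verification.

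Given such a limit $\fp X^{n_k} \to \fp Y$, I first note that $\law(Y) = \law(X)$, so the paths of $Y$ are almost surely continuous. By Proposition~\ref{prop:OST.qual}, this yields ${\rm OS}(\fp X^{n_k}, \varphi) \to {\rm OS}(\fp Y, \varphi)$ for every continuous bounded non-anticipative $\varphi$, and combined with the hypothesized OS convergence I obtain ${\rm OS}(\fp X, \varphi) = {\rm OS}(\fp Y, \varphi)$ for all admissible $\varphi$. The crux of the argument will be to upgrade this OS equality to equality of the laws of the first-order prediction processes at common continuity points. To that end, I fix $t \in \cont(\fp X) \cap \cont(\fp Y)$, $g \in C_b(\mathcal X)$, and $c \in \R$, and construct a family $\varphi^{c,g}_\delta$ of admissible cost functions consisting of two narrow bumps (of heights $c$ and $g(x^s)$, where $x^s := x(\cdot \wedge s)$) centred at times $t$ and $1$, plus a large constant penalty elsewhere which renders stopping outside these two regions suboptimal. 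Using the continuity of the paths of $\fp Z$ together with the continuity of its filtration at $t$ (cf.\ Proposition~\ref{prop:contPt}) and modulus-of-continuity estimates in the spirit of Example~\ref{ex:modulus}, I expect to verify that
\[
{\rm OS}(\fp Z, \varphi^{c,g}_\delta) \xrightarrow[\delta \to 0]{} \E\!\big[\min\big(c,\, \E[g(Z) \mid \F^{\fp Z}_t]\big)\big]
\]
for every $\fp Z \in \FP$ with a.s.\ continuous paths and $t \in \cont(\fp Z)$.

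Applying this to $\fp Z = \fp X$ and $\fp Z = \fp Y$ and combining with the OS equality and $\law(X) = \law(Y)$ will give $\E[\min(c, \E[g(X) \mid \F_t^{\fp X}])] = \E[\min(c, \E[g(X) \mid \F_t^{\fp Y}])]$ for all $c \in \R$ and $g \in C_b(\mathcal X)$. Varying $c$, the call functions of $\E[g(X) \mid \F_t^{\fp X}]$ and $\E[g(X) \mid \F_t^{\fp Y}]$ then agree and hence so do their laws; varying $g$ and invoking the Cram\'er--Wold device on linear combinations $\sum \alpha_i g_i \in C_b(\mathcal X)$ will yield $\law(\pp^1_t(\fp X)) = \law(\pp^1_t(\fp Y))$ in $\prob(\prob(\mathcal X))$. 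Since this holds for every $t$ in the cocountable set $\cont(\fp X) \cap \cont(\fp Y)$, and trivially for $t = 1$, Lemma~\ref{lem:Hellwig_separates_plain} will provide $\fp Y \sim_{\HK} \fp X$, concluding the argument. The hardest part will be the construction of $\varphi^{c,g}_\delta$: it must be jointly continuous on $\mathcal X \times [0,1]$ (delicate when $\mathcal X = D([0,1];\R^d)$ carries the $J_1$-topology, since the map $(x,s) \mapsto x^s$ is not jointly continuous there and must be regularized, e.g., via a mollifier in the time variable) as well as bounded and non-anticipative, and one must then rigorously verify the concentration of the optimal stopping value on the two-time regime $\{t, 1\}$ as $\delta \to 0$ with the penalty constant taken large enough.
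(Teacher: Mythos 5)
Your proposal is correct in substance and takes a genuinely different route from the paper at the crux of the argument. Both proofs share the same reduction: use Lemma~\ref{lem:OS.sequential.finer.than.weak} and Corollary~\ref{cor:CompFPp} to get relative compactness, reduce to a subsequential $\HK$-limit $\fp Y$ with $\law(Y)=\law(X)$ and continuous paths, use Proposition~\ref{prop:OST.qual} to get ${\rm OS}(\fp X,\varphi)={\rm OS}(\fp Y,\varphi)$ for all admissible $\varphi$, and conclude via Lemma~\ref{lem:Hellwig_separates_plain}. The divergence is in how one passes from OS equality to equality of the laws of $\pp^1_t$. The paper's proof argues by contradiction: from $\law(\pp^1_t(\fp X))\ne\law(\pp^1_t(\fp Y))$ it produces a \emph{single} distinguishing cost function by first invoking Corollary~\ref{cor:plain.via.ordering} to get a causal coupling $\pi$ with $X=Y$ $\pi$-a.s., showing $(M_1,M_2)=(\int f\,d\pp^1_t(\fp X),\int f\,d\pp^1_t(\fp Y))$ is a $\pi$-martingale, and applying Lemma~\ref{lem:martingale_strict_minimum}. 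Your proof instead characterizes the OS values for the whole family of two-time cost functions as $\E[\min(c,\E[g(\cdot)\mid\F_t])]$ and recovers $\law(\pp^1_t)$ by a put/call-function argument in $c$ together with Cram\'er--Wold in $g$. This is arguably a more direct and more symmetric route: it bypasses the coupling from Corollary~\ref{cor:plain.via.ordering} (which is one of the two places the paper uses that $\fp X$ is naturally filtered) and Lemma~\ref{lem:martingale_strict_minimum} altogether; both routes still need the naturally-filtered hypothesis through Lemma~\ref{lem:Hellwig_separates_plain}. What the paper's coupling approach buys is a shorter direct inequality that only requires one admissible $\varphi$, whereas yours needs to control the OS value over a two-parameter family; but this extra work is anyway absorbed by Lemma~\ref{lem:os.continuous}, so neither version is substantially harder. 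Two small remarks: first, you don't actually need modulus-of-continuity estimates or the filtration-continuity of $\fp Z$ at $t$ for the $\delta\to0$ limit -- the clean way (and what the paper does) is to pick an increasing family $\varphi_\delta\uparrow\varphi$ and apply Lemma~\ref{lem:os.continuous}, which gives ${\rm OS}(\fp Z,\varphi)=\E[\min(c,\E[g(Z)\mid\F_t^{\fp Z}])]$ exactly, for every $t$; the continuity point restriction enters only through Lemma~\ref{lem:Hellwig_separates_plain}. Second, your construction uses a constant stopping payoff $c$ at time $t$, while the paper uses a payoff $h(x^t)$; the constant suffices once one notes $\E[M^{\fp X}_g]=\E[g(X)]=\E[g(Y)]=\E[M^{\fp Y}_g]$ so that equal call functions in $c$ do pin down the law, and the paper's $h(x^t)$ is an artifact of its coupling-based argument rather than a necessity.
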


\begin{proof}
Without loss of generality, throughout the proof, we work with the representatives of filtered processes in $\mathcal{CFP}_p$. 
In order to prove the claim by contradiction, suppose that the sequence $(\fp X^n)_n$ converges to $\fp X$ in the optimal stopping topology, but not in Hellwig's information topology.

Due to Lemma \ref{lem:OS.sequential.finer.than.weak}, the set of laws $\{ \law(X^n) : n \in \mathbb N\}$ is tight. By Proposition \ref{prop:Compact}, $\{ \fp X^n : n \in \mathbb N\}$ is relatively compact in the Hoover--Keisler topology. As $(\fp X^n)_n$ is assumed not to converge to $\fp X$ in the Hellwig's information topology, there is a subsequence $(\fp X^{n_k})_{k}$ converging to some $\fp Y\in \FP$ in the Hoover--Keisler topology, such that $\fp X$ and $\fp Y$ are not equal w.r.t.\ Hellwig's information topology, i.e.\ there is  $t \in \cont(\fp X)$ such that 
\begin{align}\label{eq:prf:OS.finer.Hellwig.1}
    \law(\pp_t^1(\fp X)) \not = \law(\pp_t^1(\fp Y)).
\end{align}
Note that the sequence $(\fp X^{n_k})_k$ converges in the optimal stopping topology to $\fp X$ by assumption and to $\fp Y$ because the optimal stopping topology is weaker than the Hoover--Keisler topology, cf.\ Proposition~\ref{prop:OST.qual}. By Lemma~\ref{lem:OS.sequential.finer.than.weak}, we have $\law(X)=\law(Y)$. As $\fp X$ is naturally filtered, Corollary~\ref{cor:plain.via.ordering} asserts that $\CW_p(\fp Y, \fp X)=0$. As the infimum in the definition of $\CW_p$ is attained, cf.\ Remark~\ref{rem:CWattainment}, there is a causal coupling $\pi \in \cpl_{\rm c}(\fp Y, \fp X)$ such that $X = Y$ $\pi$-almost surely.

Fix $t \in \cont(\fp X)$ such that \eqref{eq:prf:OS.finer.Hellwig.1} holds true. By \cite[Lemma~7.4]{BaBaBeEd19b} there is $f \in C_b(\mathcal{X})$ for which $M_1:=\int f \, d \pp^1_t(\fp X)$ and $M_2:=\int f \, d\pp_t^1(\fp Y)$ satisfy $\law(M_1) \not = \law(M_2)$. It follows from the causality of $\pi$  that  $(M_1, M_2)$ is a  martingale under $\pi$. Indeed, Lemma~\ref{lem:causal_equiv} implies that $\E_\pi[f(Y) | \F_t^{\fp Y} ] = \E_\pi[f(Y) | \F_{t,t}^{\fp X, \fp Y}]$ and thus we obtain
  \begin{align*}
  \E_\pi[M_2 |\F_t^{\fp X}] 
  &= \E_\pi[ \E_\pi[f(Y) | \F_t^{\fp X} ] |\F_t^{\fp X}] 
  = \E_\pi[ \E_\pi[f(Y) | \F_{t,t}^{\fp X, \fp Y} ] |\F_{t}^{\fp X}]   \\
  &=\E_\pi[f(Y) | \F_t^{\fp X} ] =\E_\pi[f(X) | \F_t^{\fp X} ]  =M_1. 
  \end{align*}
Lemma \ref{lem:martingale_strict_minimum} therefore shows that
		\begin{equation}
		    \label{eq:lem.OS.finer.Hellwig.0}
    		\mathbb E_\pi[M_1] > \mathbb E_\pi[M_1 \wedge M_2].
		\end{equation}
		In the following, we exploit \eqref{eq:lem.OS.finer.Hellwig.0} to construct a non-anticipative continuous bounded function $\varphi \colon C([0,1];\R^d)\times[0,1] \to \R$ for which ${\rm OS}(\fp X,\varphi)\neq {\rm OS}(\fp Y,\varphi)$ and thereby obtain a contradiction to the fact that $(\fp X^{n_k})_k$ converges to both $\fp X$ and $\fp Y$ in the optimal stopping topology.
		
		To that end, recall that as $t \in \cont(\fp X)$ and $\fp X$ is naturally filtered, $M_1$ is $\sigma^{\P^{\fp X}}(X_s : s \le t)$-measurable by Corollary \ref{cor:contPtPlain}.
		Hence, there is a measurable function $g \colon \mathcal{X} \to \R$ such that $\P^{\fp X}$-almost surely $g(X^t) = M_1$ (recalling the notation $\tilde f^t := ( \tilde f(s \wedge t) )_{s \in [0,1]}$ for $\tilde f \in \mathcal X$). In particular,
	\[
		\mathbb E_{\P^{\fp X}}[g(X^t)] > \mathbb E_\pi[g(X^t) \wedge M_2]
		=\mathbb E_{\P^{\fp Y}}[g(Y^t) \wedge M_2],\]
		where the equality holds because $X=Y$ almost surely under $\pi$. By a standard approximation argument, there is $h\in C_b(\mathcal{X} )$ such that
		\begin{align}
	\label{eq:opt.stop.martingle}
	 \mathbb E_{\P^{\fp X}}[h(X^t)\wedge M_1] 
		> \mathbb E_{\P^{\fp Y}}[h(Y^t) \wedge M_2].
		\end{align}

	Next, consider the non-anticipative cost function
	
        \[\varphi(x,s) :=				h(x^t) 1_{\{t\}}(s) + 				f(x) 1_{\{1\}}(s) + \infty 1_{[0,1]\setminus\{t,1\}}(s).\]
	Then, by the Snell envelope theorem,  
	\begin{align*}
	{\rm OS}(\fp X, \varphi) 
	&=  \mathbb E_{\P^{\fp X}}[h(X^t) \wedge \mathbb E_{\P^{\fp X}}[f(X) | \F_t^\fp X ]]
	=\mathbb E_{\P^{\fp X}}[h(X^t) \wedge M_1] \text{ and}\\
	{\rm OS}(\fp Y, \varphi) 
	&=\mathbb E_{\P^{\fp Y}} [h(Y^s) \wedge \mathbb E_{\P^{\fp Y}} [f(Y) | \F_t^\fp Y]]
	= \mathbb E_{\P^{\fp Y}}[h(Y^t) \wedge M_2].
	\end{align*}
	In particular, \eqref{eq:opt.stop.martingle} implies that ${\rm OS}(\fp X, \varphi) > 	{\rm OS}(\fp Y, \varphi) $.
	
	Finally, it is standard to construct  bounded continuous non-anticiaptive functions  $(\varphi_j)_j$ that approximate $\varphi$ from below.\footnote{Take e.g.\ $\varphi_n(x,s) :=h(x^s) \frac{(1 - s)}{1 - t} + f(x^s) \frac{s - t \wedge s}{1 - t} + j(1 - s)(s - t\wedge s) + j(t - t \wedge s)$.}
    Then, by Lemma \ref{lem:os.continuous}, we get
	\[{\rm OS}(\fp X,\varphi)=\sup_{j \in \N} 	{\rm OS}(\fp X, \varphi_j) 
	\quad\text{and}\quad
	{\rm OS}(\fp Y,\varphi)=\sup_{j \in \N} 	{\rm OS}(\fp Y, \varphi_j).\]
	Thus, there is $j \in \N$ such that ${\rm OS}(\fp X, \varphi_j) > 	{\rm OS}(\fp Y, \varphi_j) $.  This contradicts the fact that  $\fp X^{n_k}\to \fp X$ and $\fp X^{n_k} \to \fp Y$ in the optimal stopping topology.

\end{proof}

		\section{Convergence of random walks and Euler schemes}
  \label{sec:donsker}

    For a simpler presentation, we consider only one dimensional processes, i.e.\ $d=1$, in this section.

  \subsection{The Brownian Motion}

  Formally a filtered process $\fp X = (\Omega,\F,\P,(\F_t)_{t\in[0,1]}, X)$ is  a \emph{Brownian motion} if
    \begin{enumerate}
        \item[(a)] $X_0 = 0$,
        \item[(b)] $X_t - X_s$ is normally distributed with mean $0$ and variance $t-s$ and independent of $\F_s$, for every $0\leq s<t\leq 1$,
        \item[(c)] $t \mapsto X_t(\omega)$ is continuous for \emph{a.e.} $\omega \in \Omega$.
    \end{enumerate}
In the present setting, the following characterization of Brownian motion turns out to be technically convenient.
    To state it, denote by $\fp W$ the Wiener measure on $C([0,1];\R)$ and define the \emph{standard Brownian motion} $\mathbb{B}$ to be the standard naturally filtered process (see Definition~\ref{def:plain}) associated to the Wiener measure $\fp W$.

\begin{lemma}\label{lem:BM}
		Let $\fp X \in \FP$.
        Then the following are equivalent:
        \begin{enumerate}[label = (\roman*)]
            \item \label{it:lem.BM.1} $\fp X$ is a Brownian motion.
            \item \label{it:lem.BM.2} $\fp X$ is a martingale and $\law(X) = \mathbb W$.
            \item \label{it:lem.BM.3} $\AW_p(\fp X, \fp B) = 0$.
        \end{enumerate}
	\end{lemma}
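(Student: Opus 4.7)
The plan is to verify the equivalences via the implications \ref{it:lem.BM.1}$\Rightarrow$\ref{it:lem.BM.2}, \ref{it:lem.BM.2}$\Rightarrow$\ref{it:lem.BM.1}, \ref{it:lem.BM.1}$\Rightarrow$\ref{it:lem.BM.3}, and \ref{it:lem.BM.3}$\Rightarrow$\ref{it:lem.BM.2}. The key inputs are Lemma~\ref{lem:plaineq} (to identify naturally filtered processes via measurability of the prediction process), Theorem~\ref{thm:AWmetrizesHK} (to translate $\AW_p=0$ into Hoover--Keisler equivalence), Proposition~\ref{lem:mart_closed} (closedness of martingales under $\AW_p$), and Lévy's classical characterization of Brownian motion.

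\textbf{(i)$\Leftrightarrow$(ii).} If $\fp X$ is a Brownian motion, then $\fp X$ is a martingale since $\E[X_t - X_s \mid \F_s^{\fp X}] = \E[X_t - X_s] = 0$, and $\law(X) = \mathbb{W}$ since independent Gaussian increments together with $X_0=0$ and path continuity determine the Wiener measure. Conversely, assume (ii). Since $\law(X) = \mathbb{W}$ is supported on $C([0,1];\R)$, the process $X$ has a.s.\ continuous paths, and the pathwise quadratic variation (the in-probability limit of $\sum (X_{t_{i+1}^n}-X_{t_i^n})^2$ over refining partitions) equals $t$ almost surely, since this is a law-dependent property that holds for Wiener measure. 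Hence $\langle X,X\rangle_t = t$. Lévy's characterization then yields that $\fp X$ is a Brownian motion with respect to $(\F_t^{\fp X})_{t \in [0,1]}$.

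\textbf{(i)$\Rightarrow$(iii).} Assume $\fp X$ is a Brownian motion. The independence of the future increments $(X_u-X_t)_{u \ge t}$ from $\F_t^{\fp X}$ together with the $\F_t^{\fp X}$-measurability of $X^t$ implies that
\[
\pp^1_t(\fp X) = \law_{\P^{\fp X}}(X \mid \F_t^{\fp X})
\]
is a deterministic (Borel) function of $X^t$ alone: it is the law of the path that coincides with $X^t$ on $[0,t]$ and extends as a Brownian motion started from $X_t$ on $(t,1]$. In particular $\pp^1_t(\fp X)$ is $\sigma(X_r : r\le t)$-measurable for every $t\in[0,1]$, so the right-continuous filtration generated by $\pp^1(\fp X)$ is contained in the right-continuous natural filtration of $X$. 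By Lemma~\ref{lem:plaineq}\ref{it:lem:plaineq2}, $\fp X$ is naturally filtered, i.e.\ $\fp X \sim_{\HK} \fp S^{\mathbb{W}} = \fp B$. Theorem~\ref{thm:AWmetrizesHK}\ref{it:thm.plan.topo.same.1} then gives $\AW_p(\fp X,\fp B)=0$.

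\textbf{(iii)$\Rightarrow$(ii).} If $\AW_p(\fp X,\fp B)=0$, then by Theorem~\ref{thm:AWmetrizesHK} we have $\fp X\sim_{\HK}\fp B$. Since $\AW_p$ dominates the $p$-Wasserstein distance of the marginal laws (the product coupling is bicausal), we obtain $\law(X)=\law(B)=\mathbb{W}$. By Proposition~\ref{lem:mart_closed}, the set of martingales is a closed subset of $(\FP_p,\AW_p)$; applying closedness to the constant sequence $\fp X^n:=\fp B$ which trivially converges to $\fp X$, we conclude that $\fp X$ is a martingale as well. This establishes (ii).

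The main obstacle is the implication \ref{it:lem.BM.2}$\Rightarrow$\ref{it:lem.BM.1}, which requires invoking Lévy's characterization of Brownian motion; it is crucial here that the assumption $\law(X)=\mathbb{W}$ together with the martingale property of $\fp X$ with respect to possibly larger filtration $(\F_t^{\fp X})$ implies that the increments remain independent of the enlarged $\sigma$-algebras, which is precisely the content of Lévy's theorem. All other implications are direct consequences of the machinery developed in Sections~\ref{sec:AW}, \ref{sec:OS} and \ref{sec:plain}.
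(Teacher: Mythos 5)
Your proof is correct and follows essentially the same route as the paper. The paper proves the cycle (iii)$\Rightarrow$(ii)$\Rightarrow$(i)$\Rightarrow$(iii) using exactly the tools you invoke: closedness of martingales (Proposition~\ref{lem:mart_closed}) for (iii)$\Rightarrow$(ii), L\'evy's characterization for (ii)$\Rightarrow$(i), and $\pp^1(\fp X)$ being adapted to the natural filtration of $X$ plus Lemma~\ref{lem:plaineq} for (i)$\Rightarrow$(iii). The only differences are cosmetic: you prove the redundant implication (i)$\Rightarrow$(ii) separately and spell out in more detail the quadratic-variation step of L\'evy's theorem and the Markov-type structure of $\pp^1_t(\fp X)$, which the paper compresses to a single sentence each.
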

   \begin{proof}
        Since the set of martingales is closed in $(\FP_1,\AW_1)$ by Proposition~\ref{lem:mart_closed}, we have \ref{it:lem.BM.3}$\implies$\ref{it:lem.BM.2}.
        On the other hand, the implication \ref{it:lem.BM.2}$\implies$\ref{it:lem.BM.1} is well-known as L\'evy's characterization of the Brownian motion.
        \ref{it:lem.BM.1}$\implies$\ref{it:lem.BM.3}: Since $\fp X$ is a Brownian motion, it is straightforward to see that $\pp^1(\fp X)$ is adapted to the ($\P^\fp X$-completed) filtration generated by $X$.
        Hence, by Lemma \ref{lem:plaineq}, $\fp X$ is naturally filtered and $\AW_p(\fp X, \fp B) = 0$.
    \end{proof} 

    In particular, it follows from Lemma \ref{lem:BM} that the class of Brownian motions is an equivalence class w.r.t.\ $\AW_p$.

    A fundamental property of the Brownian motion is that it emerges as the continuous-time limit of scaled random walks, which is formalized in Donsker's theorem.
    It is worthwhile to emphasize that with all the preparatory work, it is straightforward to show that Donsker's theorem extends from the setting of weak convergence to the present setting (of weak adapted convergence).

     \begin{proposition}
     \label{prop:qualitative_donsker}
      Let $(\fp X^n)_n$ be a sequence of martingales whose laws converge to $\mathbb W$ in $\W_p$.
        Then $(\fp X^n)_n$ converges to $\fp B$ in $\AW_p$.
   \end{proposition}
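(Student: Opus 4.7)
The proof proposal rests on three ingredients already established: the Prokhorov-type compactness result of Corollary~\ref{cor:CompFPp}, the $\AW_p$-closedness of the set of martingales (Proposition~\ref{lem:mart_closed}), and the characterization of the Brownian motion as the unique $\AW_p$-equivalence class of martingales with Wiener law (Lemma~\ref{lem:BM}).

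The plan is to argue by compactness plus uniqueness of the limit point. First I would observe that since $\law(X^n)\to\mathbb W$ in $\W_p$, the set $\{\law(X^n):n\in\N\}$ is $\W_p$-relatively compact; hence by Corollary~\ref{cor:CompFPp} the sequence $(\fp X^n)_n$ is relatively compact in $(\FP_p,\HK_p)$, and by Theorem~\ref{thm:AWmetrizesHK} equivalently relatively compact with respect to $\AW_p$. To conclude that the whole sequence converges to $\fp B$ in $\AW_p$, it therefore suffices to prove that every $\AW_p$-accumulation point of $(\fp X^n)_n$ coincides with $\fp B$ in $(\FP_p,\AW_p)$.

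Let $\fp Y\in\FP_p$ be such an accumulation point, say $\AW_p(\fp X^{n_k},\fp Y)\to 0$ along a subsequence. Two facts pin down $\fp Y$: (a) since each $\fp X^{n_k}$ is a martingale and, by Proposition~\ref{lem:mart_closed}, the set of martingales is closed in $(\FP_p,\AW_p)$, the limit $\fp Y$ is again a martingale; (b) since $\AW_p$-convergence implies $\W_p$-convergence of the laws (this follows from $\W_p(\law(X^{n_k}),\law(Y))\le \AW_p(\fp X^{n_k},\fp Y)$, because the projection of any bicausal $\epsilon$-coupling is a coupling of the laws), we obtain $\law(Y)=\mathbb W$. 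Combining (a) and (b), Lemma~\ref{lem:BM}, specifically the implication \ref{it:lem.BM.2}$\implies$\ref{it:lem.BM.3}, gives $\AW_p(\fp Y,\fp B)=0$, i.e.\ $\fp Y=\fp B$ in $\FP_p$.

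I do not anticipate a serious obstacle: the argument is essentially a one-line deduction from the previously developed machinery. The only point that needs a brief justification is the elementary domination $\W_p\le\AW_p$ used in step (b); beyond that, the proof is a direct application of compactness, closedness of the martingale class, and the characterization of $\fp B$ as the unique $\AW_p$-representative of a Brownian martingale with Wiener marginal law.
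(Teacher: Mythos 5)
Your proof is correct and follows essentially the same route as the paper's: compactness from Corollary~\ref{cor:CompFPp}, closedness of martingales (Proposition~\ref{lem:mart_closed}), and identification of the limit via Lemma~\ref{lem:BM}. The only difference is that you spell out explicitly why $\law(Y)=\mathbb W$ for the accumulation point, which the paper leaves implicit.
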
 
   
    In particular, it follows that standard scaled random walks (see below for the precise definition) converge to Brownian motion w.r.t.\ $\mathcal{AW}_p$, i.e.\  $\AW_p(\fp B^n,\fp B) \to 0$ as $n\to \infty$. 
   
    \begin{proof}
        The set of laws $\{ \law(X^n) : n  \in \mathbb N \}$ is $\W_p$-relatively compact.
        Therefore, $\{ \fp X^n : n \in \mathbb N \}$ is relatively compact in the Hoover--Keisler topology due to \cite[Theorem 5.12]{BePaScZh23}.
        It follows from the characterization of $\AW_p$-convergence provided by Theorem \ref{thm:intro.all.topo.equal.bounded} that there is a subsequence $(\fp X^{n_j})_j$ that converges w.r.t.\ $\AW_p$ to $\fp X$, where the latter is a martingale by Proposition~\ref{lem:mart_closed}.
        Invoking Lemma \ref{lem:BM} yields that $\fp X$ has to be a Brownian motion. Thus, $\fp B$ is the only accumulation point of $(\fp X^n)_n$.
        We conclude that $(\fp X^n)_n$ converges to $\fp B$ w.r.t.\ $\AW_p$.
    \end{proof}

\subsection{Quantitative approximations}

In this section we give two quantitative approximation results in the realm of Donsker's theorem and for discretizations of SDEs.

First consider the random walk approximation of the standard Brownian motion $\fp B$. 
Suppose $(U_i)_{i }$ is a sequence of \emph{i.i.d.} random variables with the symmetric Bernoulli distribution on a probability space $(\Omega,\mathcal{F},\mathbb{P})$. 
Take the \emph{standard scaled random walk} with step size $1/n$  as \[\mathbb{B}^n=(\Omega,\mathcal{F},\mathbb{P}, (\mathcal{F}^{\mathbb{B}^n}_t)_{t \in [0,1]}, B^n),\] where $B^n$ is the scaled sum of $(U_i)_{i }$ and $\mathcal{F}^{\mathbb{B}^n}$ is the natural filtration generated by $B^n$, i.e. 
	\[ B^n_t:= \frac{1}{\sqrt{n}} \sum_{i=1}^{\lfloor nt \rfloor} U_i, \quad \mathcal{F}^{\mathbb{B}^n}_t:= \sigma^{\mathbb P}(B^n_s: \, s \leq t), \quad  t \in [0,1]. \]
Then the following holds.

\begin{proposition}
		\label{prop:RW}
        There exist constants $C, \tilde{C}>0$ such that for every $\varepsilon\in(0,\frac{1}{2})$ and every $n\geq 2$, there is an $\varepsilon$-bicausal coupling ${\pi}$ of $\mathbb{B}$ and $\mathbb{B}^n$ that satisfies
		$\E_{\pi^n}[\lVert B-B^n \rVert_{\infty} ] \leq \frac{C \log (n) }{ \sqrt{n \varepsilon}}$.
      In particular, 
        \[ \AW_1(\fp B, \fp B^n) 
        \leq \frac{ \tilde{C} \log (n) }{ \sqrt[3]{n}}.\]
	\end{proposition}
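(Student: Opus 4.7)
The plan is to construct the required coupling via a block-wise Skorokhod embedding. Partition $[0,1]$ into $K = \lceil 1/\varepsilon \rceil$ blocks $I_k = [(k-1)\varepsilon, k\varepsilon]$, and on an auxiliary probability space equip the Brownian motion $B$ with an independent family $(\eta_j^{(k)})_{j,k}$ of Rademacher variables. Within block $I_k$, define the Skorokhod hitting times $\sigma_0^{(k)} := (k-1)\varepsilon$ and $\sigma_j^{(k)} := \inf\{t \ge \sigma_{j-1}^{(k)} : |B_t - B_{\sigma_{j-1}^{(k)}}| = 1/\sqrt n\}$ for $j = 1, \dots, m$, where $m := \lfloor n\varepsilon \rfloor$, and let $G_k := \{\sigma_m^{(k)} \le k\varepsilon\} \in \F_{k\varepsilon}^B$. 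Define the random walk increments by $\xi_j^{(k)} := \sqrt n\, (B_{\sigma_j^{(k)}} - B_{\sigma_{j-1}^{(k)}})$ on $G_k$ and by $\xi_j^{(k)} := \eta_j^{(k)}$ on $G_k^c$, and assemble $B^n$ from these increments. Because in the Skorokhod construction each sign is a Rademacher variable independent of the corresponding duration, and distinct blocks use disjoint portions of $B$, the family $(\xi_j^{(k)})_{j,k}$ is jointly i.i.d.\ Rademacher, so $B^n$ has the correct random walk marginal.

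For $\varepsilon$-bicausality, fix $t \in I_k$. Since $G_k \in \F_{k\varepsilon}^B$ and the increments $\xi^{(k')}$ for $k' \le k$ are functions of $B|_{[0, k\varepsilon]}$ and $\eta$, one obtains $\F_t^{B^n} \subset \F_{k\varepsilon}^B \vee \sigma(\eta) \subset \F_{t+\varepsilon}^B \vee \sigma(\eta)$; since $\eta \perp B$, Lemma~\ref{lem:causal_equiv} yields $\F_t^{B^n} \indep_{\F_{t+\varepsilon}^B} \F_1^B$. The reverse direction follows because for $k' \ge k + 2$ the Skorokhod embedding in $I_{k'}$ depends only on the Brownian increments on $I_{k'}$, which are independent of $B|_{[0,t]}$. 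For the cost estimate, the residuals $R_j^{(k)} := \sigma_j^{(k)} - (k-1)\varepsilon - j/n$ form a centered random walk with step variance of order $1/n^2$; maximal inequalities give $\E[\max_{j\le m} |R_j^{(k)}|] = O(\sqrt{\varepsilon/n})$ and Gaussian-type tail bounds give $\P(G_k^c)$ of polynomial decay in $n$. On $G_k$ the grid-point error $|B_{\sigma_j^{(k)}} - B_{(k-1)\varepsilon + j/n}|$ is bounded via Lévy's modulus of continuity for $B$ at scale $\max_j |R_j^{(k)}|$, and on $G_k^c$ by the single-block oscillations of $B$ and $B^n$ weighted by $\P(G_k^c)$. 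Taking the supremum over the $K$ blocks with only polylogarithmic overhead yields $\E_\pi[\|B - B^n\|_\infty] \le C\log(n)/\sqrt{n\varepsilon}$, and the $\AW_1$ bound follows by plugging this into Definition~\ref{def:AWcont} and optimizing $\varepsilon + C\log(n)/\sqrt{n\varepsilon}$ at $\varepsilon \asymp n^{-1/3}$.

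The main obstacle will be the careful accounting at block boundaries: preserving the exact Rademacher-product law of $B^n$ when the embedding overshoots a block (handled by the sign/duration independence of the Skorokhod construction together with the auxiliary $\eta$), verifying both directions of $\varepsilon$-bicausality in the conditional-independence sense of Lemma~\ref{lem:causal_equiv} since strict inclusion of $\sigma$-algebras fails on $G_k^c$, and tuning the scale at which Lévy's modulus of continuity is invoked against the tail bound on $\P(G_k^c)$ to achieve the claimed rate.
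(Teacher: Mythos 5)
Your plan — tile $[0,1]$ into $\varepsilon$-blocks, couple the walk to the Brownian motion block by block, and paste the blocks independently — is exactly the structural idea the paper uses, and your analysis of the marginal law of $B^n$ and of $\varepsilon$-bicausality is essentially sound (the sign/duration independence of the Skorokhod construction does make $(\xi^{(k)}_j)$ genuinely i.i.d.\ Rademacher, and the block-independence argument gives both directions of $\varepsilon$-causality once one is careful that $\xi^{(k)}$ is only determined at time $k\varepsilon$, which costs at most $\varepsilon$).

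However, there is a genuine gap in the cost estimate, and it is fatal for the stated rate. Replacing the Skorokhod embedding per block by the Koml\'os--Major--Tusn\'ady coupling is not a cosmetic choice in this proof — it is what produces the exponent $1/3$. Concretely: within one block, your residual random walk $R_j^{(k)}=\sigma_j^{(k)}-(k-1)\varepsilon-j/n$ has $\mathrm{Var}(R_m^{(k)})\asymp \varepsilon/n$, so $\E\big[(B_{\sigma_m^{(k)}}-B_{k\varepsilon})^2\big]=\E|R_m^{(k)}|\asymp\sqrt{\varepsilon/n}$. These block-endpoint errors are centered, independent across $k$, and \emph{add}, they are not merely maximized over; hence
\[
\E\big[(B^n_1-B_1)^2\big]\asymp \frac{1}{\varepsilon}\cdot\sqrt{\frac{\varepsilon}{n}}=\frac{1}{\sqrt{\varepsilon n}},
\qquad
\E\Big[\max_k|B^n_{k\varepsilon}-B_{k\varepsilon}|\Big]\gtrsim (\varepsilon n)^{-1/4}.
\]
This dominates your within-block L\'evy-modulus bound $(\varepsilon/n)^{1/4}\sqrt{\log n}$ in the relevant regime and is strictly larger than the claimed $\log(n)/\sqrt{n\varepsilon}$ whenever $n\varepsilon \gg (\log n)^4$. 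Plugging the honest Skorokhod-based bound $(\varepsilon n)^{-1/4}$ into $\AW_1\le \varepsilon + (\varepsilon n)^{-1/4}$ and optimizing gives $\varepsilon\asymp n^{-1/5}$ and $\AW_1(\fp B,\fp B^n)\lesssim n^{-1/5}$, not $\log(n)\,n^{-1/3}$. The statement ``taking the supremum over the $K$ blocks with only polylogarithmic overhead'' is therefore where the argument breaks: the block errors are a martingale in $k$, and their partial sums, not their maximum, control $\|B-B^n\|_\infty$.

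The paper's proof avoids this by invoking, on each block, the KMT strong approximation (their Lemma~\ref{lem:infi}): it produces a coupling of an $(n\varepsilon)$-step walk with Brownian motion on $[0,\varepsilon]$ whose sup-error has second moment $\lesssim \varepsilon\log^2(n\varepsilon)/(n\varepsilon)$, rather than the Skorokhod rate $\sqrt{\varepsilon/n}$. Summing those second moments over $1/\varepsilon$ independent blocks and applying BDG then yields the endpoint error $\log(n\varepsilon)/\sqrt{n\varepsilon}$, and the maximum of the within-block sup-errors is handled by the exponential tail in Lemma~\ref{lem:infi} together with a union bound. This is where the $\log(n)/\sqrt{n\varepsilon}$ in the proposition comes from; no embedding-based argument can reach it, since the $n^{-1/4}$ rate of the Skorokhod embedding is known to be optimal for that construction.
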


	Our next result is concerned with discrete approximation of stochastic differential equations, i.e.\ convergence of Euler schemes. 
	Suppose $\mu,\sigma\colon[0,1]\times \R \to \R$ are bounded, uniformly Lipschitz in the state variable, uniformly $\frac{1}{2}$-H\"{o}lder in the time variable and that $\inf_{t,x}\sigma(t,x)>0$.
	Further, let $\mathbb{B} \in \mathcal{FP} $ be a Brownian motion defined on $(\Omega, \mathcal{F},\mathbb{P}, (\mathcal{F}_t)_{t \in [0,1]})$, and consider the  stochastic differential equation 
	\begin{align}
		\label{eq:SDE}
		X_t=x_0 +\int_0^t \mu_u(X_u) \,du+ \int_0^t \sigma_u(X_u) \, dB_u
	\end{align}
	for $t\in[0,1]$ with initial value  $x_0\in\mathbb{R}$. 
	Under the assumptions on $\mu$ and $\sigma$, \eqref{eq:SDE} has a unique strong solution denoted by $\fp X=(\Omega,\mathcal{F},\mathbb{P}, (\mathcal{F}^{\fp X}_t)_{t \in [0,1]}, X)$ where $\mathcal{F}^{\fp X}_t:=\bigcap_{\epsilon>0} \sigma^{\mathbb{P}}(X_s:s \leq t+\epsilon)$, $t \in [0,1]$, is the natural filtration. Moreover, consider the Euler scheme  with step size $1/n$: Set $X^n_0=x_0$, and  recursively for $k=0, \dotso n-1,$
	\begin{align}
		\label{eq:SDE.Euler}
		X^n_{t_{k+1}}:= X^n_{t_k} + \mu_{t_k}(X^n_{t_k}) \frac{1}{n}+ \sigma_{t_k}(X^n_{t_k}) ( B_{t_{k+1}} -  B_{t_{k}}), 
	\end{align}
	where $t_k:=\frac{k}{n}$.
	Finally let $X^n$ be the piecewise constant interpolation of $(X_{t_k}^n)_{k=0}^n$, i.e. $X^n_t=X^n_{t_k}$ for $t \in [t_k,t_{k+1})$, and denote by $\mathbb{X}^n$ the filtered process $(\Omega,\mathcal{F},\mathbb{P}, (\mathcal{F}^{\fp X^n}_t)_{t \in [0,1]},X^n )$  where $ (\mathcal{F}^{\fp X^n}_t)_{t \in [0,1]}$ is the natural filtration generated by $X^n$. 
	
	\begin{proposition}
		\label{prop:Euler}
		For every $n\geq 2$, there is a $\frac{1}{n}$-bicausal coupling ${\pi^n}$ between $\mathbb{X}$ and $\mathbb{X}^n$  such that
		$
		\E_{\pi^n} \left[ \lVert X-X^n \rVert_{\infty} \right] 
		\leq (C-1)\sqrt{\frac{ \log (n) }{ n }}$,
		where $C>1$ is a constant that only depends on the Lipschitz constants, H\"{o}lder constants and supremum norms of $\mu$ and $\sigma$.
        In particular, 
        \[\AW_{1}(\fp X, \fp X^n) 
        \leq C\sqrt{\frac{ \log (n)  }{ n }}.\]
	\end{proposition}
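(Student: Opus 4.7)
The plan is to build a \emph{synchronous coupling} by driving both the SDE~\eqref{eq:SDE} and its Euler scheme~\eqref{eq:SDE.Euler} with the same Brownian motion $\fp B$, and then combine a standard strong-error estimate with a maximal inequality for the Brownian oscillations inside each sub-interval $[t_k,t_{k+1}]$.

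\textbf{Construction and bicausality.} Both $\fp X$ and $\fp X^n$ live on the base space $(\Omega,\F,\P)$ of $\fp B$, so I take $\pi^n:=(\id,\id)_\#\P$. The assumption $\inf\sigma>0$ together with strong solvability yields $\F^{\fp X}_t=\F^B_t$, while the piecewise-constant interpolation gives $\F^{\fp X^n}_s=\sigma(X^n_{t_0},\ldots,X^n_{t_k})$ for $s\in[t_k,t_{k+1})$. Since each $X^n_{t_j}$ with $j\le k$ is $\F^B_{t_j}$-measurable, one has $\F^{\fp X^n}_s\subset \F^B_{t_{k+1}}=\F^{\fp X}_{s+1/n}$, which immediately gives $1/n$-causality from $\fp X$ to $\fp X^n$. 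Conversely, $\F^{\fp X^n}_1$ is the join of $\F^{\fp X^n}_{s+1/n}\subset\F^B_{t_{k+1}}$ with the increments $B_{t_j}-B_{t_{j-1}}$ for $j\ge k+2$, which are independent of $\F^B_{t_{k+1}}$; a brief verification then shows that $\F^{\fp X}_s=\F^B_s$ and $\F^{\fp X^n}_1$ are conditionally independent given $\F^{\fp X^n}_{s+1/n}$, so Lemma~\ref{lem:causal_equiv} yields $1/n$-causality from $\fp X^n$ to $\fp X$ and hence $\pi^n\in\cplbc^{1/n}(\fp X,\fp X^n)$.

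\textbf{Strong-error estimate.} Introduce the continuous-time Euler interpolation $\tilde X^n$ with $\tilde X^n_{t_k}=X^n_{t_k}$ and $d\tilde X^n_t=\mu_{t_k}(X^n_{t_k})\,dt+\sigma_{t_k}(X^n_{t_k})\,dB_t$ on $[t_k,t_{k+1})$. Standard Burkholder--Davis--Gundy and Gronwall arguments, using Lipschitz-in-space and $\tfrac12$-H\"older-in-time regularity of $\mu,\sigma$, give $\E[\sup_{t}|X_t-\tilde X^n_t|]\le C_1/\sqrt{n}$. On $[t_k,t_{k+1})$ we have $X^n_t-\tilde X^n_t=-\mu_{t_k}(X^n_{t_k})(t-t_k)-\sigma_{t_k}(X^n_{t_k})(B_t-B_{t_k})$, hence
\[
\|X^n-\tilde X^n\|_\infty\le \tfrac{\|\mu\|_\infty}{n}+\|\sigma\|_\infty\max_{0\le k<n}\sup_{t\in[t_k,t_{k+1}]}|B_t-B_{t_k}|.
\]
By the Gaussian maximal inequality, the expected maximum of the $n$ independent Brownian excursions on intervals of length $1/n$ is bounded by $C_2\sqrt{\log n/n}$. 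Combining the two bounds gives $\E_{\pi^n}[\|X-X^n\|_\infty]\le (C-1)\sqrt{\log n/n}$, and adding the bicausality penalty $1/n\le\sqrt{\log n/n}$ (valid for $n\ge 2$) yields the announced $\AW_1$ bound.

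\textbf{Main obstacle.} The main technical point is the verification of bicausality from $\fp X^n$ to $\fp X$: the natural filtration of the piecewise-constant interpolation $\fp X^n$ is strictly coarser than $\F^B$, and the shift $1/n$ is precisely the slack needed so that $\F^{\fp X^n}_{s+1/n}$ carries enough information (namely $X^n_{t_{k+1}}$) to separate the past of $B$ from the future of $X^n$, whose subsequent values depend only on Brownian increments after $t_{k+1}$. The resulting $\sqrt{\log n/n}$ rate is sharp for this approach, the $\sqrt{\log n}$ factor stemming exclusively from the supremum of $n$ intra-step Brownian oscillations and not from the regularity of $\mu,\sigma$.
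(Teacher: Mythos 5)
Your proof is correct and follows the same overall template as the paper's: the identity coupling on the common Brownian base space, a $1/n$-shift justified by $\F^{\fp X}_t=\F^B_t$ (via $\inf\sigma>0$) versus the piecewise-constant filtration of $\fp X^n$, and then a split of $\|X-X^n\|_\infty$ into a grid-level strong error of order $n^{-1/2}$ plus an intra-step oscillation of order $\sqrt{\log n/n}$. The place where you genuinely deviate is the second term. The paper estimates the oscillation of the \emph{true solution} inside each mesh cell, $\max_k\sup_{t\in[t_k,t_{k+1})}|X_t-X_{t_k}|$, and therefore has to control $\max_k\sup_t|\int_{t_k}^t\sigma_u(X_u)\,dB_u|$, a stochastic integral with a random integrand, which it handles by Dubins--Schwarz plus a union bound. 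You instead route the comparison through the \emph{continuous-time Euler interpolation} $\tilde X^n$, so the residual $\tilde X^n-X^n$ on $[t_k,t_{k+1})$ has a frozen coefficient $\sigma_{t_k}(X^n_{t_k})$ and reduces directly to $\|\sigma\|_\infty\max_k\sup_t|B_t-B_{t_k}|$, for which the reflection principle and a Gaussian union bound suffice. Your variant is somewhat more elementary for that step (it sidesteps time-change arguments entirely), at the mild cost of needing the strong Euler error $\E[\|X-\tilde X^n\|_\infty]\lesssim n^{-1/2}$ in sup-norm rather than only at the grid points; both are standard. The bicausality verification and the final bookkeeping with $1/n\le\sqrt{\log n/n}$ for $n\ge 2$ match the paper.
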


	We proceed with the proof of Proposition \ref{prop:RW}.
	In a first step, we use a coupling between standard random walks and Brownian motion from \cite{KoMaTu76}:
	
	\begin{lemma}
		\label{lem:infi}

    There are constants $C_1,C_2,C_3>0$ such that the following holds.
    For every $\varepsilon \in(0,1/2)$ satisfying $1/\varepsilon \in\mathbb{N}$ and $K\in\mathbb{N}$, there exists a probability space $(\Omega, \mathcal{F}, \P)$ that supports a Brownian motion $(Y_t)_{t\in[0,\varepsilon]}$ and a scaled random walk $(X_t)_{t\in[0,\varepsilon]}$ with step size $\varepsilon/K$ such that 
		\begin{align*}
			\P\left( \max_{t \in [0,\varepsilon]} |Y_{t}-X_{t}| \geq  \sqrt{\frac{\varepsilon}{K}}\left(  C_1 \log(K) + x \right) \right) < C_2\exp(-C_3 x)
		\end{align*}
		for all $x \geq 0$.
  In particular, there exists $C_4>0$ such that
$ \E[ \sup_{t\in [0,\varepsilon]} |Y_t-X_t|^2 ]
  \leq \frac{C_4 \varepsilon \log (K)^2}{K}$.  
	\end{lemma}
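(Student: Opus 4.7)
My strategy is to reduce the lemma to the Komlós–Major–Tusnády (KMT) strong embedding theorem via Brownian scaling. Specifically, I would first invoke the KMT theorem (which is exactly the result of \cite{KoMaTu76} being cited): on a suitable probability space there exist a standard Brownian motion $(\tilde B_s)_{s\ge 0}$ and a standard (unscaled) symmetric random walk $(\tilde S_k)_{k\in\N}$ such that, for absolute constants $c_1,c_2,c_3>0$,
\[
    \P\Bigl(\max_{1\le k\le K}|\tilde B_k-\tilde S_k|\ge c_1\log K+x\Bigr)\le c_2 e^{-c_3 x}\qquad\text{for all }x\ge 0.
\]
I would then rescale by setting $Y_t:=\sqrt{\varepsilon/K}\,\tilde B_{Kt/\varepsilon}$ and $X_t:=\sqrt{\varepsilon/K}\,\tilde S_{\lfloor Kt/\varepsilon\rfloor}$ for $t\in[0,\varepsilon]$. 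By Brownian scaling, $Y$ is a Brownian motion on $[0,\varepsilon]$ and $X$ is a scaled random walk with step size $\varepsilon/K$. At the grid points $t_k:=k\varepsilon/K$ we have $|Y_{t_k}-X_{t_k}|=\sqrt{\varepsilon/K}\,|\tilde B_k-\tilde S_k|$, so the KMT bound transfers verbatim after multiplication by $\sqrt{\varepsilon/K}$.

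The only missing piece is to pass from the grid $\{t_k\}$ to the full interval $[0,\varepsilon]$. Since $X$ is constant on each $[t_k,t_{k+1})$, I would write
\[
    \sup_{t\in[0,\varepsilon]}|Y_t-X_t|\ \le\ \max_{0\le k\le K}|Y_{t_k}-X_{t_k}|\ +\ \max_{0\le k<K}\sup_{s\in[t_k,t_{k+1}]}|Y_s-Y_{t_k}|.
\]
For the second term I would apply the reflection-principle Gaussian tail bound on each sub-interval of length $\varepsilon/K$ and union bound over the $K$ intervals:
\[
    \P\Bigl(\max_k\sup_{s\in[t_k,t_{k+1}]}|Y_s-Y_{t_k}|\ge\sqrt{\varepsilon/K}\,y\Bigr)\ \le\ 4K\,e^{-y^2/2}.
\]
Setting $y=\alpha\log K+x$ with $\alpha$ chosen large enough so that $4K\,e^{-(\alpha\log K)^2/2}\le 1$, and using $e^{-x^2/2}\le e^{1/2}\,e^{-x}$, the modulus-of-continuity tail has the same exponential form as KMT's. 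Combining, I obtain the inequality in the statement with $C_1:=c_1+\alpha$ and suitably adjusted $C_2,C_3$.

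For the $L^2$ bound I would just integrate the tail. With $Z:=\sup_{t\in[0,\varepsilon]}|Y_t-X_t|$ and $u_0:=\sqrt{\varepsilon/K}\,C_1\log K$,
\[
    \E[Z^2]=\int_0^\infty 2u\,\P(Z\ge u)\,du\ \le\ u_0^2+\int_0^\infty 2(u_0+\sqrt{\varepsilon/K}\,x)\,C_2 e^{-C_3 x}\sqrt{\varepsilon/K}\,dx,
\]
and the right-hand side is of order $\varepsilon\log(K)^2/K$ since $u_0^2$ already dominates the lower-order contributions $\sqrt{\varepsilon/K}\cdot u_0$ and $\varepsilon/K$ (provided $K\ge 3$ so that $\log K\ge 1$; the case $K\in\{1,2\}$ is trivial and absorbed into $C_4$).

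\textbf{Main obstacle.} The deep input is the KMT embedding itself, which I would simply cite. The only genuine work is matching the two error sources: the KMT discrepancy at the grid points and the Brownian oscillation between them. These must be tuned so that both yield the \emph{same} $\sqrt{\varepsilon/K}(\text{const}\cdot\log K+x)$ scale and both decay exponentially in $x$. Getting the logarithmic prefactor and the exponent right—in particular choosing $\alpha$ large enough in the union bound to beat the factor $K$—is the only subtle step.
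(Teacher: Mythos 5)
Your proof is correct and takes essentially the same route as the paper: invoke KMT at the grid points, control the inter-grid gap by the reflection principle plus a union bound, absorb the extra factor of $K$ into the $\log K$ prefactor, and obtain the $L^2$ bound by integrating the tail. The only cosmetic difference is that you rescale first and then split, whereas the paper splits in the unscaled picture (comparing $W$ and the piecewise-constant $L$ on $[0,K]$) and rescales at the end; by Brownian scaling these are the same computation.
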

	\begin{proof}
		According to \cite[Theorem 1]{KoMaTu76}, there exists a probability space $(\Omega, \mathcal{F}, \P)$ supporting a Brownian motion $W$ and a standard random walk $L$ with piecewise constant paths such that 
		\begin{align}\label{eq:komlos}
			\P\left( \max_{i=1,\dotso, K} |W_{i}-L_{i}| \geq C'_1 \log(K)+x \right) \leq C'_2 \exp(-C'_3 x)
		\end{align}
		for all $x\geq 0$, where $C'_1,C'_2,C'_3$ are some positive  constants. 
  
  Let us start by estimating the term $\P\left(\max_{t \in [0,K]} |W_t-W_{\lfloor t \rfloor}| \geq y \right)$. 
  To that end, define $Z^K_i:= \sup_{t \in [i-1,i]}|W_t - W_{i-1}|$ for $i=1,\dotso, K$.
		By the reflection principle of Brownian motion, we have that
		\begin{align}
		    \label{eq:bm.reflection}
		\P( Z^K_i \geq y )
		\leq 2 \P( |W_{1}| \geq y)
		\leq 4 \exp\left(\frac{-y^2}{2} \right)
  \end{align} 
		for every $y\geq 0$.
		In particular, by the union bound
		\begin{align}\label{eq:tail}
			\P\left(\max_{t \in [0,K]} |W_t-W_{\lfloor t \rfloor}| \geq y \right)=\P\left( \max_{i=1,\dotso,K }Z^K_i \geq y \right) 
			\leq 4K \exp\left(\frac{-y^2}{2} \right).
		\end{align}
		 For any  $y\geq \sqrt{4 \log(4K)}$, it is straightforward that  $4K \exp(\frac{-y^2}{2})\leq \exp(\frac{-y^2}{4})$.
		
		Note that since $L$ is piecewise constant,  
		\begin{align*}
			\P \left(\max_{t \in [0,K]} |W_t - L_t| \geq 2y  \right) \leq \P\left( \max_{i=1,\dotso,K} |W_{i}-L_{i}| \geq y \right)+ \P\left(\max_{t \in [0,K]} |W_t-W_{\lfloor t \rfloor}| \geq y \right). 
		\end{align*}
		Then one can deduce from \eqref{eq:komlos} and \eqref{eq:tail} that 
		\begin{align*}
			\P\left( \max_{t \in [0,K]} |W_{t}-L_{t}| \geq C_1 \log(K)+x \right)\leq C_2\exp(-C_3 x),
		\end{align*}
		for all $x\geq 0$, where $C_1,C_2,C_3$ are  positive constants. Let us take 
  \[ Y_t=\sqrt{\frac{\varepsilon}{K}}W_{tK/\varepsilon}, \quad X_t=\sqrt{\frac{\varepsilon}{K}}L_{tK/\varepsilon}, \quad t \in [0,\varepsilon], \]
  which are a  Brownian motion and a scaled random walk, respectievly, with the desired property.
  
    The `in particular' claim follows from tail integration (i.e.\ for every random variable $U$, $\E[U^2]=\int_0^\infty \P(|U|\geq \sqrt{u }) \, du$) and the previous estimate.
	\end{proof}

	Now we are ready to prove Proposition~\ref{prop:RW}. 
	The idea is to use Lemma~\ref{lem:infi} to couple $B$ and $B^n$ over small time-intervals, and paste them together in a bicausal way. 
	
	\begin{proof}[Proof of Proposition~\ref{prop:RW}]
		Let us assume without loss of generality that $1/\varepsilon$ and $K:=\varepsilon n$ are integers.
        We start by proving the first claim in the proposition.
		For each $i=1,\dotso, 1/\varepsilon$,  take an independent copy of the probability space in Lemma~\ref{lem:infi} with the given $\varepsilon$ and $K$. We denote it by $(\Omega^i, \mathcal{F}^i, \P^i)$, on
		which there is a scaled random walk $(X_t^{i})_{ t\in[0,\varepsilon]}$ with step size $1/n$ and a Brownian motion $(Y^{i}_t)_{t\in[0,\varepsilon]}$  as in Lemma~\ref{lem:infi}. 
		Consider the product space $(\Omega, \mathcal{F}, \P)$ of $(\Omega^i, \mathcal{F}^i, \P^i)_{i=1,\dotso,1/\varepsilon}$, and define the processes $(B^n_t)_{ t\in[0,1]}$ and $(B_t)_{t\in[0,1]}$ on $(\Omega, \mathcal{F},\P)$ by
  \begin{align*}
			B^n_t:= \sum_{i=1}^\ell X^{i}_{\varepsilon} + X^{\ell+1}_{t-\ell \varepsilon}
			\quad \text{and}\quad B_t:= \sum_{i=1}^\ell Y^{i}_{\varepsilon} + Y^{\ell+1}_{t-\ell \varepsilon}
		\end{align*}
		for all $t \in \left[\ell \varepsilon, (\ell+ 1)\varepsilon \right) $ where $\ell=0,\dotso, \frac{1}{\varepsilon}-1$. 
		
		According to the construction, it is clear that $(B^n_t)_{t\in[0,1]}$ is a standard scaled random walk with step size $1/n$ and $(B_t)_{t\in[0,1]}$ is a Brownian motion in their respective natural  filtrations. Then since $B^n$ is piecewise constant, 
\begin{equation}\label{eq:boundmaximum}
\E \left[\sup_{t \in [0,1]}|B^n_t-B_t| \right] \leq \E \left[\max_{1 \leq i \leq 1/\varepsilon}|B^n_{i\varepsilon}-B_{i\varepsilon} | \right]+\E\left[\max_{1 \leq i \leq 1/\varepsilon}\sup_{t \in [0,\epsilon]}|X^i_t-Y^i_t| \right]
=: \E[I] + \E[II].
\end{equation}
In the remainder of the argument, let $C_1,\dotso,C_{6}$ be suitable positive constants. 
In order to estimate the first term appearing on the right-hand side in \eqref{eq:boundmaximum}, note that the process $(B^n_{i\varepsilon}-B_{i\varepsilon})_{i=1,\dotso,1/\varepsilon}$ is a discrete-time martingale since its increments are independent with mean $0$. Hence an application of BDG inequality \cite[Chapter 3, Theorem 3.28]{KaSh91} shows that 
\[ 
    \E[I] \leq C_1 \sqrt{\E\left[|B^n_1-B_1 |^2 \right]} =C_1 \sqrt{\frac{1}{\varepsilon}\E\left[|X^1_{\varepsilon}-Y^1_{\varepsilon}|^2\right] } \leq \frac{C_2\log (K)}{\sqrt{\varepsilon n}}, 
\]
where we use Lemma~\ref{lem:infi} in the last inequality. 
		
		Let us then estimate the second term on right hand side in \eqref{eq:boundmaximum}. For every $x\geq 0$, using Lemma~\ref{lem:infi}, we get that
		\begin{align*}
            \P \left( II \geq \frac{1}{\sqrt{n}}\left(C_3 \log(K) +x\right) \right) 
			\leq  \frac{C_4 \exp(-C_5 x)}{\varepsilon}.
		\end{align*}
For $x \geq \frac{2}{C_5} \log(1/\varepsilon)$, the  term on the right-hand side is bounded from above by $ C_4\exp(-\frac{C_5x}{2}  )$. 
		Therefore, by tail integration, $	\mathbb{E} [II] \leq C_6\frac{\log(1/\varepsilon)+\log(K)}{\sqrt{n}}$.
		
In conjunction with \eqref{eq:boundmaximum}, we thus obtain
\begin{align}
\label{eq:BM.RW.approx.in.proof}\E\left[\sup_{t \in [0,1]} |B^n_t-B_t| \right] \leq C_2 \frac{\log (\varepsilon n)}{\sqrt n}+C_6\frac{\log(1/\varepsilon)+\log(K)}{\sqrt{n}} \leq (C_2+2C_6)\frac{\log(n) }{\sqrt{\varepsilon n}}. 
  \end{align}

    As for the second claim in the proposition, note that by the definition of $\AW_1$ and by \eqref{eq:BM.RW.approx.in.proof}, 
    \[\AW_1( \fp B, \fp B^n) 
    \leq \inf_{\varepsilon\in[0,1]}\left(  (C_2+2C_6)\frac{\log (n) }{\sqrt{\varepsilon n}}  +\varepsilon \right)
    \leq  2(C_2+2C_6) \frac{\log(n)}{ \sqrt[3]{n}}. \qedhere \]
	\end{proof}

	We finish this section with the proof of Proposition \ref{prop:Euler}:
	
	\begin{proof}[Proof of Proposition \ref{prop:Euler}]
    Let us start by observing that  $(X, X^n)_*\mathbb{P}$ is a $\frac{1}{n}$-bicausal coupling between $\mathbb{X}$ and $\mathbb{X}^n$.
    Indeed, since the volatility $\sigma$ of SDE is strictly positive,  we have that for every $t \in [0,1]$,
  \[\mathcal{F}^{\mathbb{X}}_t=\bigcap_{\epsilon>0} \sigma^{\mathbb{P}}(B_s :s\leq t+\epsilon) \quad \text{and} \quad \mathcal{F}^{\mathbb{X}^n}_t= \sigma^{\mathbb P}(B_{k/n} : k/n\leq t),\]
  from which it clearly follows that  $(X, X^n)_*\mathbb{P}$ is $\frac{1}{n}$-bicausal.
  
  It remains to estimate $\E[\sup_{t\in[0,1]} |X_t-X_t^n|]$.
  To that end, first note that since $X^n$ is piecewise constant,
       \[ \sup_{t\in[0,1]} |X_t-X_t^n|
       \leq \max_{0\leq k\leq n }  | X_{k/n}-X^n_{k/n} | + \max_{0\leq k< n } \sup_{t\in[k/n, (k+1)/n)}  | X_t-X_{k/n} | 
       =: I + II.\]
      The term $\E[I]$ can estimated by \cite{KlPl92}: By Theorem 10.2.2 therein, 

		 $\E[I] \leq C_1/\sqrt{n}$ where  $C_1$ is a constant depending only on  the Lipschitz constants, H\"{o}lder constants and supremum norms of $\mu$ and $\sigma$.
        Thus, it remains to estimate $\E[II]$.
        By the definition of $X$, 
        \begin{align*} 
        II  
        &\leq \max_{0\leq k< n}\sup_{t\in[k/n, (k+1)/n)} \left| \int_{k/n}^t \mu_u(X_u) \, du  \right| + \max_{1\leq k <n}\sup_{t\in[k/n, (k+1)/n)} \left| \int_{k/n}^t \sigma_u(X_u) \, dB_u \right| \\
       & =:II^\mu + II^\sigma. 
       \end{align*}
        Clearly $|\int_{k/n}^t \mu_u(X_u) \, du|\leq \|\mu\|_\infty/n$ almost surely for every $k$ and $t\in[k/n,(k+1)/n)$ and hence $\E[II^\mu]\leq \|\mu\|_\infty/n$.
        The estimate on $\E[II^\sigma]$ follows from  similar arguments as presented for the proof of Lemma \ref{lem:infi}: first, in analogy to \eqref{eq:bm.reflection}, we have that for every $k$ and every $y\geq 0$,
        \begin{align}
            \label{eq:dubins-schwarz}
        \mathbb{P}\left(\sup_{t\in[k/n,(k+1)/n)}  \left|  \int_{k/n}^t \sigma_u(X_u) \, dB_u \right| \geq  y \frac{\|\sigma\|_\infty}{\sqrt n} \right) \leq  4 \exp\left(\frac{- y^2}{2 } \right).
        \end{align}
        Indeed, if  $\sigma$ were constant, this would  immediately follow from \eqref{eq:bm.reflection}.
        For non-constant $\sigma$, it can be deduced e.g.\ from the  Dubins-Schwarz theorem \cite[Chapter 3, Theorem 4.6]{KaSh91}.
        Therefore, it follows from \eqref{eq:dubins-schwarz}, the union bound over $0\leq k < n$, and  tail-integration that  $\E[II^\sigma]\leq C_2\frac{\lVert \sigma \rVert_{\infty} \sqrt{\log(n)}}{\sqrt{n}}$ for an absolute constant $C_2$.

        The combination of all the established estimates on $I$ and $II$ completes the proof.
	\end{proof}

	\appendix

    \section{Beyond standard Borel base spaces}\label{sec:NonPolish} 
    
Throughout this section, we will denote by $\overline{\fp X}$ the canonical filtered process associated with $\fp X$, cf.\ Definition \ref{def:asociatedCFP}. The aim of this section is to prove the following proposition:

    \begin{proposition} \label{prop:nonPolish}
        For every $\fp X,\fp Y \in \mathcal{FP}_p$, the following hold.
        \begin{enumerate}[label = (\roman*)]
            \item $ \AW_p(\fp X,\fp Y)
        =\AW_p\left(\overline{\fp X},\overline{\fp Y} \right)$.
        \item $\CW_p(\fp X,\fp Y) = \CW_p(\overline{\fp X},\overline{\fp Y})$.
        \item $\SCW_p(\fp X,\fp Y) = \SCW_p(\overline{\fp X},\overline{\fp Y})$.
        \end{enumerate}
    \end{proposition}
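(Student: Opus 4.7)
My plan is to prove all three equalities in parallel by establishing two-sided inequalities for $\AW_p$ and matching one-sided inequalities for $\CW_p$, with part (iii) following from (ii) by maximization. The common engine is the canonical coupling $\pi_{\fp X}^{\mathrm{can}}:=(\id,\iota_{\fp X})_{\#}\P^{\fp X}$ between $\fp X$ and $\overline{\fp X}$, where $\iota_{\fp X}:=\pp^\infty(\fp X)\colon\Omega^{\fp X}\to\mathsf{M}_\infty$. This coupling is concentrated on a graph, identifies $X$ with $\overline X$ almost surely, and is bicausal: causality from $\fp X$ to $\overline{\fp X}$ follows because $\pp^\infty_s(\fp X)$ is $\F^{\fp X}_s$-measurable by construction, so $\iota_{\fp X}^{-1}(\F^{\overline{\fp X}}_t)\subset\F^{\fp X}_t$; the reverse causality amounts to the Hoover--Keisler conditional independence $\iota_{\fp X}^{-1}(\F^{\overline{\fp X}}_1)\indep_{\iota_{\fp X}^{-1}(\F^{\overline{\fp X}}_s)}\F^{\fp X}_s$ of \cite[Corollary~4.10]{BePaScZh23} invoked in the proof of Lemma~\ref{lem:plaineq} (this is the content of the previously established Lemma~\ref{lem:canonical.couplings}).

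For the inequality $\AW_p(\overline{\fp X},\overline{\fp Y})\le\AW_p(\fp X,\fp Y)$ I will use a direct push-forward that completely avoids disintegrations. Given $\pi\in\cplbc^{\varepsilon}(\fp X,\fp Y)$, the candidate coupling is $\tilde\pi:=(\iota_{\fp X},\iota_{\fp Y})_{\#}\pi$; cost preservation is automatic since $X=\overline X$ and $Y=\overline Y$ along the Monge factors. To verify $\varepsilon$-bicausality, set $\mathcal H_s:=\iota_{\fp X}^{-1}(\F^{\overline{\fp X}}_s)\subset\F^{\fp X}_s$; for bounded $W\in L^\infty(\F^{\overline{\fp Y}}_t)$, the random variable $W\circ\iota_{\fp Y}$ is $\F^{\fp Y}_t$-measurable, so a tower–causality chain yields
\[
\E_\pi[W\circ\iota_{\fp Y}\mid\mathcal H_1]
=\E_\pi\!\big[\E_\pi[W\circ\iota_{\fp Y}\mid\F^{\fp X}_{t+\varepsilon}]\,\big|\,\mathcal H_1\big]
=\E_\pi\!\big[\E_\pi[W\circ\iota_{\fp Y}\mid\F^{\fp X}_{t+\varepsilon}]\,\big|\,\mathcal H_{t+\varepsilon}\big]
=\E_\pi[W\circ\iota_{\fp Y}\mid\mathcal H_{t+\varepsilon}],
\]
where the first equality uses the $\varepsilon$-causality of $\pi$ (Lemma~\ref{lem:causal_equiv}), the second applies Hoover--Keisler to the $\F^{\fp X}_{t+\varepsilon}$-measurable inner conditional expectation, and the last is tower from $\mathcal H_{t+\varepsilon}\subset\F^{\fp X}_{t+\varepsilon}$. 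Unpacking the push-forward, this is precisely $\varepsilon$-causality of $\tilde\pi$ from $\overline{\fp X}$ to $\overline{\fp Y}$; the reverse direction follows by symmetry. Crucially, the chain only invokes one-sided causality of $\pi$, so the same argument simultaneously establishes $\CW_p(\overline{\fp X},\overline{\fp Y})\le\CW_p(\fp X,\fp Y)$.

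The reverse inequality $\AW_p(\fp X,\fp Y)\le\AW_p(\overline{\fp X},\overline{\fp Y})$ is the main technical obstacle, because there is no way to invert $\iota_{\fp X}$ to pull a coupling of canonicals back to the originals; the natural chain construction—gluing $\pi_{\fp X}^{\mathrm{can},-1}$, a near-optimal $\pi_0\in\cplbc^{\varepsilon}(\overline{\fp X},\overline{\fp Y})$, and $\pi_{\fp Y}^{\mathrm{can}}$ at the canonical marginals—would require disintegrations with target $\Omega^{\fp X}$ or $\Omega^{\fp Y}$, which need not exist in the absence of any Radon/Lusin hypothesis on these base spaces. I will bypass this via Proposition~\ref{prop:FiniteOmegaDense}, approximating $\fp X,\fp Y$ by filtered processes $\fp X^n,\fp Y^n$ on finite probability spaces with $\AW_p(\fp X,\fp X^n),\AW_p(\fp Y,\fp Y^n)\to 0$. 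In the finite setting every relevant disintegration trivializes, so the chain-gluing argument produces a witness for $\AW_p(\fp X^n,\fp Y^n)\le\AW_p(\overline{\fp X^n},\overline{\fp Y^n})$. Combining this with the triangle inequality on $(\mathcal{CFP}_p,\AW_p)$ from Lemma~\ref{lem:AW_triangluar_ineq} and the already-established direction applied to the pairs $(\fp X^n,\fp X)$ and $(\fp Y^n,\fp Y)$—which gives $\AW_p(\overline{\fp X^n},\overline{\fp X})\le\AW_p(\fp X^n,\fp X)\to 0$ and the analog for $\fp Y$—allows passage to the limit, completing (i).

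Part (ii) is handled by exactly the same two-step strategy: the Monge push-forward of paragraph two preserves one-sided $\varepsilon$-causality without modification, and the finite-space chain gluing preserves it as well since each link is causal in the required direction. Part (iii) follows from (ii) by taking the maximum of $\CW_p(\fp X,\fp Y)$ and $\CW_p(\fp Y,\fp X)$ and noting that the pairs of canonical representatives are the same on either side.
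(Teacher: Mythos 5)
Your proposal follows essentially the same two-sided strategy as the paper: the forward inequality $\AW_p(\overline{\fp X},\overline{\fp Y})\le\AW_p(\fp X,\fp Y)$ via the push-forward $(\pp^\infty(\fp X),\pp^\infty(\fp Y))_\#\pi$ (the paper packages your tower chain as Lemma~\ref{lem:canonical.couplings}, using the chain rule for conditional independence rather than the explicit three-step tower, but the arguments are equivalent), and the reverse inequality by routing through Proposition~\ref{prop:FiniteOmegaDense} so that all gluings happen with finite middle factors (the paper's Proposition~\ref{prop:bicausal_cplapprox} plus Lemma~\ref{lem:glueing.appendix}). You also correctly identify the genuine obstruction---disintegrations with target $\Omega^{\fp X}$ need not exist---and the correct workaround.

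There is, however, a gap in your final ``passage to the limit.'' You derive $\AW_p(\fp X^n,\fp Y^n)\le\AW_p(\overline{\fp X^n},\overline{\fp Y^n})\le\AW_p(\overline{\fp X},\overline{\fp Y})+o(1)$ using the triangle inequality on $\mathcal{CFP}_p$ and the forward direction for the pairs $(\fp X^n,\fp X)$, $(\fp Y^n,\fp Y)$. But this does not by itself control $\AW_p(\fp X,\fp Y)$: you still need $\AW_p(\fp X,\fp Y)\le\AW_p(\fp X,\fp X^n)+\AW_p(\fp X^n,\fp Y^n)+\AW_p(\fp Y^n,\fp Y)$, and this is \emph{not} Lemma~\ref{lem:AW_triangluar_ineq}, whose scope is restricted to $\mathcal{CFP}_p$; here the outer endpoints $\fp X,\fp Y$ are arbitrary. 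The correct justification is a four-way gluing of the chain $\fp X\to\fp X^n\to\fp Y^n\to\fp Y$, which is admissible precisely because the middle factors are finite and therefore the required disintegrations (targets $\Omega^{\fp X}$ and $\Omega^{\fp Y}$, sources the finite spaces) exist trivially --- i.e., the very mechanism you already invoke for the chain $\fp X^n\to\overline{\fp X^n}\to\overline{\fp Y^n}\to\fp Y^n$. This is exactly what the paper's Lemma~\ref{lem:glueing.appendix} encodes and what Proposition~\ref{prop:bicausal_cplapprox} executes (approximating $\overline{\fp X},\overline{\fp Y}$ rather than $\fp X,\fp Y$, but to the same effect). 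Once you spell out this final gluing, your argument closes and the $\CW_p$, $\SCW_p$ cases follow as you say.
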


To establish Proposition \ref{prop:nonPolish}, it will turn out to be convenient to introduce the gluing for couplings of general filtered processes.
	
\begin{lemma}
\label{lem:glueing.appendix}
    Let $\fp X^1,\fp X^2,\fp X^3, \fp X^4 \in \mathcal{FP}$, $\pi \in \cpl(\fp X^1,\fp X^2)$, $\rho \in \cpl(\fp X^2,\fp X^3)$ and $\gamma \in \cpl(\fp X^3,\fp X^4)$ such that there exist regular conditional disintegrations of $\pi$, $\rho$ and $\gamma$ w.r.t.\ their $\fp X^2$- and $\fp X^3$-coordinates.
    Then there is $\Pi\in \prob(\Omega^{\fp X^1}\times\Omega^{\fp X^2}\times\Omega^{\fp X^3}\times \Omega^{\fp X^4})$ such that 
    \[ \pi = ({\pr_{\Omega^{\fp X^1} \times \Omega^{\fp X^2}}})_\#\Pi, 
    \quad
    \rho = ({\pr_{\Omega^{\fp X^2} \times \Omega^{\fp X^3}}})_\#\Pi,
    \quad
    \gamma = ({\pr_{\Omega^{\fp X^3} \times \Omega^{\fp X^4}}})_\#\Pi.
   \]
    If, in addition, $\pi$ is $\epsilon_1$-causal from $\fp X^1$ to $\fp X^2$, $\rho$ is $\varepsilon_2$-causal from $\fp X^2$ to $\fp X^3$, and $\gamma$ is $\epsilon_3$-causal from $\fp X^3$ to $\fp X^4$, then $\sigma:=({\pr_{\Omega^{\fp X^1} \times \Omega^{\fp X^4}}})_\#\Pi$ is $(\varepsilon_1+\varepsilon_2+\epsilon_3)$-causal from $\fp X^1$ to $\fp X^4$.
\end{lemma}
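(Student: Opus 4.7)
The plan is to generalize the three-factor gluing construction of Lemma~\ref{lem:glueing} to the present four-factor setting. Concretely, using the regular disintegrations of $\pi$ with respect to $\F_1^{\fp X^2}$ and of $\gamma$ with respect to $\F_1^{\fp X^3}$, I would define
\[
\Pi(d\omega_1, d\omega_2, d\omega_3, d\omega_4) := \pi^{\F_1^{\fp X^2}}_{\omega_2}(d\omega_1)\,\rho(d\omega_2, d\omega_3)\,\gamma^{\F_1^{\fp X^3}}_{\omega_3}(d\omega_4).
\]
A short Fubini argument then gives that the three relevant bivariate marginals of $\Pi$ are $\pi$, $\rho$ and $\gamma$. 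By further disintegrating $\rho$ in each of its coordinates (which is possible by hypothesis), I would derive the two conditional independences
\[
\F^{\fp X^1}\indep_{\F_1^{\fp X^2}} \F^{\fp X^3}\vee\F^{\fp X^4}
\qquad\text{and}\qquad
\F^{\fp X^4}\indep_{\F_1^{\fp X^3}} \F^{\fp X^1}\vee\F^{\fp X^2}
\]
under $\Pi$, which are the structural substitute for the single conditional independence appearing in Lemma~\ref{lem:glueing}.

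The causality claim would then follow by iterating, three times, the tower-of-conditional-expectations argument at the end of the proof of Lemma~\ref{lem:glueing}. Fix $t\in[0,1]$ and a bounded $\F_t^{\fp X^4}$-measurable $W$. First, using $\F^{\fp X^4}\indep_{\F_1^{\fp X^3}}\F^{\fp X^1}\vee\F^{\fp X^2}$ together with the $\varepsilon_3$-causality of $\gamma$, I would show $\E_\Pi[W\mid \F_1^{\fp X^1}\vee\F_1^{\fp X^2}\vee\F_1^{\fp X^3}] = \E_\Pi[W\mid \F_{t+\varepsilon_3}^{\fp X^3}]$. Next, using $\F^{\fp X^1}\indep_{\F_1^{\fp X^2}}\F^{\fp X^3}\vee\F^{\fp X^4}$ together with the $\varepsilon_2$-causality of $\rho$, I would reduce the inner conditioning to $\F_{t+\varepsilon_2+\varepsilon_3}^{\fp X^2}$; a final application of the $\varepsilon_1$-causality of $\pi$ then brings it down to $\F_{t+\varepsilon_1+\varepsilon_2+\varepsilon_3}^{\fp X^1}$. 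Collapsing the resulting tower exactly as in Lemma~\ref{lem:glueing} yields $\E_\Pi[W\mid \F_1^{\fp X^1}] = \E_\Pi[W\mid \F_{t+\varepsilon_1+\varepsilon_2+\varepsilon_3}^{\fp X^1}]$, and Lemma~\ref{lem:causal_equiv} translates this into the $(\varepsilon_1+\varepsilon_2+\varepsilon_3)$-causality of $\sigma$ from $\fp X^1$ to $\fp X^4$.

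I do not anticipate a conceptually new difficulty: each step mirrors its counterpart in the three-process case. The main point requiring care is bookkeeping, namely making sure that at every iteration the filtration indices match so that the shifts $\varepsilon_3$, then $\varepsilon_2+\varepsilon_3$, then $\varepsilon_1+\varepsilon_2+\varepsilon_3$ accumulate in the right order; this is precisely why the hypothesis on regular conditional disintegrations of $\pi$, $\rho$ and $\gamma$ is stated explicitly, replacing the standard-Borel assumption that made these disintegrations automatic in Lemma~\ref{lem:glueing}.
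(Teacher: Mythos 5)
The gap is in the very first step: the construction of $\Pi$. You mirror Lemma~\ref{lem:glueing} and disintegrate $\pi$ and $\gamma$ with respect to the \emph{terminal} $\sigma$-algebras $\F_1^{\fp X^2}$ and $\F_1^{\fp X^3}$, writing $\Pi = \pi^{\F_1^{\fp X^2}}_{\omega_2}(d\omega_1)\,\rho(d\omega_2,d\omega_3)\,\gamma^{\F_1^{\fp X^3}}_{\omega_3}(d\omega_4)$. The paper instead disintegrates with respect to the \emph{full coordinate} $\sigma$-algebras, setting $\Pi(A_1\times A_2\times A_3\times A_4)=\int_{A_2\times A_3}\pi_{\omega_2}(A_1)\,\gamma_{\omega_3}(A_4)\,d\rho(\omega_2,\omega_3)$, where $\pi_{\omega_2}$ is the regular conditional law of $\omega_1$ given the entire $\omega_2$, and this is precisely what the hypothesis of the lemma provides. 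The distinction matters: with your kernel, the $(\omega_1,\omega_2)$-marginal is
\[
\int_{A_2}\pi^{\F_1^{\fp X^2}}_{\omega_2}(A_1)\,d\P^{\fp X^2}(\omega_2)
=\E_{\P^{\fp X^2}}\!\bigl[\E_{\P^{\fp X^2}}[\mathbf 1_{A_2}\mid\F_1^{\fp X^2}]\cdot \pi^{\F_1^{\fp X^2}}_{\cdot}(A_1)\bigr],
\]
since the kernel is $\F_1^{\fp X^2}$-measurable, and this equals $\pi(A_1\times A_2)$ only for $A_2\in\F_1^{\fp X^2}$ — not for general $A_2\in\F^{\fp X^2}$. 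A concrete failure: if $\F_1^{\fp X^2}$ is trivial (e.g.\ $X^2$ is a.s.\ constant) and $\pi$ is a non-product coupling, your formula produces the product coupling, so $(\pr_{\Omega^{\fp X^1}\times\Omega^{\fp X^2}})_\#\Pi\neq\pi$. Your ``short Fubini argument'' therefore does not go through.

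The reason Lemma~\ref{lem:glueing} can get away with the $\F_1$-disintegration is that there all three processes are \emph{canonical}, so $\F_1^{\fp Y}$ and $\F^{\fp Y}$ agree modulo $\P^{\fp Y}$-null sets, and the two kinds of disintegration coincide. In Lemma~\ref{lem:glueing.appendix} the processes lie in general $\mathcal{FP}$ where $\F_1\subsetneq\F$ is possible; the hypothesis about disintegrations ``w.r.t.\ their $\fp X^2$- and $\fp X^3$-coordinates'' is there precisely so that the full-coordinate kernels exist and the Fubini marginal check works. Your own closing remark shows you read the hypothesis correctly, but your construction does not actually use what it grants. The remainder of your outline — identifying the two structural conditional independences and running the tower argument three times — is exactly the paper's intention when it writes that ``the rest of the proof follows from a similar reasoning as used in the proof of Lemma~\ref{lem:glueing}'', so once you replace the $\F_1$-kernels by the full-coordinate ones (and adjust the conditioning $\sigma$-algebras in your two displayed independences accordingly, noting that a further tower step is needed to descend from $\F^{\fp X^3}$ to the $\F_1^{\fp X^3}$-level where causality of $\gamma$ applies) the argument should close.
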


\begin{proof}
    Since we assume the existence of regular conditional disintegrations of $\pi$, $\rho$ and $\gamma$ w.r.t.\ their $\fp X^2$- and $\fp X^3$-coordinates, we can define a probability measure on the measurable space $(\Omega^{\fp X^1} \times\Omega^{\fp X^2}  \times\Omega^{\fp X^3} \times\Omega^{\fp X^4} , \F^{\fp X^1} \otimes\F^{\fp X^2}  \otimes\F^{\fp X^3} \otimes\F^{\fp X^4})$ via
    \[
        \Pi(A_1 \times A_2 \times A_3 \times A_4) = \int_{A_2 \times A_3} \pi_{\omega^{\fp X^2}}(A_1) \gamma_{\omega^{\fp X^3}}(A_4) \, d\rho(\omega^{\fp X^2},\omega^{\fp X^3}),    
    \]
    for all $A_i \in \F^{\fp X^i}$, $i \in \{1,\dots,4\}$.
    The rest of the proof follows from a similar reasoning as used in the proof of Lemma \ref{lem:glueing}.
\end{proof}

\begin{lemma} \label{lem:canonical.couplings}
    Let $\fp X, \fp Y \in \mathcal{FP}$ and $\epsilon \ge 0$.
    Then we have:
    \begin{enumerate}[label = (\roman*)]
        \item \label{it:lem.canonical.couplings.1} $(\id, \pp^\infty(\fp X))_\# \P^\fp X \in \cplbc(\fp X, \overline{\fp X})$,
        \item \label{it:lem.canonical.couplings.2} If $\pi \in \cplc^\epsilon(\fp X,\fp Y)$, then $(\pp^\infty(\fp X), \pp^\infty(\fp Y))_\# \pi \in \cplc^\epsilon(\overline{\fp X}, \overline{\fp Y})$,
        \item \label{it:lem.canonical.couplings.3} If $\pi \in \cplbc^\epsilon(\fp X,\fp Y)$, then $(\pp^\infty(\fp X), \pp^\infty(\fp Y))_\# \pi \in \cplbc^\epsilon(\overline{\fp X}, \overline{\fp Y})$.
    \end{enumerate}
\end{lemma}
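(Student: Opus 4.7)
The plan is to verify each statement directly through the conditional-independence characterizations of causality from Lemma~\ref{lem:causal_equiv}, relying on two structural facts. First, the prediction process $\pp^\infty(\fp X)$ is adapted to $(\F_t^\fp X)_t$, and since this filtration is right-continuous, its own right-continuous natural filtration $\G^\infty_t(\fp X) := \bigcap_{\delta>0} \sigma^{\P^\fp X}(\pp^\infty_s(\fp X): s \le t+\delta)$ satisfies $\G^\infty_t(\fp X) \subseteq \F_t^\fp X$; in particular, the pullback of $\F_t^{\overline{\fp X}}$ via $\pp^\infty(\fp X)$ coincides (up to $\P^\fp X$-null sets) with $\G^\infty_t(\fp X)$. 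Second, \cite[Corollary~4.10]{BePaScZh23} yields $\G^\infty_1(\fp X) \indep_{\G^\infty_t(\fp X)} \F_t^\fp X$ under $\P^\fp X$, equivalently $\E_{\P^\fp X}[W \mid \F_t^\fp X] = \E_{\P^\fp X}[W \mid \G^\infty_t(\fp X)]$ for every $W \in L^\infty(\G^\infty_1(\fp X))$.

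For \ref{it:lem.canonical.couplings.1}, set $\pi := (\id, \pp^\infty(\fp X))_\# \P^\fp X$. The marginals are correct by definition of $\overline{\fp X}$. Causality from $\fp X$ to $\overline{\fp X}$ follows immediately from the first fact: any bounded $\F_t^{\overline{\fp X}}$-measurable $V$ pulls back to an $\F_t^\fp X$-measurable random variable, so $\E_\pi[V \mid \F_1^\fp X] = V = \E_\pi[V \mid \F_t^\fp X]$, and we invoke Lemma~\ref{lem:causal_equiv}(ii). Causality from $\overline{\fp X}$ to $\fp X$ is a statement about $\sigma$-algebras on $\Omega^\fp X$ alone (since $\pi$ is concentrated on the graph of $\pp^\infty(\fp X)$) and, via the same pullback identification, is exactly the content of the second fact.

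For \ref{it:lem.canonical.couplings.2}, write $\bar\pi := (\pp^\infty(\fp X), \pp^\infty(\fp Y))_\# \pi$; marginals are again correct. To verify $\epsilon$-causality from $\overline{\fp X}$ to $\overline{\fp Y}$, it suffices by Lemma~\ref{lem:causal_equiv}(iv) to show
\[
\E_{\bar\pi}\bigl[V\bigl(U - \E_{\bar\pi}[U \mid \F_{t+\epsilon}^{\overline{\fp X}}]\bigr)\bigr] = 0
\]
for all bounded $U \in L^\infty(\F_1^{\overline{\fp X}})$ and $V \in L^\infty(\F_t^{\overline{\fp Y}})$. Writing $U' := U \circ \pp^\infty(\fp X)$ and $V' := V \circ \pp^\infty(\fp Y)$ and pushing forward, the $\epsilon$-causality of $\pi$ (Lemma~\ref{lem:causal_equiv}(iv) applied to $\pi$) gives $\E_\pi[V' U'] = \E_\pi[V' \cdot \E_\pi[U' \mid \F_{t+\epsilon}^\fp X]]$. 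On the other hand, setting $\phi := \E_{\bar\pi}[U \mid \F_{t+\epsilon}^{\overline{\fp X}}]$, the push-forward identifies $\phi \circ \pp^\infty(\fp X)$ as $\E_{\P^\fp X}[U' \mid \G^\infty_{t+\epsilon}(\fp X)]$, and the second structural fact applied at time $t+\epsilon$ to $W = U' \in L^\infty(\G^\infty_1(\fp X))$ yields $\E_{\P^\fp X}[U' \mid \F_{t+\epsilon}^\fp X] = \E_{\P^\fp X}[U' \mid \G^\infty_{t+\epsilon}(\fp X)] = \phi \circ \pp^\infty(\fp X)$. Substituting back makes the two sides match.

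Part \ref{it:lem.canonical.couplings.3} is immediate from \ref{it:lem.canonical.couplings.2} applied with the roles of $\fp X$ and $\fp Y$ reversed. The main obstacle is that the base spaces $\Omega^\fp X, \Omega^\fp Y$ of general filtered processes need not be standard Borel, so neither Lemma~\ref{lem:glueing} nor the forthcoming Lemma~\ref{lem:glueing.appendix} is available to compare $\pi$ and $\bar\pi$ by a gluing argument. The plan above circumvents this issue by keeping every manipulation as a conditional expectation under $\pi$ on the product space (and under $\P^\fp X$ on the $\fp X$-marginal), invoking the push-forward identification only at the very last step.
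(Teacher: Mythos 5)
Your proposal is correct and rests on the same structural facts as the paper's proof: adaptedness of $\pp^\infty(\fp X)$ to $(\F_t^{\fp X})_t$ for the easy causality direction, and the conditional independence $\F_t^{\fp X}\indep_{\sigma(\pp^\infty_t(\fp X))}\sigma(\pp^\infty(\fp X))$ from \cite{BePaScZh23} (cited there as Corollary~4.11; your Corollary~4.10 is the right-continuous variant, interchangeable here since $(\F_t^{\fp X})_t$ is right-continuous) for the hard direction and for part \ref{it:lem.canonical.couplings.2}. The only genuine difference is in the formalism: the paper performs the combination step for \ref{it:lem.canonical.couplings.2} as a one-line application of the chain rule for conditional independence \cite[Proposition~5.8]{Ka06}, whereas you unwind the same step into the integral identity of Lemma~\ref{lem:causal_equiv}(iv) together with the push-forward identification of $\F_{t+\epsilon}^{\overline{\fp X}}$ with $\G^\infty_{t+\epsilon}(\fp X)$. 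The two are logically equivalent ways of encoding the same calculation, both correctly kept on the product space $\Omega^{\fp X}\times\Omega^{\fp Y}$ so that no gluing (hence no standard-Borel assumption on the base spaces) is needed; your final remark about why gluing is unavailable matches the role this lemma plays in the paper.
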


\begin{proof}
    We first show \ref{it:lem.canonical.couplings.1}.
   Since $\pp^\infty(\fp X)$ is $(\mathcal F^\fp X_t)_{t \in [0,1]}$-adapted, the coupling $(\id, \pp^\infty(\fp X))_\# \P^\fp X$ is causal.
    Moreover, for every $t \in [0,1]$, we have by \cite[Corollary 4.11]{BePaScZh23} that $\F_t^\fp X$ is conditionally independent of $\pp^\infty(\fp X)$ given $\pp^\infty_t(\fp X)$.
    Therefore, $(\id, \pp^\infty(\fp X))_\# \P^\fp X$ is causal from $\overline{\fp X}$ to $\fp X$, thus, bicausal.
    
    We proceed to prove \ref{it:lem.canonical.couplings.2}.
    Since $\pi$ is $\varepsilon$-causal, we have that under $\pi$,
    $
        \F_t^\fp Y \indep_{\F^\fp X_{t + \epsilon}} \F^\fp X$.
    As in the previous part, we find by \cite[Corollary 4.11]{BePaScZh23} that
    $
        \F_{t + \epsilon}^\fp X \indep_{\pp^\infty_{t + \epsilon}(\fp X)} \pp^\infty(\fp X).$
        
    Hence, invoking the chain rule of conditional independence, see \cite[Proposition 5.8]{Ka06}, yields
    $
        \F_t^\fp Y \indep_{\pp^\infty_{t + \epsilon}(\fp X)} \pp^\infty(\fp X).
    $
    We write $(\mathcal G_t^\fp X)_{t \in [0,1]}$ and $(\mathcal G_t^\fp Y)_{t \in [0,1]}$ 
    for the right-continuous filtration generated by $\pp^\infty(\fp X)$ and $\pp^\infty(\fp Y)$, respectively.
    Since 
    $\pp^\infty(\fp Y)$  
    is  
    $(\F^\fp Y_t)_t$-
    adapted, 
    $
        \mathcal G_t^\fp Y \indep_{\pp^\infty_{t + \epsilon}(\fp X)} \pp^\infty(\fp X).
    $
    As $t$ was arbitrary, we conclude that  $\G_{t}^{\fp Y} \indep_{\G_t^{\fp X}} (\G_s^{\fp X})_s$ 
    and thus $(\pp^\infty(\fp X),\pp^\infty(\fp Y))_\# \pi \in \cplc^\epsilon(\overline{\fp X}, \overline{\fp Y})$.

    Finally, \ref{it:lem.canonical.couplings.3} follows by symmetry.
\end{proof}

\begin{proposition} \label{prop:bicausal_cplapprox}
    Let $\fp X,\fp Y \in \mathcal{FP}$ and $\overline{\pi} \in \cplbc^\epsilon(\overline{\fp X}, \overline{\fp Y})$.
    Then there exists a sequence $(\pi^n)_n$ with $\pi^n \in \cplbc^{\epsilon + \frac1n}(\fp X,\fp Y)$ such that
    \[
        (X,Y)_\# \pi^n \to (\overline{X},\overline{Y})_\# {\overline{\pi}}
    \]
     w.r.t.\ the $p$-Wasserstein distance on $\X \times \X$.
\end{proposition}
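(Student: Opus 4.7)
The plan is to construct a single coupling $\pi \in \cplbc^\epsilon(\fp X, \fp Y)$ whose pushforward under $(X,Y)$ coincides \emph{exactly} with $(\overline X, \overline Y)_\# \overline\pi$; setting $\pi^n := \pi$ then trivially meets all requirements, since $\cplbc^\epsilon \subset \cplbc^{\epsilon + 1/n}$ and the pushforward sequence is constant. No genuine approximation will be needed---the tolerance $1/n$ is merely a convenient slack.

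To build $\pi$, first invoke Lemma~\ref{lem:canonical.couplings}\ref{it:lem.canonical.couplings.1} to obtain bicausal couplings $\sigma_X := (\id, \pp^\infty(\fp X))_\# \P^{\fp X} \in \cplbc(\fp X, \overline{\fp X})$ and $\sigma_Y \in \cplbc(\fp Y, \overline{\fp Y})$. Since $\pp^1_1(\fp X) = \delta_X$ $\P^{\fp X}$-a.s.\ and $\overline X = \delta^{-1}(Z_1^1)$ with $\delta^{-1}(\delta_x) = x$, one has $X = \overline X$ $\sigma_X$-a.s., and analogously $Y = \overline Y$ $\sigma_Y$-a.s. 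Because $\Omega^{\overline{\fp X}} = \Omega^{\overline{\fp Y}} = \mathsf{M}_\infty$ is Lusin, the regular conditional disintegrations required by Lemma~\ref{lem:glueing.appendix} exist (cf.\ \cite[Corollary~10.4.13]{Bo07a}). Applying that lemma to the chain $(\sigma_X, \overline\pi, \sigma_Y^{-1})$, with $\overline{\fp X}$ and $\overline{\fp Y}$ as the two interior coordinates, yields a measure $\Pi$ on $\Omega^{\fp X} \times \Omega^{\overline{\fp X}} \times \Omega^{\overline{\fp Y}} \times \Omega^{\fp Y}$, and I set $\pi := (\pr_{\Omega^{\fp X} \times \Omega^{\fp Y}})_\# \Pi$. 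The causality conclusion of the lemma with parameters $(0, \epsilon, 0)$ directly gives that $\pi$ is $\epsilon$-causal from $\fp X$ to $\fp Y$.

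For the reverse direction, the key observation is that the explicit construction inside Lemma~\ref{lem:glueing.appendix} yields the symmetric expression
\[
\Pi(d\omega_1, d\omega_2, d\omega_3, d\omega_4) = (\sigma_X)_{\omega_2}(d\omega_1)\,(\sigma_Y^{-1})_{\omega_3}(d\omega_4)\,\overline\pi(d\omega_2, d\omega_3),
\]
which is invariant under swapping the outer pair. Hence the causality assertion of Lemma~\ref{lem:glueing.appendix} applied to the reversed chain $(\sigma_Y, \overline\pi^{-1}, \sigma_X^{-1})$---whose input data are bicausal, $\epsilon$-bicausal, bicausal in that order---yields $\epsilon$-causality of the very same $\pi$ from $\fp Y$ to $\fp X$, and so $\pi \in \cplbc^\epsilon(\fp X, \fp Y)$. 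Finally, the $\Pi$-a.s.\ identities $X = \overline X$ and $Y = \overline Y$ give $(X,Y)_\# \pi = (\overline X, \overline Y)_\# \overline\pi$ exactly, and the constant choice $\pi^n := \pi$ completes the proof. The one delicate point I anticipate is justifying that a single measure $\Pi$ simultaneously witnesses both causality directions rather than requiring two separate gluings; this hinges on the symmetric form of the gluing formula, after which the remaining verification is essentially bookkeeping.
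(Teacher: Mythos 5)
Your construction founders on the very obstacle that forces the paper to take the detour through finite probability spaces: the regular conditional disintegrations you invoke do not exist for general $\fp X, \fp Y \in \mathcal{FP}$. To apply Lemma~\ref{lem:glueing.appendix} to the chain $(\sigma_X, \overline\pi, \sigma_Y^{-1})$ with $\fp X^1 = \fp X$, $\fp X^2 = \overline{\fp X}$, $\fp X^3 = \overline{\fp Y}$, $\fp X^4 = \fp Y$, you need the disintegration $(\sigma_X)_{\omega_2}(d\omega_1)$ of $\sigma_X$ with respect to its $\overline{\fp X}$-coordinate, which is a kernel taking values in $\mathcal{P}(\Omega^{\fp X})$, and likewise a kernel taking values in $\mathcal{P}(\Omega^{\fp Y})$ from $\sigma_Y^{-1}$. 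The Lusin-ness of $\mathsf M_\infty$ is irrelevant here: it would guarantee existence of kernels valued in $\mathcal{P}(\mathsf M_\infty)$ (conditioning \emph{toward} the canonical space), but what you need are kernels valued in $\mathcal{P}(\Omega^{\fp X})$ and $\mathcal{P}(\Omega^{\fp Y})$, and $\Omega^{\fp X}, \Omega^{\fp Y}$ are arbitrary probability spaces with no topological or countable-generation hypotheses. For such spaces, a regular conditional probability given a non-atomic $\sigma$-algebra generally fails to exist, so the defining formula for $\Pi$ is not well posed.

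This is precisely why the paper's proof first applies Proposition~\ref{prop:FiniteOmegaDense} to approximate $\overline{\fp X}, \overline{\fp Y}$ by processes $\fp X^n, \fp Y^n$ on \emph{finite} probability spaces, and then glues through those: conditioning on a finite-valued random variable always yields a regular conditional probability by restriction and renormalization on atoms, no matter how pathological $\Omega^{\fp X}$ and $\Omega^{\fp Y}$ are. That detour costs the extra $\tfrac{1}{n}$ slack in the causality parameter and replaces your exact identity by an approximation, which is exactly the shape of the statement to be proved. Your observation that the gluing formula is symmetric under swapping the outer pair, hence a single $\Pi$ witnesses both causality directions, is sound as far as it goes; but it cannot repair the missing disintegration, and the resulting exact-equality conclusion $(X,Y)_\#\pi = (\overline X,\overline Y)_\#\overline\pi$ is stronger than what is actually provable at this level of generality.
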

\begin{proof}
    We fix a coupling $\overline{\pi} \in \cplbc^\epsilon(\overline{\fp X},\overline{\fp Y})$.
    For the canonical representatives $\overline{\fp X}$ and $\overline{\fp Y}$, there exist by Proposition \ref{prop:FiniteOmegaDense} sequences $({\fp X}^n)_{n}$ and $({\fp Y}^n)_{n }$, where $\Omega^{{\fp X}^n}$ and $\Omega^{{\fp Y}^n}$ are finite, such that
    \[
        \AW_p(\overline{\fp X}, {\fp X}^n) \le \frac1n \text{ and } \AW_p(\overline{\fp Y}, {\fp Y}^n) \le \frac1n,
    \]
    for every $n \in \N$.
    We write $\overline{\gamma}^{n,X}$ and $\overline{\gamma}^{n,Y}$ for $\frac{1}{n}$-bicausal couplings of $({\fp X}^n, \overline{\fp X})$ and $(\overline{\fp Y}, {\fp Y}^n)$, respectively, that are $\frac1n$-optimal for $\AW_p$.
    For every $n \in \mathbb N$, applying Lemma \ref{lem:glueing.appendix} with $\overline{\gamma}^{n,X}$, $\overline{\pi}$ and $\overline{\gamma}^{n,Y}$ we find a coupling $\overline{\pi}^n$ that is $(\epsilon + \frac2n)$-bicausal between ${\fp X}^n$ and ${\fp Y}^n$ and the sequence satisfies
    \begin{equation} \label{eq:prop.approx.1}
        ({X}^n,{Y}^n)_\# \overline{\pi}^n \to (\overline{X},\overline{Y})_\# \overline{\pi}.    
    \end{equation}
    By Lemma \ref{lem:canonical.couplings}, $\eta^X := (\id,\pp(\fp X))_\# \P^\fp X$ is bicausal from $\fp X$ to $\overline{\fp X}$.
    Next, by gluing $\eta^X$ and $\overline{\gamma}^{n,X}$ we obtain a $\frac1n$-bicausal coupling $\gamma^{n,X}$ between $\fp X$ and ${\fp X}^n$,  which can be derived, e.g., directly from \cite[Lemma 2.11]{Pa22}.
    Similarly, we obtain a $\frac1n$-bicausal coupling $\gamma^{n,Y}$ between $\fp Y$ and ${\fp Y}^n$.
    Note that, as ${\fp X}^n$ and ${\fp Y}^n$ have a discrete probability measure, the regular conditional disintegrations of the measures $\gamma^{n,X}$ and $\gamma^{n,Y}$ exist w.r.t.\ both coordinates.
    Therefore, we can apply Lemma \ref{lem:glueing.appendix} with $\gamma^{n,X}$, $\overline{\pi}^n$ and $\gamma^{n,Y}$ and obtain a probability measure $\Pi^n$ on the product space $\Omega^{\fp X} \times \Omega^{{\fp X}^n} \times \Omega^{{\fp Y}^n} \times \Omega^\fp Y$ such that $(\pr_{\Omega^{\fp X}},\pr_{\Omega^\fp Y})_\# \Pi^n =: \pi^n$ is $(\epsilon + \frac4n)$-bicausal between $\fp X$ and $\fp Y$.
    
    Furthermore,
    \begin{equation} \label{eq:prop.approx.2}
        \mathbb E_{\Pi^n} \left[ d^p(X,{X}^n) + d^p(Y,{Y}^n) \right]^{\frac1p} \leq
        \AW_p(\overline{\fp X}, {\fp X}^n) + \AW_p(\overline{\fp Y}, {\fp Y}^n) \le \frac4n.   
    \end{equation}
    Hence, by \eqref{eq:prop.approx.1} and \eqref{eq:prop.approx.2} we conclude that $(X,Y)_\# \pi^n \to (\overline{X},\overline{Y})_\# \overline{\pi}$.
\end{proof}

By the same line of argument, the statement of the preceding proposition remains true when replacing ``bicausal'' with ``causal''.
\begin{corollary} \label{cor:causal_cplapprox}
    Let $\fp X,\fp Y \in \mathcal{FP}_p$ and $\overline{\pi} \in \cplc^\epsilon(\overline{\fp X}, \overline{\fp Y})$ be given.
    Then there exists a sequence $(\pi^n)_n$ with $\pi^n \in \cplc^{\epsilon + \frac1n}(\fp X,\fp Y)$ such that
    \[
        (X,Y)_\# \pi^n \to (\overline{X},\overline{Y})_\# {\overline{\pi}}.
    \]    
\end{corollary}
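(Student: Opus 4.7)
The plan is to mimic the proof of Proposition~\ref{prop:bicausal_cplapprox} step by step, tracking the accumulation of causality defects rather than bicausality defects. The crucial observation is that bicausality is strictly stronger than one-sided causality, so any bicausal coupling appearing as an intermediate step in the gluing can be regarded as causal in whichever direction is needed for Lemma~\ref{lem:glueing.appendix}. Hence the only ``genuinely causal'' coupling in the chain will be $\overline{\pi}$ itself, while the bridges to the finite approximants remain bicausal and therefore compose without loss.

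Concretely, I would first invoke Proposition~\ref{prop:FiniteOmegaDense} to produce filtered processes $\fp X^n,\fp Y^n$ on finite bases with $\AW_p(\overline{\fp X},\fp X^n),\AW_p(\overline{\fp Y},\fp Y^n)\le\tfrac1n$, along with near-optimal $\tfrac1n$-bicausal couplings $\overline\gamma^{n,X}\in\cplbc^{1/n}(\fp X^n,\overline{\fp X})$ and $\overline\gamma^{n,Y}\in\cplbc^{1/n}(\overline{\fp Y},\fp Y^n)$. Applying Lemma~\ref{lem:glueing.appendix} to the chain $\overline\gamma^{n,X},\overline{\pi},\overline\gamma^{n,Y}$ (reading each bicausal bridge as $\tfrac1n$-causal in the required direction) yields a coupling $\overline\pi^{\,n}$ between $\fp X^n$ and $\fp Y^n$ that is $(\varepsilon+\tfrac2n)$-causal from $\fp X^n$ to $\fp Y^n$, and moreover $(X^n,Y^n)_\#\overline\pi^{\,n}\to(\overline X,\overline Y)_\#\overline\pi$ in $\W_p$ by the triangle inequality for $\AW_p$.

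Next, using Lemma~\ref{lem:canonical.couplings}\ref{it:lem.canonical.couplings.1}, the coupling $(\id,\pp^\infty(\fp X))_\#\P^{\fp X}$ is bicausal between $\fp X$ and $\overline{\fp X}$; gluing it with $\overline\gamma^{n,X}$ produces a $\tfrac1n$-bicausal coupling $\gamma^{n,X}$ between $\fp X$ and $\fp X^n$, and analogously $\gamma^{n,Y}$ between $\fp Y^n$ and $\fp Y$. Because $\fp X^n$ and $\fp Y^n$ live on finite bases, regular conditional disintegrations of $\gamma^{n,X}$, $\overline\pi^{\,n}$, and $\gamma^{n,Y}$ exist in both coordinates, so Lemma~\ref{lem:glueing.appendix} applies to the four-term chain and delivers a coupling $\pi^n:=({\pr_{\Omega^{\fp X}\times\Omega^{\fp Y}}})_\#\Pi^n\in\cplc^{\varepsilon+4/n}(\fp X,\fp Y)$. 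The Wasserstein estimate
\[
\E_{\Pi^n}\!\bigl[d_{\mathcal X}^p(X,X^n)+d_{\mathcal X}^p(Y,Y^n)\bigr]^{1/p}\le \AW_p(\overline{\fp X},\fp X^n)+\AW_p(\overline{\fp Y},\fp Y^n)\le\tfrac4n
\]
combined with the convergence of $(X^n,Y^n)_\#\overline\pi^{\,n}$ gives $(X,Y)_\#\pi^n\to(\overline X,\overline Y)_\#\overline\pi$ in $\W_p$. Rescaling $n\mapsto 4n$ (or absorbing the factor $4$ into $\tfrac1n$) yields the stated bound $\varepsilon+\tfrac1n$.

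The only substantive point to check, and the one I expect to need the most care, is that Lemma~\ref{lem:glueing.appendix} truly preserves causality along a chain in which one link is only $\varepsilon$-causal while the other links are bicausal; the direction of causality in each bridge must be chosen consistently so that the composed defect is $\varepsilon+(\text{sum of bridge defects})$ rather than something larger. Once this bookkeeping is made precise, the rest of the argument is a transparent transcription of the proof of Proposition~\ref{prop:bicausal_cplapprox}.
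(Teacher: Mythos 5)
Your proposal is correct and is exactly what the paper intends: the paper proves Corollary~\ref{cor:causal_cplapprox} by the single line ``by the same line of argument,'' and your proof spells out precisely the bookkeeping that line points to, noting that bicausal bridges may be read as causal in whichever direction Lemma~\ref{lem:glueing.appendix} requires, so the causality defects add to $\epsilon + \tfrac{4}{n}$. No gap; this is a faithful expansion of the paper's argument.
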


\begin{proof}[Proof of Proposition \ref{prop:nonPolish}]
    We only show the statement for $\AW_p$, since the other assertions follow similarly, using Corollary \ref{cor:causal_cplapprox} instead of Proposition \ref{prop:bicausal_cplapprox}.
    Fix $\varepsilon \ge 0$ and let $\pi \in \cplbc^\varepsilon(\fp X, \fp Y)$.
    By Lemma \ref{lem:canonical.couplings} we have $\bar \pi := (\pp^\infty(\fp X),\pp^\infty(\fp Y))_\# \pi \in \cplbc^\varepsilon(\overline{\fp X}, \overline{\fp Y})$ and  $\law_\pi(X,Y) = \law_{\bar \pi}(\bar X, \bar Y)$.
    Hence, $\AW_p(\fp X,\fp Y) \ge \AW_p(\overline{\fp X}, \overline{\fp Y})$.
    
    On the other hand, let $\bar \pi \in \cplbc^\varepsilon(\overline{\fp X}, \overline{\fp Y})$.  By Proposition \ref{prop:bicausal_cplapprox}, there exist $(\pi^n)_n$ with  $\pi^n \in \cplbc^{\varepsilon + \frac1n}(\fp X, \fp Y)$ such that $\law_{\pi^n}(X,Y) \to \law_{\bar \pi}(\bar X, \bar Y)$.
    Since $p$-Wasserstein convergence is equivalent to weak convergence together with convergence of the $p$-moments, we have that $\law_{\pi^n}(X,Y) \to \law_{\bar \pi}(\bar X, \bar Y)$ holds also w.r.t.\ $\W_p$ (using $d((x,y),(\bar x,\bar y))^p := d_\X(x,y)^p + d_\X(\bar x,\bar y)^p$ as the metric on $\mathcal X \times \mathcal X$).
    Hence,
    \[
        \E_{\pi^n}[d(X,Y)^p] \to \E_{\bar \pi}[d(\bar X,\bar Y)^p],
    \]
    from where we immediately deduce the reverse inequality $\AW_p(\fp X, \fp Y) \le \AW_p(\overline{\fp X},\overline{\fp Y})$, which concludes the proof.
\end{proof}

 \section{Auxiliary statements}

        \subsection{Auxiliary statements for quantitative optimal stopping}
        \label{sec:appQuantOS}

\begin{proof}[Proof of Example \ref{ex:modulus}]
		We first note the following standard fact:
		For a square integrable martingale $\bbY$, by the Cauchy-Schwartz inequality and the  BDG-inequality (conditionally on $\mathcal{F}_\tau$) we have that 
		\begin{align}
			\label{eq:modulus.martingales}
			\delta_\bbY(\epsilon) \leq  2 \sup_{\tau\in\rm{ST}(\bbY)} \sqrt{ \E\left[| Y_{\tau+\varepsilon} -  Y_{\tau}|^2 \right] } .
		\end{align}
		
		Further note that $\E[| Y_{\tau+\varepsilon} -  Y_{\tau}|^2 ] =\E[Y_{\tau+\varepsilon}^2] - \E[  Y_{\tau}^2]$ by the martingale property.
		
		From \eqref{eq:modulus.martingales} the claim for the Brownian motion is straightforward:  $\E[ B_{\tau}^2]=\E[\tau]$ and $\E[ B_{\tau+\varepsilon}^2] \leq \E[\tau] + \epsilon$.
		
		The proof for the SDE case follows from similar arguments,  using \eqref{eq:modulus.martingales} for the martingale part.
		Indeed, using the triangle inequality and the It\^{o} isometry.
		\begin{align*}
			\delta_{\mathbb{S}}(\varepsilon)
			&\leq \sup_{\tau\in\mathrm{ST}(\mathbb{S})} \E\left[  \sup_{s\in[0,\epsilon]} \left| \int_{\tau}^{\tau+s} \mu_u(S_u)\,du \right| \right] 
			+\sup_{\tau\in\mathrm{ST}(\mathbb{S})} \E\left[  \left| \int_{\tau}^{\tau+s} \sigma_u^2(S_u)\,du\right| \right]^{1/2}.
		\end{align*}

		The claim for the random walk follows from similar arguments as in the case of Brownian motion. Suppose $\mathbb{B}^n$ is a scaled random walk with step size $1/n>0$. 
		Then it is clear that $ \E[(B^n_{\tau})^2 ]=  \E[\lfloor n \tau  \rfloor / n]$, and hence $\E[(B^n_{\tau+\varepsilon})^2]-\E[(B^n_{\tau})^2] \leq \varepsilon+1/n$. 
		Using \eqref{eq:modulus.martingales}, the proof follows.
		
		Denote by $M^n$ the martingale part of the Euler scheme $\mathbb{S}^n$ in \eqref{eq:SDE.Euler}. 
		Using the triangle inequality, 
		\begin{align*}
			\delta_{\mathbb{S}^n}(\varepsilon) \leq 2 \lVert \mu \rVert_{\infty} \left( \frac{1}{n} \vee \varepsilon \right) + \sup_{\tau\in\mathrm{ST}(\mathbb{S}^n)} \E\left[  \sup_{s \in [0,\varepsilon]} |M^n_{\tau+s}-M^n_{\tau}| \right],
		\end{align*}
		and denoting by  $\langle M^n \rangle$ the quadratic variation of $M^n$,
		\[\E[ (M^n_{\tau+\varepsilon })^2] -\E[ (M^n_{\tau})^2] =\E[\langle M^n \rangle_{\tau+\varepsilon}]-\E[\langle M^n \rangle_{\tau} ].\] 
		 Moreover, using \eqref{eq:modulus.martingales} and the fact that  $\langle M^n \rangle_{\tau+\epsilon} -\langle M^n \rangle_{\tau}$ is bounded from above by $2\lVert \sigma \rVert_{\infty}^2 ( 1/n \vee \varepsilon)$, 
		\begin{align*}
			\delta_{\mathbb{S}^n}(\varepsilon) \leq 2 \lVert \mu \rVert_{\infty} \left( \frac{1}{n} \vee \varepsilon \right) + 4  \lVert \sigma \rVert_{\infty} \sqrt{\frac{1}{n} \vee \varepsilon}.
		\end{align*}
		This completes the proof.
	\end{proof}
 
	\subsection{Auxiliary statements for the optimal stopping topology}
 \label{sec:os.topoloy}
	
	For shorthand notation, in this section we set $S:=C([0,1];\R^d)\times[0,1]$.

	\begin{proof}[Proof of Lemma \ref{lem:os.continuous}]
	We may assume w.l.o.g.\ that there is a uniformly distributed random variable $U$ that is $\F^{\fp X}_0$-measurable and independent of $X$ (since adding such $U$ does not change the adapted distribution).

	The advantage of this assumption is that then ${\rm ST}(\fp X)$ corresponds to the set of so-called `randomized stopping times' and by \cite[Theorem~1.5]{BaCh77}, the set 
 \[\Gamma:=\{\law(X,\tau) : \tau\in{\rm ST}(\fp X)\}\subset\mathcal{P}(S)\]
 is convex and compact w.r.t.\ the weak topology. 
	Define $
	 \Psi (n,\gamma):=  \int_{S} \varphi^n  \, d\gamma$ for $(n,\gamma) \in \mathbb{N} \times \Gamma$.
	For every fixed $n\in\N$, the mapping $\Psi(n,\cdot)$ is continuous and linear; and for every fixed $\gamma\in\Gamma$, the  mapping $\Psi(\cdot,\gamma )$ is increasing. 
	In particular, we may apply Sion's minimax theorem \cite{Si58} which guarantees that 
	\begin{align*}
	\sup_{n\in \N} {\rm OS}(\fp Y,\varphi^n)
	&=\sup_{n\in \N} \inf_{\gamma\in\Gamma} \Psi(n,\gamma)
	= \inf_{\gamma\in\Gamma} \sup_{n\in \N} \Psi(n,\gamma)
	= {\rm OS}(\fp Y,\varphi),
	\end{align*}
	where the last equality follows from the monotone convergence theorem.
	\end{proof}

	\begin{lemma}
	\label{lemma:pointwiseapprox}
		There exists a countable set $\mathcal{C}\subset C_b(S)$ such that every $f \in C_b(S)$  is the pointwise increasing limit of a sequence in $\mathcal{C}$.
	\end{lemma}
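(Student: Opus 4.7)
The plan is to build $\mathcal{C}$ explicitly using a countable dense set in $S$ together with rational parameters, and then approximate any $f \in C_b(S)$ from below via a Moreau--Yosida regularization combined with a density argument. Note that $S = C([0,1];\mathbb{R}^d)\times[0,1]$, equipped with a natural product metric $d_S$ (say sup-norm on the first factor plus absolute value on the second), is a Polish metric space, so we may fix a countable dense set $D = \{x_k : k \in \mathbb{N}\}$ of $S$.

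First, I would construct the countable family of ``elementary'' bounded continuous functions
\[
\mathcal{C}_0 := \bigl\{ x \mapsto \max\!\bigl(-M,\, q - n\, d_S(x, x_k)\bigr) : M \in \mathbb{Q}_{>0},\ q \in \mathbb{Q},\ n, k \in \mathbb{N} \bigr\} \subset C_b(S),
\]
and then let $\mathcal{C}$ be the countable set of all finite maxima of elements of $\mathcal{C}_0$. Next, fix $f \in C_b(S)$ and define $\mathcal{C}_f := \{\phi \in \mathcal{C} : \phi \leq f \text{ pointwise}\}$. Enumerating $\mathcal{C}_f = (\phi_j)_{j\in\mathbb{N}}$ and setting $\psi_n := \max(\phi_1, \ldots, \phi_n) \in \mathcal{C}$, we obtain an increasing sequence in $\mathcal{C}$ satisfying $\psi_n \leq f \leq \|f\|_\infty$ and bounded from below by the minimum of $\phi_1$, so $\psi_n \in C_b(S)$.

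The main step is to show $\psi_n(x) \uparrow f(x)$ pointwise; by construction of $\psi_n$ it suffices to prove that for every $x_0 \in S$ and $\epsilon > 0$ there exists $\phi \in \mathcal{C}_f$ with $\phi(x_0) \geq f(x_0) - \epsilon$. For this I would invoke the Moreau--Yosida approximation
\[
f_n(x) := \inf_{y \in S}\bigl(f(y) + n\, d_S(x,y)\bigr),
\]
which satisfies $-\|f\|_\infty \leq f_n \leq f$, is $n$-Lipschitz, and converges pointwise and monotonically upwards to $f$ by continuity. Given $x_0$ and $\epsilon$, choose $n$ with $f_n(x_0) > f(x_0) - \epsilon/2$ and then $x_k \in D$ with $d_S(x_0, x_k) < \epsilon/(4n)$. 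The $n$-Lipschitz estimate gives
\[
f_n(x_k) \;\geq\; f_n(x_0) - n\,d_S(x_0,x_k) \;>\; f(x_0) - \epsilon + n\,d_S(x_0,x_k),
\]
so the interval $[f(x_0) - \epsilon + n\,d_S(x_0,x_k),\ f_n(x_k)]$ has positive length and contains some rational $q$. Picking $M \in \mathbb{Q}$ with $M > \|f\|_\infty$, the function $\phi(x) := \max\bigl(-M,\, q - n\, d_S(x,x_k)\bigr)$ lies in $\mathcal{C}_0$. Using that $f_n$ is $n$-Lipschitz, $q - n d_S(x,x_k) \leq f_n(x_k) - n d_S(x,x_k) \leq f_n(x) \leq f(x)$, while $-M < -\|f\|_\infty \leq f(x)$, so $\phi \leq f$ and hence $\phi \in \mathcal{C}_f$; on the other hand $\phi(x_0) \geq q - n\,d_S(x_0, x_k) \geq f(x_0) - \epsilon$.

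The main obstacle is calibrating parameters so that the upper bound $\phi \leq f$ and the pointwise lower bound $\phi(x_0) \geq f(x_0) - \epsilon$ are compatible; this is precisely where the $n$-Lipschitz property of the Moreau--Yosida approximation combined with the quantitative choice $d_S(x_0,x_k) < \epsilon/(4n)$ is essential. Everything else (countability of $\mathcal{C}$, monotonicity and boundedness of $\psi_n$) is bookkeeping.
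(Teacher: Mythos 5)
Your proof is correct, and it takes a genuinely different route from the paper's. The paper works purely topologically: it fixes a countable base $\mathcal{B}$ of $S$, forms step functions $f_{U,p,q} = p\mathbf{1}_U + q\mathbf{1}_{U^c}$ for $U\in\mathcal{B}$ and rational $p\ge q$, shows every $f\in C_b(S)$ is a pointwise sup of such step functions, then approximates each $f_{U,p,q}$ from below by Urysohn functions $g_{U,V,p,q}$ for $\overline{V}\subset U$, and finally takes $\mathcal{C}$ to be all finite maxima of the $g$'s. You instead exploit the metric structure of $S$: the elementary functions $\max(-M,\, q - n\,d_S(\cdot,x_k))$ are explicit truncated cone functions anchored at a countable dense set, and the Moreau--Yosida/Lipschitz regularization $f_n(x)=\inf_y(f(y)+n\,d_S(x,y))$ replaces the role of Urysohn's lemma in producing continuous lower approximants. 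Both approaches share the final step of passing to finite maxima of a countable family, but yours is more explicit and self-contained (no appeal to Urysohn or $T_4$), at the cost of using separability and a compatible metric rather than just second countability and normality. Since $S=C([0,1];\mathbb{R}^d)\times[0,1]$ is Polish, both sets of hypotheses hold, so your argument is perfectly adequate; the paper's version would continue to work on a non-metrizable second-countable $T_4$ space, but that extra generality is not used here.
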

	\begin{proof}
		Let $\B$ be a countable base of the topology on $S$. 
		For $U \in \B$ and $p,q\in\Q$ with $p\geq q$, set 
		\[ f_{U,p,q}:= p1_U + q1_{U^c}.\]
		We first  show that every $f \in C_b(S)$ is the pointwise supremum of such functions.  
		To that end, let $\epsilon>0$. 
		By continuity of $f$, for every $x\in S$, there is  $B_x \in \B$ such that $f(y) \ge f(x) - \epsilon$ for all $y \in B_x$. 
		Let $q\in \Q$ with $q \le -\|f\|_\infty - 2 \epsilon$, and $p_x\in \Q$ with $f(x)-2\epsilon\leq p_x \leq f(x)-\epsilon$.
		Then 
		\[	f_{B_x,p_x,q}\leq f 
		\quad\text{and}\quad
		f_{B_x,p_x,q}(x)\geq f(x)-2\varepsilon.\]

		Next we approximate $f_{U,p,q}$ from below by continuous functions.
		By Urysohn's lemma, for all $p,q \in \Q$  with $p\geq q$ and all  $U,V \in \B$ satisfying ${\rm cl}(V) \subset U$ (where ${\rm cl}(V)$ denotes the closure of $V$), there is a continuous function $g_{U,V,p,q}$ such that 
		\[ g_{U,V,p,q}=p \text{ in } V,
		\qquad
		g_{U,V,p,q}(x)=q \text{ in } U^c
		\quad\text{and } q\leq g_{U,V,p,q} \leq p.\]
		Using the separation axiom $T_4$, it follows that 
		\[
		f_{U,p,q} = \sup_{ V \in \B  \text{ such that } {\rm cl}(V) \subset U } g_{U,V,p,q}.
		\]
		Hence, we can choose $\mathcal{C}$ as the collection of all finite maxima of functions of the form $g_{U,V,p,q}$, where $p,q \in \Q$ with $p \ge q$ and $U,V \in \B$ with ${\rm cl}(V) \subset U$.
	\end{proof}
	
	\begin{corollary}
	\label{cor:pointwiseapprox.non-anticipative}
    There exists a countable set $\mathcal{C}\subset C_b(S)$ of non-anticipative functions such that every non-anticipative $f \in C_b(S)$  is the pointwise increasing limit of a sequence in $\mathcal{C}$.
	\end{corollary}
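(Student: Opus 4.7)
The plan is to derive the corollary from Lemma~\ref{lemma:pointwiseapprox} by pulling back the countable family through the stopping map $T\colon S\to S$, $T(x,t):=(x^t,t)$. Recall that a function $\varphi\in C_b(S)$ is non-anticipative precisely when $\varphi=\varphi\circ T$, so non-anticipative continuous functions are exactly those continuous functions that factor through $T$. This suggests that precomposition with $T$ is the natural operation to produce a countable approximating family of non-anticipative functions from an arbitrary countable approximating family.

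First I would verify that $T\colon S\to S$ is continuous. If $(x_n,t_n)\to(x,t)$ in $S$, then by the triangle inequality
\[\sup_{s\in[0,1]}|x_n(t_n\wedge s)-x(t\wedge s)|\leq \|x_n-x\|_\infty+\sup_{s\in[0,1]}|x(t_n\wedge s)-x(t\wedge s)|,\]
and the second term tends to $0$ by uniform continuity of $x$ on $[0,1]$; thus $T(x_n,t_n)\to T(x,t)$. In particular, for any $g\in C_b(S)$ the function $g\circ T$ lies in $C_b(S)$ and is non-anticipative, since $(x,t)$ and $(y,t)$ with $x|_{[0,t]}=y|_{[0,t]}$ satisfy $x^t=y^t$ and so $g(T(x,t))=g(T(y,t))$.

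With this in place, let $\mathcal{C}'\subset C_b(S)$ be the countable set produced by Lemma~\ref{lemma:pointwiseapprox}, and define $\mathcal{C}:=\{g\circ T:g\in\mathcal{C}'\}$, which is countable and consists of non-anticipative continuous bounded functions. Now let $f\in C_b(S)$ be non-anticipative. By Lemma~\ref{lemma:pointwiseapprox} there is a sequence $(g_n)_n\subset\mathcal{C}'$ with $g_n\uparrow f$ pointwise on $S$. Evaluating at $T(x,t)=(x^t,t)$ and using that $f\circ T=f$ by non-anticipativity yields
\[(g_n\circ T)(x,t)=g_n(x^t,t)\uparrow f(x^t,t)=f(x,t)\]
for every $(x,t)\in S$. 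Since $g_n\circ T\in\mathcal{C}$ for each $n$, this completes the proof.

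I do not foresee a serious obstacle here; the only thing to check carefully is the continuity of the stopping map $T$, and this is a routine uniform-continuity argument on $C([0,1];\mathbb{R}^d)$. The construction would look slightly different on $D([0,1];\mathbb{R}^d)$ because $T$ fails to be continuous there, but the corollary as stated only concerns $S=C([0,1];\mathbb{R}^d)\times[0,1]$, so this issue does not arise.
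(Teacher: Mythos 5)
Your proof is correct and takes essentially the same route as the paper: the paper defines the same map $\iota(x,t)=(x^t,t)$, characterizes non-anticipative $\varphi\in C_b(S)$ by $\varphi=\varphi\circ\iota$, and sets $\mathcal{C}=\{\varphi\circ\iota:\varphi\in\mathcal{C}'\}$ with $\mathcal{C}'$ from Lemma~\ref{lemma:pointwiseapprox}. You merely spell out the continuity of the stopping map, which the paper leaves as a routine check.
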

	\begin{proof}
	Define the mapping $\iota \colon S\to S$ by $\iota( (f,t)) = (f^t,t)$ where  we recall that $f^t(s)=f(t\wedge s)$.
	In particular, $\varphi\in C_b(S)$ is non-anticipative if and only if $\varphi=\varphi\circ\iota$.
	Let $\mathcal{C}'$ be the set of Lemma \ref{lemma:pointwiseapprox}.
	It can be readily checked that  $\mathcal{C}:= \{ \varphi \circ\iota : \varphi\in\mathcal{C}'\}$ satisfies the claim in the corollary.
	\end{proof}

	\begin{proof}[Proof of Lemma \ref{lem:OS.sequential.finer.than.weak}]
	We start by showing that the optimal stopping topology is sequential.
	Let $\mathcal{C}$ be the set of Corollary \ref{cor:pointwiseapprox.non-anticipative} and 		denote by $\mathcal{T}$ the initial topology w.r.t.\ the maps $ \fp X \mapsto {\rm OS}(\fp X,\varphi) $ where $\varphi \in \mathcal C$.
    Clearly,  $\mathcal{T}$ is sequential and coarser than the optimal stopping topology.
	To complete the first part of the proof, it remains to show that  $\mathcal{T}$ is finer than the optimal stopping topology.
		
		To that end, it suffices to  prove that for every $\bbX\in \FP$, every non-anticipative $\varphi\in C_b(S;\R)$, and every $\epsilon >0$, there are $\varphi^1,\varphi^2 \in \mathcal{C}$ such that 
		\begin{align}\label{eq:OSbase}
	\begin{split}
	&\{\bbY \in \FP: |{\rm OS}(\bbX,\varphi^1)- {\rm OS}(\bbY,\varphi^1)| \leq \tfrac{1}{2}\epsilon \text{ and } |{\rm OS}(\bbX,\varphi^2)-{\rm OS}(\bbY,\varphi^2)| \leq \tfrac{1}{2}\epsilon \} \\
 &\subset \{\bbY  \in \FP : | {\rm OS}(\bbX,\varphi)- {\rm OS}(\bbY,\varphi)| \leq \epsilon \} .
\end{split}
		\end{align}
		As consequence of Lemmas \ref{lem:os.continuous} and \ref{cor:pointwiseapprox.non-anticipative}, there  are $\underline{\varphi},\overline{\varphi}\in \mathcal{C}$  such that $\underline{\varphi}\leq \varphi\leq \overline{\varphi}$ and
		\begin{equation*}
			{\rm OS}(\fp X,\overline{\varphi}) - \tfrac{1}{2}\epsilon  
			\le {\rm OS}(\fp X, \varphi) 
			\le {\rm OS}(\fp X, \underline{\varphi}) + \tfrac{1}{2}\epsilon.
		\end{equation*}
		By setting $\varphi^1=\underline{\varphi}$ and $\varphi^2=\overline{\varphi}$, \eqref{eq:OSbase} clearly follows.
		This completes the proof that the optimal stopping topology is sequential.
		
		\vspace{0.5em}
		We proceed to show that the optimal stopping topology is stronger than the weak convergence topology.
		Since the optimal stopping topology is sequential by the first part, it suffices to show that for every  sequence $(\fp X^n)_n$ converging to $\fp X$ w.r.t.\ the optimal stopping topology and every $f\in C_b ( C([0,1];\R^d))$, we have that $\E_{\P^{\fp X}}[f(X)] =  \lim_n \E_{\P^{\fp X^n}}[f(X^n)]$.
		To that end, fix such $f$ and consider, for $k \in \N$, the non-anticipative function $\varphi_k\in C_b(S)$ defined by 
		\[
		\varphi_k(x,t) := f(x^t) + k(1 - t).
		\]
		In particular, the sequence $(\varphi_k)_k$ pointwise increases to $\varphi(x,t):=f(x) + \infty 1_{[0,1)}(t)$ and thus, we have by Lemma \ref{lem:os.continuous} for every $\fp Y \in \FP$
		\begin{equation}
		\label{eq:opt.stop.approx}
		\sup_{k \in \N} {\rm  OS}(\fp Y, \varphi_k) 
		={\rm  OS}(\fp Y, \varphi) 
		= \mathbb E_{\P^{\fp Y}}[f(Y)].
		\end{equation}
		
		Since each $\varphi_k$ is admissible for the optimal stopping topology, 
		we get ${\rm OS} (\fp X, \varphi_k)=\lim_n {\rm OS}(\fp X^n, \varphi_k)$, and therefore by \eqref{eq:opt.stop.approx}
		\begin{align*}
		\mathbb E_{\P^{\fp X}}[f(X)]
		=\sup_{k \in \N} {\rm OS} (\fp X, \varphi_k) 
		&=\sup_{k\in \N } \lim_{n \to \infty} {\rm OS} (\fp X^n, \varphi_k) \\
		&\leq  \liminf_{n\to\infty} \sup_{k \in \N} {\rm OS}(\fp X^n, \varphi_k)
		=
		\liminf_{n \to \infty} \mathbb E_{\P^{\fp X^n}}[f(X^n)].
		\end{align*}
		Replacing $f$ by $-f$ in the above arguments concludes the proof.
	\end{proof}

\section{On the strict adapted Wasserstein distance}
\label{sec:aw.strong}

In this section, we elaborate on the assertion made in the introduction that topology induced by the strict adapted Wasserstein metric
\[ \AW_p^{(\rm s)}(\fp X,\fp Y):=\inf_{\pi\in\cplbc(\fp X,\fp Y)} \E_\pi[d^p_{\mathcal{X}}(X,Y)]^{1/p} \] 
has certain shortcomings for the analysis of general stochastic processes.

First, let us show that $\AW_p^{(\rm s)}$  does not generate a `weak topology'.
To that end, denote by $\textup{TC}$  the set of all strictly increasing, absolutely continuous functions $\varphi\colon [0,1] \to [0,1]$ that satisfy $\varphi(0) = 0$ and $\varphi(1) = 1$. Consequently, each $\varphi$ represents the cumulative distribution function of a probability measure on $[0,1]$, and we identify $\textup{TC}$ with a subset of the set of all probability measures on $[0,1]$. Now, let $\fp B$ be the standard Brownian motion. For every $\varphi\in \textup{TC}$, we denote by $\fp X^\varphi$ the naturally filtered process defined by $X_t^\varphi = \int_0^t \sqrt{ \dot \varphi(t) } \, dB_t$, where $\dot \varphi$ denotes the (weak) derivative of $\varphi$.
Hence, $X^\varphi$ has the same law as the time-changed Brownian motion $(B_{\varphi(t)})_{t\in[0,1]}$. Taking $\mathcal{X}=C([0,1])$ with $d_{\mathcal{X}}=\lVert \cdot \rVert_{\infty}$, the topologies generated by $\AW_p^{(\rm{s})}$ and $\AW_p$ on $\{\fp X^{\varphi}: \, \varphi \in \textup{TC} \}$ are analogous to the topologies of  total variation and weak convergence on $\textup{TC}$ respectively:

\begin{lemma}\label{lem:timechangeBM}
 Let $\varphi,\varphi^n\in \textup{TC}$ for $n\in\mathbb{N}$.
Then the following holds.
\begin{enumerate}[label = (\roman*)]
    \item  $\mathcal{AW}_p^{(s)}(\fp X^{\varphi^n},\fp X^{\varphi}) \to 0$ if and only if $\varphi^n\to \varphi$ in total variation, i.e.\ $\dot \varphi^n\to \dot \varphi$ in $L^1(dt)$.
    \item $\mathcal{AW}_p(\fp X^{\varphi^n},\fp X^{\varphi}) \to 0$ if and only if $\varphi^n\to \varphi$ weakly,  i.e.\ $\varphi^n(t)\to\varphi(t)$ for every $t\in[0,1]$.
\end{enumerate}
\end{lemma}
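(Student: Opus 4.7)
Throughout I will use that $\fp X^\varphi$ is a continuous Gaussian martingale whose natural filtration is continuous, hence $\fp X^\varphi \in \NFP$ and $\cont(\fp X^\varphi) = [0,1]$ (by Corollary~\ref{cor:contPtPlain} together with continuity in $t$ of the conditional laws of a continuous Markov process). Consequently Theorem~\ref{thm:plan.topo.same} applies and $\AW_p$-convergence on this family is equivalent to convergence in any other listed adapted topology together with convergence of $p$-th moments. Since $\E[\|X^{\varphi^n}\|_\infty^q] \le C_q \varphi^n(1)^{q/2} = C_q$ uniformly for every $q \ge 1$ by BDG, the $p$-th moments are uniformly integrable and will automatically converge whenever $X^{\varphi^n} \to X^\varphi$ weakly.

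For part (ii), the forward implication $\AW_p(\fp X^{\varphi^n}, \fp X^\varphi) \to 0 \Rightarrow \varphi^n \to \varphi$ weakly is immediate: $\AW_p$ dominates $\W_p$, hence forces weak convergence of the Gaussian laws on $\X$, and matching variances gives $\varphi^n(t) = \mathrm{Var}(X^{\varphi^n}_t) \to \varphi(t)$ for every $t$. For the converse, pointwise convergence of CDFs to a continuous CDF upgrades to uniform convergence $\varphi^n \to \varphi$ by Polya's theorem. I would then verify Hellwig's topology: the Markov property expresses the prediction process as $\pp^1_t(\fp X^\varphi) = F_t(X^\varphi, \varphi)$, where $F_t$ sends $(f, \psi) \in \X \times C([0,1])$ to the law on $\X$ of $s \mapsto f(s)\,\mathbf 1_{s \le t} + (f(t) + \widetilde B_{\psi(s) - \psi(t)})\,\mathbf 1_{s > t}$ with $\widetilde B$ an independent Brownian motion. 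Joint continuity of $F_t$ combined with uniform clock convergence and weak convergence of $X^{\varphi^n}$ to $X^\varphi$ (the latter coming from covariance convergence plus tightness via the Gaussian Kolmogorov--Chentsov criterion) yields $\law(\pp^1_t(\fp X^{\varphi^n})) \to \law(\pp^1_t(\fp X^\varphi))$ for every $t \in [0,1]$, as required.

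For part (i), the lower bound rests on a bicausal stochastic-calculus argument. Given any $\pi \in \cplbc(\fp X^{\varphi^n}, \fp X^\varphi)$, bicausality propagates the individual martingale property: both $X^\varphi$ and $X^{\varphi^n}$ remain martingales under $\pi$ with respect to the joint filtration $\F^{X^{\varphi^n}}_t \vee \F^{X^\varphi}_t$. Their pathwise quadratic variations are unchanged at $\varphi^n,\varphi$, and Kunita--Watanabe yields $\langle X^{\varphi^n} - X^\varphi\rangle_1 \ge \int_0^1 (\sqrt{\dot\varphi^n} - \sqrt{\dot\varphi})^2\,ds$. Applying BDG to the martingale difference,
\[
\E_\pi[\|X^{\varphi^n} - X^\varphi\|_\infty^p] \ge c_p^p \Big(\int_0^1 (\sqrt{\dot\varphi^n} - \sqrt{\dot\varphi})^2\,ds\Big)^{p/2},
\]
so $\AW_p^{(\rm s)}$ is bounded below by a constant times the Hellinger distance between $\dot\varphi^n\,dt$ and $\dot\varphi\,dt$, equivalent to TV. For the upper bound, I would fix a Brownian motion $B$ on a common probability space and set $X^\varphi := \int_0^\cdot \sqrt{\dot\varphi}\,dB$ and $X^{\varphi^n} := \int_0^\cdot \sqrt{\dot\varphi^n}\,dB$. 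Under the genericity assumption $\dot\varphi, \dot\varphi^n > 0$ a.e., both processes generate the Brownian filtration (since $B_t = \int_0^t (\dot\varphi)^{-1/2}\,dX^\varphi_s$), so the induced coupling on $\X \times \X$ has identical natural filtrations and is automatically bicausal. BDG then bounds the cost from above by the same Hellinger quantity, and the equivalence of Hellinger with TV closes the equivalence.

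The main technical obstacle is the rigorous verification of the joint continuity of $F_t$ in part (ii) and the associated continuous-mapping step; this requires careful handling of the topologies on $\X$ and $C([0,1])$ and of the simultaneous dependence of the conditional law on the observed past path and on the clock. A secondary issue is the degenerate case in part (i) where $\dot\varphi$ vanishes on a set of positive measure, so that $\F^{X^\varphi} \subsetneq \F^B$ and the common-$B$ coupling need not be bicausal; this can be handled by a brief regularization argument (e.g.\ $\varphi_\delta := (1-\delta)\varphi + \delta\,\id$, noting that the lower bound just derived forces $\AW_p^{(\rm s)}(\fp X^{\varphi_\delta}, \fp X^\varphi) \to 0$ as $\delta \to 0$), reducing everything to the non-degenerate setting.
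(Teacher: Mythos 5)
Your part~(i) follows essentially the same route as the paper (two-sided BDG/Hellinger bound), except that the paper simply cites \cite[Proposition~3.3]{BaBaBeEd19a} while you reconstruct the estimate. Two remarks there. First, your worry about degeneracy ($\dot\varphi$ vanishing on a set of positive measure) is unfounded: the synchronous coupling $(X^\varphi,X^{\varphi^n})_\#\P^B$ is bicausal unconditionally. Indeed, causality from $\fp X^\varphi$ to $\fp X^{\varphi^n}$ only requires $\F_t^{X^{\varphi^n}} \indep_{\F_t^{X^\varphi}} \F_1^{X^\varphi}$ under the coupling, and this holds because $\F_1^{X^\varphi}\subset\F_t^{X^\varphi}\vee\sigma(B_s-B_t: s>t)$ while $\sigma(B_s-B_t:s>t)$ is independent of $\F_t^B\supset \F_t^{X^{\varphi^n}}$; no equality of natural filtrations with $\F^B$ is needed. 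Second, the regularization you propose is circular as written: you invoke ``the lower bound just derived'' to conclude $\AW_p^{(\rm s)}(\fp X^{\varphi_\delta},\fp X^\varphi)\to 0$, but the lower bound gives $\AW_p^{(\rm s)}\ge c\cdot\text{Hellinger}$, not an upper bound, and the upper bound for that pair is precisely what you are trying to establish (and still suffers from $\dot\varphi$ possibly vanishing). Fortunately, as just explained, the regularization is unnecessary.

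For part~(ii) the forward implication is identical to the paper's. For the converse you take a genuinely different route: rather than exhibiting a coupling, you propose to verify Hellwig's convergence by writing $\pp^1_t(\fp X^\varphi)=F_t(X^\varphi,\varphi)$ via the Markov property and proving joint continuity of $F_t$ — an approach you yourself flag as ``the main technical obstacle.'' The paper is more direct and avoids this: from Polya's theorem $\|\varphi^n-\varphi\|_\infty\to 0$, and since $\varphi$ is continuous and \emph{strictly} increasing, $\delta(\epsilon):=\min_{t\le 1-\epsilon}(\varphi(t+\epsilon)-\varphi(t))>0$; hence for $n$ large the clocks satisfy $\varphi(t)\le\varphi^n(t+\epsilon)$ and $\varphi^n(t)\le\varphi(t+\epsilon)$ for all $t$, which makes the time-changed coupling $\pi=\law\big((B_{\varphi^n(t)})_t,(B_{\varphi(t)})_t\big)$ $\epsilon$-bicausal outright (nested filtrations, so conditional independence is trivial), and the $\AW_p$ cost is then controlled directly. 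Your approach buys a more structural ``Markov'' perspective on the prediction process, but at the price of a nontrivial continuous-mapping verification; the paper's explicit coupling construction is shorter and sidesteps it entirely.
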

\begin{proof}
We start with (i). 
It follows from an application of \cite[Proposition 3.3]{BaBaBeEd19a} and the BDG inequality that there are constants $c_1,c_2>0$ depending only on $p$ such that  
\[c_1\mathcal{AW}_p^{(s)}(\fp X^{\varphi^n},\fp X^{\varphi} ) 
\leq  \left( \int_0^1 \left( \sqrt{\dot \varphi^n(t)}-\sqrt{\dot \varphi(t)} \right)^2 \, dt \right)^{1/2}
\leq c_2 \mathcal{AW}_p^{(s)}(\fp X^{\varphi^n},\fp X^{\varphi} ) .\]
Hence, $\mathcal{AW}_p^{(s)}(\fp X^{\varphi^n},\fp X^{\varphi}) \to 0$ if and only if $\varphi^n$ converges to $\varphi$ in the Hellinger distance and this is equivalent to $\varphi^n \to \varphi$ in total variation. 

As for (ii), suppose first that  $\varphi^n \to \varphi$ weakly.
Since $\varphi$ is continuous, it follows that $\lVert \varphi^n - \varphi \rVert_{\infty} \to 0$.
Therefore, for any $\epsilon>0$,  if $n$ is large enough,
\[\pi=\mathcal{L}\left( (B_{\varphi^n(t)})_{t\in[0,1]}, (B_{\varphi(t)})_{t\in[0,1]} \right)\]
is an $\epsilon$-bicausal coupling between $\fp X^{\varphi^n}$ and $\fp X^{\varphi}$.
It readily follows that $\mathcal{AW}_p(\fp X^{\varphi^n},\fp X^{\varphi}) \to 0$. 

Now suppose that $\mathcal{AW}_p(\fp X^{\varphi^n},\fp X^{\varphi}) \to 0$.

Then clearly $\mathcal{L}( (X^{\varphi^n}_t)_{t\in[0,1]})\to \mathcal{L}((X^{\varphi}_t)_{t\in[0,1]})$ weakly which implies that  $\mathcal{L}(X^{\varphi^n}_t) \to \mathcal{L}(X^{\varphi}_t)$ for  every fixed $t\in[0,1]$.
Since $X^{\varphi}_t$ and $X^{\varphi^n}_t$ are Gaussian with mean zero and variance $\varphi(t)$ and $\varphi^n(t)$ respectively,  the claim follows.
\end{proof}

Next, we show that the \emph{only} bicausal coupling between a random walk and the Brownian motion is the product coupling. Therefore, the convergence of scaled random walks to Brownian motion does not hold w.r.t.\ the strict adapted Wasserstein metric. 
As it happens, this is the outcome of a more general phenomenon.

 	\begin{proposition}\label{prop:cplbconlyindep}
	Let  $\fp X, \fp Y \in\mathcal{FP}$ such that 
    \begin{enumerate}[(a)]
        \item $\F^{\fp X}_0$ is trivial and $\F_{t-}^{\fp X}=\F_t^{\fp X}$ for all $t \in (0,1]$;
        \item there are $0=t_0 < t_1< \dotso <t_N=1$ such that $\F_t^{\fp Y}=\F_{t_n}^{\fp Y}$ for all $t \in [t_n,t_{n+1})$. 
    \end{enumerate}
    Then, for every $\pi \in \cpl_{\rm bc}(\fp X, \fp Y)$, $X$ and $Y$ are independent under $\pi$.
	\end{proposition}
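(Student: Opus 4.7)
The plan is to show by induction on $n \in \{0, \dots, N\}$ that $\F_{t_n}^{\fp Y}$ and $\F_1^{\fp X}$ are (unconditionally) independent under $\pi$. Since $X$ is $\F_1^{\fp X}$-measurable and $Y$ is $\F_1^{\fp Y}$-measurable with $\F_1^{\fp Y} = \F_{t_N}^{\fp Y}$ by (b), taking $n = N$ yields the claim. For the base case $n = 0$, Lemma~\ref{lem:causal_equiv} turns causality of $\pi$ from $\fp X$ to $\fp Y$ at $t = 0$ into $\F_0^{\fp Y} \indep_{\F_0^{\fp X}} \F_1^{\fp X}$, and triviality of $\F_0^{\fp X}$ from (a) collapses this to $\F_0^{\fp Y} \indep \F_1^{\fp X}$.

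For the inductive step, suppose $\F_{t_n}^{\fp Y} \indep \F_1^{\fp X}$. By (b), $\F_t^{\fp Y} = \F_{t_n}^{\fp Y}$ for every $t \in [t_n, t_{n+1})$, so causality from $\fp Y$ to $\fp X$ at such $t$ reads $\F_t^{\fp X} \indep_{\F_{t_n}^{\fp Y}} \F_1^{\fp Y}$. A standard Dynkin $\pi$-$\lambda$ argument applied to the $\pi$-system $\bigcup_{t < t_{n+1}} \F_t^{\fp X}$ together with the left-continuity $\F_{t_{n+1}-}^{\fp X} = \F_{t_{n+1}}^{\fp X}$ from (a) upgrades this to $\F_{t_{n+1}}^{\fp X} \indep_{\F_{t_n}^{\fp Y}} \F_1^{\fp Y}$. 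The induction hypothesis gives $\F_{t_{n+1}}^{\fp X} \subseteq \F_1^{\fp X}$ independent of $\F_{t_n}^{\fp Y}$, and combining this with the conditional independence just obtained yields that $\F_{t_{n+1}}^{\fp X}$ is independent of the larger $\sigma$-algebra $\sigma(\F_{t_n}^{\fp Y}, \F_1^{\fp Y}) = \F_1^{\fp Y}$. Finally, causality from $\fp X$ to $\fp Y$ at $t = t_{n+1}$ gives $\F_{t_{n+1}}^{\fp Y} \indep_{\F_{t_{n+1}}^{\fp X}} \F_1^{\fp X}$, and applying the same combining principle using the just-derived $\F_{t_{n+1}}^{\fp X} \indep \F_{t_{n+1}}^{\fp Y}$ produces $\F_{t_{n+1}}^{\fp Y} \indep \F_1^{\fp X}$, closing the induction.

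The key ingredient — used twice in each inductive step — is the elementary principle that if $\G \indep_{\mathcal{H}} \mathcal{K}$ and $\G \indep \mathcal{H}$, then $\G \indep \sigma(\mathcal{H}, \mathcal{K})$, which is verified by a one-line computation $P(A \cap B \cap C) = \E[\ind{C}\, P(A|\mathcal{H})\, P(B|\mathcal{H})] = P(A)\, P(B \cap C)$ for $A \in \G$, $B \in \mathcal{K}$, $C \in \mathcal{H}$. This is what converts the merely conditional independences supplied by bicausality into genuine unconditional independence along the grid. Assumption (a) enters only through the $\pi$-$\lambda$ extension to $\F_{t_{n+1}}^{\fp X}$ via left-continuity, while (b) is used to guarantee that the filtration of $\fp Y$ is constant on each interval so that the new information of $\fp X$ accrued over $[t_n, t_{n+1})$ is conditionally independent of $\F_1^{\fp Y}$ given $\F_{t_n}^{\fp Y}$; I expect the bookkeeping of these two facts (rather than any deep obstacle) to be the only thing to be careful about.
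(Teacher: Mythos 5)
Your proof is correct and follows the same induction as the paper, establishing $\F_{t_n}^{\fp Y}\indep\F_1^{\fp X}$ step by step and using the same three ingredients: causality from $\fp Y$ to $\fp X$ on $[t_n,t_{n+1})$ together with (a), (b), causality from $\fp X$ to $\fp Y$ at $t_{n+1}$, and the inductive hypothesis. The only difference is organizational: the paper derives two conditional independences \eqref{eq:prf:bcindep1}, \eqref{eq:prf:bcindep2} (the second via a slightly delicate argument manipulating nested $\sigma$-algebras) and then invokes Kallenberg's chain rule twice, whereas you fold the inductive hypothesis in early to upgrade $\F_{t_{n+1}}^{\fp X}\indep_{\F_{t_n}^{\fp Y}}\F_1^{\fp Y}$ to the unconditional $\F_{t_{n+1}}^{\fp X}\indep\F_1^{\fp Y}$ (a stronger intermediate than the paper records) before invoking causality from $\fp X$ to $\fp Y$, which lets you get by with two clean applications of the elementary combining principle $\G\indep_{\mathcal H}\mathcal K$ and $\G\indep\mathcal H$ $\Rightarrow$ $\G\indep\sigma(\mathcal H,\mathcal K)$ and sidesteps \eqref{eq:prf:bcindep2} entirely. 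This is a genuine, if modest, simplification of the bookkeeping; the underlying mechanism is the same.
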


 In particular, the standard Brownian motion  $\mathbb{B}$ and the standard scaled random walk $\mathbb{B}^n$ satisfy the assumptions of Proposition \ref{prop:cplbconlyindep}, and hence  $\AW^{(s)}_p(\fp B,\fp B^n) \not \to 0$ as $n \to \infty$.

\begin{remark}\label{rem:sSCW}
In analogy to the strict adapted Wasserstein distance one may also define the strict symmetrized causal Wasserstein distance, $\mathcal{SCW}^{(\rm s)}_p(\fp X, \fp Y)$, by restring to causal (not $\varepsilon$-bicausal) couplings in the original definition \eqref{eq:def.CW}.
It turns out that $\mathcal{SCW}^{(\rm s)}_p$ is topologically coarser than $\AW_p^{(\rm s)}$.
Specifically, we have $\AW^{(\rm s)}_p(\fp B^n,\fp B) \not \to 0$ while $\mathcal{SCW}^{(\rm s)}_p(\fp B^n,\fp B)  \to 0$. 
Indeed, recall that $\lim_n\AW_p(\fp B^n,\fp B)= 0$, fix $n$, and pick $\epsilon$ and a $\epsilon$-bicausal coupling $\pi$ that attain the infima in the definition of $\AW_p(\fp B^n,\fp B)$. 
Clearly $\pi$ is causal from $\fp B^n$ to the shifted process $t \mapsto B_{t -\epsilon}$  over the time interval $[\epsilon,1]$. 
Concatenating $\fp B$ with another standard Brownian motion $\fp B'$ independent of $\fp B^n,\fp B$, i.e.\ $\bar B_t:= \mathbbm{1}_{t \in [0, \epsilon]} B'_t+ \mathbbm{1}_{t \in [\epsilon ,1]}(B'_{\epsilon}+B_{t-\epsilon})$, we obtain a causal coupling from $\fp B^n$ to $\bar{\fp B}$, where the latter is still a Brownian motion. 

Under such causal coupling, the distance between $\fp B^n$ and $\bar{\fp B}$ is small for large $n$, and hence $\mathcal{CW}_p^{(\rm s)}(\fp B^n,\fp B) \to 0$. Similarly $\mathcal{CW}_p^{(\rm s)}(\fp B,\fp B^n) \to 0$, and thus $\mathcal{SCW}^{(\rm s)}(\fp B^n, \fp B) \to 0$.
\end{remark}

	\begin{proof}[Proof of Proposition \ref{prop:cplbconlyindep}]
	Throughout this proof (conditional) independence is always meant under $\pi$. 
	We shall prove via induction over $n$ that $\F_1^{\fp X} \indep \F_{t_n}^{\fp Y}$ which  clearly implies that $X \indep Y$. 
	
	$n=0$: By the causality of $\pi$ from $\fp X$ to $\fp Y$, we have that $\F_0^{\fp Y} \indep_{\F_0^{\fp X}} \F_1^{\fp X}$.
	And since $\F_0^{\fp X}$ is trivial, $\F_0^{\fp Y} = \F_{t_0}^{\fp Y}$, we conclude that  $\F_1^{\fp X} \indep  \F_{t_0}^{\fp Y} $.
	
	$n \mapsto n+1$: 
	By the causality of $\pi$ from $\fp Y$ to $\fp X$, we have that $\F_t^{\fp X} \indep_{\F^{\fp Y}_{t}} \F^{\fp Y}_{t_{n+1}}$ for all $t\in [t_n,t_{n+1})$. 
     Hence, by the left-continuity of  $(\F_t^{\fp X})_{t\in[0,1]}$ and   $\F^{\fp Y}_t=\F^{\fp Y}_{t_n}$,
\begin{equation}\label{eq:prf:bcindep1}
		\F_{t_{n+1}}^{\fp X} \indep_{\F^{\fp Y}_{t_n}} \F^{\fp Y}_{t_{n+1}}.
	\end{equation}

    Next, we claim that 
    \begin{equation}
    \label{eq:prf:bcindep2}
	\F_1^{\fp X} \indep_{\F^{\fp X}_{t_{n+1}} , \, \F_{t_n}^{\fp Y}} \F_{t_{n+1}}^{\fp Y} .
	\end{equation}
    To that end, note that the causality of $\pi$  from $\fp X$ to $\fp Y$ implies that $\E[A \, | \, \F^{\fp X}_{t_{n+1}}]=\E[A \, | \sigma( \F^{\fp X}_{t_{n+1}}, \F^{\fp Y}_{t_{n+1}} )]$ for all $A \in \F^{\fp X}_1$.
    Moreover, since
$
\F^{\fp X}_{t_{n+1}} \cup \F^{\fp Y}_{t_n} \subset \F^{\fp X}_{t_{n+1}}\cup \F^{\fp Y}_{t_{n+1}},$
it is straightforward to verify that
\[ \E[A \, | \, \F^{\fp X}_{t_{n+1}}]
=\E[A \, | \sigma( \F^{\fp X}_{t_{n+1}}, \F^{\fp Y}_{t_n} )]
=\E[A \, | \sigma( \F^{\fp X}_{t_{n+1}}, \F^{\fp Y}_{t_{n+1}}) ] \quad \text{for any $A \in \F^{\fp X}_1$},\]
     which is equicalent to \eqref{eq:prf:bcindep2}.
 
	The chain rule for conditional independence applied to \eqref{eq:prf:bcindep1} and \eqref{eq:prf:bcindep2} shows that $\F_1^{\fp X}	\indep_{ \F_{t_n}^{\fp Y}} \F^{\fp Y}_{t_{n+1}} $.
 Indeed apply \cite[Proposition 5.8]{Ka97} with $\mathcal{H}=\F^{\fp Y}_{t_{n+1}}$, $\mathcal{G}=\F^{\fp Y}_{t_{n}}$, $\mathcal{F}_1=\F^{\fp X}_{t_{n+1}}$, and $\mathcal{F}_2=\F^{\fp X}_1$.
    Combined with the inductive hypotheses  $\F_1^{\fp X} \indep \F_{t_{n}}^{\fp Y}$, another application of \cite[Proposition 5.8]{Ka97} with $\mathcal{H}=\mathcal{F}^{\fp X}_1$, $\mathcal{G}=\sigma(\{\emptyset\})$, $\mathcal{F}_1=\mathcal{F}^{\fp Y}_{t_n}$, $\mathcal{F}_2=\mathcal{F}^{\fp Y}_{t_{n+1}}$, shows that $\F_1^{\fp X} \indep \F_{t_{n+1}}^{\fp Y}$.
	This completes the proof.
\end{proof}

Finally, let us also mention that the space $(\mathrm{FP}_p,\AW^{(\rm s)}_p)$ is not separable.
\begin{example}
Let $X$ be the canonical process on $\mathcal{X}=D([0,1];\R)$, set $\F$ to be the Borel $\sigma$-algebra on $D([0,1];\R)$, let  $f(t) =  \mathbbm{1}_{\{1\}}(t) $, and take
    $\P:= \frac{1}{2}\delta_{\{f\}}+\frac{1}{2}\delta_{\{-f\}}$.
	For every  $r \in [0,1]$, consider 
 \begin{align*} 
 \F_t^r&:=\begin{cases}
 \{\emptyset,\mathcal{X}\} &\quad \text{for }t\in[0,r),\\
     \F &\quad \text{for } t\in[r,1],
     \end{cases}
     \\
     \fp X^r &:= \left( \Omega, \F, \mathbb{P}, (\F_t^r)_{t\in[0,1]}, X \right) .
	\end{align*}
	A short computation shows that for $r_1 \neq r_2$, the only bicausal coupling between $\fp X ^{r_1} $ and $\fp X ^{r_2} $ is the product coupling.
 Since  $d_{J_1}(f,-f)=2$, this implies that $\AW^{(s)}_p(\fp X^{r_1}, \fp X^{r_2}) =2^{(p-1)/p}\geq 1$. In particular, as  $\{ \fp X^r : r \in [0,1]\}$ is uncountable, $(\mathrm{FP}_p, \AW^{(\rm s)}_p)$ is not separable. 
\end{example}

\section{A counterexample}

\begin{example}\label{ex:counter}
    We consider the \cadlag{} filtered processes $(\bbX^n)_n$ and $\bbX$ that are constructed as follows: Let $U$ be a random variable that is uniformly distributed on $[0,1]$ and $V$ be an independent random variable taking the values $\pm1$ each with probability $\frac{1}{2}$. We set 
    \begin{align*}
			X_t:= V 1_{[U,1]}(t)
			\quad\text{and}\quad
			X^n_t:= \frac{V}{n} 1_{[ U, U+\frac{1}{n})}(t) + V 1_{[U+\frac{1}{n},1]}(t).
		\end{align*}
    I.e., the process $X$ jumps at time $U$ up or down according to the value of $V$, whereas $X^n$ first has a small jump at time $U$ that reveals the direction of the big jump that occurs at time $U+\frac{1}{n}$. We equip both $X^n$ and $X$ with their natural (right-continuous, completed) filtration to define the  naturally filtered processes $\fp X^n, \fp X$. 

    These processes have the following properties:
    \begin{enumerate}
        \item $\AW_p(\fp X^n, \fp X) \to 0$.  Indeed, up to a time-shift of at most $\frac{1}{n}$, the filtrations of  $\bbX$ and $\bbX^n$ coincide, and  $d_{J_1}(X^n,X)\leq \frac{1}{n}$ almost surely where $d_{J_1}$ is the Skorokhod metric.
        \item $\fp X^n \to \fp X$ in the Aldous\textsubscript{MZ} topology. This is because $\AW_p$-convergence is equivalent to ${\rm HK}_p$ convergence, which clearly implies convergence in Aldous\textsubscript{MZ} topology.
        \item $\fp X^n \not\to \fp X$ in Aldous\textsubscript{J1} topology. This can be seen by considering the map
        \[
            f((x_t,\rho_t)_{t \in [0,1]}) = \sup_{t \in [0,1]} \Big(x_t - \int y_1 \, d\rho_t \Big)^2,
        \]
        with the domain $D([0,1]; \R \times \mathcal P_1(D([0,1])))$.
        Observe that $f$ can be written as composition of continuous maps, see for example \cite[Theorem 13.4.1]{Wa02} for continuity of the supremum. In particular, the map $F(\fp Y) := \mathbb E[f((Y_t,\pp^1_t(\fp Y))_{t \in [0,1]})] = \E[\sup_{t \in [0,1]} (Y_t - \E[Y_1|\F_t^{\fp Y}])^2]$ is continuous in the Aldous\textsubscript{J1} topology.
        However, computing the values for our constructed processes yields
        \[
            F(\fp X) = 0 < 1 = \lim_{n \to \infty} \mathbb E[(\tfrac Vn - V)^2] = \lim_{n \to \infty} F(\fp X^n).
        \]
        
        \item The optimal stopping values do not converge. To see this, note that $\bbX$ is a martingale and thus $\E[ X_{\tau}] = 0$ for every $\tau\in \rm{ST}(\bbX)$. On the other hand, set $\tau^{n,x}\in\mathrm{ST}(\bbX^n)$ to be the hitting time of the process $X^n$ of the value $x\in[0,1]$. Then, for $\sigma^n=\tau^{n,1/n} \wedge \tau^{n,-1}$, we have that
		$ \E[X^n_{\sigma^n}] = \frac{1}{2}\left( \frac{1}{n}-1\right)$, which does not converge to zero.
    \end{enumerate}
\end{example}

\medskip
\noindent
{\bf Acknowledgment:}
 This research was funded in whole or in part by the Austrian Science Fund (FWF) [doi: 10.55776/P34743 and 10.55776/ESP31 and  10.55776/P35197] and the Austrian National Bank [Jubil\"aumsfond, project 18983].

\bibliographystyle{abbrv}
\bibliography{joint_biblio}

\end{document}